\documentclass[11pt,reqno]{amsart}

\usepackage[T1]{fontenc}
\usepackage{lmodern}
\usepackage{amsmath,amssymb,mathtools,mathrsfs}
\usepackage{microtype}
\usepackage[colorlinks=true,linkcolor=blue,citecolor=blue,urlcolor=blue]{hyperref}

\numberwithin{equation}{section}
\allowdisplaybreaks

\newtheorem{theorem}{Theorem}[section]
\newtheorem{proposition}[theorem]{Proposition}
\newtheorem{lemma}[theorem]{Lemma}
\newtheorem{corollary}[theorem]{Corollary}
\theoremstyle{definition}
\newtheorem{definition}[theorem]{Definition}
\newtheorem{notation}[theorem]{Notation}
\theoremstyle{remark}
\newtheorem{remark}[theorem]{Remark}

\DeclareMathOperator{\pardeg}{pardeg}

\hypersetup{
  pdftitle={On possible values of the signature of flat unitary bundles over compact surfaces},
  pdfauthor={Inkang Kim, Pierre Pansu, Xueyuan Wan},
  pdfsubject={Possible signatures of flat unitary bundles over compact surfaces},
  pdfkeywords={signature, flat Hermitian bundle, surface group representation, Toledo invariant, rho invariant, parabolic Higgs bundle, Milnor--Wood inequality}
}

\title[Possible signatures of flat unitary bundles]
{On possible values of the signature of flat unitary bundles over compact surfaces}

\author{Inkang Kim}
\address{School of Mathematics, Korea Institute for Advanced Study (KIAS),
85 Hoegiro, Dongdaemun-gu, Seoul 02455, Republic of Korea}
\email{inkang@kias.re.kr}

\author{Pierre Pansu}
\address{Universit\'e Paris-Saclay, CNRS, Laboratoire de Math\'ematiques d'Orsay,
91405 Orsay Cedex, France}
\email{pierre.pansu@universite-paris-saclay.fr}

\author{Xueyuan Wan}
\address{Mathematical Science Research Center, Chongqing University of Technology,
Chongqing 400054, China}
\email{xwan@cqut.edu.cn}

\date{}

\makeatletter
\@namedef{subjclassname@2020}{\textup{2020} Mathematics Subject Classification}
\makeatother

\begin{document}

\begin{abstract}
We determine all possible signatures of flat Hermitian bundles over compact,
connected, oriented surfaces of positive genus with nonempty boundary. If $\Sigma_{g,n}$ has genus $g\geq 1$ and $n\geq 1$ boundary
components, then the signatures arising from representations
$\pi_1(\Sigma_{g,n})\to\mathrm{U}(p,q)$ are exactly the integers $m$ satisfying
\[
|m|
\leq
(p+q)(2g+n-2)
-(2g-2)|p-q|
-\min\{2,n|p-q|\}.
\]
Every such integer, including the two extremal values, is realized by a
block-diagonal representation whose image is contained, after possibly
interchanging $p$ and $q$, in
$\mathrm{U}(1,1)^{\times\min\{p,q\}}\times\mathrm{U}(|p-q|)$.
\end{abstract}

\subjclass[2020]{Primary 57M50; Secondary 14H60, 22E40}
\keywords{Signature, flat Hermitian bundle, surface group representation,
Toledo invariant, rho invariant, parabolic Higgs bundle, Milnor--Wood inequality}
\thanks{Research by Inkang Kim is partially supported by RS-2026-25468457 and
KIAS Individual Grant (MG031408), Pierre Pansu is supported by Agence Nationale de la Recherche,
 ANR-22-CE40-0004 GOFR, and Xueyuan Wan is supported by the
National Key R\&D Program of China (Grant No. 2024YFA1013200) and the National Natural Science Foundation of China (Grant No. 12671100).}

\maketitle

\section{Introduction}
\label{sec:introduction}
Representation spaces of surface groups lie at a meeting point of topology,
differential geometry, complex geometry, and Lie theory. Among the numerical
invariants attached to a surface group representation, the signature is
distinguished by its simultaneously topological and analytic nature. Let
\(\Sigma\) be a compact oriented surface and let
\(\phi\colon\pi_1(\Sigma)\to\mathrm{U}(p,q)\) be a representation. The
associated flat Hermitian bundle \(\mathcal E_\phi\) determines a
nondegenerate Hermitian intersection form on the space
\(
\operatorname{Im}(
H^{1}(\Sigma,\partial\Sigma;\mathcal{E}_{\phi})
\to
H^{1}(\Sigma;\mathcal{E}_{\phi}))
\)
and its signature will be denoted by \(\operatorname{sign}(\phi)\).

For closed manifolds, signatures of flat bundles were studied systematically
by Lusztig and Meyer, who related them to indices of elliptic operators and to
characteristic classes \cite{Lusztig,MeyerLocal,MeyerSurface}. For a manifold with boundary, the
Atiyah--Patodi--Singer index theorem includes an additional term determined by
the boundary operator \cite{APS1,APS3}. In the case of surfaces,
Atiyah developed this point of view further and related the resulting
correction terms to his signature cocycle and to the logarithm of the Dedekind
\(\eta\)-function \cite{Atiyah}.

A parallel geometric invariant arises from the Hermitian symmetric space
associated with \(\mathrm{U}(p,q)\). The Toledo invariant was introduced in
the study of representations into complex hyperbolic groups
\cite{Toledo}. For representations into
\(\mathrm{PU}(1,1)\cong\mathrm{PSL}(2,\mathbb R)\) over a closed surface, it
agrees, up to the choice of orientation convention, with the Euler number.
The classical Milnor--Wood inequalities bound this number
\cite{Milnor,Wood}. More generally, bounds for the Toledo invariant are
controlled by the norm of the bounded K\"ahler class; see
\cite{DomicToledo,BIW}. Burger--Iozzi--Wienhard extended the Toledo invariant to
surfaces with boundary by means of bounded cohomology and proved that the
Milnor--Wood inequality remains valid in this setting \cite{BIW}. For closed
surfaces the Toledo invariant is discrete, whereas for surfaces with boundary
it may vary continuously. The signature, by contrast, remains integer-valued.

The authors related these invariants in \cite{KPWUnitary}. For every representation
\(\phi\colon\pi_1(\Sigma)\to\mathrm{U}(p,q)\), one has
\begin{equation}
\label{eq:introduction-signature-Toledo}
\operatorname{sign}(\phi)
=
-2T(\Sigma,\phi)
+
\rho_\phi(\partial\Sigma),
\end{equation}
where
\[
\rho_\phi(\partial\Sigma)
=
\sum_{c\subset\partial\Sigma}\rho\bigl(\phi(c)\bigr)
\]
is a boundary invariant depending on the boundary holonomies. The function
\(\rho\colon\mathrm{U}(p,q)\to\mathbb R\) is generally discontinuous, and its
value can be computed from the eigenvalues and unipotent parts of a boundary
holonomy. The signature also satisfies the Milnor--Wood type estimate
\cite[Theorem~6.5 and Equation~(6.6)]{KPWUnitary}
\begin{equation}
\label{eq:introduction-coarse-MW}
\left|\operatorname{sign}(\phi)\right|
\leq
(p+q)|\chi(\Sigma)|.
\end{equation}
On representation spaces with prescribed boundary holonomies, the signature
can help distinguish connected components. This approach has been applied to
representations into
\(\mathrm{SL}(2,\mathbb R)\) and \(\mathrm{PSL}(2,\mathbb R)\)
\cite{KWComponents}.

A natural problem is therefore to determine exactly which integers occur as
signatures of representations into \(\mathrm{U}(p,q)\). The extremal case also connects this problem with the rigidity of
maximal Toledo representations. For surfaces of negative Euler
characteristic and $0<p<q$, Burger--Iozzi--Wienhard proved that maximal
Toledo representations preserve a maximal tube type subdomain,
whose stabilizer is conjugate to
$\mathrm{U}(p,p)\times\mathrm{U}(q-p)\subset\mathrm{U}(p,q)$
\cite{BIW}.
Related rigidity results for uniform lattices in higher-dimensional
complex hyperbolic spaces were obtained by Koziarz and Maubon
\cite{KoziarzMaubon2017}.
This suggests asking whether the maximal signature is likewise
attained by representations into
$\mathrm{U}(p,p)\times\mathrm{U}(q-p)$.
For surfaces with boundary, the rho correction in
\eqref{eq:introduction-signature-Toledo} prevents the Toledo bound
alone from determining the maximal signature.
Our main theorem answers this question affirmatively for surfaces
of positive genus with nonempty boundary.
In fact, we prove a stronger realization statement: every possible
signature is attained by a representation into the smaller
block-diagonal subgroup
$\mathrm{U}(1,1)^{\times p}\times\mathrm{U}(q-p)$;
see Theorem~\ref{thm:introduction-main}(3).
Thus passing from these direct sums to arbitrary
$\mathrm{U}(p,q)$-representations does not enlarge the range
of signatures.

The analogous
problem for symplectic groups was solved in \cite{KPWValues}: for a surface with
nonempty boundary and negative Euler characteristic, every integer permitted
by the signature Milnor--Wood inequality is realized by a representation into
\(\mathrm{Sp}(2p,\mathbb R)\). The same work settled the balanced unitary
case \(\mathrm{U}(p,p)\), the compact case \(\mathrm{U}(p)\), and the
general unitary case over genus-zero surfaces. The remaining case was
\(\mathrm{U}(p,q)\) with \(p,q>0\) and \(p\neq q\), over surfaces of
positive genus with nonempty boundary.

We settle this remaining case and combine it with the known and elementary
cases to give the complete range for every compact oriented surface. For
positive genus and nonempty boundary, the answer distinguishes the balanced
case \(p=q\) from the unbalanced case \(p\neq q\). In the balanced case,
the general signature Milnor--Wood bound is attained. In the unbalanced
case, the sharp bound is smaller by an explicit quantity depending on
\(|p-q|\), the genus, and the number of boundary components.

Let \(\Sigma_{g,n}\) be a compact, connected, oriented surface of genus \(g\)
with \(n\geq0\) boundary components, and write
\(
\chi_{g,n}:=\chi(\Sigma_{g,n})=2-2g-n.
\)
For \(p,q\geq0\), put
\(
a:=\min\{p,q\},
\) \(
b:=\max\{p,q\},
\)
and let
\[
\mathscr S_{p,q}(\Sigma_{g,n})
:=
\left\{
\operatorname{sign}(\phi)
\;\middle|\;
\phi\in
\operatorname{Hom}\bigl(\pi_1(\Sigma_{g,n}),\mathrm{U}(p,q)\bigr)
\right\}.
\]
For \(n\geq1\) and \(r\geq0\), define
\begin{equation}
\label{eq:introduction-delta}
\delta(n,r)
:=\min\{2,nr\}\end{equation}
and, when \(g,n\geq1\), set
\begin{equation}
\label{eq:introduction-M}
M_{g,n}(p,q)
:=
2a(2g-2)+n(a+b)-\delta(n,b-a).
\end{equation}

Our main theorem is the following.

\begin{theorem}
\label{thm:introduction-main}
Let \(p,q\geq0\) with \(p+q>0\).
\begin{enumerate}
\item If \(g=0\) and \(n\leq1\), then
\[
\mathscr S_{p,q}(\Sigma_{0,n})=\{0\}.
\]
If \(g=0\) and \(n\geq2\), then
\begin{equation}
\label{eq:introduction-main-genus-zero}
\mathscr S_{p,q}(\Sigma_{0,n})
=
\bigl[-(p+q)(n-2),(p+q)(n-2)\bigr]\cap\mathbb Z.
\end{equation}

\item If \(n=0\) and \(g\geq1\), then
\begin{equation}
\label{eq:introduction-main-closed}
\mathscr S_{p,q}(\Sigma_{g,0})
=
\bigl[-2a|\chi_{g,0}|,2a|\chi_{g,0}|\bigr]\cap4\mathbb Z.
\end{equation}

\item If \(g,n\geq1\), then
\begin{equation}
\label{eq:introduction-main-positive-genus}
\mathscr S_{p,q}(\Sigma_{g,n})
=
\bigl[-M_{g,n}(p,q),M_{g,n}(p,q)\bigr]\cap\mathbb Z.
\end{equation}
Moreover, every value in this interval is realized by a block-diagonal
representation whose image is contained, after possibly interchanging the
positive and negative subspaces, in
\[
\mathrm{U}(1,1)^{\times a}\times\mathrm{U}(b-a)
\subseteq
\mathrm{U}(a,b),
\]
where the compact factor acts on a definite subspace.
\end{enumerate}
\end{theorem}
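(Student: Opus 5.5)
Parts (1) and (2), together with the balanced case $p=q$ and the compact case $a=0$ of part (3), are taken from \cite{KPWValues} and the classical closed-surface results. For closed surfaces, $\rho$ vanishes and $\operatorname{sign}(\phi)=-2T$. The Toledo invariant takes values in $2\mathbb Z$ with $|T|\le a|\chi|$, and all such values are realized by $\mathrm{U}(1,1)^{a}\times \mathrm{U}(b-a)$. These facts give part (2). The genuinely new content is part (3) with $0<a<b$, and I would split it into an upper bound and a realization argument.

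\textbf{Upper bound.} I would start from \eqref{eq:introduction-signature-Toledo} and bound $-2T$ and $\rho_\phi(\partial\Sigma)$ jointly, not separately. The plan is to use the parabolic Higgs bundle description from \cite{KPWUnitary}. After conjugating the boundary holonomies, the Toledo invariant becomes the parabolic degree of the positive part $V$ of the Higgs bundle $(V\oplus W,\theta)$, corrected by boundary weights, and $\rho$ is an explicit function of the eigenvalue angles. Since $\theta\colon V\to W\otimes K(D)$ has rank at most $a$, a Milnor--Wood/Bradlow--García-Prada--Gothen type argument applies. One splits off the kernel and image of $\theta$, and the parabolic degree of the $(b-a)$-dimensional part on which $\theta$ is not injective is constrained by semistability. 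This should give
\[
|\operatorname{sign}(\phi)|\le 2a(2g-2+n)+n(b-a)-(\text{defect}).
\]
The first term comes from the $\mathrm{U}(a,a)$-like part, with $\rho$ bounded by $\pm a$ per puncture. The second term comes from the definite excess, where $\rho$ contributes at most $b-a$ per puncture. The defect is the main obstacle. I would argue that equality in all boundary contributions forces the definite part to be a flat unitary bundle whose boundary eigenvalues all sit at the extreme angle. On a closed-up surface, the degree identity, with total parabolic degree $0$ and integrality, then forces a loss of at least $\min\{2,n(b-a)\}$. The loss is $n(b-a)$ when that is $1$, since then a single boundary eigenvalue is not free, and otherwise the loss is $2$ by parity of the signature combined with the integrality of degrees. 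Verifying that these degree and parity constraints interact exactly as $\delta(n,b-a)$ predicts is where I expect the difficulty.

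\textbf{Realization.} I would use additivity of the signature under direct sums. The signature of $\phi_1\oplus\cdots\oplus\phi_a\oplus\psi$ is the sum of the signatures, with $\phi_i\colon\pi_1\to\mathrm{U}(1,1)$ and $\psi\colon\pi_1\to\mathrm{U}(b-a)$. By \cite{KPWValues}, the $\mathrm{U}(1,1)$ pieces realize every integer in $[-2|\chi|,2|\chi|]$. The $\mathrm{U}(b-a)$ piece realizes every integer in $[-(b-a)|\chi|,(b-a)|\chi|]$ when $n\ge1$; for $\mathrm{U}(1)$ this means $|m|\le n-\delta(n,1)+\ldots$, and the general range must be computed via explicit rotations. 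To get the sharper interval I would not use the pieces independently. Instead I would pick one $\mathrm{U}(1,1)$ factor maximizing $\operatorname{sign}=2(2g-2+n)$ and compact factors whose boundary rotation angles are chosen so that $\rho$ contributes $b-a$ per puncture except for a total loss of $\delta$. The product relation in $\pi_1$ forces this loss, but it can be absorbed in a single generator pair using genus $g\ge1$, via commutators of rotations in $\mathrm{U}(b-a)$. The remaining integers are obtained by decreasing one summand at a time, since each summand's range is an integer interval and sums of integer intervals are integer intervals. Symmetry under reversing orientation, or swapping $p,q$, gives the negative values.
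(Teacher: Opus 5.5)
Parts (1) and (2), and the realization half of (3), are essentially as in the paper, apart from one slip. A representation into \(\mathrm U(b-a)\) does not realize all of \([-(b-a)|\chi|,(b-a)|\chi|]\). By Theorem~\ref{thm:complete-Up-range} its signatures form \(([2-n(b-a),n(b-a)-2]\cap\mathbb Z)\cup\{0\}\). If your range were correct, direct sums would violate the very bound you are proving. With the correct range, independent orthogonal sums of \(a\) rank-one pieces and one compact piece already reach \(2a(2g-2+n)+n(b-a)-\delta(n,b-a)=M_{g,n}(p,q)\), so no coupling between the pieces is needed.

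The genuine gap is the upper bound in (3): its key inequality is asserted (``this should give''), not derived.
\begin{itemize}
\item Your premise that \(\rho\) is an explicit function of the eigenvalue angles is false. At eigenvalue \(1\) the unipotent part contributes: a nontrivial unipotent element of \(\mathrm{SU}(1,1)\) has \(\rho=\pm1\), whereas \(\rho(I)=0\). These are exactly the terms that must be matched against nilpotent residues on the Higgs side.
\item Since \(|2T|\leq 2a|\chi|\) already exhausts the balanced term \(2a(2g-2+n)\), your per-puncture allowance of \(\pm a\) for \(\rho\) on the balanced part cannot simply be added. You give no mechanism coupling \(T\) and \(\rho\).
\item The defect argument does not work as stated. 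Parity cannot force a loss, since signatures of both parities occur on bordered surfaces. An analysis of the equality case of \(2a(2g-2+n)+n(b-a)\) cannot exclude the value one below it, which must also be excluded when \(\delta=2\).
\item The claim that equality forces the definite part to split off as a flat unitary bundle is unproved. The paper's own equality analysis (Corollary~\ref{cor:extremal-boundary-alternatives}) leaves open extremal configurations with \(h_\partial(\phi)=0\) and \(B_Q=1\) in which \(Q\) has a nonzero weight and need not split off.
\end{itemize}

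Your kernel--image instinct matches the paper. The missing idea is to bypass the \(T\)/\(\rho\) decomposition altogether. \(L^2\)-Hodge theory gives \(\operatorname{sign}(\phi)=-(p+q)\chi(\Sigma)+2\chi(\mathcal C_\gamma^\bullet)-h_\partial(\phi)\) (Corollary~\ref{eqn:sign-versus-chi-of-C-gamma}). Here the lattices \(\mathcal A_V,\mathcal B_W\) encode the nilpotent residue through its monodromy weight filtration. Local residue--saturation inequalities and semistability then reduce the bound to \(h_\partial(\phi)+2B_Q\geq\delta(n,q-p)\), where \(B_Q=\deg Q+\sum_j d_{Q,j}\) is a nonnegative integer. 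This follows from a dichotomy:
\begin{itemize}
\item a nonzero parabolic weight of \(Q\) forces \(B_Q\geq1\);
\item if all weights of \(Q\) vanish, then \(\operatorname{rk}Q\leq\dim\ker(\phi(c_j)-I)\) for every \(j\), hence \(h_\partial(\phi)\geq n(q-p)\) (Proposition~\ref{prop:residual-quotient-L2-bound} and Lemma~\ref{lem:zero-weight-Q-fixed-vectors}).
\end{itemize}

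Second, you never treat nonreductive representations, although the Higgs correspondence applies only to reductive ones. This is not a formality, because the signature is not continuous. The Borel representation of Remark~\ref{rem:failure-naive-isotropic-reduction} has signature \(1\), while its Levi degeneration has signature \(0\).

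The paper handles this as follows.
\begin{itemize}
\item It conjugates by \(a_t=\operatorname{diag}(tI_F,I_{E_0},t^{-1}I_{F^-})\) to degenerate \(\phi\) to \(\alpha\oplus\phi_0\oplus\alpha^{-*}\).
\item The Toledo invariant is continuous along this family. The hyperbolic summand has vanishing signature, Toledo invariant and rho invariants. So the signature can change only through the boundary rho terms.
\item Each of these changes by at most \(2s\) (Lemma~\ref{lem:rho-defect-parabolic-extension}). On the unipotent summand \(\exp(2\pi B)\) one has \(\rho=-\operatorname{sign}(h_B)\). The degeneration perturbs \(B\) by rank at most \(2s\), so semicontinuity of inertia applies.
\item The identity \(M_{g,n}(p-s,q-s)+2sn=M_{g,n}(p,q)-4s(g-1)\) then closes an induction on \(p\).
\end{itemize}
Without some argument of this kind, your bound covers only reductive representations.
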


Except for the divisibility condition on closed surfaces, the possible
signatures therefore form a full symmetric interval of integers. To see the
meaning of the positive-genus endpoint, write \(r=b-a\). Then
\[
M_{g,n}(p,q)
=
2a(2g-2+n)+nr-\min\{2,nr\}.
\]
The first term is the sharp bound for the balanced part
\(\mathrm{U}(1,1)^{\times a}\); the remaining terms give the sharp
bound for the definite part \(\mathrm{U}(r)\). The main theorem says that coupling
these parts in a general \(\mathrm{U}(p,q)\)-representation cannot
increase the bound. When
\(p=q=a\), the defect vanishes and
\[
M_{g,n}(a,a)=2a(2g-2+n)=2a|\chi_{g,n}|,
\]
so the usual signature Milnor--Wood inequality is sharp. When \(b>a\), the
endpoint may be rewritten as
\begin{equation}
\label{eq:introduction-improvement}
M_{g,n}(p,q)
=
(p+q)|\chi_{g,n}|-(b-a)(2g-2)-\delta(n,b-a).
\end{equation}
The improvement therefore consists of a term \((b-a)(2g-2)\) and a
boundary correction of at most \(2\). In particular, even for genus \(1\),
the boundary correction makes the coarse bound strictly larger than the
sharp bound whenever \(p\neq q\).

To realize all values, we take orthogonal direct sums of representations into
\(\mathrm{U}(1,1)\) and a compact unitary group. The main work is to prove
that no other representation has a larger signature. For a
reductive representation, parabolic nonabelian Hodge theory produces a
normalized logarithmic \(\mathrm{U}(p,q)\)-Higgs bundle
\[
(E=V\oplus W,\Phi),
\qquad
\Phi=
\begin{pmatrix}0&\beta\\ \gamma&0\end{pmatrix},
\]
on the compactification of the punctured surface
\cite{Corlette,Simpson,BGPMiR20}. Two residue operators enter the proof. The ordinary residue of \(\gamma\)
enters the estimates for its image and its saturation. The residue
on the associated graded bundle describes the local monodromy. These
operators need not have the same rank. The Euler characteristic of the
\(L^2\) Dolbeault complex accounts for their difference and gives the
sharp estimate. For a non-reductive representation, we then pass to a
suitable Levi quotient. This degeneration preserves the Toledo invariant,
and an estimate for the change in the boundary rho invariant completes an
induction on the positive rank.

Section~\ref{sec:signature-unitary} recalls the invariants and inequalities
used in the paper. Section~\ref{sec:special-signature-values} treats the
rank-one, genus-zero, compact, and closed-surface cases.
Section~\ref{sec:positive-genus-signatures} states the positive-genus result
and outlines its proof. Section~\ref{sec:reductive} proves the sharp bound
for reductive representations, and Section~\ref{non-reductive} extends it
to arbitrary representations. Section~\ref{subsec:realization-positive-genus}
realizes every integer in the resulting interval.
Appendix~\ref{app:L2-minimal-extension} provides the local analytic and
filtered comparison used in the reductive case.

\section{Background on signatures of flat unitary bundles}
\label{sec:signature-unitary}

This section fixes the conventions and notation used throughout the paper.
We recall the signature of a flat Hermitian bundle, the Toledo invariant of
Burger--Iozzi--Wienhard \cite{BIW}, and the boundary rho invariant introduced
in \cite{KPWUnitary}. We then state the signature--Toledo formula and the
Milnor--Wood inequalities needed below.

Let \(\Sigma=\Sigma_{g,n}\) be a compact, connected, oriented surface of
genus \(g\) with \(n\geq 0\) boundary components, so that
\(\chi(\Sigma)=2-2g-n\). Let \(E=\mathbb{C}^{p+q}\), where \(p,q\geq 0\)
and \(p+q>0\), and let \(\Omega\) be a nondegenerate Hermitian form on
\(E\) of signature \((p,q)\). Throughout, Hermitian forms are taken to be
linear in the first variable. We denote by
\[
\mathrm{U}(E,\Omega)
:=
\bigl\{
A\in\mathrm{GL}(E)
\;\big|\;
\Omega(Av,Aw)=\Omega(v,w)
\text{ for all }v,w\in E
\bigr\}
\]
the group of complex linear automorphisms of \(E\) preserving \(\Omega\).
After choosing an \(\Omega\)-orthonormal basis of \(E\), this group is
naturally identified with the group \(\mathrm{U}(p,q)\).

Given a representation
\(\phi\colon\pi_1(\Sigma)\to\mathrm{U}(E,\Omega)\), we denote by
\(\mathcal{E}_{\phi}:=\widetilde{\Sigma}\times_{\phi}E\) the associated
flat Hermitian vector bundle over \(\Sigma\). The Hermitian form
\(\Omega\) induces a parallel nondegenerate Hermitian pairing on
\(\mathcal{E}_{\phi}\), which will again be denoted by \(\Omega\).

\subsection{The twisted signature}
\label{subsec:twisted-signature}

Consider the image of relative twisted cohomology in absolute twisted
cohomology,
\[
\widehat{H}^{1}(\Sigma;\mathcal{E}_{\phi})
 :=
\operatorname{Im}\!\left(
H^{1}(\Sigma,\partial\Sigma;\mathcal{E}_{\phi})
\longrightarrow
H^{1}(\Sigma;\mathcal{E}_{\phi})
\right).
\]
For \(x,y\in \widehat{H}^{1}(\Sigma;\mathcal{E}_{\phi})\), choose a class
\(\widetilde{x}\in H^{1}(\Sigma,\partial\Sigma;\mathcal{E}_{\phi})\)
whose image is \(x\), and define
\begin{equation}
\label{eq:twisted-intersection-form}
Q_{\phi}(x,y)
:=
\left\langle
\Omega\bigl(\widetilde{x}\smile y\bigr),
[\Sigma,\partial\Sigma]
\right\rangle.
\end{equation}
Here the cup product is sesquilinear: \(\Omega\) contracts the coefficients
linearly in the first variable and conjugate-linearly in the second. Also,
\([\Sigma,\partial\Sigma]\) denotes the relative fundamental class.
The expression in \eqref{eq:twisted-intersection-form} is independent of
the choice of \(\widetilde{x}\). By Poincar\'e--Lefschetz duality,
\(Q_{\phi}\) is nondegenerate. Moreover, \(Q_{\phi}\) is skew-Hermitian,
and hence \(iQ_{\phi}\) is a nondegenerate Hermitian form; see
\cite[Section~2.1]{KPWUnitary}.

The signature of the flat Hermitian bundle
\((\mathcal{E}_{\phi},\Omega)\), or equivalently of the representation
\(\phi\), is defined by
\begin{equation}
\label{eq:def-signature}
\operatorname{sign}(\phi)
:=
\operatorname{sign}(iQ_{\phi}).
\end{equation}
Thus, if
\(\widehat{H}^{1}(\Sigma;\mathcal{E}_{\phi})
 =\mathscr{H}^{+}\oplus\mathscr{H}^{-}\),
where \(iQ_{\phi}\) is positive definite on \(\mathscr{H}^{+}\) and
negative definite on \(\mathscr{H}^{-}\), then
\(\operatorname{sign}(\phi)=\dim\mathscr{H}^{+}-\dim\mathscr{H}^{-}\).

The signature is invariant under conjugation of the representation and is
additive under orthogonal direct sums. More precisely, if
\(\phi=\phi_{1}\oplus\phi_{2}\) is induced by an orthogonal decomposition
of the underlying Hermitian space, then
\(\operatorname{sign}(\phi)
 =\operatorname{sign}(\phi_{1})+\operatorname{sign}(\phi_{2})\).

\subsection{The Toledo invariant}
\label{subsec:toledo-invariant}

The symmetric space associated with \(\mathrm{U}(p,q)\) is the classical
Hermitian symmetric space
\(\mathcal{X}_{p,q}
 =\mathrm{U}(p,q)/(\mathrm{U}(p)\times\mathrm{U}(q))\).
Its rank is \(\min\{p,q\}\). Put \(G=\mathrm{U}(p,q)\), and suppose
first that \(pq>0\). Let \(\omega_{\mathcal X}\) be the invariant
K\"ahler form whose metric has minimal holomorphic sectional curvature
\(-1\). We use the normalized bounded K\"ahler class
\(\kappa_G^b\in H_{\mathrm{cb}}^2(G;\mathbb R)\), represented by the cocycle
\[
(g_0,g_1,g_2)\longmapsto
\frac{1}{2\pi}\int_{\triangle(g_0x,g_1x,g_2x)}\omega_{\mathcal X},
\qquad x\in\mathcal X_{p,q}.
\]
The factor \(1/(2\pi)\) fixes the normalization in all formulas below.

Since every connected component of \(\partial\Sigma\) has amenable
fundamental group, the canonical map
\(
j_{\partial\Sigma}\colon
H_{b}^{2}(\Sigma,\partial\Sigma;\mathbb{R})
\to
H_{b}^{2}(\Sigma;\mathbb{R})
\)
is an isometric isomorphism. Following Burger--Iozzi--Wienhard
\cite[Section~1.1]{BIW}, the Toledo invariant of \(\phi\) is defined by
\begin{equation}
\label{eq:def-toledo}
T(\Sigma,\phi)
:=
\left\langle
j_{\partial\Sigma}^{-1}\phi_{b}^{*}(\kappa_{G}^{b}),
[\Sigma,\partial\Sigma]
\right\rangle.
\end{equation}
In this pairing, the relative bounded cohomology class is first mapped to
ordinary relative cohomology. We also use the identification
\(H_{b}^{2}(\pi_{1}(\Sigma);\mathbb{R})
 \cong H_{b}^{2}(\Sigma;\mathbb{R})\).
When \(pq=0\), the symmetric space \(\mathcal{X}_{p,q}\) is a point, so
\(\kappa_{G}^{b}=0\) and consequently \(T(\Sigma,\phi)=0\).

The Toledo invariant also admits an equivalent description in terms of a
compactly supported Chern--Weil form on the interior of \(\Sigma\); see
\cite[Section~3]{KPWUnitary}. The definition \eqref{eq:def-toledo} is the one used below.

\subsection{The boundary rho invariant}
\label{subsec:rho-invariant}

The relation between the signature and the Toledo invariant involves a
correction determined entirely by the boundary holonomies. Let
\(L\in\mathrm{U}(p,q)\), and let \(\mathcal{E}_{L}\to S^{1}\) be the flat
bundle with holonomy \(L\). Choose a compatible complex structure \(J\) on
\(\mathcal{E}_{L}\), a fixed point
\(W\in\overline{\mathcal{X}}_{p,q}\) of \(L\), and the corresponding
\(L\)-invariant primitive \(\alpha_{W}\) of the K\"ahler form. If
\(\widetilde{J}\colon\mathbb{R}\to\mathcal{X}_{p,q}\) is the associated
equivariant map and \(A_{J}\) is the induced self-adjoint boundary
operator, set
\begin{equation}
\label{eq:def-rho}
\rho(L)
:=
-\tfrac{1}{\pi}\int_{S^{1}}\widetilde{J}^{\,*}\alpha_{W}
+\eta(A_{J}).
\end{equation}
The right-hand side of \eqref{eq:def-rho} is independent of the choices of
\(J\) and \(W\). It therefore defines a conjugacy-invariant, generally
discontinuous, real-valued function on \(\mathrm{U}(p,q)\); see
\cite[Definition~4.1, Lemma~4.3, and Corollary~4.4]{KPWUnitary}.

Write
\(\partial\Sigma=c_{1}\sqcup\cdots\sqcup c_{n}\), with each boundary
component equipped with the induced boundary orientation. The rho
invariant of the boundary representation is
\(\rho_{\phi}(\partial\Sigma)
 :=\sum_{j=1}^{n}\rho(\phi(c_{j}))\).
With the above normalizations, the signature, the Toledo invariant, and
the boundary rho invariant are related by
\begin{equation}
\label{eq:signature-toledo-rho}
\operatorname{sign}(\phi)
 =
-2T(\Sigma,\phi)+\rho_{\phi}(\partial\Sigma).
\end{equation}
This is the signature--Toledo formula established in
\cite[Theorem~1]{KPWUnitary}. Analytically, the rho invariant in
\eqref{eq:signature-toledo-rho} records the boundary correction arising
from the Atiyah--Patodi--Singer index theorem; compare \cite{Atiyah}.
In particular, when \(\Sigma\) is closed, the boundary term disappears and
\(\operatorname{sign}(\phi)=-2T(\Sigma,\phi)\).

\subsection{Milnor--Wood inequalities}
\label{subsec:milnor-wood-unitary}

The Toledo invariant satisfies the Milnor--Wood inequality
\begin{equation}
\label{eq:mw-toledo}
\left|T(\Sigma,\phi)\right|
\leq
\operatorname{rank}(\mathcal{X}_{p,q})\,|\chi(\Sigma)|
=
\min\{p,q\}\,|\chi(\Sigma)|.
\end{equation}
For closed surfaces, this estimate originates in the computation of the
norm of the K\"ahler class by Domic--Toledo \cite{DomicToledo}. For \(\mathrm{U}(p,q)\), a direct Higgs-bundle proof is given by
Bradlow--Garc\'ia-Prada--Gothen
\cite[Section~3.4, especially Lemma~3.24, Corollary~3.27,
and Remark~3.29]{BGG03}.  The non-emptiness results
\cite[Theorem~6.1 and Remark~6.2]{BGG03} also show that the bound is
sharp in the Higgs-bundle moduli problem. Its
extension to compact surfaces with boundary is due to
Burger--Iozzi--Wienhard; see \cite[Theorem~1(1)]{BIW}.

The signature satisfies a different Milnor--Wood type inequality:
\begin{equation}
\label{eq:mw-signature}
\left|\operatorname{sign}(\phi)\right|
\leq
(p+q)|\chi(\Sigma)|.
\end{equation}
This follows from \cite[Theorem~6.5 and Equation~(6.6)]{KPWUnitary}.
The estimate \eqref{eq:mw-signature} is not a formal consequence of the Toledo inequality \eqref{eq:mw-toledo}, since the
boundary term in \eqref{eq:signature-toledo-rho} need not vanish. For
closed surfaces, however, the rho invariant is absent, and
\eqref{eq:mw-toledo} yields the sharper estimate
\(\left|\operatorname{sign}(\phi)\right|
 \leq2\min\{p,q\}|\chi(\Sigma)|\).

\section{Previously known and elementary cases}
\label{sec:special-signature-values}

We begin with \(\mathrm{U}(1,1)\), where all required signatures can be
realized in the subgroup
\(\mathrm{SU}(1,1)\cong\mathrm{SL}(2,\mathbb R)\). We give the rank-one
construction in detail because it will also supply the higher-rank
examples. We then give short proofs for genus-zero surfaces and compact
unitary groups. Finally, we treat closed surfaces, where the absence of a
boundary term forces the signature to be divisible by \(4\).

Let \(\Sigma_{g,n}\) be a compact, connected, oriented surface of genus \(g\)
with \(n\geq 0\) boundary components, and write
\(\chi(\Sigma_{g,n})=2-2g-n\). When \(n\geq1\), we orient each boundary
component by the orientation induced from \(\Sigma_{g,n}\), and use the
standard presentation
\begin{equation}
\label{eq:standard-surface-presentation}
\pi_1(\Sigma_{g,n})
=
\left\langle
a_1,b_1,\ldots,a_g,b_g,c_1,\ldots,c_n
\;\middle|\;
\prod_{j=1}^{g}[a_j,b_j]\prod_{i=1}^{n}c_i=1
\right\rangle.
\end{equation}
We use \([A,B]=ABA^{-1}B^{-1}\). Each \(c_i\) follows the induced
boundary orientation. For \(n=0\), omit the boundary generators.
Throughout this section, all intervals of signature values contain only
integers.

\subsection{The rank-one case}
\label{subsec:U11}

We choose the isomorphism
\(\mathrm{SU}(1,1)\cong\mathrm{SL}(2,\mathbb R)\) so that the unitary
signature agrees with the symplectic signature used in \cite{KPWValues}.
The rho invariant then agrees as well, whereas the two Toledo conventions
have opposite signs: \cite{KPWValues} writes the signature formula as
\(\operatorname{sign}=2T+\rho\). This convention allows us to use its
rank-one constructions directly.
A representation
\(\phi\colon\pi_1(\Sigma_{g,n})\to\mathrm{SL}(2,\mathbb{R})\) is called
\emph{boundary paraelliptic} if
\(\operatorname{tr}(\phi(c_i))\in[-2,2]\) for every \(i\). Thus each boundary
holonomy is elliptic, parabolic, or central.

\begin{theorem}[{\cite[Theorem~3.3]{KPWValues}}]
\label{thm:rank-one-paraelliptic}
Let \(\Sigma=\Sigma_{g,n}\) satisfy \(n\geq 1\) and
\(\chi(\Sigma)\leq 0\). Then every integer in
\(
[-2|\chi(\Sigma)|,\,2|\chi(\Sigma)|]
\)
is the signature of a boundary paraelliptic representation
\(\pi_1(\Sigma)\to\mathrm{SL}(2,\mathbb{R})\).
\end{theorem}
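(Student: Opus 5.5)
The plan is to realize every integer in \([-2|\chi(\Sigma)|,2|\chi(\Sigma)|]\) by explicit boundary paraelliptic representations into \(\mathrm{SL}(2,\mathbb R)\). The signature will be computed through the symplectic formula \(\operatorname{sign}=2T+\rho\), with \(\rho\) read off from the boundary holonomies. The key inputs are three.

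\begin{enumerate}
\item \textbf{Pants with parabolic and elliptic boundary.} On a pair of pants, or on a one-holed torus, we use hyperbolic structures with cusps or cone points, together with their reflections. For these the Toledo invariant equals plus or minus the area divided by \(2\pi\), which interpolates continuously as the cone angles vary.
\item \textbf{The rho invariant of elliptic and parabolic elements.} For an elliptic rotation by angle \(\theta\), the rho invariant is an explicit affine function of \(\theta/\pi\) with a jump at the identity. For a parabolic element it takes the value \(\pm1\) or \(0\), depending on the sign of the parabolic.
\item \textbf{Additivity.} Both the signature and \(T\) are additive under gluing along curves with elliptic or parabolic holonomy. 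The rho terms on the glued curve cancel because the two sides carry opposite orientations.
\end{enumerate}

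The construction runs as follows.

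\begin{enumerate}
\item Decompose \(\Sigma_{g,n}\) into \(|\chi|\) pairs of pants, using one-holed tori when \(g\ge1\).
\item On each piece choose a representation whose signature is any value in \(\{-2,\dots,2\}\).
\item Check that the interior gluing curves can be given compatible holonomies. For example, make them all parabolic of the same type, or central.
\item Sum the local contributions, which gives every total in \([-2|\chi|,2|\chi|]\).
\end{enumerate}

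For the extremal value \(2|\chi|\), take a complete finite-area hyperbolic structure with cusps at the boundary. Then \(T=|\chi|\), and the parabolic \(\rho\) contributions give \(0\) when the parabolics have the appropriate sign. Intermediate values come from two adjustments: replacing some pieces by trivial or reflected representations, and using elliptic boundaries whose \(\rho\)-jump shifts the signature by one unit.

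The main obstacle is to produce the odd values and the values near the extremes while keeping everything paraelliptic. I would handle this first on the pair of pants, computing \(\operatorname{sign}\) directly from \(H^1\) for representations with prescribed elliptic boundary angles. Concretely, \(\dim \widehat H^1=2|\chi|-\sum_i\dim\ker(\phi(c_i)-1)\) plus a correction from the global invariants. Degenerating angles to \(0\) then gives one-unit steps. Gluing these local models along parabolic curves should finish the proof.
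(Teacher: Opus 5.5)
There is a genuine gap, in steps 2--4 of your construction. Your extremal construction is sound once you check that the cusped Fuchsian holonomy lifts with every boundary parabolic of trace $-2$; then every boundary $\rho$ vanishes and $\operatorname{sign}=\pm2|\chi(\Sigma)|$.

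The problem is the intermediate values. You assume that each pair of pants or one-holed torus can independently be given any signature in $\{-2,\dots,2\}$ while the interior gluing holonomies stay compatible. You never verify this, and for the gluing types you suggest it is false. Since $\operatorname{sign}=2T+\rho$, with $T$ continuous and the $\rho$-terms fixed by the boundary classes, a piece's signature is constant on each component of its relative representation space. A pair of pants with fixed boundary traces is essentially rigid, so its signature is dictated by the gluing data rather than chosen freely.

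Concretely, take a piece all of whose boundary curves are interior and of one type:
\begin{itemize}
\item \emph{Central:} the image is abelian and the signature is $0$.
\item \emph{Unipotent of trace $+2$:} the representation is reducible (on a pair of pants the Fricke identity gives $\operatorname{tr}[c_1,c_2]=2$), so $T=0$ and the signature lies in $\{-1,0,1\}$.
\item \emph{Parabolic of trace $-2$:} on a pair of pants one gets $\operatorname{tr}[c_1,c_2]=18$. So the piece is a lifted Fuchsian or anti-Fuchsian thrice-punctured sphere, of signature $\pm2$, and its three boundary parabolics all have the same sign type.
\end{itemize}
Gluing requires the holonomy on one side to be conjugate in $\mathrm{SL}(2,\mathbb R)$ to the inverse of the holonomy on the other side, and inversion reverses the parabolic sign type. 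Hence two adjacent pieces of the last kind must have opposite signatures $+2$ and $-2$. For the same reason, a handle cannot be closed up from one such pair of pants.

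So none of your uniform gluing types lets the pieces range freely. Your odd-value step, which glues degenerated elliptic models along parabolic curves, rests on the same unverified matching, and the case $n=1$ is where it is hardest.

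This is the defect the paper records in Remark~\ref{rem:rank-one-proof-correction} for the original gluing argument, and the paper's proof avoids local independence altogether. Put $N=|\chi(\Sigma)|$.
\begin{itemize}
\item \emph{Even values $2N-4a$:} these come from the global boundary-elliptic realization theorem of \cite{KPWValues}.
\item \emph{The other even residue class modulo $4$:} for $n\ge2$, remove a disc with trivial holonomy from a representation of $\Sigma_{g,n-1}$. For $n=1$, use Goldman's theorem on the closed surface; the Euler number is even, so the representation lifts.
\item \emph{Odd values for $n\ge2$:} glue the pants representations $\eta_\pm$ of signature $\pm1$ to boundary-elliptic representations of $\Sigma_{g,n-1}$ with matched elliptic classes. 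Each $\eta_\pm$ has two elliptic boundaries, with angles in $(0,\pi)$ and $(\pi,2\pi)$, and one parabolic boundary.
\item \emph{Odd values for $n=1$:} induct on $g$. Old values extend by a trivial handle. The new values $4g-5$ and $4g-3$ come from gluing a pants with two negative-trace hyperbolic boundaries, one parabolic boundary and signature $\pm1$ to maximal representations of $\Sigma_{1,1}$ and $\Sigma_{g-1,1}$ with matched eigenvalues.
\end{itemize}
To rescue your approach you would need an explicit list of local models together with a proof that their gluing classes actually match. Near the extremes, the paper needs hyperbolic or carefully matched elliptic gluing curves, not uniformly parabolic or central ones.
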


\begin{proof}
The Milnor--Wood inequality \eqref{eq:mw-signature} gives
\(
\bigl|\operatorname{sign}(\phi)\bigr|
\leq 2|\chi(\Sigma)|
\)
for every representation
\(\phi\colon\pi_1(\Sigma)\to\mathrm{SL}(2,\mathbb R)\). It remains to
realize every integer in this interval. Put \(N:=|\chi(\Sigma)|\). If
\(N=0\), then \(\Sigma\) is an annulus and the trivial representation realizes
the only possible value, namely \(0\). We therefore assume \(N\geq1\).

\smallskip
\noindent\emph{Step 1: even signatures.}
By \cite[Theorem~4]{KPWValues}, for every integer \(a\) with
\(0\leq a\leq N\), there exists a boundary elliptic representation of
signature
\(
2N-4a.
\)
Thus all even integers congruent to \(2N\) modulo \(4\) are realized.

Consider an even integer in the other congruence class. It can be written as
\(m=2N-2b\), where \(b\in\{1,3,\ldots,2N-1\}\). Suppose first that
\(n\geq2\). Set \(N':=N-1=|\chi(\Sigma_{g,n-1})|\) and
\(a:=(b-1)/2\). Then \(m=2N'-4a\), with \(0\leq a\leq N'\). If
\(N'>0\), the boundary elliptic realization theorem provides a representation
of \(\pi_1(\Sigma_{g,n-1})\) with signature \(m\); when \(N'=0\), the same
conclusion follows from an \(\mathrm{SO}(2)\)-representation of the annulus.
Removing an additional open disc introduces a boundary component with trivial
holonomy and leaves the signature unchanged. This yields a boundary
paraelliptic representation of \(\pi_1(\Sigma_{g,n})\) with signature \(m\).

Now suppose that \(n=1\). Then \(N=2g-1\), and the even values not already
covered are precisely the multiples \(m=4e\) with \(|e|\leq g-1\). By
Goldman's theorem \cite[Theorem~B]{Goldman}, there is a representation of the
closed surface group into \(\mathrm{PSL}(2,\mathbb R)\) with Euler number
\(2e\). Since this Euler number is even, the representation lifts to
\(\mathrm{SL}(2,\mathbb R)\). On a closed surface the rho term is absent, so
its signature is \(4e=m\). Removing a disc produces the required
representation of \(\pi_1(\Sigma_{g,1})\) with trivial boundary holonomy.
Hence every even integer in the Milnor--Wood interval is realized.

\smallskip
\noindent\emph{Step 2: odd signatures when \(n\geq2\).}
By \cite[Subsection~3.1]{KPWValues}, there are pair-of-pants representations
\(\eta_+\) and \(\eta_-\) with signatures \(1\) and \(-1\), respectively,
such that two boundary holonomies are elliptic, one with angle in $(0,\pi)$ and the other with angle in $(\pi,2\pi)$, and the third is parabolic.
Every odd integer in \([-2N,2N]\) has one of the forms
\[
m=2N-4a-1 \quad (0\leq a\leq N-1)
\text{ or }
m=2N-4a+1 \quad (1\leq a\leq N).
\]
In the first case, choose on \(\Sigma_{g,n-1}\) a boundary elliptic
representation of signature \(2(N-1)-4a\) and glue it to \(\eta_+\). In the
second case, choose one of signature \(2(N-1)-4(a-1)\) and glue it to
\(\eta_-\). The elliptic conjugacy classes may be adjusted so that the
holonomies along the gluing curves are inverse to one another. Additivity of the signature under gluing
then gives the prescribed value \(m\).

\smallskip
\noindent\emph{Step 3: odd signatures when \(n=1\).}
We argue by induction on \(g\). For \(g=1\), punctured-torus representations
with parabolic boundary holonomy and signatures \(\pm1\) are constructed in
\cite[Subsection~6.2]{KPWValues}. Assume now that \(g\geq2\) and that all odd
integers in
\(
[-2(2g-3),\,2(2g-3)]
\)
are realized on \(\Sigma_{g-1,1}\). Sending the generators of an additional
handle to the identity extends each of these representations to
\(\Sigma_{g,1}\) without changing the signature. Since
\(|\chi(\Sigma_{g,1})|=2g-1\), the only remaining positive odd values are
\(4g-5\) and \(4g-3\).

By \cite[Lemma~4.13]{KWComponents}, for each
\(\varepsilon\in\{-1,1\}\) there is a pair-of-pants representation
\(\eta_\varepsilon\) with two hyperbolic boundary holonomies of negative
trace, one parabolic boundary holonomy, and signature \(\varepsilon\). Choose
maximal boundary hyperbolic representations on \(\Sigma_{1,1}\) and
\(\Sigma_{g-1,1}\), of signatures \(2\) and \(2(2g-3)\), respectively.
Their boundary holonomies have negative trace by
\cite[Lemma~3.7]{KWComponents}, and their eigenvalues may be chosen
to match those of the two hyperbolic boundary components of
\(\eta_\varepsilon\). Gluing the three pieces gives a boundary parabolic
representation of \(\Sigma_{g,1}\) with signature
\(
2+\varepsilon+2(2g-3)=4g-4+\varepsilon.
\)
Thus \(4g-5\) and \(4g-3\) are realized. Composing with the standard
sign-reversing automorphism of \(\mathrm{SL}(2,\mathbb R)\) realizes their
negatives; see \cite[Subsection~1.4]{KPWValues}. This completes the proof.
\end{proof}

\begin{remark}
\label{rem:rank-one-proof-correction}
The gluing argument in \cite[Section~3.2]{KPWValues} starts with a
genus-zero representation and attaches \(g\) elliptic punctured-torus
representations. To justify such a gluing, one must control the elliptic
conjugacy class on each attaching boundary; the existence of some
elliptic holonomy does not suffice. In particular, that argument does not
by itself establish the extremal values. The proof above obtains these
values from the boundary elliptic realization theorem and supplies the
boundary matching needed for the odd values.
\end{remark}

The preceding theorem immediately gives the corresponding result for
\(\mathrm{U}(1,1)\).

\begin{corollary}
\label{cor:signature-values-U11}
Let \(\Sigma=\Sigma_{g,n}\) satisfy \(n\geq1\) and
\(\chi(\Sigma)\leq0\). Then
\begin{equation}
\label{eq:U11-complete-range}
\left\{
\operatorname{sign}(\phi)
\;\middle|\;
\phi\in
\operatorname{Hom}
\bigl(\pi_1(\Sigma),\mathrm{U}(1,1)\bigr)
\right\}
=
[-2|\chi(\Sigma)|,\,2|\chi(\Sigma)|]\cap\mathbb{Z}.
\end{equation}
Moreover, every value in this interval is realized by a representation whose
image is contained in \(\mathrm{SU}(1,1)\) and which corresponds, under
\(\mathrm{SU}(1,1)\cong\mathrm{SL}(2,\mathbb{R})\), to a boundary
paraelliptic representation.
\end{corollary}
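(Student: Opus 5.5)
The corollary follows from the two inclusions in \eqref{eq:U11-complete-range}, each of which reduces to results already stated.

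For the upper bound, I would apply the signature Milnor--Wood inequality \eqref{eq:mw-signature} with $p=q=1$. It gives $|\operatorname{sign}(\phi)|\leq 2|\chi(\Sigma)|$ for every representation $\phi\colon\pi_1(\Sigma)\to\mathrm{U}(1,1)$. The signature is an integer because it is the signature of the Hermitian form $iQ_\phi$. So the left-hand side of \eqref{eq:U11-complete-range} is contained in the right-hand side.

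For the reverse inclusion, I would fix the isomorphism $\mathrm{SU}(1,1)\cong\mathrm{SL}(2,\mathbb R)$ chosen at the start of Subsection~\ref{subsec:U11}. Under this identification the unitary signature of a representation agrees with the symplectic signature used in \cite{KPWValues}. By Theorem~\ref{thm:rank-one-paraelliptic}, every integer $m\in[-2|\chi(\Sigma)|,2|\chi(\Sigma)|]$ is the signature of a boundary paraelliptic representation $\psi\colon\pi_1(\Sigma)\to\mathrm{SL}(2,\mathbb R)$. Transporting $\psi$ through the isomorphism and composing with the inclusion $\mathrm{SU}(1,1)\subset\mathrm{U}(1,1)$ gives a representation $\phi$ with image in $\mathrm{SU}(1,1)$.

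It remains to check that $\operatorname{sign}(\phi)=m$. The signature is defined purely from the flat bundle with its parallel Hermitian form $\Omega$, through the twisted cohomology and the form $iQ_\phi$. Including $\mathrm{SU}(1,1)$ into $\mathrm{U}(1,1)$ does not change the flat Hermitian bundle, so the signature is unchanged. The identification with the symplectic signature is exactly the normalization fixed in that subsection. This also gives the "moreover" clause: the image lies in $\mathrm{SU}(1,1)$ and corresponds to a boundary paraelliptic representation.

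The only step needing care is this compatibility of signature conventions. The Toledo sign conventions differ between the two settings, but the signatures agree by the choice of isomorphism made in Subsection~\ref{subsec:U11}, and that is all the argument uses. Once this is granted, the corollary is a direct combination of \eqref{eq:mw-signature} and Theorem~\ref{thm:rank-one-paraelliptic}.
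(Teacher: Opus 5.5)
Your proposal is correct and follows the same route as the paper, which derives the corollary directly from the preceding theorem. The inclusion $\subseteq$ is the signature Milnor--Wood inequality \eqref{eq:mw-signature} with $p=q=1$. The reverse inclusion and the ``moreover'' clause come from Theorem~\ref{thm:rank-one-paraelliptic}, transported through the fixed isomorphism $\mathrm{SU}(1,1)\cong\mathrm{SL}(2,\mathbb R)$, under which the signatures agree.
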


Taking orthogonal direct sums of \(a\) such representations realizes every
integer in
\([-2a|\chi(\Sigma)|,2a|\chi(\Sigma)|]\). Together with
\eqref{eq:mw-signature}, this gives the complete range for
\(\mathrm{U}(a,a)\).

\subsection{Genus-zero surfaces}
\label{subsec:genus-zero-unitary}

We now assume that \(\Sigma=\Sigma_{0,n}\), where \(n\geq2\), so that
\(|\chi(\Sigma)|=n-2\).
The following elementary observation will also be useful for compact
unitary groups. Define
\[
\varepsilon(\theta)
:=
\begin{cases}
0, & \theta=0,\\[1mm]
1-\tfrac{\theta}{\pi}, & 0<\theta<2\pi.
\end{cases}
\]

\begin{lemma}
\label{lem:angle-realization}
Let \(N\geq2\) and let \(s\) be an integer satisfying
\(|s|\leq N-2\). Then there exist angles
\(\theta_1,\ldots,\theta_N\in[0,2\pi)\) such that
\begin{equation}
\label{eq:angle-realization}
\sum_{\nu=1}^{N}\theta_\nu\in2\pi\mathbb{Z},
\qquad
\sum_{\nu=1}^{N}\varepsilon(\theta_\nu)=s.
\end{equation}
\end{lemma}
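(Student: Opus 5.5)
The plan is to make the constraint linear by noting that $\varepsilon$ is affine on the nonzero angles. Suppose exactly $k$ of the angles are nonzero, say $\theta_1,\dots,\theta_k\in(0,2\pi)$, and the remaining $N-k$ vanish. Then
\[
\sum_{\nu=1}^{N}\varepsilon(\theta_\nu)=k-\frac1\pi\sum_{\nu=1}^{k}\theta_\nu .
\]
Hence if $\sum_\nu\theta_\nu=2\pi m$ with $m\in\mathbb Z$, the second condition in \eqref{eq:angle-realization} reads $s=k-2m$. Since each $\theta_\nu$ lies in $(0,2\pi)$, the sum $2\pi m$ lies strictly between $0$ and $2\pi k$. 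So the admissible pairs are those with $1\leq m\leq k-1$, which forces $k\geq2$. Conversely, every such pair is realized by the constant choice $\theta_1=\dots=\theta_k=2\pi m/k\in(0,2\pi)$, with all other angles set to $0$.

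For fixed $k\geq2$, this construction gives exactly the values $s=k-2m$ with $1\leq m\leq k-1$. These are the integers in $[-(k-2),k-2]$ congruent to $k$ modulo $2$.

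It remains to cover every integer $s$ with $|s|\leq N-2$ by choosing $k$. If $s\equiv N \pmod 2$, take $k=N$. Otherwise $s\equiv N-1\pmod 2$, so $|s|\leq N-3$. This case only occurs when $N\geq3$, because for $N=2$ the only allowed value is $s=0$, which has the parity of $N$. Then take $k=N-1\geq2$, and the window $[-(N-3),N-3]$ contains $s$. In either case, set $m=(k-s)/2$. This is an integer in $[1,k-1]$ precisely because $|s|\leq k-2$ and $s\equiv k\pmod 2$.

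The argument is elementary. The only point requiring care is the parity bookkeeping: we need both $k=N$ and $k=N-1$ to be admissible, i.e.\ $\geq2$, whenever they are needed, and we need the endpoint condition $m\neq0$, $m\neq k$ coming from the open interval $(0,2\pi)$. This is what makes $|s|\leq N-2$ rather than $|s|\leq N$ the sharp range.
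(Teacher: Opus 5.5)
Your proof is correct and uses essentially the paper's construction: put $k$ equal nonzero angles $2\pi m/k$ with $1\le m\le k-1$ and set the rest to zero, so that $\sum\varepsilon = k-2m$. The only difference is the choice of $k$. The paper takes $k=|s|+2$, with $m=1$ when $s\ge 0$ and $m=1-s$ when $s<0$; this $k$ automatically has the same parity as $s$ and satisfies $k\le N$, so your case split between $k=N$ and $k=N-1$ is not needed.
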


\begin{proof}
Set \(r:=|s|+2\) and \(k:=(r-s)/2\). Thus \(k=1\) when
\(s\geq0\), and \(k=1-s\) when \(s<0\).
Then \(2\leq r\leq N\) and \(1\leq k<r\). Set
\(\theta_1=\cdots=\theta_r=2\pi k/r\) and
\(\theta_{r+1}=\cdots=\theta_N=0\). It follows that
\(
\sum_{\nu=1}^{N}\theta_\nu=2\pi k\) and \(
\sum_{\nu=1}^{N}\varepsilon(\theta_\nu)
=
r-2k
=
s.
\)
\end{proof}

\begin{proposition}[{\cite[Proposition~10.1]{KPWValues}}]
\label{prop:genus-zero-unitary}
Let \(p,q\geq0\) with \(p+q>0\). Then every integer in
\begin{equation}
\label{eq:genus-zero-range}
[-(p+q)(n-2),\,(p+q)(n-2)]
\end{equation}
is the signature of a representation
\(\pi_1(\Sigma_{0,n})\to\mathrm{U}(p,q)\). Moreover, the representation may
be chosen with image contained in
\[
\mathrm{U}(1)^p\times\mathrm{U}(1)^q
\subset
\mathrm{U}(p)\times\mathrm{U}(q)
\subset
\mathrm{U}(p,q).
\]
\end{proposition}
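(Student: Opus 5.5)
The plan is to build the representation as a direct sum of $p+q$ one-dimensional representations, one for each line of an $\Omega$-orthonormal basis, and then to use additivity of the signature. Since $\pi_1(\Sigma_{0,n})$ has the presentation \eqref{eq:standard-surface-presentation} with $g=0$, a representation into $\mathrm{U}(1)$ is the same as a choice of unit complex numbers $e^{i\theta_1},\ldots,e^{i\theta_n}$ assigned to $c_1,\ldots,c_n$. The only constraint is the relation $\prod_\nu c_\nu=1$, i.e. $\sum_\nu\theta_\nu\in2\pi\mathbb Z$. This is exactly the first condition in \eqref{eq:angle-realization}. Taking an orthogonal direct sum of $p$ such characters on positive lines and $q$ on negative lines gives a representation with image in $\mathrm{U}(1)^p\times\mathrm{U}(1)^q$. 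The signature of the sum is the sum of the signatures of the summands.

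Next I compute the signature of a single summand. The factor group $\mathrm{U}(1)=\mathrm{U}(1,0)$ (or $\mathrm{U}(0,1)$) has a point as symmetric space. Hence $T=0$, as recorded in Subsection~\ref{subsec:toledo-invariant}. The signature--Toledo formula \eqref{eq:signature-toledo-rho} then gives $\operatorname{sign}=\sum_\nu\rho(e^{i\theta_\nu})$. In \eqref{eq:def-rho} the primitive term vanishes, so $\rho(e^{i\theta})$ is the eta invariant of the boundary operator $-i\,d/dt$ on the circle twisted by the character. Its spectrum is $\{(\theta+2\pi k)/(2\pi)\cdot c\}_{k\in\mathbb Z}$ up to a positive normalization. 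The Hurwitz zeta computation of this eta invariant gives $1-\theta/\pi=\varepsilon(\theta)$ for $0<\theta<2\pi$, and $0$ for the trivial holonomy, since the kernel is not counted. Equivalently, one may quote the value of $\rho$ on $\mathrm{U}(1)$ from \cite{KPWUnitary}. On a negative line the form $\Omega$ changes sign, so $iQ_\phi$ changes sign and the rho value becomes $-\varepsilon(\theta)$.

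The step I expect to be the main obstacle is fixing this overall sign convention, i.e. whether a positive line contributes $+\sum\varepsilon(\theta_\nu)$ or $-\sum\varepsilon(\theta_\nu)$. The argument is insensitive to it. Lemma~\ref{lem:angle-realization} realizes every integer $s$ with $|s|\le n-2$, so it realizes $-s$ as well. Therefore each positive summand and each negative summand can be made to contribute any prescribed integer in $[-(n-2),n-2]$. For $n=2$ this interval is $\{0\}$, consistent with the annulus.

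Finally, let $m$ be an integer with $|m|\le(p+q)(n-2)$. I write $m=s_1+\cdots+s_{p+q}$ with each $|s_j|\le n-2$. For instance, give as many summands as possible the value $\operatorname{sgn}(m)(n-2)$, one summand the remainder, and the rest $0$. For each $j$, I choose angles by Lemma~\ref{lem:angle-realization} with $N=n$, replacing $s_j$ by $-s_j$ when the line is negative or if the convention requires it. The resulting block-diagonal representation has signature $m$ and image in $\mathrm{U}(1)^p\times\mathrm{U}(1)^q$, as claimed. For context, the matching upper bound is just \eqref{eq:mw-signature} with $|\chi(\Sigma_{0,n})|=n-2$.
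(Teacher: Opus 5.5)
Your proposal is correct and is essentially the paper's proof. The paper also takes a diagonal representation into $\mathrm{U}(1)^p\times\mathrm{U}(1)^q$ with vanishing Toledo invariant and signature $\sum_{j\le p}\sum_i\varepsilon(\theta_{i,j})-\sum_{j>p}\sum_i\varepsilon(\theta_{i,j})$. It then writes $m=t_1+\cdots+t_{p+q}$ with $|t_j|\le n-2$ and applies Lemma~\ref{lem:angle-realization} line by line, flipping the sign on the negative lines.

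The only difference is how the per-character value $\pm\varepsilon(\theta)$ is obtained. You sketch it from an eta-invariant computation and correctly observe that the sign convention is immaterial, whereas the paper simply quotes the diagonal signature formula from \cite[Section~10.1]{KPWValues}.
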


\begin{proof}
The Milnor--Wood inequality gives
\(
|\operatorname{sign}(\phi)|
\leq
(p+q)(n-2)
\)
for every
\(\phi\colon\pi_1(\Sigma_{0,n})\to\mathrm{U}(p,q)\). It remains to prove
realizability.

Consider diagonal boundary holonomies
\begin{equation}
\label{eq:diagonal-boundary-genus-zero}
\phi(c_i)
=
\operatorname{diag}
\bigl(
e^{i\theta_{i,1}},\ldots,e^{i\theta_{i,p+q}}
\bigr),
\qquad
1\leq i\leq n,
\end{equation}
where \(\theta_{i,j}\in[0,2\pi)\). These matrices define a representation of
\(\pi_1(\Sigma_{0,n})\) precisely when
\(
\sum_{i=1}^{n}\theta_{i,j}
\in
2\pi\mathbb{Z}
\), \(
1\leq j\leq p+q.
\)
For such a representation, the Toledo invariant vanishes and the signature
is
\begin{equation}
\label{eq:diagonal-signature-formula}
\operatorname{sign}(\phi)
=
\sum_{j=1}^{p}\sum_{i=1}^{n}\varepsilon(\theta_{i,j})
-
\sum_{j=p+1}^{p+q}\sum_{i=1}^{n}\varepsilon(\theta_{i,j});
\end{equation}
see \cite[Section~10.1]{KPWValues}.

Let \(m\) be an integer in the interval
\eqref{eq:genus-zero-range}, and set \(M:=n-2\). Since
\([-(p+q)M,(p+q)M]\) is the sum of \(p+q\) copies of \([-M,M]\), there
exist integers \(t_1,\ldots,t_{p+q}\in[-M,M]\) such that
\(m=t_1+\cdots+t_{p+q}\). Define
\[
s_j:=
\begin{cases}
t_j, & 1\leq j\leq p,\\
-t_j, & p+1\leq j\leq p+q.
\end{cases}
\]
For each \(j\), Lemma~\ref{lem:angle-realization}, applied with \(N=n\),
provides angles \(\theta_{1,j},\ldots,\theta_{n,j}\) satisfying
\(
\sum_{i=1}^{n}\theta_{i,j}\in2\pi\mathbb{Z},
\) and \(
\sum_{i=1}^{n}\varepsilon(\theta_{i,j})=s_j.
\)
Thus \eqref{eq:diagonal-boundary-genus-zero} defines a representation, and
\eqref{eq:diagonal-signature-formula} yields
\[
\operatorname{sign}(\phi)
=
\sum_{j=1}^{p}s_j-\sum_{j=p+1}^{p+q}s_j
=
\sum_{j=1}^{p+q}t_j
=
m.
\]
\end{proof}

\subsection{Compact unitary groups}
\label{subsec:compact-unitary}

We next consider representations into the compact unitary group
\(\mathrm{U}(p)\). Proposition~\ref{prop:genus-zero-unitary}, with \(q=0\),
already gives the complete answer in genus zero. We therefore assume
\(g\geq1\).

\begin{proposition}[{\cite[Proposition~10.3]{KPWValues}}]
\label{prop:positive-genus-Up}
Let \(\Sigma=\Sigma_{g,n}\) with \(g\geq1\) and \(n\geq1\), and let
\(p\geq1\). Then every representation
\(\phi\colon\pi_1(\Sigma)\to\mathrm{U}(p)\) satisfies
\begin{equation}
\label{eq:Up-signature-bound}
|\operatorname{sign}(\phi)|
\leq
\max\{0,np-2\}.
\end{equation}
Conversely, if \(np\geq2\), then every integer in
\([2-np,np-2]\) is realized as the signature of a representation into
\(\mathrm{U}(p)\).
\end{proposition}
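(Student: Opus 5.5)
For a compact group the Toledo invariant vanishes (as noted in the definition, \(pq=0\) forces \(\kappa^b_G=0\)). The signature--Toledo formula \eqref{eq:signature-toledo-rho} therefore reduces to \(\operatorname{sign}(\phi)=\sum_{i=1}^n\rho(\phi(c_i))\). I would first compute \(\rho\) on \(\mathrm{U}(p)\). Every element is semisimple with eigenvalues \(e^{i\theta_1},\dots,e^{i\theta_p}\), \(\theta_j\in[0,2\pi)\). Since \(\rho\) is conjugacy invariant and additive under orthogonal sums, the diagonal computation quoted from \cite[Section~10.1]{KPWValues} in \eqref{eq:diagonal-signature-formula} gives
\[
\rho(L)=\sum_{j=1}^p\varepsilon(\theta_j),\qquad \operatorname{sign}(\phi)=\sum_{i=1}^n\sum_{j=1}^p\varepsilon(\theta_{i,j}).
\]
Here \(\theta_{i,j}\) are the eigenvalue angles of \(\phi(c_i)\).

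\textbf{Upper bound.} Each \(\varepsilon(\theta)\) lies in \((-1,1)\), and equals \(0\) when \(\theta=0\), so the trivial bound is \(|\operatorname{sign}(\phi)|<np\). The main obstacle is the improvement to \(np-2\), which must use the relation \(\prod_j[\phi(a_j),\phi(b_j)]\prod_i\phi(c_i)=1\). I would take determinants. Commutators have determinant \(1\), so \(\prod_i\det\phi(c_i)=1\), that is,
\[
\sum_{i,j}\theta_{i,j}=2\pi k \quad\text{for some integer } k\ge 0.
\]
Let \(r\) be the number of pairs \((i,j)\) with \(\theta_{i,j}\neq0\). Then \(\operatorname{sign}(\phi)=r-2k\).
\begin{itemize}
\item If \(r=0\), the signature is \(0\).
\item If \(r\ge1\), then \(k\ge1\), because each nonzero \(\theta_{i,j}\) is positive. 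This gives \(\operatorname{sign}\le r-2\le np-2\).
\item Also \(2\pi k<2\pi r\), since each angle is \(<2\pi\), so \(k\le r-1\). This gives \(\operatorname{sign}\ge r-2(r-1)=2-r\ge 2-np\).
\end{itemize}
Hence \(|\operatorname{sign}(\phi)|\le\max\{0,np-2\}\). Note that genus is irrelevant here, since only the determinant of the relation is used.

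\textbf{Realization.} For \(np\ge2\) I would send all handle generators \(a_j,b_j\) to the identity and take diagonal boundary holonomies. The relation then only requires \(\sum_i\theta_{i,j}\in2\pi\mathbb Z\) for each \(j\). A constraint coupling different \(j\) is not available here, so I instead relabel the \(np\) pairs \((i,j)\) as angles \(\theta_1,\dots,\theta_{np}\). I then apply Lemma~\ref{lem:angle-realization} with \(N=np\) to obtain angles whose total is \(2\pi k\) and whose \(\varepsilon\)-sum is \(s\).

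These angles must satisfy the per-column conditions. In the construction from that lemma, \(r=|s|+2\) of the angles equal \(2\pi k/r\) and the rest are zero. To make this compatible, I would use a representation with nontrivial handles, so that the \(c_i\) need only satisfy the single determinant condition. Since \(g\ge1\), every element of \(\mathrm{SU}(p)\) is a commutator \([A,B]\) in \(\mathrm{U}(p)\). So it suffices to choose diagonal \(\phi(c_i)\) with \(\prod_i\det\phi(c_i)=1\), and then set \(\phi(a_1),\phi(b_1)\) so that \([\phi(a_1),\phi(b_1)]=(\prod_i\phi(c_i))^{-1}\in\mathrm{SU}(p)\), with the remaining handles sent to the identity. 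The lemma's angles, placed arbitrarily into the \(n\) diagonal matrices, satisfy exactly this single condition. The signature formula then gives the value \(s\), for every integer \(s\in[2-np,np-2]\).
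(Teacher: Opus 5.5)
Your proposal is correct and follows essentially the same route as the paper: the Toledo invariant vanishes, $\operatorname{sign}(\phi)=\sum_{i,j}\varepsilon(\theta_{i,j})$, the determinant of the surface relation gives $\operatorname{sign}(\phi)=r-2k$ with $0<k<r$ when $r>0$, and realization uses Lemma~\ref{lem:angle-realization} with $N=np$ plus one nontrivial handle absorbing $D^{-1}\in\mathrm{SU}(p)$. The differences are minor. Where you invoke the general fact that every element of $\mathrm{SU}(p)$ is a commutator, the paper writes one down explicitly: a cyclic permutation matrix $A$ and a diagonal $B$ with $[A,B]=D^{-1}$. The paper also takes $\rho$ on $\mathrm{U}(p)$ directly from the boundary rho formula, rather than deducing it from the genus-zero signature formula, which by itself only constrains sums of $\rho$-values.
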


\begin{proof}
The Toledo invariant vanishes for representations into \(\mathrm{U}(p)\). Write the eigenvalues of the \(i\)-th
boundary holonomy as
\(
\phi(c_i)
\sim
\operatorname{diag}
\bigl(
e^{i\theta_{i,1}},\ldots,e^{i\theta_{i,p}}
\bigr),
\) \(
\theta_{i,j}\in[0,2\pi).
\)
The boundary rho formula gives
\begin{equation}
\label{eq:Up-boundary-signature}
\operatorname{sign}(\phi)
=
\sum_{i=1}^{n}\sum_{j=1}^{p}
\varepsilon(\theta_{i,j});
\end{equation}
see \cite[Section~10.2]{KPWValues}.

Applying the determinant to the relation
\eqref{eq:standard-surface-presentation}, we obtain
\(
\prod_{i=1}^{n}\det\phi(c_i)=1.
\)
Consequently,
\(
\sum_{i=1}^{n}\sum_{j=1}^{p}\theta_{i,j}
=
2\pi k
\)
for some integer \(k\geq0\). Let \(r\) denote the number of nonzero angles
among the \(\theta_{i,j}\). Equation \eqref{eq:Up-boundary-signature} then
becomes
\begin{equation}
\label{eq:Up-r-minus-2k}
\operatorname{sign}(\phi)=r-2k.
\end{equation}

If \(r=0\), then \(k=0\), and the signature is \(0\). Otherwise, all
\(r\) nonzero angles lie strictly between \(0\) and \(2\pi\), so
\(0<k<r\). Since \(k\) is an integer, \(r\geq2\) and
\[
2-r\leq r-2k\leq r-2.
\]
Together with \(r\leq np\), this proves
\eqref{eq:Up-signature-bound}. In particular, when \(np=1\), the
nonzero-angle case is impossible, and every signature is \(0\).

We now prove realizability. Assume \(np\geq2\), and let
\(m\in[2-np,np-2]\). By Lemma~\ref{lem:angle-realization}, applied with
\(N=np\), there exist \(np\) angles, arranged as
\(\{\theta_{i,j}\}_{1\leq i\leq n,\,1\leq j\leq p}\), such that
\begin{equation}
\label{eq:Up-angle-choice}
\sum_{i=1}^{n}\sum_{j=1}^{p}\theta_{i,j}
\in2\pi\mathbb{Z},
\qquad
\sum_{i=1}^{n}\sum_{j=1}^{p}
\varepsilon(\theta_{i,j})
=
m.
\end{equation}
Set
\(
C_i
:=
\operatorname{diag}
\bigl(
e^{i\theta_{i,1}},\ldots,e^{i\theta_{i,p}}
\bigr),
\) \(
D:=C_1\cdots C_n.
\)
The first condition in \eqref{eq:Up-angle-choice} implies that
\(D\in\mathrm{SU}(p)\).

If \(p=1\), then \(D=1\), and we take \(A=B=1\). Suppose \(p\geq2\), and
write \(D=\operatorname{diag}(d_1,\ldots,d_p)\). Let
\(A\in\mathrm{U}(p)\) be the cyclic permutation matrix defined by
\(
Ae_j=e_{j+1}
\quad (1\leq j<p),
\) \(
Ae_p=e_1,
\)
and set
\(
B
=
\operatorname{diag}
\bigl(
1,d_2,d_2d_3,\ldots,d_2d_3\cdots d_p
\bigr).
\)
Since \(d_1\cdots d_p=1\), a direct computation gives
\begin{equation}
\label{eq:unitary-commutator}
[A,B]
=
D^{-1}.
\end{equation}

We define a representation on the generators in
\eqref{eq:standard-surface-presentation} by
\(
\phi(c_i)=C_i,
\) \(
\phi(a_1)=A,
\) \(
\phi(b_1)=B,
\)
and by
\(\phi(a_j)=\phi(b_j)=I_p\) for \(2\leq j\leq g\). Equation
\eqref{eq:unitary-commutator} shows that the defining surface relation is
satisfied. Finally, \eqref{eq:Up-boundary-signature} and
\eqref{eq:Up-angle-choice} give
\(\operatorname{sign}(\phi)=m\).
\end{proof}

On a closed surface, the signature vanishes because both the Toledo
invariant and the boundary term vanish. It also vanishes on a disc,
whose fundamental group is trivial. Combining these observations with
the preceding propositions gives the complete range for compact unitary
groups.

\begin{theorem}[{\cite[Theorem~2]{KPWValues}}]
\label{thm:complete-Up-range}
Let \(\Sigma=\Sigma_{g,n}\) be a compact, connected, oriented surface, and
let \(p\geq1\). The
possible values of the signature of representations
\(\pi_1(\Sigma)\to\mathrm{U}(p)\) are
\begin{equation}
\label{eq:complete-Up-range}
\begin{cases}
\{0\}, & n=0\text{ or }(g,n)=(0,1),\\[1mm]
[-p(n-2),\,p(n-2)]\cap\mathbb{Z},
& g=0,\ n\geq2,\\[1mm]
\bigl([2-np,\,np-2]\cap\mathbb{Z}\bigr)\cup\{0\},
& g\geq1,\ n\geq1.
\end{cases}
\end{equation}
\end{theorem}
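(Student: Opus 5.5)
The statement to prove is Theorem~\ref{thm:complete-Up-range}. It is essentially an assembly of cases already treated, so the proof is a case split according to \((g,n)\). For representations into \(\mathrm{U}(p)\) the Toledo invariant vanishes, since the symmetric space is a point. The signature--Toledo formula \eqref{eq:signature-toledo-rho} therefore reduces the signature to the boundary term \(\rho_\phi(\partial\Sigma)\).

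\emph{Case \(n=0\) or \((g,n)=(0,1)\).} For \(n=0\) the boundary term is absent, so \(\operatorname{sign}(\phi)=-2T(\Sigma,\phi)=0\). For the disc, \(\pi_1\) is trivial; equivalently \(H^1(\Sigma_{0,1};\mathcal E_\phi)=0\), so the form lives on the zero space. Either way, only \(0\) occurs. The trivial representation realizes it.

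\emph{Case \(g=0\), \(n\geq2\).} Apply Proposition~\ref{prop:genus-zero-unitary} with \(q=0\). It realizes every integer in \([-p(n-2),p(n-2)]\). The Milnor--Wood inequality \eqref{eq:mw-signature} gives the reverse containment, because \(|\chi(\Sigma_{0,n})|=n-2\).

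\emph{Case \(g\geq1\), \(n\geq1\).}
\begin{itemize}
\item Every signature is either \(0\) or satisfies \(2-np\le\operatorname{sign}(\phi)\le np-2\), by the dichotomy \(r=0\) versus \(r\geq2\) in the proof of Proposition~\ref{prop:positive-genus-Up}. This gives the containment in \(([2-np,np-2]\cap\mathbb Z)\cup\{0\}\). When \(np=1\) the interval is empty and only \(0\) survives, which agrees with \eqref{eq:Up-signature-bound}.
\item The trivial representation realizes \(0\).
\item When \(np\geq2\), the realization half of Proposition~\ref{prop:positive-genus-Up} realizes every integer in \([2-np,np-2]\).
\end{itemize}

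Combining the three cases gives \eqref{eq:complete-Up-range}. The only point needing care is bookkeeping in degenerate cases. When \(np=1\), \([2-np,np-2]\) is the empty interval \([1,-1]\), so the union with \(\{0\}\) is exactly \(\{0\}\). When \(np=2\), the interval is \(\{0\}\) itself. Both are consistent with the statement. I do not expect any genuine obstacle, since all analytic input (the rho formula \eqref{eq:Up-boundary-signature} and the determinant constraint) was already used in Proposition~\ref{prop:positive-genus-Up}.
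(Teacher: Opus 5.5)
Your proposal is correct and follows essentially the same route as the paper. The paper disposes of the closed-surface and disc cases directly: the Toledo invariant and the boundary term vanish in the first, and $\pi_1$ is trivial in the second. It then assembles the remaining cases from Proposition~\ref{prop:genus-zero-unitary} with $q=0$ together with the Milnor--Wood inequality, and from Proposition~\ref{prop:positive-genus-Up}. Your handling of the degenerate cases $np\in\{1,2\}$ is also right, since for $np\geq 2$ the stated set coincides with the interval $|m|\leq np-2$ given by \eqref{eq:Up-signature-bound}.
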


\subsection{Closed surfaces}
\label{subsec:closed-surfaces}

We conclude this section with the closed-surface case. Here the boundary rho
term is absent, and the signature satisfies a divisibility condition that does
not occur for bordered surfaces.

Let \(\Sigma=\Sigma_g\) be a closed oriented surface of genus \(g\geq1\),
and assume \(0\leq p\leq q\) and \(p+q>0\). For a representation
\(\phi\colon\pi_1(\Sigma)\to\mathrm{U}(p,q)\), choose a compatible complex
structure and write \(\mathcal E_\phi=\mathcal E^+\oplus\mathcal E^-\) for
the corresponding positive and negative subbundles. By
\cite[(3.15)]{KPWUnitary},
\begin{equation}
\label{eq:closed-toledo-chern-weil}
T(\Sigma,\phi)
=
\int_\Sigma\bigl(c_1(\mathcal E^-)-c_1(\mathcal E^+)\bigr)
=
-2\int_\Sigma c_1(\mathcal E^+)
\in 2\mathbb Z.
\end{equation}
Indeed, flatness makes the real first Chern class of
\(\mathcal E_\phi\) vanish. Since \(H^2(\Sigma;\mathbb Z)\) is torsion-free,
\(c_1(\mathcal E^+)+c_1(\mathcal E^-)=0\) also holds integrally. Since
\(\partial\Sigma=\varnothing\), the signature--Toledo formula reduces to
\begin{equation}
\label{eq:closed-sign-divisible-by-four}
\operatorname{sign}(\phi)=-2T(\Sigma,\phi)\in 4\mathbb Z.
\end{equation}

Combining this with the closed Milnor--Wood inequality gives
\[
\mathscr S_{p,q}(\Sigma_g)
\subset
[2p\chi(\Sigma_g),\,2p|\chi(\Sigma_g)|]\cap 4\mathbb Z.
\]
The inclusion is sharp.

\begin{proposition}
\label{prop:closed-surface-signature-values}
Let \(\Sigma_g\) be a closed oriented surface of genus \(g\geq1\), and let
\(0\leq p\leq q\). Then
\begin{equation}
\label{eq:closed-signature-range-Upq}
\mathscr S_{p,q}(\Sigma_g)
=
[2p\chi(\Sigma_g),\,2p|\chi(\Sigma_g)|]\cap 4\mathbb Z.
\end{equation}
\end{proposition}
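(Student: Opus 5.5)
The inclusion ``\(\subseteq\)'' has already been established just before the statement. It combines the divisibility \eqref{eq:closed-sign-divisible-by-four} with the closed Milnor--Wood bound \(|\operatorname{sign}(\phi)|\le 2p|\chi(\Sigma_g)|\), which comes from \eqref{eq:mw-toledo} since \(\min\{p,q\}=p\). It therefore remains to realize every \(m\in 4\mathbb Z\) with \(|m|\le 2p(2g-2)\).

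If \(p=0\) or \(g=1\), the interval is \(\{0\}\), and the trivial representation suffices. So assume \(p\ge1\) and \(g\ge2\). The plan is to use block-diagonal representations into
\[
\mathrm{U}(1,1)^{\times p}\times\mathrm{U}(q-p)\subset\mathrm{U}(p,q),
\]
with the compact factor trivial. By additivity of the signature under orthogonal direct sums, it then suffices to realize each multiple of \(4\) in \([-2(2g-2),2(2g-2)]\) by a single \(\mathrm{SU}(1,1)\)-representation, and to write \(m\) as a sum of \(p\) such values.

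For the rank-one step, take \(m_1=4e\) with \(|e|\le g-1\). By Goldman's theorem \cite[Theorem~B]{Goldman}, as already used in Step~1 of the proof of Theorem~\ref{thm:rank-one-paraelliptic}, there is a representation \(\pi_1(\Sigma_g)\to\mathrm{PSL}(2,\mathbb R)\) with Euler number \(2e\). Since this Euler number is even, the representation lifts to \(\mathrm{SL}(2,\mathbb R)\cong\mathrm{SU}(1,1)\). On a closed surface there is no rho term, so with the convention fixed in Subsection~\ref{subsec:U11} its signature is \(\pm4e\). Replacing \(e\) by \(-e\) if necessary gives signature exactly \(4e\).

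For the decomposition step, write \(m=4E\) with \(|E|\le p(g-1)\). Choose integers \(e_1,\dots,e_p\) with \(|e_j|\le g-1\) and \(\sum_j e_j=E\); for instance, take as many of them as possible equal to \(\pm(g-1)\), one equal to the remainder, and the rest \(0\). Let \(\phi_j\) be the rank-one representation of signature \(4e_j\) from the previous step. Then
\[
\phi=\phi_1\oplus\cdots\oplus\phi_p\oplus\mathbf 1_{q-p},
\]
where the \(j\)-th \(\mathrm{U}(1,1)\) block acts on the span of the \(j\)-th positive and \(j\)-th negative basis vectors, and the trivial \(\mathrm{U}(q-p)\)-block contributes signature \(0\). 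By additivity, \(\operatorname{sign}(\phi)=4E=m\).

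The only delicate point is the sign and normalization bookkeeping: matching the PSL Euler number to the unitary signature \(-2T\) under the chosen isomorphism. Since the target set is symmetric, any sign discrepancy is harmless, and the factor \(|\mathrm{sign}|=2|\mathrm{eu}|\) is exactly what was used in Step~1 of the proof of Theorem~\ref{thm:rank-one-paraelliptic}.
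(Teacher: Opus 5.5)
Your proposal is correct and follows essentially the same route as the paper. The paper realizes each value as $\psi_1\oplus\cdots\oplus\psi_p\oplus\mathbf 1_{q-p}$, with each $\psi_j$ an $\mathrm{SU}(1,1)$-representation from Goldman's theorem plus the even-Euler-class lifting criterion. It bookkeeps via Toledo invariants $t_j\in 2\mathbb Z$ summing to $-m/2$, whereas you bookkeep via signatures $4e_j$. Since $\operatorname{sign}=-2T$ on a closed surface, the two are equivalent.
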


\begin{proof}
The inclusion ``\(\subset\)'' follows from
\eqref{eq:closed-sign-divisible-by-four} and the Milnor--Wood inequality. If
\(p=0\) or \(g=1\), both sides of
\eqref{eq:closed-signature-range-Upq} are \(\{0\}\), so there is nothing to
prove. Assume henceforth that \(p\geq1\) and \(g\geq2\).

Let
\(m\in[2p\chi(\Sigma_g),2p|\chi(\Sigma_g)|]\cap 4\mathbb Z\), and put
\(T_0:=-m/2\). Then \(T_0\in2\mathbb Z\) and
\(|T_0|\leq p|\chi(\Sigma_g)|\). We may write
\(T_0=t_1+\cdots+t_p\), where each
\(
t_j\in[-|\chi(\Sigma_g)|,|\chi(\Sigma_g)|]\cap2\mathbb Z.
\)
Goldman's theorem and the lifting criterion from
\(\mathrm{PSL}(2,\mathbb R)\) to \(\mathrm{SL}(2,\mathbb R)\) imply that
each \(t_j\) occurs as the Toledo invariant of a representation
\(
\psi_j\colon\pi_1(\Sigma_g)
\to
\mathrm{SU}(1,1)\cong\mathrm{SL}(2,\mathbb R);
\)
see \cite[Theorem~B and p.~604]{Goldman}. The block-diagonal representation
\[
\phi:=\psi_1\oplus\cdots\oplus\psi_p\oplus\mathbf1_{q-p}
\colon\pi_1(\Sigma_g)\longrightarrow\mathrm{U}(p,q)
\]
has Toledo invariant \(T_0\). Therefore
\(\operatorname{sign}(\phi)=-2T_0=m\), proving the reverse inclusion.
\end{proof}

\begin{remark}
\label{rem:closed-lifting-parity}
In rank one, the divisibility above is exactly Goldman's lifting obstruction:
a representation into
\(\mathrm{PU}(1,1)\cong\mathrm{PSL}(2,\mathbb R)\) lifts to
\(\mathrm{SU}(1,1)\cong\mathrm{SL}(2,\mathbb R)\) if and only if its Euler
class is even \cite[p.~604]{Goldman}. On a closed surface, the Euler number agrees with the rank-one Toledo
invariant up to the orientation convention. This sign has no effect on
parity, and the lifting criterion explains
\eqref{eq:closed-toledo-chern-weil}.
\end{remark}
\begin{remark}
\label{rem:closed-higgs-MW}
The closed-surface Milnor--Wood inequality also admits a direct
Higgs-bundle proof.  For a reductive representation, the associated
polystable \(\mathrm{U}(p,q)\)-Higgs bundle yields the bound by applying
semistability to the kernel and saturated image of a Higgs-field component;
see
\cite[Section~3.4, especially Lemma~3.24, Corollary~3.27,
and Remark~3.29]{BGG03}.  The corresponding non-emptiness results
\cite[Theorem~6.1 and Remark~6.2]{BGG03} also show that the bound is
sharp.
\end{remark}

\section{Positive-genus case}
\label{sec:positive-genus-signatures}

We now consider surfaces with positive genus and nonempty boundary. Replacing
the coefficient form \(\Omega\) by \(-\Omega\) regards the same holonomy as
a representation into \(\mathrm{U}(q,p)\) and multiplies the twisted
Hermitian intersection form by \(-1\). Consequently,
\(
\mathscr S_{p,q}(\Sigma)=-\mathscr S_{q,p}(\Sigma).
\)
Since the asserted ranges are symmetric, it is enough to treat \(q\geq p\).
The balanced and compact cases were settled in Section~\ref{sec:special-signature-values}, so it remains to prove
the sharp range for
\[
g\geq1,
\qquad
n\geq1,
\qquad
q>p>0.
\]
Let \(\Sigma=\Sigma_{g,n}\), with
\(\partial\Sigma=c_1\sqcup\cdots\sqcup c_n\), and recall
\(M_{g,n}(p,q)\) from \eqref{eq:introduction-M}.

\begin{theorem}
\label{thm:positive-genus-realization-bounds}
For \(g,n\geq1\) and \(q>p>0\),
\begin{equation}
\label{eq:positive-genus-exact-range}
\mathscr S_{p,q}(\Sigma_{g,n})
=
[-M_{g,n}(p,q),M_{g,n}(p,q)]\cap\mathbb Z.
\end{equation}
Every value in this interval is realized by a representation with image in
\[
\mathrm{U}(1,1)^{\times p}\times\mathrm{U}(q-p)
\subseteq
\mathrm{U}(p,q).
\]
\end{theorem}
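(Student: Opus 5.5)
The proof splits into realization, which is elementary, and the sharp upper bound, which is the real content.

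\emph{Realization.} Put $r:=q-p$ and $N:=|\chi(\Sigma_{g,n})|=2g-2+n$. Since $g,n\geq1$, we have $\chi\leq0$. Hence Corollary~\ref{cor:signature-values-U11} realizes every integer in $[-2N,2N]$ by a representation into $\mathrm{U}(1,1)$. Proposition~\ref{prop:positive-genus-Up}, applied with $p$ replaced by $r$, realizes every integer in $[2-nr,nr-2]$ when $nr\geq2$, and the value $0$ when $nr=1$; in both cases this is the interval $[-(nr-\delta(n,r)),\,nr-\delta(n,r)]$. Take the orthogonal direct sum of $p$ representations into $\mathrm{U}(1,1)$ with one representation into $\mathrm{U}(r)$ acting on a negative definite subspace. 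By additivity of the signature, this sum realizes every integer in
\[
[-2pN,2pN]+[-(nr-\delta),\,nr-\delta]=[-M,M].
\]
Here we use $M_{g,n}(p,q)=2pN+nr-\delta(n,r)$. Passing to the negative definite factor flips the sign of the signature, but the interval is symmetric, so this is harmless. The image lies in $\mathrm{U}(1,1)^{\times p}\times\mathrm{U}(q-p)$, which gives the last assertion of the theorem.

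\emph{Upper bound, reductive case.} By \eqref{eq:introduction-improvement} it suffices to show $\operatorname{sign}(\phi)\leq(p+q)N-r(2g-2)-\delta(n,r)$; the lower bound then follows by $\Omega\mapsto-\Omega$. For reductive $\phi$, I would use parabolic nonabelian Hodge theory to pass to a polystable logarithmic $\mathrm{U}(p,q)$-Higgs bundle $(V\oplus W,\Phi)$ on the compactification $X$ with $n$ marked points. The steps are as follows.
\begin{enumerate}
\item Express $T$ through parabolic degrees, $T=\pardeg V-\pardeg W$ up to sign, and express $\rho$ through the parabolic weights at the punctures together with the ranks of the graded residues. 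This rewrites $\operatorname{sign}(\phi)$ as a sum of degree terms and local terms at the punctures.
\item Run the Bradlow--Garc\'ia-Prada--Gothen argument of \cite{BGG03} with the maps $\gamma\colon V\to W\otimes K(D)$ and $\beta$. Apply parabolic semistability to $\ker\gamma$ and to the saturation of $\operatorname{Im}\gamma$. The rank of $\gamma$ is at most $p$, so the $r$-dimensional cokernel part of $W$ only sees the $K$-twist of degree $2g-2$. This is the source of the saving $r(2g-2)$.
\item The boundary saving $\delta(n,r)$ comes from comparing two residues: the ordinary residue of $\gamma$ at the punctures and the residue on the associated graded bundle, which governs the local monodromy. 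I would write the Euler characteristic of the $L^2$ Dolbeault complex, computed by Riemann--Roch, as the dimension of $\widehat H^1$, and balance it against the signature. The defect $\min\{2,nr\}$ should arise as in the proof of Proposition~\ref{prop:positive-genus-Up}: on the definite part, the nonzero angles sum to $2\pi k$ with $0<k<$(number of nonzero angles), which costs $2$ unless everything is trivial.
\end{enumerate}
This local-global bookkeeping is where I expect the main obstacle. In particular, the residue of $\gamma$ may have smaller rank than the graded residue, and one has to show that the resulting extra weight contributions never exceed the slack.

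\emph{Upper bound, non-reductive case.} For non-reductive $\phi$, I would induct on $p$. Such a $\phi$ preserves an $\Omega$-isotropic subspace $U\neq0$, so it degenerates to a representation into the Levi quotient
\[
\mathrm{U}(U^\perp/U)\times\mathrm{GL}(U),
\]
which embeds as $\mathrm{U}(p-k,q-k)$ times a block-diagonal copy of $\mathrm{GL}(U)$ in $\mathrm{U}(k,k)$. This degeneration preserves the Toledo invariant, so by \eqref{eq:signature-toledo-rho} only the change in $\rho$ at each boundary holonomy needs to be estimated. Since $\rho$ is discontinuous, the key step is a bound on the jump of $\rho$ when a boundary holonomy is replaced by its semisimplified Levi component. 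One needs this jump to be at most the difference $M_{g,n}(p,q)-M_{g,n}(p-k,q-k)-2kN$, which is essentially zero, or else absorbed by using the $\mathrm{U}(k,k)$ bound. I would try to prove the jump estimate by a direct computation of $\rho$ from eigenvalues and unipotent parts, as in \cite{KPWUnitary}. The induction then concludes using the reductive case for the smaller group.
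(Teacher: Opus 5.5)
Your realization argument is correct and coincides with the paper's (Proposition~\ref{prop:realization-positive-genus}). The gaps are in the upper bound, which is the substance of the theorem.

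\textbf{Reductive case.} You stop short of an actual estimate, and as stated two of your steps would not reach it.

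First, $\rho$ is not determined by the parabolic weights and the ranks of the graded residues. The two classes of nontrivial unipotent elements of $\mathrm{SU}(1,1)$ have $\rho=\pm1$, yet both have weight $0$ and a graded residue of rank one. They differ only in whether the nilpotent residue maps $V\to W$ or $W\to V$.

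Second, the Euler characteristic of the full $L^2$ complex only determines $\dim\widehat H^1(\Sigma;\mathcal E_\phi)$, so it carries no sign information.

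The paper never computes $\rho$ here. It proves
$\operatorname{sign}(\phi)=-(p+q)\chi(\Sigma)+2\chi(\mathcal C_\gamma^\bullet)-h_\partial(\phi)$
for $\mathcal C_\gamma^\bullet=[\mathcal A_V\xrightarrow{\gamma}\mathcal B_W\otimes K_{\overline\Sigma}(D)]$. This rests on two facts.
\begin{enumerate}
\item $iQ_\phi$ is negative definite on harmonic $\gamma$-forms and positive definite on harmonic $\beta$-forms, so $h^-(\phi)=\dim\mathbb H^1(\mathcal C_\gamma^\bullet)$.
\item By duality, $\dim\mathbb H^2(\mathcal C_\gamma^\bullet)=\dim\mathbb H^0(\mathcal C_\beta^\bullet)$.
\end{enumerate}
The $L^2$ lattices $\mathcal A,\mathcal B$ are cut out by the weight filtration of the nilpotent part of the zero-weight, zero-primary graded residue. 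They are exactly what records the $V\to W$ versus $W\to V$ information.

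After Riemann--Roch and the kernel/saturation decomposition, you still need three estimates that are absent from your sketch.
\begin{enumerate}
\item $\ell_{x_j}(I/I^0)\geq\operatorname{rk}\gamma-r_j^{\mathrm{full}}$. This comes from the Smith normal form of $\gamma$ at $x_j$, whose reduction mod $z$ is the \emph{ordinary} residue. Note that the ordinary residue can have larger rank than the graded one, not smaller as you suggest.
\item $r_j^{\mathrm{full}}\leq a_j+q-b_j-d_{Q,j}$. This comes from $\gamma(\mathcal A_V)\subset\mathcal B_W\otimes K_{\overline\Sigma}(D)$.
\item $h_\partial(\phi)+2(\deg Q+\sum_j d_{Q,j})\geq\delta(n,q-p)$, where $Q=W/I$.
\end{enumerate}
Your integrality heuristic from the compact case covers only one branch of the last estimate: the case where $Q$ has a nonzero weight. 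There, $\pardeg Q\geq0$ and integrality force $\deg Q+\sum_jd_{Q,j}\geq1$. When all weights of $Q$ vanish, one needs $\operatorname{rk}Q\leq\dim\ker(\phi(c_j)-I)$. The paper proves this by factoring $W\to Q$ through the cokernel of the graded residue and reading off fixed vectors from the local monodromy model. Nothing in your plan supplies this.

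\textbf{Non-reductive case.} Your bookkeeping is off. Suppose you charge the $\mathrm{GL}(U)$ block $2kN$ through the $\mathrm U(k,k)$ bound. Then the remaining slack $M_{g,n}(p,q)-M_{g,n}(p-k,q-k)-2kN$ is exactly $0$. The inequality you would then need, that the total $\rho$-jump is $\leq0$, fails. Take the Borel representation of Remark~\ref{rem:failure-naive-isotropic-reduction} and add a trivial $\mathrm U(0,q-p)$ summand: it has signature $1$, while its Levi degeneration has signature $0$.

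Two ingredients are missing.
\begin{enumerate}
\item The block $\alpha\oplus\alpha^{-*}$ has vanishing signature, Toledo invariant and boundary $\rho$, because the flat involution $(v,w)\mapsto(v,-w)$ reverses the Hermitian form. Hence the full $M_{g,n}(p,q)-M_{g,n}(p-k,q-k)=2k(2g-2+n)$ is available to absorb the jump.
\item One needs the per-boundary estimate $|\rho(L)-\rho(L^{\mathrm{gr}})|\leq2k$.
\end{enumerate}
The paper proves the second estimate as follows. The elliptic and hyperbolic contributions pass through the Levi projection unchanged. The eigenvalue-one contribution is $\rho(\exp 2\pi B)=-\operatorname{sign}(h_B)$, with $h_B(u,v)=\Omega(\sqrt{-1}Bu,v)$. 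The difference $b_tBb_t^{-1}-B^{\mathrm{gr}}$ has rank at most $2k$, and inertia indices can only drop in the limit.

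The total error is then $2kn\leq2k(2g-2+n)$, which closes the induction. Note also that $L^{\mathrm{gr}}$ must be the Levi projection $\operatorname{diag}(A,L_0,(A^*)^{-1})$, obtained as a limit of conjugates so that continuity of $T$ applies. It is not a semisimplification: the unipotent parts it retains affect $\rho$.
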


The proof has three stages. We first establish the sharp upper bound for
reductive representations using the \(L^2\) Dolbeault
complex of the associated parabolic Higgs bundle. We then extend the estimate
to arbitrary representations using a controlled degeneration to a Levi factor.
Finally, we realize every integer in the asserted interval by orthogonal direct
sums.

\section{Reductive representations}
\label{sec:reductive}

The proof of the sharp upper bound uses the parabolic Higgs bundle associated
with a reductive representation. We first review the closed-surface argument
of \cite[Section~3.4]{BGG03}. We then express the signature as the Euler
characteristic of a two-term complex. For a surface with boundary, the same
method introduces explicit local corrections at the punctures. The purpose
of this section is to keep track of these corrections and of the degrees
of the kernel and image of the Higgs field.

\subsection{The closed-surface Milnor--Wood argument} 
\label{subsec:closed-surface-proof}

Let $\Sigma$ be a closed surface of genus $g\geq1$, and assume
$0<p\leq q$. A representation
$\phi\colon\pi_1(\Sigma)\to\mathrm U(p,q)$ determines a flat vector bundle
$E$ with a parallel Hermitian form of signature $(p,q)$. A smooth
$\phi$-equivariant map
$\widetilde\Sigma\to\mathrm U(p,q)/(\mathrm U(p)\times\mathrm U(q))$
is equivalent to a smooth rank-$p$ subbundle $V\subset E$ on which this
form is positive definite. Its orthogonal complement $W:=V^\perp$ is
negative definite, and $E=V\oplus W$. 
In the Higgs-bundle convention of
\cite[Definition~3.28 and Remark~3.29]{BGG03}, the Toledo invariant is
\[
\tau
=
2\frac{q\deg(V)-p\deg(W)}{p+q}.
\]
Since the total flat bundle satisfies
$\deg(V)+\deg(W)=0$, this becomes
\(
\tau=2\deg(V).
\)
Our bounded-cohomological normalization has the opposite sign; indeed,
\cite[(3.15)]{KPWUnitary} gives
\[
T(\Sigma,\phi)
=
\int_\Sigma\bigl(c_1(W)-c_1(V)\bigr)
=
-2\deg(V).
\]
Thus
\begin{align} \label{eqn:toledo-closed}
T(\phi):=T(\Sigma,\phi)=-2\mathrm{deg}(V).
\end{align}

Fix a Riemann surface structure on $\Sigma$. If $\phi$ is reductive, the
corresponding harmonic reduction gives a \emph{Higgs bundle structure}:

\begin{itemize}
  \item holomorphic structures on $V$ and $W$;
  \item a holomorphic Higgs field
$\Phi\in H^0(\Sigma,\operatorname{End}(E)\otimes K)$ that exchanges the two
summands,
  \begin{align*}
\Phi=\begin{pmatrix}
 0     &  \beta  \\
 \gamma     & 0 
\end{pmatrix},
\end{align*}
\end{itemize}
where $K$ is the canonical bundle of $\Sigma$,
$\beta\colon W\to V\otimes K$, and $\gamma\colon V\to W\otimes K$.
The associated Higgs bundle is polystable of degree zero. We use its
\emph{semistability}: every $\Phi$-invariant coherent subsheaf
$S\subset E$, that is, every subsheaf satisfying
$\Phi(S)\subset S\otimes K$, obeys
\begin{align*}
\mathrm{deg}(S)\le 0.
\end{align*}
 
 If $\gamma=0$, $V$ itself is a $\Phi$-invariant subsheaf, hence $\mathrm{deg}(V)\le 0$. 
 
Otherwise, let $N:=\ker\gamma$ and let $I\subset W$ be the saturation
of $\operatorname{im}(\gamma)\otimes K^{-1}$. Thus $I$ is the smallest
subbundle of $W$ containing the image. Both $N$ and $V\oplus I$ are
$\Phi$-invariant: $\gamma$ vanishes on $N$, $\gamma(V)\subset I\otimes K$,
and $\beta(I)\subset V\otimes K$. Semistability therefore gives
\begin{align} \label{eqn:stability}
\mathrm{deg}(N)\le 0,\quad \mathrm{deg}(I)\le -\mathrm{deg}(V).
\end{align}

The induced morphism $V/N\to I\otimes K$ is generically an isomorphism.
Taking its determinant gives a nonzero holomorphic section of
\begin{align*}
\mathrm{det}((V/N)^*)\otimes \mathrm{det}(I \otimes K)=\mathrm{det}((V/N)^*)\otimes \mathrm{det}(I) \otimes K^{\mathrm{rank}(I)}.
\end{align*}
Therefore this line bundle has nonnegative degree. In other words,
\begin{align}\label{eqn:isomorphism}
-\mathrm{deg}(V)+\mathrm{deg}(N)+\mathrm{deg}(I)+\mathrm{rank}(I)\mathrm{deg}(K)\ge 0.
\end{align}
Combining \eqref{eqn:stability} and \eqref{eqn:isomorphism}, and using
$\operatorname{rk}I\leq p$ and $\deg K=2g-2\geq0$, yields
\begin{align*}
2\mathrm{deg}(V)&\le \mathrm{rank}(I)\mathrm{deg}(K)\le p|\chi(\Sigma)|.
\end{align*}
In both cases, one concludes that
\begin{align*}
T(\phi)\ge p\chi(\Sigma).
\end{align*}
Complex conjugation of the representation reverses the Toledo invariant.
Applying the same argument to the conjugate representation gives
\begin{align*}
T(\phi)\le p|\chi(\Sigma)|.
\end{align*}

\subsection{A cohomological formula for the signature} 
\label{subsec:signature-closed-surface-proof}

For a closed surface, the signature equals minus twice the Toledo
invariant. The preceding argument therefore already gives its sharp bound.
On a punctured surface, however, the signature also contains a boundary
term. Although the kernel--image argument extends to parabolic Higgs
bundles \cite[Proposition~4.2]{GPLM}, a bound for the Toledo invariant alone
does not directly control that boundary term.

We instead express the signature in terms of Higgs cohomology. We explain
this first for a closed surface. The harmonic-bundle K\"ahler identities
identify flat harmonic forms with harmonic forms for
$D^0:=\bar\partial+\Phi$. Because $\Phi$ is off diagonal, its Dolbeault
complex splits into the two summands associated with
\[
\mathcal C_\gamma^\bullet=[V\xrightarrow{\gamma}W\otimes K],
\qquad
\mathcal C_\beta^\bullet=[W\xrightarrow{\beta}V\otimes K],
\]
placed in degrees zero and one. On harmonic one-forms, the Hermitian
intersection form is negative definite on the $\gamma$-summand and positive
definite on the $\beta$-summand. Thus its negative index is
$\dim\mathbb H^1(\mathcal C_\gamma^\bullet)$. Poincar\'e duality pairs
$\mathbb H^2(\mathcal C_\gamma^\bullet)$ with
$\mathbb H^0(\mathcal C_\beta^\bullet)$, so the degree-zero contributions
cancel when the signature is expressed using Euler characteristics. The
result is
\begin{align}\label{eqn:sign-chi}
\operatorname{sign}(\phi)
=-\chi(\Sigma;\mathcal E_\phi)
+2\chi(\mathcal C_\gamma^\bullet).
\end{align}
Here $\mathcal E_\phi$ is the flat local system,
$\chi(\Sigma;\mathcal E_\phi)=(p+q)\chi(\Sigma)$, and
$\chi(\mathcal C_\gamma^\bullet)=\chi(V)-\chi(W\otimes K)$.
The punctured-surface version, including the cancellation in degrees zero
and two, is proved in Lemma~\ref{lem:L2-Hodge-gamma} and
Corollary~\ref{eqn:sign-versus-chi-of-C-gamma} below.

Riemann--Roch gives
$\chi(V)=p(1-g)+\deg V$ and
$\chi(W\otimes K)=q(g-1)+\deg W$. Consequently,
\begin{align*}
\chi(\mathcal{C}^\bullet_\gamma)=(p+q)(1-g)+\deg V-\deg W.
\end{align*}
We now use the kernel and saturated image from
Subsection~\ref{subsec:closed-surface-proof}. Put
$I^0:=\operatorname{im}(\gamma)\otimes K^{-1}$,
$k:=\operatorname{rk}I$, and $Q:=W/I$. The difference between $I^0$ and
its saturation $I$ is measured by the torsion sheaf $I/I^0$; write
$\ell(I/I^0)\geq0$ for its total length. The exact sequences defining
$N$, $I^0$, and $Q$ give
\begin{align*}
\deg V=\deg N+\deg I-k\chi(\Sigma)-\ell(I/I^0),\quad \deg W=\deg I+\deg Q.
\end{align*}
Semistability gives $\deg N\leq0$. It also gives
$\deg(V\oplus I)\leq0$, and therefore
$\deg Q=-\deg(V\oplus I)\geq0$. Hence
\begin{align*}
\deg V-\deg W\le -k\chi(\Sigma).
\end{align*}
It follows that
\begin{align*}
\chi(\mathcal{C}^\bullet_\gamma)\le(p+q)(1-g)-k\chi(\Sigma)=(p+q-2k)(1-g),
\end{align*}
and \eqref{eqn:sign-chi} yields
\begin{align*}
\operatorname{sign}\le -2k\chi(\Sigma)\le 2p|\chi(\Sigma)|.
\end{align*}

\subsection{From boundary circles to marked points}
\label{subsec:signature-punctured-surface}

We now return to a surface with boundary. To repeat the preceding argument,
we need a Higgs complex that computes the cohomology carrying the
nondegenerate intersection form. Its local conditions at the punctures will
produce the boundary corrections to \eqref{eqn:sign-chi}. 

Let \(\overline\Sigma\) be the closed surface
obtained by capping off the boundary components. Choose marked points
\(x_1,\ldots,x_n\) in the caps, identify the interior of \(\Sigma\) with
\(\overline\Sigma\setminus\{x_1,\ldots,x_n\}\), and put
\(
D:=x_1+\cdots+x_n,
\qquad
\Sigma^\circ:=\overline\Sigma\setminus D.
\)

The nonabelian Hodge correspondence now supplies a parabolic Higgs bundle.
Its flags and weights record the growth of the harmonic metric at the
punctures. We will combine its parabolic degree inequalities with an
explicit coherent complex whose hypercohomology is identified, through
$L^2$-harmonic forms, with intersection cohomology.

\begin{remark}[Boundary and puncture orientations]
\label{rem:boundary-puncture-orientation}
Give \(\overline\Sigma\) the complex orientation agreeing with the given
orientation of \(\Sigma\). Let \(\delta_j\) be a small positively oriented
loop about \(x_j\) in a holomorphic coordinate. Under the identification
\(\Sigma^\circ\simeq\operatorname{int}\Sigma\), the loop \(\delta_j\) is
freely homotopic to \(c_j^{-1}\): the induced orientation on the boundary of
a deleted disc is opposite to its positive complex orientation. We therefore
write
\(
L_j:=\phi(c_j)\) and \(
M_j:=\phi(\delta_j)=L_j^{-1}.
\)
The rho invariant term is always \(\rho(L_j)\), whereas all
parabolic weights, residues, and monodromy weight filtrations below are the
local nonabelian-Hodge data attached to \(M_j\).
\end{remark}

\subsection{Normalized parabolic Higgs bundles}

\label{subsec:reductive-higgs-setup}

We first fix the convention for parabolic structures, following
\cite[Section~2]{GPLM}. A parabolic structure on a holomorphic vector
bundle $F$ over $(\overline\Sigma,D)$ consists, at each marked point
$x_j$, of a flag
\[
F|_{x_j}=F_{j,1}\supsetneq F_{j,2}\supsetneq\cdots
\supsetneq F_{j,\ell_j+1}=0
\]
and weights
$0\leq\alpha_{j,1}<\cdots<\alpha_{j,\ell_j}<1$.
The multiplicity of $\alpha_{j,i}$ is
$m_{j,i}:=\dim_{\mathbb C}(F_{j,i}/F_{j,i+1})$.
We write
\[
\operatorname{Gr}_{\alpha_{j,i},x_j}(F)
:=
F_{j,i}/F_{j,i+1},
\]
and put $\operatorname{Gr}_{\lambda,x_j}(F)=0$ when $\lambda$ is not
a weight of $F$ at $x_j$. Thus the graded pieces are subquotients of
the fiber, not the flag subspaces themselves. The parabolic degree is
\[
\operatorname{pardeg}(F)
=
\deg F+\sum_{j=1}^n\sum_{i=1}^{\ell_j}
\alpha_{j,i}m_{j,i}.
\]

For a holomorphic subbundle $S\subset F$, the induced parabolic
structure is defined by the intersections
$S|_{x_j}\cap F_{j,i}$. The quotient $T=F/S$ carries the quotient
parabolic structure, whose flag is obtained by taking the images
$\pi_j(F_{j,i})$ under $\pi_j\colon F|_{x_j}\to T|_{x_j}$.
In both constructions, we retain the weight $\alpha_{j,i}$ precisely
when the corresponding graded quotient is nonzero.

We denote the positive-weight flag step by
\begin{equation}\label{positive-weight flag}
F_{j,>0}:=
\begin{cases}
F_{j,2},&\alpha_{j,1}=0,\\
F|_{x_j},&\alpha_{j,1}>0.
\end{cases}
\end{equation}
If the only weight is zero, the first case gives $F_{j,>0}=0$.
Then
\[
\operatorname{Gr}_{0,x_j}(F)
=
F|_{x_j}/F_{j,>0},
\qquad
\dim F_{j,>0}
=
\sum_{\lambda\in(0,1)}
\dim\operatorname{Gr}_{\lambda,x_j}(F).
\]
For the induced quotient structure, one has
$\pi_j(F_{j,>0})=T_{j,>0}$.

Let \(G=\mathrm{U}(p,q)\), regarded as a real reductive algebraic group,
and let
\(
H_\phi
:=
\overline{\phi(\pi_1(\Sigma))}^{\,\mathrm{Zar}}
\)
be the real Zariski closure of the image of \(\phi\). We call \(\phi\)
\emph{reductive} if \(H_\phi\) is \(G\)-completely reducible: whenever
\(H_\phi\) is contained in a real parabolic subgroup \(P\subset G\), it is
contained in a Levi subgroup of \(P\). For real reductive groups, this
condition is equivalent to the closed-orbit condition for the conjugation
action; see \cite{BMR,RichardsonSlodowy}.

Throughout the rest of this section, we assume that
\(
\phi\colon\pi_1(\Sigma)\to\mathrm{U}(p,q)
\)
is reductive. The correspondence in \cite{BGPMiR20} is formulated using
meromorphic equivalence classes. For our degree and length calculations,
we need actual vector bundles on $\overline\Sigma$. We therefore first
choose the representative with standard weights in $[0,1)$ and verify that
its Higgs field is logarithmic.

Let
\(
H=\mathrm{U}(p)\times\mathrm{U}(q)
\)
be the standard maximal compact subgroup of \(G\). Its complexification and
the complexified isotropy representation are
\[
H^{\mathbb C}
=
\mathrm{GL}(p,\mathbb C)\times\mathrm{GL}(q,\mathbb C),
\quad
\mathfrak m^{\mathbb C}
=
\operatorname{Hom}(W,V)
\oplus
\operatorname{Hom}(V,W).
\]
Here $V\cong\mathbb C^p$ and $W\cong\mathbb C^q$ denote, respectively, the positive and negative subspaces in the standard orthogonal decomposition $\mathbb C^{p+q}=V\oplus W$ of the Hermitian space of signature $(p,q)$. The two factors of $H^{\mathbb C}$ act on $V$ and $W$ through their standard representations.

Let \(\mathbf C=(C_1,\ldots,C_n)\), where \(C_j\) is the conjugacy class of
the positively oriented puncture monodromy
\(M_j=\phi(c_j)^{-1}\). In the notation of \cite{BGPMiR20}, a
representation with prescribed puncture-monodromy conjugacy
classes belongs to
\(
\mathcal R(\mathbf C)
=
\mathcal S(0,\mathbf C);
\)
see \cite[Proposition~7.9]{BGPMiR20}. The local correspondence between
monodromy data and Higgs-bundle data is described in
\cite[Table~1 and Proposition~6.8]{BGPMiR20}, while the global
correspondence follows from
\cite[Proposition~6.3, Theorem~6.5, and Theorem~7.10]{BGPMiR20}.

The \emph{group-theoretic lattice} is the local lattice supplied by the
parabolic nonabelian-Hodge correspondence of \cite{BGPMiR20}, with
weights $\alpha_j^{\mathrm B}$ normalized so that the compact elliptic
part of the positively oriented puncture monodromy is
$\exp(2\pi\sqrt{-1}\alpha_j^{\mathrm B})$. The
\emph{standard lattice} is obtained from this lattice by the integral
Hecke transformation which replaces each group-theoretic weight
$\theta\in[0,1)$ by the usual vector-bundle weight
$\{-\theta\}\in[0,1)$. This transformation preserves parabolic degrees. The adapted harmonic
metric also gives polystability in the standard lattice. The ordinary
residue and the saturation defects, however, may change when the lattice
changes.

We shall use the following consequence in a form adapted to
\(\mathrm{U}(p,q)\). 

\begin{proposition}
\label{prop:normalized-logarithmic-representative}
Let
\(
\phi\colon\pi_1(\Sigma)\to\mathrm{U}(p,q)
\)
be reductive, with arbitrary boundary holonomy. Then the parabolic
nonabelian-Hodge correspondent of \(\phi\) has a distinguished
standard-vector-bundle representative
\[
(E=V\oplus W,\Phi)
\]
on \((\overline\Sigma,D)\) with the following properties.

\begin{enumerate}
\item
The bundles \(V\) and \(W\) are ordinary holomorphic vector bundles on
\(\overline\Sigma\), equipped with parabolic flags and standard
vector-bundle weights in \([0,1)\), and
\(
\operatorname{rk}V=p,
\) \(
\operatorname{rk}W=q.
\)

\item
The Higgs field is logarithmic and weakly parabolic, of the form
\[
\Phi
=
\begin{pmatrix}
0&\beta\\
\gamma&0
\end{pmatrix},
\quad
\beta\colon W\to
V\otimes K_{\overline\Sigma}(D),
\quad
\gamma\colon V\to
W\otimes K_{\overline\Sigma}(D).
\]
Thus its ordinary residue preserves the parabolic filtration, and its
strictly parabolic part vanishes on the associated graded.
\item The distinguished representative is obtained from the
group-theoretic lattice of \cite{BGPMiR20} by an integral Hecke
transformation to the standard vector-bundle lattice. The transformation
preserves the parabolic degree of every reduction, and the standard
representative is polystable. All kernels, images, saturations, ordinary
residues, and local lengths below are taken in this standard lattice.

\item
The parabolic degrees satisfy
\[
\pardeg(V)+\pardeg(W)=0.
\]
Here, for standard weights \(c_{j,\ell}\in[0,1)\), counted with
multiplicity,
\(
\pardeg(F):=\deg(F)+\sum_{j,\ell}c_{j,\ell}.
\)

\end{enumerate}
\end{proposition}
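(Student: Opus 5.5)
Our plan is to read the proposition off the parabolic nonabelian Hodge correspondence of \cite{BGPMiR20} specialized to $G=\mathrm{U}(p,q)$, and then to move from the group-theoretic lattice to the standard lattice by a Hecke transformation.

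First, since $\phi$ is reductive, \cite[Proposition~7.9, Theorem~7.10]{BGPMiR20} places $\phi$ in $\mathcal R(\mathbf C)=\mathcal S(0,\mathbf C)$. The corresponding object is a polystable parabolic $(H^{\mathbb C},\mathfrak m^{\mathbb C})$-Higgs pair of parabolic degree zero. Since $H^{\mathbb C}=\mathrm{GL}(p,\mathbb C)\times\mathrm{GL}(q,\mathbb C)$, the underlying principal bundle is a pair of parabolic vector bundles $(V,W)$ of ranks $p,q$. The $\mathfrak m^{\mathbb C}$-valued Higgs field splits into $\beta\in\operatorname{Hom}(W,V)$ and $\gamma\in\operatorname{Hom}(V,W)$, with values in $K_{\overline\Sigma}(D)$. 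The local models \cite[Table~1, Proposition~6.8]{BGPMiR20} show that $\Phi$ has at most a simple pole and that its residue lies in the parabolic subalgebra determined by the weights. This is the weakly parabolic condition in (2); its nilpotent part on the graded pieces corresponds to the unipotent part of $M_j$. The degree-zero (toplevel) condition on $\mathrm{U}(p,q)$, applied to the centre $\mathrm{U}(1)$ acting by scalars on $V\oplus W$, gives $\pardeg V+\pardeg W=0$ in the group-theoretic lattice.

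Second, at each $x_j$ we perform the integral Hecke modification $F\mapsto F'$. We keep the sections whose growth matches the weight $\theta$, and we reassign the weight $\{-\theta\}$. Concretely, for each positive group weight $\theta$ we replace the corresponding graded piece by its twist by $\mathcal O(-x_j)$ (or $\mathcal O(x_j)$, depending on the convention). Then $\deg$ changes by $\mp\dim\operatorname{Gr}_\theta$ while the weight sum changes by $\pm\dim\operatorname{Gr}_\theta$, because $\{-\theta\}=1-\theta$ for $\theta>0$. Thus parabolic degree is preserved piece by piece. The same calculation applies to every $\Phi$-invariant reduction with its induced flag, so the parabolic degree of every reduction is preserved. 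Polystability of the standard representative follows, and it can also be seen from the adapted harmonic metric as noted above. Applying this separately to $V$ and $W$ gives (1), (3) and (4). The weights are now standard in $[0,1)$, and $\pardeg V+\pardeg W=0$.

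The main obstacle is to check that after the Hecke transformation the Higgs field remains logarithmic and weakly parabolic, rather than acquiring a double pole or losing compatibility with the new flags. Our first approach is local. In a frame adapted to the weights, $\Phi$ maps the weight-$\theta$ piece into weight $\geq\theta$ pieces modulo $z$, because of its boundedness with respect to the harmonic metric. After the modification, the matrix entry from weight $\theta$ to weight $\theta'$ is multiplied by $z^{\epsilon}$ with $\epsilon\in\{-1,0,1\}$. A negative power can occur only when $\theta'<\theta$, and in that case the original entry already vanishes at $x_j$. Hence the new entries have at most a simple pole, and the new residue preserves the reversed-order standard flag. The strictly parabolic part on the associated graded is the conjugate of the old nilpotent part and so still vanishes on the graded pieces. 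This makes the representative distinguished: it is uniquely determined by requiring standard weights in $[0,1)$.
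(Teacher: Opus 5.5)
Your proposal is correct and follows essentially the paper's route: start from the group-theoretic representative of \cite{BGPMiR20}, apply the integral Hecke transformation $e=z^{\varepsilon(\theta)}e^{\mathrm B}$ (so on each positive-weight direction the degree drops by one and the weight contribution rises from $-\theta$ to $1-\theta$), transport the parabolic degrees of reductions and the determinant-character identity, obtain polystability from the unchanged harmonic metric, and check entrywise that the factor $z^{\varepsilon(\theta_s)-\varepsilon(\theta_t)}$ produces $z^{-1}$ only on entries that already vanish at $x_j$. One convention slip: with the group weights $\theta\in[0,1)$ fixed in your second step, both inequalities in your local check are reversed (the residue sends group weight $\theta_s$ into weights $\theta_t\le\theta_s$, and $z^{-1}$ occurs exactly when $\theta_s=0<\theta_t$), which leaves the conclusion intact because you reversed both; you should also add that the factor $z^{+1}$ for $\theta_s>0=\theta_t$ kills exactly the residue entries that would otherwise send standard weight $1-\theta_s>0$ to the smaller weight $0$, which is needed for the residue to preserve the standard flag.
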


\begin{proof}
Put \(M_j:=\phi(c_j)^{-1}\), the positively oriented puncture monodromy.
After conjugating its compact elliptic factor into
\(\mathrm U(p)\times\mathrm U(q)\), write the group-theoretic weight of
\cite{BGPMiR20} as
\[
\alpha_j^{\mathrm B}
=
\operatorname{diag}
\bigl(
\theta^V_{j,1},\ldots,\theta^V_{j,p},
\theta^W_{j,1},\ldots,\theta^W_{j,q}
\bigr),
\qquad
0\leq\theta^V_{j,r},\theta^W_{j,s}<1.
\]
The convention of \cite[Remark~2.8]{BGPMiR20} associates
\(\alpha_j^{\mathrm B}\) to \(M_j\) through
\(\exp(2\pi\sqrt{-1}\alpha_j^{\mathrm B})\); it is not the standard vector-bundle convention. 
For \(0\leq\theta<1\), define
\[
\varepsilon(\theta):=
\begin{cases}
0,&\theta=0,\\
1,&0<\theta<1,
\end{cases}
\quad
a(\theta):=\varepsilon(\theta)-\theta=\{-\theta\}.
\]
The standard weights of \(V\) and \(W\) are
\(
a_{j,r}:=a(\theta^V_{j,r}),
\) \(b_{j,s}:=a(\theta^W_{j,s}).
\)
Equivalently, these are the eigenangles in \([0,1)\) of the compact
elliptic factor of the boundary holonomy \(\phi(c_j)\).

Let \(\varepsilon_j\) be obtained by applying \(\varepsilon\) entrywise to
\(\alpha_j^{\mathrm B}\), and put
\[
\widetilde\alpha_j
:=\alpha_j^{\mathrm B}-\varepsilon_j
=-\operatorname{diag}
\bigl(a_{j,1},\ldots,a_{j,p},b_{j,1},\ldots,b_{j,q}\bigr).
\]
Then
\(\exp(2\pi\sqrt{-1}\widetilde\alpha_j)
=\exp(2\pi\sqrt{-1}\alpha_j^{\mathrm B})\).
This is the integral Hecke transformation of
\cite[Section~3.3, Theorem~3.5]{BGPMiR20}.

If \(e^{\mathrm B}_{j,\ell}\) is a compatible local frame of
group-theoretic weight \(\theta_{j,\ell}\), the standard lattice is
generated by
\(
e_{j,\ell}
=z^{\varepsilon(\theta_{j,\ell})}e^{\mathrm B}_{j,\ell}.
\)
This Hecke modification is determined by the filtration and the integral
cocharacter \(-\varepsilon_j\), hence is independent of the compatible
splitting. The tame metric has the corresponding growth exponent
$a(\theta_{j,\ell})$: for a frame also adapted to the monodromy weight
filtration, its model estimate is
\[
\|e_{j,\ell}\|_h^2
\asymp
|z|^{2a(\theta_{j,\ell})}
|\log|z||^{\kappa_{j,\ell}}
\]
for an integer $\kappa_{j,\ell}$; see
\cite[Section~5.1, especially~(5.8)]{BGPMiR20}. The logarithmic factor
does not affect the parabolic weight, so the standard weights are
precisely the $a(\theta_{j,\ell})$.

We next verify logarithmicity. Suppose that a local Higgs-field coefficient
maps a source direction of group weight \(\theta_s\) to a target direction
of group weight \(\theta_t\), and set \(\mu=\theta_t-\theta_s\). In the
group-theoretic lattice,
\begin{equation}
\label{eq:parabolic-isotropy-sheaf}
{P}\!E^{\mathrm B}(\mathfrak m^{\mathbb C})
\cong
\bigoplus_{\mu}
\mathfrak m^{\mathbb C}_\mu
\otimes
\mathcal O_{\overline\Sigma}
\bigl(\lfloor-\mu\rfloor x_j\bigr);
\end{equation}
see \cite[(4.1)]{BGPMiR20}. Since \(-1<\mu<1\), its coefficient with
respect to \(dz/z\) has vanishing order at least
\(\mathbf 1_{\{\theta_t>\theta_s\}}\). In standard frames its vanishing
order is therefore at least
\(
\nu_{ts}
=\mathbf 1_{\{\theta_t>\theta_s\}}
+\varepsilon(\theta_s)-\varepsilon(\theta_t).
\)
Equivalently,
\[
\nu_{ts}=\mathbf 1_{\{a(\theta_t)<a(\theta_s)\}}.
\]
Thus every coefficient of $dz/z$ is holomorphic, and its residue vanishes
whenever the target standard weight is smaller than the source standard
weight. This proves both logarithmicity and preservation of the standard
parabolic filtration. The unequal-weight part of the residue is strictly
parabolic; the equal-weight part induces the graded residue. The Hecke transformation
preserves the intrinsic graded-residue orbit, but may change the ordinary
residue and the associated saturation lengths.

Finally, if \(F^{\mathrm B}\) is one of the two group-theoretic lattices and
\(F\) its standard Hecke transform, then
\(
\deg(F)=\deg(F^{\mathrm B})
-\sum_{j,\ell}\varepsilon(\theta_{j,\ell}).
\)
Consequently,
\[
\begin{aligned}
\pardeg(F)
&=\deg(F)+\sum_{j,\ell}
(\varepsilon(\theta_{j,\ell})-\theta_{j,\ell})=\deg(F^{\mathrm B})-\sum_{j,\ell}\theta_{j,\ell}.
\end{aligned}
\]
The last expression is the parabolic degree in the group-theoretic
convention of \cite[Remark~2.8 and Equation~(2.4)]{BGPMiR20}.

The same equality holds for every parabolic reduction, and we record the
reason because it is needed later. By \cite[Lemma~3.7]{BGPMiR20}, the
integral Hecke transformation gives a bijection
\(
\sigma^{\mathrm B}\leftrightarrow\sigma
\)
between reductions of the group-theoretic and standard representatives to
any fixed parabolic subgroup. Let \(\chi\) be an integral antidominant
character and let \(L^{\mathrm B}_{\sigma,\chi}\) and
\(L_{\sigma,\chi}\) be the associated character lines. Near \(x_j\), the
Hecke transformation changes a compatible local frame by
\(
e_{\sigma,\chi}
=
z^{m_j(\sigma,\chi)}e^{\mathrm B}_{\sigma,\chi},
m_j(\sigma,\chi)\in\mathbb Z.
\)
Consequently,
\(
\deg L_{\sigma,\chi}
=
\deg L^{\mathrm B}_{\sigma,\chi}
-
\sum_j m_j(\sigma,\chi),
\)
whereas the corresponding real character weight changes from
\(\theta_j(\sigma,\chi)\) to
\(m_j(\sigma,\chi)-\theta_j(\sigma,\chi)\). Therefore the two integer shifts
cancel and
\begin{equation}
\label{eq:Hecke-preserves-reduction-pardeg}
\pardeg_{\widetilde\alpha}(E,\sigma,\chi)
=
\pardeg_{\alpha^{\mathrm B}}
(E^{\mathrm B},\sigma^{\mathrm B},\chi).
\end{equation}
The equality extends by linearity to the real antidominant characters used
in the stability criterion. Thus the numerical stability inequalities are
unchanged.

For polystability, we use the metric characterization rather than infer
that every Levi reduction extends across the Hecke modification. The
harmonic metric on $\Sigma^\circ$ is unchanged; in the new frames it has
the adapted growth for $\widetilde\alpha_j$ and still satisfies the Hitchin
equations. The metric-to-polystability implication in
\cite[Theorem~5.1]{BGPMiR20} therefore proves polystability of the standard
representative.

The total determinant is a character of $\mathrm U(p,q)$ itself. At
stability parameter zero, the character degree identity preceding
\cite[Theorem~5.1]{BGPMiR20} gives it parabolic degree zero. Hence
\(
\pardeg(V)+\pardeg(W)=0.
\)
\end{proof}

\begin{remark}
\label{rem:Toledo-parabolic-degree}
Let $(E=V\oplus W,\Phi)$ be the parabolic
$\mathrm{U}(p,q)$-Higgs bundle associated with $\phi$. Consider the
determinant-ratio character
\[
\chi_0(h_+,h_-)
:=
\det(h_-)\det(h_+)^{-1},\,
(h_+,h_-)\in
\mathrm{GL}(p,\mathbb C)\times\mathrm{GL}(q,\mathbb C).
\]
Its associated character line is
\(
L_0:=\det(W)\otimes\det(V)^{-1}.
\)
If $a_{j,1},\ldots,a_{j,p}$ and
$b_{j,1},\ldots,b_{j,q}$ are the standard parabolic weights of $V$
and $W$ at $x_j$, respectively, put
\(
\lambda_j^0
:=
\sum_{s=1}^q b_{j,s}
-
\sum_{r=1}^p a_{j,r}.
\)
Then
\[
\begin{aligned}
\operatorname{pardeg}(E,\chi_0)
&:=
\deg L_0+\sum_{j=1}^n\lambda_j^0=
\operatorname{pardeg}(W)-\operatorname{pardeg}(V).
\end{aligned}
\]

With the boundary orientation and the standard weight convention fixed
above, the rotation-number boundary formula
\cite[(5.2)]{KPWUnitary}, together with the formula for the rotation number on
$\mathrm{U}(p)\times\mathrm{U}(q)$
\cite[Lemma~5.6]{KPWUnitary} and the ordinary clutching formula for $L_0$,
gives
\[
T(\Sigma,\phi)
=
\operatorname{pardeg}(E,\chi_0)
=
\operatorname{pardeg}(W)-\operatorname{pardeg}(V)=-2\operatorname{pardeg}(V).
\]
Equivalently, the parabolic Toledo invariant in the Higgs-bundle
normalization of \cite[Definition~4.1]{GPLM} is
\[
\tau_{\mathrm{par}}(E,\Phi)
=
\frac{2}{p+q}
\left(
q\,\operatorname{pardeg}(V)
-
p\,\operatorname{pardeg}(W)
\right)
=
2\operatorname{pardeg}(V),
\]
and hence, with our normalization,
\[
T(\Sigma,\phi)=-\tau_{\mathrm{par}}(E,\Phi).
\]
The hyperbolic and unipotent factors of the peripheral holonomies do not
alter this formula: the rotation number depends only on the compact
elliptic factor, while the character weights on the Higgs-bundle side are
determined by the same factor. When $D=\varnothing$, parabolic degrees
reduce to ordinary degrees, and the identity becomes
\(
T(\Sigma,\phi)=-2\deg(V),
\)
in agreement with~\eqref{eqn:toledo-closed}.
\end{remark}

\begin{remark}
\label{rem:normalized-representative-essential}
The Hecke modification cannot be omitted. For example, suppose locally that
a \(V\)-direction has group-theoretic weight \(\theta\in(0,1)\), a
\(W\)-direction has weight \(0\), and
\(
\gamma(e_V^{\mathrm B})=e_W^{\mathrm B}\frac{dz}{z}.
\)
In the group-theoretic lattice, this channel has nonzero ordinary residue.
In the standard lattice, \(e_V=ze_V^{\mathrm B}\) and
\(e_W=e_W^{\mathrm B}\), so
\(
\gamma(e_V)=ze_W\frac{dz}{z}.
\)
Its ordinary residue is zero, while its image has a length-one saturation
defect. Thus all kernels, images, saturations, ordinary residues, and local
lengths used below must be formed in the standard lattice of
Proposition~\ref{prop:normalized-logarithmic-representative}; they cannot be
combined with ordinary-residue data from the unmodified group-theoretic
lattice.
\end{remark}

From now on, \(V\), \(W\), \(\beta\), and \(\gamma\) always denote the
standard-lattice objects of
Proposition~\ref{prop:normalized-logarithmic-representative}, and every
subsequent sheaf-theoretic construction and ordinary residue is taken in
this single lattice.

\subsection{The \texorpdfstring{\(L^2\)}{L2} Dolbeault complex associated
with the tame harmonic bundle}
\label{subsec:L2-Dolbeault-complex}

We next construct the two-term complex that replaces
$[V\xrightarrow{\gamma}W\otimes K]$ in the presence of punctures. Its
local modifications are determined by the nilpotent residue at parabolic
weight zero. They are exactly the corrections needed for the signature
formula. 

Let \(\Sigma^\circ=\overline\Sigma\setminus D\), and let
\(\mathscr E=V\oplus W\) be the normalized logarithmic lattice of the tame
harmonic bundle associated with \(\phi\). Here a logarithmic lattice means a locally free \(\mathcal O_{\overline\Sigma}\)-extension of the holomorphic bundle on \(\Sigma^\circ\), inside its sheaf of meromorphic sections, for which the Higgs field extends as a logarithmic morphism
\(\Phi\colon\mathscr E\to\mathscr E\otimes K_{\overline\Sigma}(D)\).
The adapted harmonic metric determines a parabolic growth filtration, and the lattice is called normalized when all its parabolic weights are represented in \([0,1)\).
The \(U(p,q)\)-decomposition is preserved by this extension and gives \(\mathscr E=V\oplus W\).
Choose a complete K\"ahler metric on $\Sigma^\circ$ that agrees with a
Poincar\'e metric near each puncture. All $L^2$ norms below use this metric
and the adapted harmonic metric $h$ on $\mathscr E$. Put
\[
D^\lambda
=
\overline\partial_{\mathscr E}+\Phi
+
\lambda\bigl(\partial_{\mathscr E,h}+\Phi_h^\dagger\bigr),
\qquad \lambda\in\mathbb C.
\]
Thus \(D^0=\overline\partial_{\mathscr E}+\Phi\) is the Higgs
Dolbeault differential, whereas \(D^1\) is the flat connection defining
\(\mathcal E_\phi\). Let
\(\mathscr L_{(2)}^\bullet(D^\lambda)\) denote its analytic maximal graph-domain
\(L^2\)-complex and \(H_{(2)}^k(D^\lambda)\) the cohomology of global
sections.

Two comparisons used below must be distinguished. The algebraic complex
constructed below is a sheaf-theoretic model for
\(\mathscr L_{(2)}^\bullet(D^0)\). The comparison with the flat local system
is instead made on global cohomology, through common \(L^2\)-harmonic
representatives. The harmonic-bundle K\"ahler identities give
\(
\Delta_\lambda=(1+|\lambda|^2)\Delta_0;
\)
see \cite[Section~2]{Simpson} and
\cite[Section~6.2.2, especially Proposition~6.5 and
Corollary~6.6]{MochizukiL2Complexes}. Hence \(D^0\) and \(D^1\) have the same
\(L^2\)-harmonic forms. At \(\lambda=1\),
\cite[Theorem~1.3]{MochizukiL2Complexes}, together with the
Riemann--Hilbert correspondence, identifies flat \(L^2\)-cohomology with
intersection cohomology. We do not assert a sheaf-level quasi-isomorphism
between the Higgs complex at \(\lambda=0\) and the intersection complex.
The explicit \(W_0/W_{-2}\) formula from
\cite[Section~3.2, equation~(2)]{DPS} will be used only on the nilpotent
zero-primary part of the residue.

We now recall this algebraic description in the present notation. Fix a marked
point \(x_j\). Let \(\operatorname{Gr}_{0,j}(\mathscr E)\) be the parabolic
graded fiber of weight \(0\) at \(x_j\); if the weight \(0\) does not occur, we
put \(\operatorname{Gr}_{0,j}(\mathscr E)=0\). The graded residue preserves
this space. Write
\[
\left.
\operatorname{GrRes}_{x_j}(\Phi)
\right|_{\operatorname{Gr}_{0,j}(\mathscr E)}
=
S_j+N_j,
\qquad
[S_j,N_j]=0,
\]
where \(S_j\) is semisimple and \(N_j\) is nilpotent; see
\cite[Section~4.1]{BGPMiR20}. Let \(G_j:=\ker S_j\) be the zero-primary part
of \(S_j\). Then \(N_j\) preserves \(G_j\). We denote by
\(W_\bullet G_j=W_\bullet(N_j)\) the monodromy weight filtration of \(N_j\),
centered at zero, so that \(N_j(W_\ell G_j)\subset W_{\ell-2}G_j\).

Let \(i_j:\{x_j\}\hookrightarrow\overline\Sigma\) be the inclusion. Evaluation
at \(x_j\), followed by projection to the weight-zero graded fiber and then to
\(G_j\), gives a canonical map \(q_j:\mathscr E\to(i_j)_*G_j\). Define two
coherent subsheaves of \(\mathscr E\) by
\begin{equation}
\label{eq:def-L2-lattice-A}
\mathcal A
:=
\ker(
\mathscr E
\longrightarrow
\bigoplus_{j=1}^{n}(i_j)_*
\bigl(G_j/W_0G_j\bigr)
)
\end{equation}
and
\begin{equation}
\label{eq:def-L2-lattice-B}
\mathcal B
:=
\ker(
\mathscr E
\longrightarrow
\bigoplus_{j=1}^{n}(i_j)_*
\bigl(G_j/W_{-2}G_j\bigr)
),
\end{equation}
where the maps are induced by the \(q_j\)'s. Equivalently, near \(x_j\),
\(\mathcal A_{x_j}=\{s\in\mathscr E_{x_j}:q_j(s)\in W_0G_j\}\) and
\(\mathcal B_{x_j}=\{s\in\mathscr E_{x_j}:q_j(s)\in W_{-2}G_j\}\). Away from
\(D\), both sheaves are equal to \(\mathscr E\).

These two sheaves will define a coherent model for the analytic
$L^2$ complex. On a summand where the semisimple residue is nonzero, the
residue is invertible, so the logarithmic complex is locally contractible
near the puncture. On such a summand, the individual sheaves of this
model need not coincide with the sheaves of $L^2$ sections; the comparison
is a quasi-isomorphism of complexes. Since \(W_{-2}G_j\subset W_0G_j\), we have
\begin{equation}
\label{eq:L2-lattice-inclusions}
\mathscr E(-D)\subseteq\mathcal B\subseteq\mathcal A\subseteq\mathscr E.
\end{equation}
Indeed, sections of $\mathscr E(-D)$ vanish at every marked point, so
they satisfy both defining conditions. The quotients of $\mathscr E$ by
$\mathcal A$ and $\mathcal B$ are supported on $D$. Both kernels are
coherent and torsion-free, hence locally free because $\overline\Sigma$
is a smooth curve.

Let \(\vartheta=\operatorname{id}_V\oplus(-\operatorname{id}_W)\). Since the
Higgs field is off diagonal, \(\vartheta\Phi\vartheta^{-1}=-\Phi\). Hence the
zero-primary spaces $G_j$ are preserved by $\vartheta$. On $G_j$,
conjugation by $\vartheta$ takes $N_j$ to $-N_j$. Multiplying a nilpotent
operator by $-1$ leaves its monodromy weight filtration unchanged, so
$\vartheta$ preserves that filtration as well. It follows that
\[
\mathcal A=\mathcal A_V\oplus\mathcal A_W,
\qquad
\mathcal B=\mathcal B_V\oplus\mathcal B_W,
\]
where \(\mathcal A_V=\mathcal A\cap V\), \(\mathcal A_W=\mathcal A\cap W\),
\(\mathcal B_V=\mathcal B\cap V\), and \(\mathcal B_W=\mathcal B\cap W\).
Thus \(V(-D)\subseteq\mathcal B_V\subseteq\mathcal A_V\subseteq V\), and
similarly \(W(-D)\subseteq\mathcal B_W\subseteq\mathcal A_W\subseteq W\).

The differential has the required target. Indeed, if
$\Phi=R_j(z)\,dz/z$ near $x_j$, preservation of the parabolic filtration
and the primary decomposition gives
$q_j(R_j(z)s)=N_jq_j(s)$. Since
$N_j(W_0G_j)\subset W_{-2}G_j$, we obtain
$\Phi(\mathcal A)\subset\mathcal B\otimes K_{\overline\Sigma}(D)$.
We can therefore define the coherent complex
\begin{equation}
\label{eq:L2-total-Dolbeault-complex}
\mathcal C_{L^2}^{\bullet}
=
\left[
\mathcal A_V\oplus\mathcal A_W
\xrightarrow{\Phi}
(\mathcal B_V\oplus\mathcal B_W)\otimes K_{\overline\Sigma}(D)
\right].
\end{equation}
Since \(\Phi\) is off diagonal, this complex splits as
\begin{equation}
\label{eq:L2-gamma-beta-splitting}
\mathcal C_{L^2}^{\bullet}
=
\mathcal C_\gamma^\bullet
\oplus
\mathcal C_\beta^\bullet,
\end{equation}
where
\[
\mathcal C_\gamma^\bullet
=
\left[
\mathcal A_V
\xrightarrow{\gamma}
\mathcal B_W\otimes K_{\overline\Sigma}(D)
\right],
\qquad
\mathcal C_\beta^\bullet
=
\left[
\mathcal A_W
\xrightarrow{\beta}
\mathcal B_V\otimes K_{\overline\Sigma}(D)
\right].
\]

The comparison of this coherent complex with the analytic maximal
\(L^2\)-complex is the only analytic input needed below. The local lattice
comparison and the punctured-curve holomorphic \(L^2\) identification are
collected in Appendix~\ref{app:L2-minimal-extension}.

\begin{definition}
Let \(h_0(\phi):=\dim H^0(\Sigma;\mathcal E_\phi)\), let
\(h_\partial(\phi):=\dim H^0(\partial\Sigma;\mathcal E_\phi)\), and let
\(h^-(\phi)\) denote the negative index of the Hermitian intersection form
\(iQ_\phi\).
\end{definition}

\begin{lemma}
\label{lem:L2-Hodge-gamma}
One has
\begin{equation}
\label{eq:hminus-gamma-hypercohomology}
h^-(\phi)
=
\dim\mathbb H^1
\bigl(\overline\Sigma,\mathcal C_\gamma^\bullet\bigr),
\end{equation}
and
\begin{equation}
\label{eq:hminus-h0-Euler-gamma}
h^-(\phi)-h_0(\phi)
=
-\chi(\mathcal C_\gamma^\bullet).
\end{equation}
Here $\mathbb H^k(\overline\Sigma,\mathcal C_\gamma^\bullet)$
denotes the hypercohomology of the two-term complex
$\mathcal C_\gamma^\bullet$, computed as the cohomology of its
global Dolbeault total complex.
\end{lemma}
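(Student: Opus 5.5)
The plan is to identify both sides through $L^2$-harmonic forms on $\Sigma^\circ$ with the complete Poincaré-type metric. First I will invoke the comparison of Appendix~\ref{app:L2-minimal-extension}. It says that the coherent complex $\mathcal C_{L^2}^\bullet$ of \eqref{eq:L2-total-Dolbeault-complex} is quasi-isomorphic to the sheaf complex $\mathscr L_{(2)}^\bullet(D^0)$, so that $\mathbb H^k(\overline\Sigma,\mathcal C_{L^2}^\bullet)\cong H^k_{(2)}(D^0)$. I will then check that the $\vartheta$-grading on the analytic side matches the splitting \eqref{eq:L2-gamma-beta-splitting}. The operator $\vartheta=\mathrm{id}_V\oplus(-\mathrm{id}_W)$ anticommutes with $\Phi$ and commutes with $\bar\partial$ and with $h$. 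Consequently $D^0$ preserves the decomposition into ``$V$-forms of degree $0$ plus $W$-forms of degree $1$'' (the $\gamma$-part) and its complement (the $\beta$-part). Hence $\mathcal H^k_{(2)}(D^0)=\mathcal H^k_\gamma\oplus\mathcal H^k_\beta$, with $\mathcal H^k_\gamma\cong\mathbb H^k(\mathcal C_\gamma^\bullet)$.

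Second, I will transport this to the flat side. Since $\Delta_1=2\Delta_0$, the $D^1$- and $D^0$-harmonic $L^2$ forms coincide. By Mochizuki's theorem together with Riemann--Hilbert, $\mathcal H^1_{(2)}(D^1)\cong IH^1=\widehat H^1(\Sigma;\mathcal E_\phi)$. On a curve, the intersection cohomology of the minimal extension in degree one is exactly the image of relative in absolute cohomology. On harmonic $1$-forms, $iQ_\phi(\alpha,\alpha')=i\int\Omega(\alpha\wedge\alpha')$. I will compute its sign on each summand pointwise. In the $\gamma$-summand, a harmonic $1$-form is a $W$-valued $(1,0)$-form plus a $V$-valued $(0,1)$-form, and on both pieces $i\,\Omega(\alpha\wedge\bar\alpha)$ is a negative multiple of the area form: $\Omega$ is negative on $W$ and $i\,dz\wedge d\bar z>0$, while $\Omega$ is positive on $V$ and $i\,d\bar z\wedge dz<0$. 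In the $\beta$-summand both signs flip. The two summands are $iQ_\phi$-orthogonal by type and by the orthogonality of $V$ and $W$. This gives \eqref{eq:hminus-gamma-hypercohomology}.

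Third, for \eqref{eq:hminus-h0-Euler-gamma}, write $\chi(\mathcal C_\gamma^\bullet)=h^0_\gamma-h^1_\gamma+h^2_\gamma$, where $h^k_\gamma:=\dim\mathbb H^k(\overline\Sigma,\mathcal C_\gamma^\bullet)$; it then suffices to show $h^0_\gamma+h^2_\gamma=h_0(\phi)$. Degree-zero $L^2$ harmonic sections for $D^0$ are $D^1$-parallel $L^2$ sections, and these are all flat sections, because the flat sections have at most logarithmic growth under the tame metric, which is $L^2$ for the Poincaré volume. Thus $\mathcal H^0_\gamma\oplus\mathcal H^0_\beta\cong H^0(\Sigma;\mathcal E_\phi)$. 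The Hodge star composed with $\Omega$-duality (or Serre duality for the $L^2$ complex, i.e.\ the duality $\mathcal A^\vee\otimes K(D)\leftrightarrow\mathcal B\otimes K(D)$ exchanging $W_0$ and $W_{-2}$ conditions) then pairs $\mathcal H^2_\gamma$ with $\mathcal H^0_\beta$. This is the step I expect to be the main obstacle. The lattices $\mathcal A,\mathcal B$ must be verified to be mutually dual under the residue pairing. Concretely, one needs $(G_j/W_0)^\vee\cong W_{-2}$-type identifications using the $\Omega$-pairing, which is compatible with $N_j$ and reverses the weight filtration. Alternatively, I would argue analytically via the Hodge star, which maps $L^2$ harmonic forms to $L^2$ harmonic forms and swaps the $\gamma$- and $\beta$-types. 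Granting the pairing, $h^2_\gamma=h^0_\beta$, so $h^0_\gamma+h^2_\gamma=h_0(\phi)$, and combining with \eqref{eq:hminus-gamma-hypercohomology} yields \eqref{eq:hminus-h0-Euler-gamma}.
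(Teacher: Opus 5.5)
Your proposal is correct and follows essentially the paper's proof. Common $L^2$-harmonic representatives split along $\mathcal C_\gamma^\bullet\oplus\mathcal C_\beta^\bullet$, and your pointwise sign check of $i\,\Omega(\alpha\wedge\bar\alpha)$ is the paper's computation $*J=-1$ on the $\gamma$-summand. Degree zero is identified with $H^0(\Sigma;\mathcal E_\phi)$, which the paper does more cleanly via $IH^0\cong H^0$. Your justification there is loose: logarithmic growth alone does not give $L^2$ for the Poincar\'e volume, although invariant flat sections are in fact bounded. The step you grant, $h^2_\gamma=h^0_\beta$, is obtained in the paper from the $\Omega$-pairing $IH^0\times IH^2\to\mathbb C$ together with $V\perp W$. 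On harmonic representatives this is exactly your Hodge-star argument: $*$ carries the $W$-valued space $\mathcal H^2_\gamma$ onto the $W$-valued space $\mathcal H^0_\beta$. So the Serre-duality route through the lattices $\mathcal A,\mathcal B$ is unnecessary.
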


\begin{proof}
Use the common \(L^2\)-harmonic representatives from
Proposition~\ref{prop:appendix-general-residue-L2-model}. The orthogonal splitting
\(
\mathcal C_{L^2}^{\bullet}
=\mathcal C_\gamma^\bullet\oplus\mathcal C_\beta^\bullet
\)
induces the corresponding splitting of the \(D^0\)-harmonic spaces.

Let \(J\) act by \(i\) on \(V\) and by \(-i\) on \(W\). In a local
holomorphic coordinate \(z\), with our orientation convention,
\(*dz=-i\,dz\) and \(*d\overline z=i\,d\overline z\). In the analytic complex, the total degree-one $\gamma$-summand consists of
$L^2$ forms in
\[
\Omega^{0,1}(V)\oplus\Omega^{1,0}(W),
\]
with the domain inherited from the full analytic complex.
For \(v\in V\) and \(w\in W\),
\(
*J(d\overline z\otimes v)=-d\overline z\otimes v,
\) \(
*J(dz\otimes w)=-dz\otimes w.
\)
Thus \(*J=-1\) on the degree-one \(\gamma\)-summand and \(*J=+1\) on
the degree-one \(\beta\)-summand. A common harmonic representative \(u\)
satisfies, with the normalization used here,
\(
iQ_\phi([u],[u])=\langle *Ju,u\rangle_{L^2};
\)
this is the pointwise identity used in
\cite[Proposition~2.1]{KPWUnitary}. Hence the form is negative definite on
the harmonic \(\gamma\)-summand and positive definite on the harmonic
\(\beta\)-summand, proving
\eqref{eq:hminus-gamma-hypercohomology}.

For the Euler characteristic, we need duality only between degrees zero
and two. The parallel Hermitian form and integration give a nondegenerate
sesquilinear pairing
\[
IH^0(\overline\Sigma;\mathcal E_\phi)
\times IH^2(\overline\Sigma;\mathcal E_\phi)
\longrightarrow\mathbb C;
\]
see \cite[Section~6.2.4]{MochizukiL2Complexes}. On common harmonic
representatives, the degree-zero $\gamma$-summand takes values in $V$,
whereas its degree-two summand takes values in $W$. The roles of $V$ and
$W$ are reversed for the $\beta$-summand. Since $V\perp W$, the pairing
between degrees zero and two vanishes on equal summands. Its two
cross-pairings are therefore nondegenerate: a class annihilating the
opposite summand also annihilates its own summand, and hence annihilates
the entire complementary-degree cohomology. Consequently,
\[
\dim\mathbb H^2(\mathcal C_\gamma^\bullet)
=
\dim\mathbb H^0(\mathcal C_\beta^\bullet).
\]
We use only this dimension equality. The pairing in degree one behaves
differently: it is definite on each summand, as established above.

In degree zero, Proposition~\ref{prop:appendix-general-residue-L2-model} gives
\[
\mathbb H^0(\mathcal C_{L^2}^{\bullet})
\cong IH^0(\overline\Sigma;\mathcal E_\phi)
\cong H^0(\Sigma;\mathcal E_\phi).
\]
The direct-sum decomposition therefore gives
\[
h_0(\phi)
=\dim\mathbb H^0(\mathcal C_\gamma^\bullet)
+\dim\mathbb H^0(\mathcal C_\beta^\bullet).
\]
Consequently,
\[
\begin{aligned}
-\chi(\mathcal C_\gamma^\bullet)
&=
-\dim\mathbb H^0(\mathcal C_\gamma^\bullet)
+\dim\mathbb H^1(\mathcal C_\gamma^\bullet)
-\dim\mathbb H^2(\mathcal C_\gamma^\bullet)\\
&=
h^-(\phi)
-
\dim\mathbb H^0(\mathcal C_\gamma^\bullet)
-
\dim\mathbb H^0(\mathcal C_\beta^\bullet)\\
&=
h^-(\phi)-h_0(\phi).
\end{aligned}
\]
This proves \eqref{eq:hminus-h0-Euler-gamma}.
\end{proof}

\begin{corollary}
\label{eqn:sign-versus-chi-of-C-gamma}
The signature is given by
\[
\operatorname{sign}(\phi)
=
-\dim E\cdot\chi(\Sigma)
+2\chi(\mathcal C_\gamma^\bullet)
-h_\partial(\phi).
\]
\end{corollary}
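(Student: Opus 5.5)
The identity should follow by linear algebra from Lemma~\ref{lem:L2-Hodge-gamma}, once the dimension of $\widehat H^1(\Sigma;\mathcal E_\phi)$ is written in terms of Euler characteristics of $\Sigma$ and $\partial\Sigma$.

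We write $\operatorname{sign}(\phi)=h^+(\phi)-h^-(\phi)=\dim\widehat H^1(\Sigma;\mathcal E_\phi)-2h^-(\phi)$. This uses only that $iQ_\phi$ is nondegenerate on $\widehat H^1$, as recalled in Subsection~\ref{subsec:twisted-signature}. Then \eqref{eq:hminus-h0-Euler-gamma} gives
\[
\operatorname{sign}(\phi)=\dim\widehat H^1(\Sigma;\mathcal E_\phi)-2h_0(\phi)+2\chi(\mathcal C_\gamma^\bullet).
\]
It therefore suffices to prove the purely topological identity
\[
\dim\widehat H^1(\Sigma;\mathcal E_\phi)=-(p+q)\chi(\Sigma)+2h_0(\phi)-h_\partial(\phi).
\]

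To prove it, we use the long exact sequence of the pair $(\Sigma,\partial\Sigma)$ with coefficients in $\mathcal E_\phi$. We assume $n\geq1$; otherwise $h_\partial=0$ and the identity reduces to the closed case. Then $H^0(\Sigma,\partial\Sigma;\mathcal E_\phi)=0$, because a flat section vanishing on a nonempty boundary vanishes on the connected surface. Also $H^2(\Sigma;\mathcal E_\phi)=0$, because $\Sigma$ deformation retracts to a graph. The sequence therefore reads
\[
0\to H^0(\Sigma)\to H^0(\partial\Sigma)\to H^1(\Sigma,\partial\Sigma)\to H^1(\Sigma)\to H^1(\partial\Sigma)\to H^2(\Sigma,\partial\Sigma)\to 0 .
\]
Exactness gives
\[
\dim\widehat H^1=\dim H^1(\Sigma,\partial\Sigma)-\bigl(h_\partial-h_0\bigr).
\]
Two further facts compute $\dim H^1(\Sigma,\partial\Sigma)$:
\begin{itemize}
\item[(i)] Poincar\'e--Lefschetz duality, using the parallel form $\Omega$ to identify $\mathcal E_\phi$ with its dual up to conjugation, gives $\dim H^k(\Sigma,\partial\Sigma)=\dim H^{2-k}(\Sigma)$. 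In particular $\dim H^2(\Sigma,\partial\Sigma)=h_0$.
\item[(ii)] Euler characteristics: $\chi(\Sigma;\mathcal E_\phi)=(p+q)\chi(\Sigma)$, and $\chi(\partial\Sigma;\mathcal E_\phi)=0$, which gives $\dim H^1(\partial\Sigma)=h_\partial$.
\end{itemize}
From $\chi(\Sigma;\mathcal E_\phi)=h_0-\dim H^1(\Sigma)$ and (i) we get $\dim H^1(\Sigma,\partial\Sigma)=\dim H^1(\Sigma)=h_0-(p+q)\chi(\Sigma)$. Substituting,
\[
\dim\widehat H^1=h_0-(p+q)\chi(\Sigma)-h_\partial+h_0,
\]
which is exactly the required identity.

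Combining this with the displayed formula for $\operatorname{sign}(\phi)$ yields the corollary, since $\dim E=p+q$. The bookkeeping is routine. The only step needing care is the vanishing of $H^0(\Sigma,\partial\Sigma)$ and $H^2(\Sigma)$, which requires $n\geq1$, together with using the twisted duality correctly for a unitary but indefinite local system. The nondegenerate parallel pairing makes the dual local system isomorphic to the complex conjugate one, and dimensions are insensitive to conjugation, so this causes no difficulty.
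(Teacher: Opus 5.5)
Your proposal is correct and follows the paper's proof: write $\operatorname{sign}(\phi)=\dim\widehat H^1(\Sigma;\mathcal E_\phi)-2h^-(\phi)$ and substitute \eqref{eq:hminus-h0-Euler-gamma}. The only difference is the dimension formula $\dim\widehat H^1(\Sigma;\mathcal E_\phi)=-\dim E\cdot\chi(\Sigma)-h_\partial(\phi)+2h_0(\phi)$: the paper cites \cite[(6.2)]{KPWUnitary} for it, whereas you derive it from the long exact sequence of the pair and twisted Poincar\'e--Lefschetz duality, which is a valid self-contained replacement.
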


\begin{proof}
The intersection form is nondegenerate on
\[
\widehat H^1(\Sigma;\mathcal E_\phi)
:=\operatorname{Im}\bigl(
H^1(\Sigma,\partial\Sigma;\mathcal E_\phi)
\longrightarrow H^1(\Sigma;\mathcal E_\phi)
\bigr).
\]
Its dimension is
\begin{equation}
\label{eqn:dimension-parabolic-cohomology}
\dim\widehat H^1(\Sigma;\mathcal E_\phi)
=
-\dim E\cdot\chi(\Sigma)
-h_\partial(\phi)
+2h_0(\phi);
\end{equation}
see \cite[(6.2)]{KPWUnitary}. The signature of a nondegenerate Hermitian
form is its dimension minus twice its negative index. Therefore
Lemma~\ref{lem:L2-Hodge-gamma} gives
\[
\begin{aligned}
\operatorname{sign}(\phi)
&=\dim\widehat H^1(\Sigma;\mathcal E_\phi)-2h^-(\phi)\\
&=-\dim E\cdot\chi(\Sigma)-h_\partial(\phi)
  +2\bigl(h_0(\phi)-h^-(\phi)\bigr)\\
&=-\dim E\cdot\chi(\Sigma)
  +2\chi(\mathcal C_\gamma^\bullet)-h_\partial(\phi).
\end{aligned}
\]
\end{proof}

\subsection{Local lattice corrections}

\begin{notation}\label{eq:local-L2-lattice-lengths}
For each marked point \(x_j\), set
\begin{equation}
a_j:=\operatorname{length}_{x_j}(V/\mathcal A_V),
\qquad
b_j:=\operatorname{length}_{x_j}(W/\mathcal B_W).
\end{equation}
\end{notation}

These lengths measure how much the $L^2$ conditions shrink the original
lattices. To use them in linear algebra, we distinguish stalks from fibers.
Fix a point
$x_j\in D$, and choose a local coordinate $z$ centered at $x_j$.  Let
$\mathcal O_j=\mathcal O_{\overline\Sigma,x_j}$.  We write $W_j$ for the stalk at $x_j$ of the sheaf of
holomorphic sections of $W$.  Thus $W_j$ is a free $\mathcal O_j$-module of
rank $q$.  The fiber of $W$ at $x_j$ is the complex vector space
$W|_{x_j}=W_j/zW_j$, where $zW_j$ is the submodule of germs of sections
vanishing at $x_j$.
With this convention, an inclusion of sheaves such as
$W(-D)\subset\mathcal B_W\subset W$ gives, at the stalk $x_j$, the inclusions
$zW_j\subset(\mathcal B_W)_j\subset W_j$.  Hence
$P_j:=(\mathcal B_W)_j/zW_j$ is naturally a subspace of the fiber
$W|_{x_j}=W_j/zW_j$.  In particular, $W_j/zW_j$ has dimension $q$.
 Since
$zW_j\subset(\mathcal B_W)_j$, the quotient $W_j/(\mathcal B_W)_j$ is killed
by $z$, and therefore its length is equal to its dimension as a complex vector
space.  From the exact sequence
\[
0\to(\mathcal B_W)_j/zW_j\to W_j/zW_j\to W_j/(\mathcal B_W)_j\to0,
\]
we get
$\dim P_j=q-b_j$.  

\subsection{The Riemann--Roch calculation}

We apply ordinary Riemann--Roch to the locally free sheaves
$\mathcal A_V$ and $\mathcal B_W$. Their degrees differ from those of
$V$ and $W$ by the local lengths just defined. Thus
\begin{align} \label{eqn:RR}
\chi(\mathcal A_V)&=p(1-g)+\deg V-\sum_{j=1}^n a_j, \\ 
\chi\bigl(\mathcal B_W\otimes K_{\overline\Sigma}(D)\bigr)
&=
q(1-g)+\deg W-\sum_{j=1}^{n}b_j+q(2g-2+n).
\end{align}
Hence
\begin{equation}
\label{eq:Euler-Cgamma-degree}
\chi(\mathcal C_\gamma^\bullet)
=
\deg V-\deg W
-
\sum_{j=1}^{n}(a_j+q-b_j)
-
(p+q)(g-1).
\end{equation}

\subsection{Kernel and image of the Higgs field} 

We now express $\deg V-\deg W$ in terms of the kernel and saturated
image of the Higgs field, as in
Subsection~\ref{subsec:closed-surface-proof}. Apply this construction to the
logarithmic
morphism
\(
\gamma\colon V\to
W\otimes K_{\overline\Sigma}(D);
\)
compare \cite[Proposition~4.2]{GPLM}. 

\begin{notation}
Let
\(
N:=\ker\gamma\subset V
\)
be the sheaf-theoretic kernel of $\gamma$, and define
\(
I^0
:=
\operatorname{im}(\gamma)
\otimes K_{\overline\Sigma}(D)^{-1}
\subset W.
\)
Let
\(
I:=\operatorname{Sat}_{W}(I^0)
\)
be the saturation of \(I^0\) in \(W\), and set
\(
Q:=W/I.
\) 
\end{notation}

The \emph{saturation} of $I^0\subset W$ is the inverse image in $W$ of
the torsion subsheaf of $W/I^0$. Equivalently, it is the smallest subsheaf
$I\subset W$ containing $I^0$ for which $W/I$ is torsion-free.
On a smooth curve, $I$ is a holomorphic subbundle of $W$, and $I/I^0$
is a torsion sheaf supported at finitely many points.

Since \(\overline\Sigma\) is a smooth curve, every torsion-free coherent
sheaf is locally free. The quotient \(V/N\), being isomorphic to the image
of \(\gamma\), is torsion-free; hence \(N\) is a saturated holomorphic
subbundle of \(V\). Likewise, \(I\) is locally free, and the definition of
saturation implies that \(Q\) is torsion-free and therefore locally free.
In particular, the sequence $0\to I\to W\to Q\to0$ is exact on fibers.

\begin{notation}
Let $P_{Q,j}$ be the image of the subspace
$P_j=(\mathcal B_W)_j/zW_j\subset W|_{x_j}$ under
$W|_{x_j}\to Q|_{x_j}$, and put
$d_{Q,j}:=\dim_{\mathbb C}P_{Q,j}$.
\end{notation}

\begin{notation}
Set
\begin{equation}
\label{eq:ranks-N-I-Q-positive}
k:=\operatorname{rk}I
=
\operatorname{rk}\gamma,
\qquad
\operatorname{rk}N=p-k,
\qquad
m:=\operatorname{rk}Q=q-k.
\end{equation}
\end{notation}

Since \(k\leq p\), we have
\begin{equation}
\label{eq:m-lower-positive}
m=q-k\geq q-p.
\end{equation}

The subbundle \(N\) is equipped with the parabolic structure induced from
\(V\), and \(V/N\) carries the corresponding quotient parabolic structure.
The morphism induced by \(\gamma\) gives an isomorphism of coherent sheaves
\(
V/N
\cong
I^0\otimes K_{\overline\Sigma}(D).
\)
Transporting the quotient parabolic structure through this isomorphism and
then twisting by \(K_{\overline\Sigma}(D)^{-1}\) gives a parabolic structure
on \(I^0\), which we denote by \(I^0_{\mathrm{quot}}\).

Independently, the inclusion \(I\subset W\) equips \(I\) with the parabolic
structure induced from \(W\); we denote this parabolic bundle by \(I_W\).
The quotient \(Q=W/I\) is endowed with the quotient parabolic structure.
The additivity of parabolic degree in the exact sequence
\(
0\to I
\to W
\to Q
\to 0
\)
gives
\begin{equation}
\label{eq:pardeg-W-I-Q}
\pardeg(W)
=
\pardeg(I_W)+\pardeg(Q),
\end{equation}
see \cite[Section~2]{GPLM}.

\begin{notation}
We write
\[
\ell_{x_j}(I/I^0)
:=
\operatorname{length}_{\mathcal O_{\overline\Sigma,x_j}}
\bigl((I/I^0)_{x_j}\bigr).
\]
We also set
\begin{equation}
\label{eq:interior-saturation-defect-positive}
\ell_{\mathrm{int}}(I/I^0)
:=
\sum_{x\in\overline\Sigma\setminus D}
\operatorname{length}_{\mathcal O_{\overline\Sigma,x}}
\bigl((I/I^0)_x\bigr)
\geq0.
\end{equation}
\end{notation}

The exact sequence
\(
0\to N
\to V
\to
I^0\otimes K_{\overline\Sigma}(D)
\to 0
\)
gives
\[
\deg V
=
\deg N+\deg I^0+k(2g-2+n),
\]
while
\(
\deg I^0
=
\deg I
-
\ell_{\mathrm{int}}(I/I^0)
-
\sum_{j=1}^{n}\ell_{x_j}(I/I^0).
\)

Hence
\begin{align*}
\deg V
=
\deg N+\deg I
-\ell_{\mathrm{int}}(I/I^0)
-\sum_{j=1}^{n}\ell_{x_j}(I/I^0)
+k(2g-2+n).
\end{align*}
Moreover,
\begin{align*}
\deg W=\deg I+\deg Q.
\end{align*}
Substituting these identities into
\eqref{eq:Euler-Cgamma-degree} gives
\begin{align*}
\chi(\mathcal C_\gamma^\bullet)
&=
\deg N-\deg Q
-
\ell_{\mathrm{int}}(I/I^0)
-
\sum_{j=1}^{n}\ell_{x_j}(I/I^0)\\
&-
\sum_{j=1}^{n}(a_j+q-b_j-k)
-
(p+q-2k)(g-1).
\end{align*}
Substitution into
Corollary~\ref{eqn:sign-versus-chi-of-C-gamma} gives the exact formula
\begin{align} \label{eq:sign deg}
\begin{split}
\operatorname{sign}(\phi)
&= 
2k(2g-2)+(p+q)n-h_\partial(\phi)+2\deg N-2\deg Q\\
&-2\ell_{\mathrm{int}}(I/I^0)-2
\sum_{j=1}^{n}(\ell_{x_j}(I/I^0)+a_j+q-b_j-k).
\end{split}
\end{align}

\subsection{Local inequalities}

The boundary terms in \eqref{eq:sign deg} will be estimated in two steps.
First, the vanishing of the ordinary residue forces a saturation defect:
\[
\ell_{x_j}(I/I^0)\geq k-r_j^{\mathrm{full}}.
\]
Second, the $L^2$-lattice corrections satisfy
\[
a_j+q-b_j\geq r_j^{\mathrm{full}}+d_{Q,j}.
\]
These are Corollary~\ref{eq:local-length-full-residue} and
Lemma~\ref{lem:full-residue-lattice-inequality}, respectively. Together they give
\begin{align} \label{eq:aj+dj-k}
\ell_{x_j}(I/I^0)+a_j+q-b_j-k\ge d_{Q,j}.
\end{align}
Thus the local boundary term is nonnegative, and also controls the part of
the target lattice that survives in $Q$. We now define the ordinary residue
rank $r_j^{\mathrm{full}}$ used in these estimates.

\subsection{Ordinary and graded residues}
\label{subsec:full-graded-residues}

Two residue maps enter the proof. The \emph{ordinary logarithmic residue}
records the full coefficient of $dz/z$ at the marked point. The
\emph{graded residue} retains only the maps between equal parabolic
weights. We use the ordinary residue to estimate the saturation defect and
the graded residue to describe the boundary monodromy. Both are formed in
the standard lattice fixed above.

Fix a marked point \(x_j\), choose a local coordinate \(z\) centered at
\(x_j\), and choose a holomorphic frame compatible with the standard
parabolic filtration. All local data are attached to the positively
oriented puncture monodromy \(M_j=L_j^{-1}\). Let
\(
\widetilde\alpha_j
=-\operatorname{diag}
\bigl(a_{j,1},\ldots,a_{j,p},b_{j,1},\ldots,b_{j,q}\bigr)
\)
be the group-theoretic lift belonging to the standard lattice. Thus the
nonnegative numbers \(a_{j,r},b_{j,s}\) are the standard vector-bundle
weights, whereas \(\widetilde\alpha_j\) is the group-theoretic weight in the
local normal form.

After a compatible local parabolic gauge, the model of
\cite[Section~4.1 and Equation~(5.7)]{BGPMiR20}, transported by the integral
Hecke transformation, has the form
\begin{equation}
\label{eq:BGPM-local-model}
\Phi
=
\left(
s_j+\operatorname{Ad}(z^{\widetilde\alpha_j})Y_j+\psi_j
\right)\frac{dz}{z}.
\end{equation}
Here \(s_j\) is semisimple, \(Y_j\) is nilpotent,
\([s_j,Y_j]=0\), and the equal-weight part \(s_j+Y_j\) represents
\(\operatorname{GrRes}_{x_j}(\Phi)\). The residue contribution
$\psi_j(0)$ is strictly parabolic. All eigenvalues of
\(\operatorname{ad}(\widetilde\alpha_j)\) lie in \((-1,1)\). Since \(Y_j\)
is fixed by \(\exp(2\pi\sqrt{-1}\widetilde\alpha_j)\), its
\(\operatorname{ad}(\widetilde\alpha_j)\)-weights are integral and hence
zero. Therefore
\(
\operatorname{Ad}(z^{\widetilde\alpha_j})Y_j=Y_j.
\)
The term $\psi_j(0)$ may nevertheless be nonzero. Its nonzero blocks map
a weight-$a$ direction to a weight-$b$ direction with $b>a$, so they vanish
on the associated graded. This is why the ordinary and graded residues
need not coincide.

Recall the induced logarithmic morphism
$\bar\gamma\colon V/N\to I\otimes K_{\overline\Sigma}(D)$,
which is generically an isomorphism. 
Restriction to $x_j$ gives a complex-linear map
\(
\bar\gamma|_{x_j}\colon
(V/N)|_{x_j}
\to
I|_{x_j}\otimes K_{\overline\Sigma}(D)|_{x_j}.
\)
The canonical residue isomorphism
$\operatorname{res}_{x_j}\colon
K_{\overline\Sigma}(D)|_{x_j}\to\mathbb C$
sends the fiber class of $a(z)\,dz/z$ to $a(0)$.
We define the full, or ordinary, residue of $\gamma$ by
\begin{equation}
\label{eq:full-residue-map-positive}
R_j^{\mathrm{full}}
:=
\bigl(
\operatorname{id}_{I|_{x_j}}
\otimes\operatorname{res}_{x_j}
\bigr)
\circ\bigl(\bar\gamma|_{x_j}\bigr)
\colon
(V/N)|_{x_j}\longrightarrow I|_{x_j}.
\end{equation}
The definition is independent of the local coordinate and frames.

The parabolic structures on \(V/N\) and \(I_W\) determine filtrations of the
source and target of \(R_j^{\mathrm{full}}\). Since \(\gamma\) is a
parabolic logarithmic morphism, \(R_j^{\mathrm{full}}\) respects these
filtrations and therefore induces a map on their associated graded spaces:
\(
R_j^{\mathrm{gr}}\colon
\operatorname{Gr}_{x_j}(V/N)
\to
\operatorname{Gr}_{x_j}(I_W).
\)
Composing with the graded quotient from $V$ and the graded inclusion into
$W$ recovers the $\gamma$-component of
$\operatorname{GrRes}_{x_j}(\Phi)$. In frames adapted to the filtrations,
$R_j^{\mathrm{full}}$ is block triangular: its diagonal blocks give
$R_j^{\mathrm{gr}}$, and its remaining blocks strictly increase the
parabolic weight. A choice of splitting is needed to isolate those
remaining blocks. We therefore use only the intrinsic ranks of
$R_j^{\mathrm{full}}$ and $R_j^{\mathrm{gr}}$.

The following elementary estimate will be applied to the full residue.

\begin{lemma}
\label{lem:length-residual-rank-positive}
Let
\(
f\colon A\to B
\)
be an injective morphism between free \(\mathbb C[[z]]\)-modules of the same
rank \(k\), with finite-length cokernel. Let
\(
\overline f\colon A/zA\to B/zB
\)
be the induced map on the residue spaces. Then
\begin{equation}
\label{eq:length-residual-rank-DVR}
\operatorname{length}_{ \mathbb C[[z]]}(\operatorname{coker}f)
\geq
k-\operatorname{rank}\overline f.
\end{equation}
\end{lemma}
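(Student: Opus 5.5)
We reduce to the Smith normal form over the discrete valuation ring \(\mathbb C[[z]]\). Since \(A\) and \(B\) are free of rank \(k\) and \(\mathbb C[[z]]\) is a principal ideal domain, we may choose bases of \(A\) and \(B\) in which \(f\) is diagonal,
\[
f=\operatorname{diag}\bigl(z^{e_1},\ldots,z^{e_k}\bigr),
\qquad e_1,\ldots,e_k\geq0.
\]
Injectivity of \(f\) is what guarantees that no diagonal entry vanishes, so every \(e_i\) is a finite nonnegative integer. This agrees with the hypothesis that the cokernel has finite length.

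In these bases the cokernel decomposes as \(\bigoplus_i \mathbb C[[z]]/(z^{e_i})\), so its length is \(\sum_i e_i\). The induced map \(\overline f\) on \(A/zA\to B/zB\) is the reduction modulo \(z\). It is diagonal with entry \(1\) when \(e_i=0\) and \(0\) when \(e_i\geq1\). Hence \(\operatorname{rank}\overline f=\#\{i: e_i=0\}\), and
\[
k-\operatorname{rank}\overline f=\#\{i:e_i\geq1\}\leq\sum_i e_i
=\operatorname{length}(\operatorname{coker} f).
\]
This is the desired inequality. The reduction map \(\overline f\) is intrinsic: a change of bases in \(A\) and \(B\) reduces to a change of bases of \(A/zA\) and \(B/zB\), so its rank does not depend on the choice. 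Therefore computing it in the Smith bases is legitimate.

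There is no real obstacle. The only point to check is the existence of the Smith normal form over \(\mathbb C[[z]]\), which follows because \(\mathbb C[[z]]\) is a PID. One can also argue directly: any nonzero entry of the matrix of \(f\) has the form \(z^e u\) with \(u\) a unit. Repeatedly moving an entry of minimal valuation to the corner and clearing its row and column by elementary operations, which are invertible over \(\mathbb C[[z]]\), gives the diagonal form.

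An alternative, basis-free route gives the same bound. Put \(r=\operatorname{rank}\overline f\). The image of \(\overline f\) has dimension \(r\), so \(\operatorname{coker}\overline f\cong \operatorname{coker}(f)/z\operatorname{coker}(f)\) has dimension \(k-r\). By Nakayama's lemma, \(\operatorname{coker} f\) is a finite-length module minimally generated by \(k-r\) elements. Each cyclic summand of a finite-length module is nonzero and so has length at least \(1\). Hence the length of \(\operatorname{coker} f\) is at least \(k-r\). I would present the Smith-form argument and mention this as a remark.
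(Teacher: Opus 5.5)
Your proof is correct and follows the paper's argument: Smith normal form over \(\mathbb C[[z]]\), \(\operatorname{length}(\operatorname{coker}f)=\sum_i e_i\), \(\operatorname{rank}\overline f=\#\{i:e_i=0\}\), and \(\#\{i:e_i\geq1\}\leq\sum_i e_i\). Your Nakayama remark is also valid, and it can be stated more directly as \(\operatorname{length}(M)\geq\dim_{\mathbb C}M/zM=k-r\) for \(M=\operatorname{coker}f\), which avoids any appeal to a cyclic decomposition.
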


\begin{proof}
By the ordinary Smith normal form, there are bases of \(A\) and \(B\) in
which
\(
f
=
\operatorname{diag}
\bigl(
z^{\mu_1},\ldots,z^{\mu_k}
\bigr),
\mu_a\geq0.
\)
Consequently,
\(
\operatorname{length}_{ \mathbb  C[[z]]}(\operatorname{coker}f)
=
\sum_{a=1}^{k}\mu_a.
\)
After reduction modulo \(z\), precisely those diagonal entries with
\(\mu_a=0\) remain nonzero. Thus
\(
\operatorname{rank}\overline f
=
\#\{a:\mu_a=0\},
\)
and hence
\(
k-\operatorname{rank}\overline f
=
\#\{a:\mu_a>0\}
\leq
\sum_{a=1}^{k}\mu_a.
\)
\end{proof}

To apply Lemma~\ref{lem:length-residual-rank-positive}, choose a local coordinate $z$ at $x_j$ and write
$\bar\gamma=f_j\,dz/z$. Here
$f_j\colon(V/N)_j\to I_j$ is an injective morphism of free
$\mathcal O_j$-modules of rank $k$, with image $I_j^0$ and cokernel
$(I/I^0)_j$.
After completion, this gives an injective morphism
\(
\widehat f_j\colon
\widehat{(V/N)_j}\to\widehat{I_j}
\)
of free $\mathbb C[[z]]$-modules of rank $k$.
Its reduction modulo $z$ is precisely $R_j^{\mathrm{full}}$, under
the natural identifications
\(
\widehat{(V/N)_j}/z\widehat{(V/N)_j}
\cong (V/N)|_{x_j}
\) and \(
\widehat{I_j}/z\widehat{I_j}
\cong I|_{x_j}.
\)
Since completion preserves the length of a finite-length module,
Lemma~\ref{lem:length-residual-rank-positive} gives the following corollary.

\begin{corollary}\label{eq:local-length-full-residue}
\begin{equation}\label{local estimate length I/I^0}
\ell_{x_j}(I/I^0)
\geq
k-r_j^{\mathrm{full}},
\qquad
r_j^{\mathrm{full}}
:=
\operatorname{rank}R_j^{\mathrm{full}}.
\end{equation}

\end{corollary}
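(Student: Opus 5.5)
The inequality follows directly from Lemma~\ref{lem:length-residual-rank-positive}, applied to the completed local morphism induced by $\bar\gamma$. The plan is to record that morphism carefully, check that the hypotheses of the lemma hold, and identify its three ingredients (rank, cokernel length, reduction mod $z$) with $k$, $\ell_{x_j}(I/I^0)$ and $r_j^{\mathrm{full}}$.

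First, near $x_j$ write $\bar\gamma=f_j\,dz/z$, using the trivialization of $K_{\overline\Sigma}(D)$ by $dz/z$. Since $\bar\gamma\colon V/N\to I^0\otimes K_{\overline\Sigma}(D)$ is an isomorphism onto its image, $f_j$ is an injective $\mathcal O_j$-linear map $(V/N)_j\to I_j$ with image $I^0_j$. Both modules are free of rank $k$: $V/N$ is locally free of rank $p-(p-k)=k$ because $N$ is saturated, and $I$ is a rank-$k$ subbundle. Its cokernel is $(I/I^0)_j$, which has finite length because $I/I^0$ is a torsion sheaf.

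Second, pass to the completion $\mathbb C[[z]]\otimes_{\mathcal O_j}(-)$. Completion is faithfully flat over the DVR $\mathcal O_j$. It therefore keeps $\widehat f_j$ injective between free modules of rank $k$ and identifies $\operatorname{coker}\widehat f_j$ with the completion of $(I/I^0)_j$. For a finite-length module supported at the closed point, that completion is the module itself, so the lengths agree.

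Third, identify $\widehat f_j \bmod z$ with $R_j^{\mathrm{full}}$. By definition \eqref{eq:full-residue-map-positive}, $R_j^{\mathrm{full}}$ is the fiber restriction $\bar\gamma|_{x_j}$ followed by $\operatorname{res}_{x_j}$, which sends $a(z)\,dz/z\mapsto a(0)$. This is exactly $f_j(0)$ on $(V/N)_j/z(V/N)_j\to I_j/zI_j$. The map is independent of the coordinate: changing $z$ rescales $dz/z$ by a unit that equals $1$ at $x_j$. Applying Lemma~\ref{lem:length-residual-rank-positive} then gives $\ell_{x_j}(I/I^0)\ge k-\operatorname{rank}R_j^{\mathrm{full}}$.

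The only delicate point is that everything, including $N$, $I$, $I^0$ and the residue, must be formed in the single standard lattice of Proposition~\ref{prop:normalized-logarithmic-representative}. Remark~\ref{rem:normalized-representative-essential} shows that mixing lattices can change both the ordinary residue and the saturation length. Once that convention is enforced, the argument is routine.
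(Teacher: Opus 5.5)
Your proposal is correct and follows the paper's argument essentially step for step. You write $\bar\gamma=f_j\,dz/z$ as an injective map of free rank-$k$ $\mathcal O_j$-modules with cokernel $(I/I^0)_j$, pass to the completion, identify the reduction mod $z$ with $R_j^{\mathrm{full}}$, and apply Lemma~\ref{lem:length-residual-rank-positive}. Your extra checks (flatness of completion, preservation of finite length, coordinate independence of the residue) simply spell out details the paper states in one line.
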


The ordinary residue is essential in this estimate. At the level of local
parabolic bundles, take line bundles of weights $0\leq a<b<1$ and the map
$\gamma(e_V)=e_W\,dz/z$. This map has ordinary residue rank one and graded
residue rank zero, while its image is already saturated. Thus the
local algebraic estimate would fail if the ordinary residue rank were
replaced by the graded residue rank: it would read $0\geq1$ in this
example.

\subsection{Low-dimensional examples}

The rank-one and rank-two cases illustrate the roles of the residue rank,
the saturation length, and the lattice corrections.

\subsubsection{One-dimensional representations}

First consider the negative-definite convention
\begin{align*}
\phi\colon\pi_1(\Sigma)\longrightarrow U(0,1),\qquad E=W.
\end{align*}
The Higgs field is zero because the symmetric space of $U(1)$ is a point.
Let $\alpha_j\in[0,1)$ be the standard parabolic weight at
$x_j$, and put $D_0:=\sum_{\alpha_j=0}x_j$.
At a zero-weight marked point the nilpotent residue vanishes, and its
monodromy weight filtration satisfies $W_0G_j=G_j$ and $W_{-2}G_j=0$.
At a positive-weight marked point there is no lattice correction. Hence
\begin{align*}
\mathcal A_W=W,\qquad\mathcal B_W=W(-D_0),
\end{align*}
and the $L^2$-Dolbeault complex is simply
\begin{align*}
\mathcal C_{L^2}^{\bullet}=\left[W\xrightarrow{\,0\,}W(-D_0)\otimes K_{\overline\Sigma}(D)\right].
\end{align*}
In this convention,
\begin{align*}
I=0,\qquad Q=W,\qquad r_j^{\mathrm{full}}=0,
\end{align*}
and 
\begin{align*}
a_j=0,\quad b_j=\begin{cases}1,&\alpha_j=0\\0,&\alpha_j>0\end{cases},\quad 
d_{Q,j}=1-b_j.
\end{align*}
Since, for a $U(1)$-representation, $\alpha_j=0$ exactly when
$\phi(c_j)=1$, one also has
$h_{\partial,j}(\phi)=b_j$,
\begin{align*}
h_\partial(\phi)=\#\{j:\phi(c_j)=1\}.
\end{align*}
Finally,
\begin{align*}
h_0(\phi)=\begin{cases}1,&\phi\text{ is the trivial character},\\0,&\text{otherwise}.\end{cases}
\end{align*}
The positive-definite $U(1,0)$ convention is obtained by interchanging $V$ and $W$.

\subsubsection{Two-dimensional representations}
For a $U(1,1)$-representation, the associated Higgs bundle has the form
\begin{align*}
E=V\oplus W,\quad  \Phi=\begin{pmatrix}0&\beta\\\gamma&0\end{pmatrix},
\end{align*}
where $V$ and $W$ are parabolic line bundles and
\begin{align*}
\beta\colon W\longrightarrow V\otimes K_{\overline\Sigma}(D),\qquad\gamma\colon V\longrightarrow W\otimes K_{\overline\Sigma}(D).
\end{align*}
Thus
\begin{align*}
\mathcal C_\gamma^\bullet=\left[\mathcal A_V\xrightarrow{\gamma}\mathcal B_W\otimes K_{\overline\Sigma}(D)\right]
\end{align*}
is a two-term complex of line bundles.

The sheaf-theoretic data depend on whether $\gamma$ vanishes identically.

If $\gamma=0$,
then 
\begin{align*}
k=0,\qquad N=V,\qquad I=0,\qquad Q=W,\qquad r_j^{\mathrm{full}}=0,\qquad d_{Q,j}=1-b_j.
\end{align*}
If $\gamma\neq 0$, then it has generic rank one, so
\begin{align*}
k=1,\qquad N=0,\qquad I=W,\qquad Q=0,\qquad d_{Q,j}=0.
\end{align*}

In the second case, writing locally
$\gamma=f_j(z)\frac{dz}{z}$
in compatible frames, one has
\begin{align*}
\ell_{x_j}(I/I^0)=\operatorname{ord}_{x_j}(f_j),\quad r_j^{\mathrm{full}}=\begin{cases}1,&f_j(0)\neq0,\\0,&f_j(0)=0.\end{cases}
\end{align*}
Thus the saturation length is simply the vanishing order of the local
coefficient of $\gamma$, while the ordinary residue rank only records
whether this coefficient vanishes at $x_j$.

We explain the identity and unipotent cases in the table below.
Put $M_j=\phi(c_j)^{-1}$, the positively oriented puncture monodromy.
In both cases its elliptic factor is the identity, so all parabolic
weights are zero and $\operatorname{Gr}_{0,j}(E)=E|_{x_j}$.
Write $\operatorname{GrRes}_{x_j}(\Phi)=S_j+N_j$.
In our correspondence the local-system weights are zero, so
$S_j=\tau(S_j)$, where $\tau$ is the compact conjugation.
The local monodromy formula is
$M_j\sim\exp(-4\pi\mathrm{i}S_j)
\exp\bigl(2\pi\mathrm{i}(N_j-H_j-X_j)\bigr)$;
see \cite[Proposition~6.3, Table~1, and Proposition~6.8]{BGPMiR20}.
Here $\sim$ denotes conjugacy, and $(H_j,X_j,N_j)$ is a compatible
$\mathfrak{sl}_2$-triple in the centralizer of $S_j$;
when $N_j=0$, we take $H_j=X_j=0$.
The first factor is hyperbolic, with Hermitian exponent
$-4\pi\mathrm{i}S_j$, while the second is unipotent since
$N_j-H_j-X_j=\operatorname{Ad}(e^{-X_j})N_j$.
Consequently, identity monodromy gives $S_j=N_j=0$, whereas
nontrivial unipotent monodromy gives $S_j=0$ and $N_j\neq0$.
Thus $G_j=E|_{x_j}$ in both cases.

Choose a local coordinate $z$ centered at $x_j$, and write
$E_j=V_j\oplus W_j$ for the stalk of $E$.
The map $q_j:E_j\to G_j$ is then evaluation at $x_j$.
For identity monodromy, the centered weight filtration of $N_j=0$
satisfies $W_0G_j=G_j$ and $W_{-2}G_j=0$.
The defining conditions therefore give
$\mathcal A_{x_j}=E_j$ and $\mathcal B_{x_j}=zE_j$.
In particular, $\mathcal B_{x_j}$ is not the zero stalk:
its sections are required to vanish at $x_j$.
Hence $(\mathcal A_V)_j=V_j$ and $(\mathcal B_W)_j=zW_j$,
giving $(a_j,b_j)=(0,1)$, since
$\operatorname{length}_{\mathcal O_j}(W_j/zW_j)=1$.

For nontrivial unipotent monodromy, $N_j$ is a nonzero nilpotent
operator on the two-dimensional space $G_j$.
Choose a basis $v_+,v_-$ with $N_jv_+=v_-$ and $N_jv_-=0$.
Its centered weight filtration is
$W_\ell G_j=0$ for $\ell\leq-2$,
$W_{-1}G_j=W_0G_j=\mathbb C v_-=\operatorname{Im}N_j$,
and $W_\ell G_j=G_j$ for $\ell\geq1$.
The only nonzero graded pieces occur in degrees $1$ and $-1$,
and $N_j$ induces an isomorphism between them.
These degrees are the monodromy filtration indices, distinct from
the parabolic weights, which remain zero.

If $N_j$ maps $V|_{x_j}$ nontrivially to $W|_{x_j}$, then
$W_0G_j=W|_{x_j}$.
Thus a section belongs to $\mathcal A_{x_j}$ exactly when its
$V$-component vanishes at $x_j$, giving
$\mathcal A_{x_j}=zV_j\oplus W_j$.
Since $W_{-2}G_j=0$, we still have $\mathcal B_{x_j}=zE_j$,
and hence $(a_j,b_j)=(1,1)$.
If instead $N_j$ maps $W|_{x_j}$ nontrivially to $V|_{x_j}$,
then $\mathcal A_{x_j}=V_j\oplus zW_j$, while
$\mathcal B_{x_j}=zE_j$ is unchanged.
This gives $(a_j,b_j)=(0,1)$.

For the usual $SU(1,1)$ boundary types, the lattice corrections and the
dimension $h_{\partial,j}=\dim\ker(\phi(c_j)-I)$ are as follows:
\[
\begin{array}{c|c|c|c}
\text{local type}&a_j&b_j&h_{\partial,j}\\
\hline
\text{no weight-zero block}&0&0&0\\
\text{weight zero, invertible semisimple residue}&0&0&0\\
\phi(c_j)=I&0&1&2\\
\text{nontrivial unipotent, }N_j\colon V\to W&1&1&1\\
\text{nontrivial unipotent, }N_j\colon W\to V&0&1&1
\end{array}
\]
Here ``no weight-zero block'' includes, for example, a nontrivial elliptic boundary holonomy.
The second row describes the positive-trace hyperbolic case. A negative-trace hyperbolic element has elliptic factor $-I$ and hence belongs to the first row, since its standard parabolic weight is nonzero. 
A parabolic element with eigenvalue $-1$ also belongs to the first row.

The global fixed-section terms have the elementary descriptions
\begin{align*}
h_0(\phi)=\dim\bigcap_{\eta\in\pi_1(\Sigma)}\ker\bigl(\phi(\eta)-I\bigr)
\end{align*}
and
\begin{align*}
h_\partial(\phi)=\sum_{j=1}^n\dim\ker\bigl(\phi(c_j)-I\bigr).
\end{align*}
Thus, for an irreducible $U(1,1)$-representation, $h_0(\phi)=0$, while
each boundary contribution is 0, 1, or 2.

Finally,
\begin{align*}
h^-(\phi)=\dim\mathbb H^1\left(\overline\Sigma,\mathcal C_\gamma^\bullet\right).
\end{align*}
If $\gamma\neq0$, this morphism of line bundles is injective as a map of
sheaves, even at its zeros. Its cokernel is a torsion sheaf, and therefore
\begin{align*}
h^-(\phi)=\operatorname{length}\operatorname{coker}\left(\mathcal A_V\xrightarrow{\gamma}\mathcal B_W\otimes K_{\overline\Sigma}(D)\right).
\end{align*}
If $\gamma=0$, then
\begin{align*}
h^-(\phi)=h^1(\mathcal A_V)+h^0\bigl(\mathcal B_W\otimes K_{\overline\Sigma}(D)\bigr).
\end{align*}
In either case, \eqref{eq:hminus-h0-Euler-gamma} and Riemann--Roch give
\begin{align*}
h^-(\phi) - h_0(\phi) = \deg W - \deg V + \sum_{j=1}^n (a_j + 1-b_j) + 2(g - 1).
\end{align*}

These examples separate three kinds of local information. The number
$r_j^{\mathrm{full}}$ records whether the local coefficient of $\gamma$
vanishes, and $\ell_{x_j}(I/I^0)$ records its vanishing order. The
corrections $a_j$ and $b_j$ instead come from the zero-weight nilpotent
part of the graded residue.

\subsection{Upper bound on the ordinary residue rank}

\begin{lemma}
\label{lem:full-residue-lattice-inequality}
For every \(j\), one has
\begin{equation}
\label{eq:full-residue-lattice-inequality}
r_j^{\mathrm{full}} \le a_j+q-b_j- d_{Q,j}.
\end{equation}
\end{lemma}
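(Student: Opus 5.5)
We need to show $r_j^{\mathrm{full}}+d_{Q,j}\le a_j+(q-b_j)=a_j+\dim P_j$. The plan is to work in the fiber at $x_j$ and bound the rank of the residue of $\gamma$ acting on $\mathcal A_V$.

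First, observe that $\gamma(\mathcal A_V)\subset \mathcal B_W\otimes K_{\overline\Sigma}(D)$, as shown when $\mathcal C_{L^2}^\bullet$ was defined. Taking the ordinary residue at $x_j$ gives a linear map
\[
\rho_j\colon (\mathcal A_V)_j/z V_j \to P_j\subset W|_{x_j}.
\]
Here $(\mathcal A_V)_j/zV_j$ is a subspace of $V|_{x_j}$ of dimension $p-a_j$. Indeed, $\rho_j$ is the restriction of the full residue $\operatorname{Res}_{x_j}\gamma\colon V|_{x_j}\to W|_{x_j}$, and its image lies in $I|_{x_j}$ because $\gamma$ factors through $I\otimes K(D)$. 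So $\operatorname{Res}\gamma$ takes values in $I|_{x_j}\cap$ (residue image), and $\rho_j$ lands in $P_j\cap I|_{x_j}$.

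Second, relate $\operatorname{Res}\gamma$ to $R_j^{\mathrm{full}}$. The map $\operatorname{Res}\gamma$ factors as
\[
V|_{x_j}\to (V/N)|_{x_j}\xrightarrow{R_j^{\mathrm{full}}} I|_{x_j}\hookrightarrow W|_{x_j},
\]
and the first map is surjective, so $\operatorname{rank}\operatorname{Res}\gamma=r_j^{\mathrm{full}}$. Restricting to a subspace of codimension $a_j$ lowers the rank by at most $a_j$. Hence
\[
\operatorname{rank}\rho_j\ge r_j^{\mathrm{full}}-a_j,
\]
and its image lies in $P_j\cap I|_{x_j}$. This gives
\[
r_j^{\mathrm{full}}-a_j\le \dim(P_j\cap I|_{x_j}).
\]

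Third, because $0\to I|_{x_j}\to W|_{x_j}\to Q|_{x_j}\to0$ is exact on fibers and $P_{Q,j}$ is the image of $P_j$, we have
\[
\dim P_j=\dim(P_j\cap I|_{x_j})+d_{Q,j}.
\]
Combining the two displays gives $r_j^{\mathrm{full}}-a_j\le q-b_j-d_{Q,j}$, which is the claim.

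The main obstacle is justifying the first step carefully. We must check that the residue of $\gamma$ restricted to $\mathcal A_V$ really lands in $P_j=(\mathcal B_W)_j/zW_j$. The subtle point is that $P_j$ is defined via $\mathcal B_W$ in the fiber, while $\gamma$ is only logarithmic. Writing $\gamma=f\,dz/z$ with $f(\mathcal A_V)\subset\mathcal B_W$, the residue of $s\in(\mathcal A_V)_j$ is the class of $f(s)$ mod $zW_j$, and this class lies in $P_j$. One also needs that elements of $zV_j$ have zero residue, which is immediate, so $\rho_j$ is well defined. I would verify both facts directly from the local model \eqref{eq:BGPM-local-model} and the inclusion $\Phi(\mathcal A)\subset\mathcal B\otimes K(D)$.
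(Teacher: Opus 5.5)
Your proposal is correct and follows the paper's proof essentially step for step: restrict the full residue $f_j(0)$ to $\overline A_j=(\mathcal A_V)_j/zV_j$, observe that its image lies in $P_j\cap I|_{x_j}$, note that passing to this codimension-$a_j$ subspace lowers the rank $r_j^{\mathrm{full}}$ by at most $a_j$, and use the exact fiber sequence $0\to I|_{x_j}\to W|_{x_j}\to Q|_{x_j}\to 0$ to get $\dim(P_j\cap I|_{x_j})=q-b_j-d_{Q,j}$. The only difference is that no appeal to the local model is needed, since the inclusion $\gamma(\mathcal A_V)\subset\mathcal B_W\otimes K_{\overline\Sigma}(D)$ already gives the well-definedness and target of your restricted map; you might also rename that map to avoid a clash with the rho invariant $\rho$.
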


\begin{proof}
Fix a marked point \(x_j\), and choose a local coordinate \(z\)
centered at \(x_j\). Let
\(\mathcal O_j=\mathcal O_{\overline\Sigma,x_j}\), and write
\(V_j\) and \(W_j\) for the stalks of \(V\) and \(W\) at \(x_j\).
Their elements are germs of local holomorphic sections. Evaluation
at \(x_j\) identifies the fibers with
\(
V|_{x_j}=V_j/zV_j,
\) \(
W|_{x_j}=W_j/zW_j.
\)

The inclusions
\(V(-D)\subseteq\mathcal A_V\subseteq V\) and
\(W(-D)\subseteq\mathcal B_W\subseteq W\) imply
\[
zV_j\subseteq(\mathcal A_V)_j\subseteq V_j,
\qquad
zW_j\subseteq(\mathcal B_W)_j\subseteq W_j.
\]
Since \(zV_j\subseteq(\mathcal A_V)_j\), two germs of sections
of \(V\) with the same value at \(x_j\) either both belong to
\((\mathcal A_V)_j\) or neither does. The same holds for
\((\mathcal B_W)_j\subseteq W_j\).
The values at \(x_j\) of germs belonging to these lattices
form the following subspaces of the fibers:
\[
\overline A_j
:=(\mathcal A_V)_j/zV_j\subseteq V|_{x_j},
\qquad
P_j
:=(\mathcal B_W)_j/zW_j\subseteq W|_{x_j}.
\]
Both \(V_j/(\mathcal A_V)_j\) and
\(W_j/(\mathcal B_W)_j\) are annihilated by \(z\), so their
\(\mathcal O_j\)-lengths equal their complex dimensions.
Consequently, the definitions of \(a_j\) and \(b_j\) give
\[
\operatorname{codim}_{V|_{x_j}}\overline A_j=a_j,
\qquad
\dim_{\mathbb C}P_j=q-b_j.
\]

Concretely, choose a local holomorphic frame
\(w_1,\ldots,w_q\) of \(W\) such that
\(w_1(x_j),\ldots,w_{q-b_j}(x_j)\) form a basis of \(P_j\).
In this frame,
\[
W_j
=\bigoplus_{\ell=1}^{q}\mathcal O_j w_\ell,\quad 
(\mathcal B_W)_j
=
\bigoplus_{\ell=1}^{q-b_j}\mathcal O_j w_\ell
\;\oplus\!
\bigoplus_{\ell=q-b_j+1}^{q}z\mathcal O_j w_\ell.
\]
In other words, the last \(b_j\) components of a section in
\((\mathcal B_W)_j\) must vanish at \(x_j\), whereas the first
\(q-b_j\) components are unrestricted. This explains why the
space of its possible values has dimension \(q-b_j\).

We next examine what happens to these values under the quotient
map \(W\to Q=W/I\). Since \(I\) is saturated in \(W\), the
quotient \(Q\) is locally free. Hence the sequence of fibers
\[
0\longrightarrow I|_{x_j}
\longrightarrow W|_{x_j}
\xrightarrow{\;\pi_j\;}Q|_{x_j}
\longrightarrow0
\]
is exact, and we may regard \(I|_{x_j}\) as a subspace of
\(W|_{x_j}\). By definition,
\(P_{Q,j}=\pi_j(P_j)\) and
\(d_{Q,j}=\dim_{\mathbb C}P_{Q,j}\).
The vectors of \(P_j\) that disappear under this projection
are precisely those belonging to \(P_j\cap I|_{x_j}\).
Thus
\[
0\longrightarrow P_j\cap I|_{x_j}
\longrightarrow P_j
\xrightarrow{\;\pi_j\;}P_{Q,j}
\longrightarrow0
\]
is exact, and therefore
\[
\dim_{\mathbb C}\bigl(P_j\cap I|_{x_j}\bigr)
=q-b_j-d_{Q,j}.
\]

Finally, write \(\gamma=f_j(z)\,dz/z\) in local holomorphic
frames. Its full ordinary residue is the linear map
\[
\widetilde R_j^{\mathrm{full}}
=f_j(0)\colon V|_{x_j}\longrightarrow W|_{x_j}.
\]
It factors as
\[
V|_{x_j}
\twoheadrightarrow(V/N)|_{x_j}
\xrightarrow{\;R_j^{\mathrm{full}}\;}
I|_{x_j}
\hookrightarrow W|_{x_j}.
\]
The first map is surjective and the last is injective, so
\(\operatorname{rank}\widetilde R_j^{\mathrm{full}}
=r_j^{\mathrm{full}}\).

The inclusion
\(\gamma(\mathcal A_V)\subseteq
\mathcal B_W\otimes K_{\overline\Sigma}(D)\)
means that
\(f_j((\mathcal A_V)_j)\subseteq(\mathcal B_W)_j\).
Evaluating at \(x_j\), and recalling that the residue takes
values in \(I|_{x_j}\), we obtain
\[
\widetilde R_j^{\mathrm{full}}(\overline A_j)
\subseteq P_j\cap I|_{x_j}.
\]
Now extend a basis of \(\overline A_j\) to a basis of
\(V|_{x_j}\). Since \(\overline A_j\) has codimension \(a_j\),
this adds exactly \(a_j\) vectors, whose images can increase
the residue rank by at most \(a_j\). Hence
\[
\begin{aligned}
r_j^{\mathrm{full}}
&\leq
\dim_{\mathbb C}
\widetilde R_j^{\mathrm{full}}(\overline A_j)+a_j\\
&\leq
\dim_{\mathbb C}\bigl(P_j\cap I|_{x_j}\bigr)+a_j\\
&=a_j+q-b_j-d_{Q,j}.
\end{aligned}
\]
This proves the claimed inequality.
\end{proof}

\subsection{Semistability} 

Substituting \eqref{eq:aj+dj-k} into \eqref{eq:sign deg}, and using
$\ell_{\mathrm{int}}(I/I^0)\geq0$, gives
\begin{equation} \label{eq:sign deg d-Q}
\begin{aligned}
\operatorname{sign}(\phi)
&\le 
2k(2g-2)+(p+q)n+2\deg N\\
&\quad-h_\partial(\phi)-2(\deg Q+\sum_{j=1}^{n}d_{Q,j}).
\end{aligned}
\end{equation}
We now use semistability to control the two degree terms.

We use the polystability of the standard-lattice representative. By
Proposition~\ref{prop:normalized-logarithmic-representative}, the integral
Hecke transformation preserves the parabolic degree of every reduction, so
the usual standard-vector-bundle semistability inequalities apply. The
total parabolic degree is zero. The saturated parabolic subbundle
\(N\oplus0\subset V\oplus W\) is \(\Phi\)-invariant because
\(\gamma(N)=0\). Hence semistability gives
\begin{equation}
\label{eq:pardeg-N-nonpositive}
\pardeg(N)\leq0.
\end{equation}
Here the assertion is automatic if \(N=0\).
Similarly, \(V\oplus I\) is a saturated \(\Phi\)-invariant parabolic
subbundle. Indeed,
\(
\gamma(V)
\subset
I^0\otimes K_{\overline\Sigma}(D)
\subset
I\otimes K_{\overline\Sigma}(D),
\)
whereas
\(
\beta(I)
\subset
V\otimes K_{\overline\Sigma}(D).
\)
Therefore
\begin{equation}
\label{eq:pardeg-VI-nonpositive}
\pardeg(V)+\pardeg(I_W)\leq0.
\end{equation}
If \(V\oplus I=V\oplus W\), the left-hand side is zero.

The exact sequence
\(
0\to I
\to W
\to Q
\to0
\)
gives
\[
\pardeg(W)
=
\pardeg(I_W)+\pardeg(Q).
\]
Since \(\pardeg(V)+\pardeg(W)=0\), we obtain
\[
\pardeg(V)+\pardeg(I_W)+\pardeg(Q)=0.
\]
Together with \eqref{eq:pardeg-VI-nonpositive}, this implies
\begin{equation}
\label{eq:pardeg-Q-nonnegative}
\pardeg(Q)\geq0.
\end{equation}
Here $Q$ is used as a quotient parabolic bundle; no inclusion of $Q$ into
$E$ is needed.

\begin{notation}
For a parabolic bundle $F$ with weights in $[0,1)$, let
$\operatorname{wt}_{x_j}(F)$ be the sum of its weights at $x_j$, counted
with multiplicity, and set
\[
\operatorname{wt}(F):=\sum_{j=1}^n\operatorname{wt}_{x_j}(F)\geq0.
\]
Then $\pardeg(F)=\deg(F)+\operatorname{wt}(F)$.
\end{notation}

Because the weights of $N$ are nonnegative,
\begin{align}\label{eq:semistabilityN}
\deg(N)=\pardeg(N)-\operatorname{wt}(N)\le 0
\end{align} 
and similarly $-\deg Q\leq\operatorname{wt}(Q)$. Combining these
inequalities with \eqref{eq:sign deg d-Q} gives
\begin{align*}
\operatorname{sign}(\phi)
\le 
2k(2g-2)+(p+q)n-h_\partial(\phi)+2(\operatorname{wt}(Q)-\sum_{j=1}^n d_{Q,j}).
\end{align*}
We will show that $\sum_jd_{Q,j}\geq\operatorname{wt}(Q)$. Since
$k\leq p$ and $g\geq1$, this already gives the desired bound apart from
the correction $\delta(n,q-p)\leq2$. The next proposition supplies that
correction by combining the quotient degree with the boundary fixed
spaces.

\subsection{The final boundary estimate}

\begin{proposition}\label{prop:residual-quotient-L2-bound}
For a reductive representation in the setting above, one has
\begin{align*}
h_\partial(\phi)+2(\deg(Q)+\sum_{j=1}^{n}d_{Q,j})\ge \delta(n, q-p).
\end{align*}
\end{proposition}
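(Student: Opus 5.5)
Since \(\delta(n,q-p)=\min\{2,n(q-p)\}\) and \(q>p\), the target reduces to showing that the left-hand side is at least \(2\). The plan is to work with the quotient parabolic bundle \(Q=W/I\) of rank \(m=q-k\geq q-p\geq1\), using \eqref{eq:pardeg-Q-nonnegative}, namely \(\deg Q+\operatorname{wt}(Q)\geq0\).

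\textbf{Step 1 (local comparison).} First I would prove the pointwise inequality \(d_{Q,j}\geq\operatorname{wt}_{x_j}(Q)\) at each marked point. At \(x_j\), the subspace \(P_j\subset W|_{x_j}\) misses only the zero-weight zero-primary directions outside \(W_{-2}G_j\), so \(P_j\) contains the positive-weight flag step \(W_{j,>0}\). Since \(\pi_j(W_{j,>0})=Q_{j,>0}\) for the quotient parabolic structure, this gives
\[
d_{Q,j}\geq\dim Q_{j,>0}\geq\operatorname{wt}_{x_j}(Q),
\]
because each positive weight is less than \(1\). Combined with \(\deg Q\geq-\operatorname{wt}(Q)\), this yields \(\deg Q+\sum_j d_{Q,j}\geq0\). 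The integer \(X:=\deg Q+\sum_j d_{Q,j}\) is therefore nonnegative, and it suffices to handle the case \(X=0\): there I must show \(h_\partial(\phi)\geq2\), or at least \(h_\partial(\phi)\geq\min\{2,n(q-p)\}\).

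\textbf{Step 2 (equality analysis).} Suppose \(X=0\). Then every inequality used above is an equality. In particular \(\operatorname{wt}(Q)=\dim Q_{j,>0}\) summed over \(j\), and since each weight lies in \([0,1)\), this forces all weights of \(Q\) to vanish. Hence \(\operatorname{wt}(Q)=0\), \(\dim Q_{j,>0}=0\), and \(\deg Q=\pardeg Q\geq0\), which combined with \(X=0\) gives \(\deg Q=0\) and \(d_{Q,j}=0\) for all \(j\). So \(P_j\subset I|_{x_j}\), and the whole fiber \(Q|_{x_j}\) comes from weight-zero directions of \(W\) that are not in \(W_{-2}G_j\).

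The next task is to convert these \(m\) directions at each puncture into invariant vectors of the boundary holonomy. The intended route is the local monodromy formula \(M_j\sim\exp(-4\pi\mathrm iS_j)\exp(2\pi\mathrm i(N_j-H_j-X_j))\) from \cite{BGPMiR20}, already used in the \(\mathrm U(1,1)\) table. One shows that the image in \(Q\) of the weight-zero part lies in \(\ker S_j\). This should follow because \(\pardeg Q=0\) with \(Q\) a degree-zero quotient of a polystable object, so \(Q\) splits off as a \(\Phi\)-invariant summand with vanishing Higgs field \(\beta|_Q\) modulo \(I\). From that, each direction of \(Q|_{x_j}\) contributes at least one dimension to \(\ker(M_j-1)\), or at least \(\operatorname{gr}\)-level invariant directions, so \(h_{\partial,j}\geq\) (something positive) for every \(j\).

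\textbf{Step 3 (counting).} If \(n\geq2\), or \(n=1\) with \(m\geq2\), summing the local contributions from Step 2 gives \(h_\partial\geq2\). If \(n(q-p)=1\), only \(\geq1\) is needed, which follows from any single puncture.

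The main obstacle is Step 2. It requires using polystability in the case \(\pardeg Q=0\) to split \(Q\) off as a degree-zero polystable summand, which forces its weight-zero local monodromy to be unipotent with nontrivial invariants. It also requires ruling out that the \(W_{-2}\)-complement directions are killed by \(N_j\) in a way that hides the fixed vectors. Should the direct splitting argument fail, my fallback is to argue globally: a flat quotient of rank \(m\) with trivial weights and degree zero carries a unitary local system. I would then apply Theorem~\ref{thm:complete-Up-range}'s angle count to that piece, with all angles zero, to extract boundary fixed vectors.
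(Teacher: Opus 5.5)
Your proof is correct. Its critical case, however, takes a different route from the paper's.

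\textbf{Common part.} Step 1 is the paper's argument: $W_{j,>0}\subseteq P_j$, so $d_{Q,j}\geq\dim Q_{j,>0}\geq\operatorname{wt}_{x_j}(Q)$, and hence $B_Q:=\deg Q+\sum_j d_{Q,j}\geq0$.

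\textbf{The paper's route.} The paper then splits according to whether $Q$ has a nonzero weight. In the zero-weight case it applies the purely local Lemma~\ref{lem:zero-weight-Q-fixed-vectors}. There the map $W_{j,0}\to Q|_{x_j}$ factors through $\operatorname{coker}R^{\mathrm{gr}}_{\gamma,j}$, whose dimension is bounded by $\dim\ker(N_j|_{G_j})$. The local monodromy formula, with its $\mathfrak{sl}_2$ normal form from \cite{BGPMiR20}, turns this into fixed vectors of $M_j$. No hypothesis on $\deg Q$ is used.

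\textbf{Your route.} You split instead on $B_Q=0$ versus $B_Q\geq1$. In the first case you gain $\pardeg(Q)=0$, which lets you invoke the polystable splitting $(E,\Phi)\simeq(V\oplus I,\Phi|_{V\oplus I})\oplus(Q,0)$. The paper proves exactly this later, as Lemma~\ref{lem:pardeg-Q-zero-splitting}, from the equality case of the harmonic-metric Chern--Weil argument. This works, but Step 2 should be finished more sharply than your hedges suggest:
\begin{itemize}
\item The Higgs field vanishes on the summand $S^{\perp_h}\cong Q$, because $\Phi(W)\subset V\otimes K_{\overline\Sigma}(D)$ and $V\subset S$. So $\phi$ restricts there to a unitary representation.
\item Its standard weights at $x_j$ agree with the quotient weights of $Q$, since the harmonic norm on $S^{\perp_h}$ is the quotient norm. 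By Proposition~\ref{prop:normalized-logarithmic-representative}, they are the eigenangles of $\phi(c_j)$ restricted to that summand, since a unitary element is its own compact elliptic factor.
\item These eigenangles are all zero, and a unitary matrix with all eigenvalues $1$ is the identity.
\end{itemize}
Thus $h_{\partial,j}(\phi)\geq m$ at every $j$. This full count is what Step 3 needs when $n=1$ and $q-p\geq2$. The weaker outcomes you mention (``unipotent with nontrivial invariants'', ``something positive'') would not suffice in that case. Your concern about $N_j$ hiding fixed vectors is moot, since the residue vanishes on this summand.

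\textbf{Comparison.} The paper's local lemma gives the degree-free inequality $\operatorname{rk}Q\leq h_{\partial,j}$ at every puncture where $Q$ has only zero weights, at the cost of residue and weight-filtration bookkeeping. Your route replaces that bookkeeping with the global analytic splitting, and yields the reducibility of $\phi$ in the critical case for free.
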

\begin{proof}
For each marked point $x_j$, put
\[
s_{Q,j}
:=
\sum_{\lambda\in(0,1)}
\dim\operatorname{Gr}_{\lambda,x_j}(Q)
=
\dim Q_{j,>0},
\quad
s_Q:=\sum_{j=1}^n s_{Q,j}.
\]
Thus $s_Q$ counts the nonzero parabolic weights of $Q$, with
multiplicity. Here $Q_{j,>0}$ is defined by \eqref{positive-weight flag}.

We first prove $d_{Q,j}\geq s_{Q,j}$.
Recall that
$P_j=(\mathcal B_W)_j/zW_j\subseteq W|_{x_j}$.
The condition defining $\mathcal B$ is imposed only after projection
to the weight-zero graded fiber of $E$. Since the parabolic
decomposition $E=V\oplus W$ respects the flags, every vector in
$W_{j,>0}$ maps to zero under this projection. Any local holomorphic
lift of such a vector therefore belongs to $(\mathcal B_W)_j$.
Consequently,
\(
W_{j,>0}\subseteq P_j.
\)
Let $\pi_j\colon W|_{x_j}\to Q|_{x_j}$ be the quotient map.
By the definition of the quotient parabolic structure,
$\pi_j(W_{j,>0})=Q_{j,>0}$. Hence
\(
Q_{j,>0}\subseteq\pi_j(P_j)=P_{Q,j},
\)
and taking dimensions gives $d_{Q,j}\geq s_{Q,j}$.

The inequality $d_{Q,j}\geq s_{Q,j}$ implies
\begin{equation}
\sum_{j=1}^n d_{Q,j}-\operatorname{wt}(Q)\geq0.
\end{equation}
Indeed, let $\alpha_1,\ldots,\alpha_{s_Q}\in(0,1)$ be the nonzero
parabolic weights of $Q$, counted with multiplicity over all marked
points. Then
\begin{equation}\label{eq:sum-dQj-ge-sQ}
\begin{aligned}
\sum_{j=1}^n d_{Q,j}-\operatorname{wt}(Q)
&=
\sum_{j=1}^n(d_{Q,j}-s_{Q,j})
+\sum_{a=1}^{s_Q}(1-\alpha_a)\\
&\geq
\sum_{a=1}^{s_Q}(1-\alpha_a)
=
s_Q-\operatorname{wt}(Q)
\geq0.
\end{aligned}
\end{equation}
Every term in the first line is nonnegative, and $1-\alpha_a>0$.
Thus $s_Q-\operatorname{wt}(Q)=0$ if and only if $s_Q=0$.
Moreover,
$\sum_jd_{Q,j}-\operatorname{wt}(Q)=0$
if and only if $s_Q=0$ and $d_{Q,j}=0$ for every $j$.

Set $B_Q:=\deg Q+\sum_jd_{Q,j}$. This is an integer. Semistability gives
$\operatorname{pardeg}(Q)\geq0$, so
\[
B_Q
=
\operatorname{pardeg}(Q)
+\sum_{j=1}^n d_{Q,j}-\operatorname{wt}(Q)
\geq0.
\]
Thus $B_Q$ is nonnegative, and equality forces
$\operatorname{pardeg}(Q)=0$, $s_Q=0$, and $d_{Q,j}=0$ for every $j$.

We distinguish two cases. If $s_Q>0$, the weight estimate above is strict,
so $B_Q>0$. Since $B_Q$ is an integer, $B_Q\geq1$. Therefore
\begin{equation}
h_\partial(\phi)
+
2(\deg Q+\sum_{j=1}^n d_{Q,j})
\geq2
\geq\delta(n,q-p).
\end{equation}

If $s_Q=0$, all parabolic weights of $Q$ vanish. The zero-weight estimate
in Lemma~\ref{lem:zero-weight-Q-fixed-vectors} below gives
$\operatorname{rk}Q\leq\dim\ker(\phi(c_j)-I)$ for every $j$.
Summing over the boundary components, we obtain
$h_\partial(\phi)\geq n\operatorname{rk}Q$.
Since $\operatorname{rk}Q=q-k\geq q-p$ and $B_Q\geq0$, it follows that
\begin{equation}\label{eq:Q-bound-zero-weight}
\begin{aligned}
h_\partial(\phi)
+
2(\deg Q+\sum_{j=1}^n d_{Q,j})
&\geq h_\partial(\phi)\geq n\operatorname{rk}Q\\
&\geq n(q-p)\geq\delta(n,q-p).
\end{aligned}
\end{equation}
This proves the proposition.
\end{proof}

For each boundary component \(c_j\), put
\(h_{\partial,j}(\phi):=\dim\ker(\phi(c_j)-I)\), so that
\(h_\partial(\phi)=\sum_jh_{\partial,j}(\phi)\).

\begin{lemma}
\label{lem:zero-weight-Q-fixed-vectors}
If all parabolic weights of \(Q\) at \(x_j\) are zero, then
\begin{equation}
\label{eq:zero-weight-Q-fixed-vectors}
\operatorname{rk}Q\le h_{\partial,j}(\phi).
\end{equation}
\end{lemma}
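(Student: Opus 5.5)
Since \(Q\) has only zero parabolic weights at \(x_j\), the quotient map \(\pi_j\colon W|_{x_j}\to Q|_{x_j}\) kills the positive-weight flag step \(W_{j,>0}\). It therefore factors through the weight-zero graded piece \(\operatorname{Gr}_{0,x_j}(W)\), giving a surjection
\[
\bar\pi_j\colon\operatorname{Gr}_{0,x_j}(W)\twoheadrightarrow Q|_{x_j}.
\]
The plan is to produce, inside \(\operatorname{Gr}_{0,x_j}(E)\), a subspace of dimension at least \(\operatorname{rk}Q\) on which the local monodromy is trivial. We then transfer this to \(\ker(\phi(c_j)-I)\) through the local correspondence of \cite[Proposition~6.3, Table~1, Proposition~6.8]{BGPMiR20}, as in the \(\mathrm U(1,1)\) table above. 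Recall that on the weight-zero part, \(M_j\) restricted to the corresponding generalized eigenspace of eigenvalue-one elliptic part is conjugate to \(\exp(-4\pi\mathrm i S_j)\exp(2\pi\mathrm i(N_j-H_j-X_j))\). Hence \(\dim\ker(M_j-I)\) equals \(\dim(\ker S_j\cap\ker N_j)\) on \(\operatorname{Gr}_{0,j}\), that is, \(\dim\ker(N_j|_{G_j})\), the number of Jordan blocks of \(N_j\) on \(G_j\). Since \(M_j=L_j^{-1}\), this is \(h_{\partial,j}(\phi)\). So it suffices to show \(\operatorname{rk}Q\le\dim\ker(N_j|_{G_j})\).

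The key step is to use the \(\Phi\)-invariance of \(V\oplus I\). This subbundle is a saturated parabolic Higgs subbundle, so the graded residue descends to the quotient \(E/(V\oplus I)\cong Q\). Because \(\gamma(V)\subset I\otimes K(D)\) and \(\beta\) maps \(Q\) to \(V\), the induced Higgs field on the quotient is zero. Hence \(\operatorname{GrRes}_{x_j}(\Phi)\) induces the zero map on \(\operatorname{Gr}_{0,x_j}(Q)=Q|_{x_j}\). In particular \(S_j\) and \(N_j\) act by zero on the quotient \(Q|_{x_j}\) of \(\operatorname{Gr}_{0,j}(E)\). The zero-primary part \(G_j\) surjects onto \(Q|_{x_j}\), because \(S_j\) is semisimple and its nonzero-eigenvalue part maps to the zero eigenspace trivially. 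So we get a surjection \(G_j\twoheadrightarrow Q|_{x_j}\) that intertwines \(N_j\) with \(0\). Such a surjection forces the image \(N_jG_j\) to lie in the kernel \(K\) of \(G_j\to Q|_{x_j}\). Then \(\operatorname{rank}N_j\le\dim K\), so
\[
\dim\ker N_j=\dim G_j-\operatorname{rank}N_j\ge \dim G_j-\dim K=\operatorname{rk}Q,
\]
as wanted.

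The main obstacle I expect is the bookkeeping in the first paragraph. One must check that the quotient parabolic structure on \(Q\), which comes from \(W\) alone, agrees with the graded structure on \(E/(V\oplus I)\). One must also check that graded residues behave functorially under saturated \(\Phi\)-invariant parabolic subbundles in the standard lattice, rather than the group-theoretic one. I would verify this with the local model \eqref{eq:BGPM-local-model}. There \(\psi_j(0)\) is strictly weight-increasing, so it vanishes on graded pieces, and only \(s_j+Y_j\) matters on \(\operatorname{Gr}_0\). The identification of \(\dim\ker(M_j-I)\) with \(\dim\ker(N_j|_{G_j})\) also needs care: \(\exp(2\pi\mathrm i(N-H-X))=\exp(2\pi\mathrm i\operatorname{Ad}(e^{-X})N)\) is unipotent with kernel of dimension equal to the number of Jordan blocks of \(N\), and \(\exp(-4\pi\mathrm i S_j)\) has fixed vectors only on \(\ker S_j\) since \(S_j\) is Hermitian-type. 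Once these are in place, the fixed-vector count is immediate.
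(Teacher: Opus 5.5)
Your proposal is correct and follows essentially the paper's route. In both arguments, \(\pi\circ\gamma=0\) (which is the same as saying the induced Higgs field on \(E/(V\oplus I)\cong Q\) vanishes) shows that the weight-zero quotient map onto \(Q|_{x_j}\) kills \(\operatorname{Im}T_j\); it therefore factors through \(\operatorname{coker}(N_j|_{G_j})\), whose dimension is \(\dim\ker(N_j|_{G_j})\), and this count is then transferred to \(\ker(M_j-\operatorname{id})=\ker(L_j-\operatorname{id})\) through the local monodromy formula of \cite{BGPMiR20}. The differences are cosmetic: the paper writes the middle step as \(\operatorname{coker}R^{\mathrm{gr}}_{\gamma,j}\subseteq\operatorname{coker}T_j\) together with \(\dim\operatorname{coker}T_j=\dim\ker T_j\), and it needs (and proves) only the inequality \(\dim\ker(N_j|_{G_j})\le\dim\ker(M_j-\operatorname{id})\), not the equality you assert.
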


\begin{proof}
Fix a marked point $x_j$ at which all parabolic weights of $Q$ are zero.
All vector spaces and dimensions in the proof are over $\mathbb C$.

\smallskip
\noindent
\emph{Step 1: factor through the cokernel of the graded residue.}

For a parabolic bundle $F$, let $F_{j,>0}\subseteq F|_{x_j}$ denote
its positive-weight flag step, see \eqref{positive-weight flag}.
Thus
\(
\operatorname{Gr}_{0,x_j}(F)
=
F|_{x_j}/F_{j,>0}.
\)
We use this canonical quotient throughout the argument.

Put
\[
V_{j,0}:=\operatorname{Gr}_{0,x_j}(V),
\quad
W_{j,0}:=\operatorname{Gr}_{0,x_j}(W),
\quad
E_{j,0}:=V_{j,0}\oplus W_{j,0}.
\]
Let $\pi\colon W\to Q=W/I$ be the quotient morphism, and let
$\pi_j\colon W|_{x_j}\to Q|_{x_j}$ be its map on fibers.
Since $Q$ carries the quotient parabolic structure,
\(
\pi_j(W_{j,>0})=Q_{j,>0}.
\)
By hypothesis, $Q_{j,>0}=0$ and
$\operatorname{Gr}_{0,x_j}(Q)=Q|_{x_j}$.
Consequently, $\pi_j$ induces a surjection
\[
\pi_{j,0}:=\operatorname{Gr}_{0,x_j}(\pi)
\colon W_{j,0}\twoheadrightarrow Q|_{x_j}.
\]

Let
\[
T_j
:=
\left.\operatorname{GrRes}_{x_j}(\Phi)\right|_{E_{j,0}}
=
\begin{pmatrix}
0&R_{\beta,j}^{\mathrm{gr}}\\
R_{\gamma,j}^{\mathrm{gr}}&0
\end{pmatrix},
\]
where
$R_{\gamma,j}^{\mathrm{gr}}\colon V_{j,0}\to W_{j,0}$
and
$R_{\beta,j}^{\mathrm{gr}}\colon W_{j,0}\to V_{j,0}$
are the weight-zero graded residue maps.

By the definition of $I$, the image of $\gamma$ is contained in
$I\otimes K_{\overline\Sigma}(D)$. Hence
\[
\bigl(
\pi\otimes\operatorname{id}_{K_{\overline\Sigma}(D)}
\bigr)\circ\gamma=0.
\]
Taking residues at $x_j$ and then passing to the weight-zero associated
graded gives
\[
\pi_{j,0}\circ R_{\gamma,j}^{\mathrm{gr}}=0.
\]
This passage is valid because the residue preserves the parabolic
filtration; the terms that strictly increase the weight disappear on the
associated graded.
Therefore $\pi_{j,0}$ descends to a surjection
\[
\operatorname{coker}R_{\gamma,j}^{\mathrm{gr}}
=
W_{j,0}/\operatorname{Im}R_{\gamma,j}^{\mathrm{gr}}
\twoheadrightarrow Q|_{x_j}.
\]
In particular,
\[
\operatorname{rk}Q
=
\dim Q|_{x_j}
\leq
\dim\operatorname{coker}R_{\gamma,j}^{\mathrm{gr}}.
\]

\smallskip
\noindent
\emph{Step 2: compare with the kernel on the zero-primary summand.}

The off-diagonal form of $T_j$ gives
\[
\operatorname{Im}T_j
=
\operatorname{Im}R_{\beta,j}^{\mathrm{gr}}
\oplus
\operatorname{Im}R_{\gamma,j}^{\mathrm{gr}}
\subseteq V_{j,0}\oplus W_{j,0}.
\]
Consequently,
\[
\operatorname{coker}T_j
\cong
\operatorname{coker}R_{\beta,j}^{\mathrm{gr}}
\oplus
\operatorname{coker}R_{\gamma,j}^{\mathrm{gr}}.
\]
Since $T_j$ is an endomorphism of a finite-dimensional vector space,
its kernel and cokernel have the same dimension. Thus
\[
\operatorname{rk}Q
\leq
\dim\operatorname{coker}R_{\gamma,j}^{\mathrm{gr}}
\leq
\dim\operatorname{coker}T_j
=
\dim\ker T_j.
\]

As in the definition of the $L^2$ complex, write the additive Jordan
decomposition as
\[
T_j=S_j+N_j,
\qquad
[S_j,N_j]=0,
\]
where $S_j$ is semisimple and $N_j$ is nilpotent, and put
$G_j:=\ker S_j$.
For every eigenvalue $\lambda$ of $S_j$, its eigenspace
$E_{j,0}^{\lambda}:=\ker(S_j-\lambda\operatorname{id})$
is preserved by $N_j$, and
\[
T_j|_{E_{j,0}^{\lambda}}
=
\lambda\operatorname{id}
+
N_j|_{E_{j,0}^{\lambda}}.
\]
This operator is invertible when $\lambda\neq0$, because
$N_j|_{E_{j,0}^{\lambda}}$ is nilpotent.
On $G_j$, the operator $T_j$ is simply $N_j|_{G_j}$.
It follows that
\[
\ker T_j=\ker(N_j|_{G_j}),
\qquad
\operatorname{rk}Q\leq\dim\ker(N_j|_{G_j}).
\]

\smallskip
\noindent
\emph{Step 3: identify the corresponding monodromy fixed vectors.}

Recall that $L_j=\phi(c_j)$ is the boundary holonomy and
$M_j=L_j^{-1}$ is the positively oriented puncture monodromy.
The correspondence was chosen with parabolic-local-system weight zero:
$\mathcal S(0,\mathbf C)=\mathcal R(\mathbf C)$; see
\cite[Proposition~7.9 and Theorem~7.10]{BGPMiR20}. By
\cite[Proposition~6.3]{BGPMiR20}, this weight is $s_j-\tau(s_j)$, where
$\tau$ is conjugation with respect to the compact real form. Thus
$s_j=\tau(s_j)$, and the Levi group in that proposition is the whole
group. Its monodromy formula therefore describes the actual conjugacy
class of $M_j$.
More explicitly, the proof of that proposition allows an additional
unipotent term only in negative eigenspaces of
$\operatorname{ad}(s_j-\tau(s_j))$. There are no such eigenspaces here.

We apply that formula to the summand corresponding to $G_j$.
The compact elliptic factor is the identity on this summand because
its parabolic Higgs-bundle weight is zero.
Since $S_j|_{G_j}=0$ and $s_j=\tau(s_j)$, both $s_j$ and $\tau(s_j)$ vanish on the
corresponding summand. Hence the semisimple-residue factor is also
the identity.

Set $Y_{j,0}:=N_j|_{G_j}$.
The local correspondence
\cite[Section~5.1, equation~(6.2), and Proposition~6.3]{BGPMiR20}
therefore gives an $M_j$-invariant summand on which the monodromy
is conjugate to
\[
U_{j,0}=\exp(Z_{j,0}),
\qquad
Z_{j,0}
=
2\pi\sqrt{-1}\,
\bigl(Y_{j,0}-H_{j,0}-X_{j,0}\bigr).
\]
Here $(H_{j,0},X_{j,0},Y_{j,0})$ are the normal
$\mathfrak{sl}_2$ data in the local model.
The normal triple is chosen in the centralizer of the compact factor
and the semisimple residue, so it preserves the summand under
consideration. When $Y_{j,0}=0$, we take
$H_{j,0}=X_{j,0}=0$.

The relations
$[X_{j,0},Y_{j,0}]=H_{j,0}$ and
$[H_{j,0},X_{j,0}]=2X_{j,0}$ imply
\[
Y_{j,0}-H_{j,0}-X_{j,0}
=
e^{-X_{j,0}}Y_{j,0}e^{X_{j,0}}.
\]
Thus $Z_{j,0}$ is nilpotent and
\[
\dim\ker Z_{j,0}=\dim\ker Y_{j,0}.
\]
Furthermore,
\[
\exp(Z_{j,0})-\operatorname{id}
=
Z_{j,0}
\left(
\sum_{a\geq0}\frac{Z_{j,0}^{a}}{(a+1)!}
\right).
\]
The sum is finite because $Z_{j,0}$ is nilpotent, and the operator
in parentheses is invertible and commutes with $Z_{j,0}$.
Therefore
\[
\dim\ker(U_{j,0}-\operatorname{id})
=
\dim\ker Z_{j,0}
=
\dim\ker(N_j|_{G_j}).
\]
Since $U_{j,0}$ represents the restriction of $M_j$ to an invariant
summand, we obtain
\[
\dim\ker(N_j|_{G_j})
\leq
\dim\ker(M_j-\operatorname{id}).
\]
Finally, $M_j=L_j^{-1}$ implies
$\ker(M_j-\operatorname{id})=\ker(L_j-\operatorname{id})$.
Combining the preceding inequalities gives
\[
\operatorname{rk}Q
\leq
\dim\ker(N_j|_{G_j})
\leq
\dim\ker(L_j-\operatorname{id})
=
h_{\partial,j}(\phi),
\]
as claimed.
\end{proof}

We can now conclude the reductive case.

\begin{corollary}\label{cor:global-residue-compensation}
For $g,n\geq1$, every reductive representation
$\phi\colon\pi_1(\Sigma_{g,n})\to U(p,q)$ satisfies
\begin{align*}
\operatorname{sign}(\phi)\le M_{g,n}(p,q)
\end{align*}
This is the upper bound in Theorem~\ref{thm:introduction-main}(3).
\end{corollary}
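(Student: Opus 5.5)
This is essentially a bookkeeping step: combine the exact signature formula \eqref{eq:sign deg} with the local and global estimates already established. We work in the setting of the section, with $q>p>0$ and $g,n\geq1$. The cases $p=q$ and $pq=0$ are covered by Section~\ref{sec:special-signature-values}, and for $q<p$ we use the symmetry $\mathscr S_{p,q}=-\mathscr S_{q,p}$ applied to the conjugate form.

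First, insert \eqref{eq:aj+dj-k} into \eqref{eq:sign deg}. This inequality is Corollary~\ref{eq:local-length-full-residue} plus Lemma~\ref{lem:full-residue-lattice-inequality}. Together with $\ell_{\mathrm{int}}(I/I^0)\geq0$, it yields \eqref{eq:sign deg d-Q}. Next, semistability gives $\deg N\leq \pardeg N\leq 0$ by \eqref{eq:semistabilityN}, so we may drop the term $2\deg N$. We are left with
\[
\operatorname{sign}(\phi)\leq 2k(2g-2)+(p+q)n-\Bigl(h_\partial(\phi)+2\bigl(\deg Q+\textstyle\sum_j d_{Q,j}\bigr)\Bigr).
\]

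Then apply Proposition~\ref{prop:residual-quotient-L2-bound}. It says the bracketed quantity is at least $\delta(n,q-p)$. Since $k=\operatorname{rk}\gamma\leq p=a$ and $2g-2\geq0$, we get $2k(2g-2)\leq 2p(2g-2)$. Therefore
\[
\operatorname{sign}(\phi)\leq 2p(2g-2)+n(p+q)-\delta(n,q-p)=M_{g,n}(p,q),
\]
which is the claimed upper bound.

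All the substantive difficulty has already been absorbed into Proposition~\ref{prop:residual-quotient-L2-bound} and Lemma~\ref{lem:zero-weight-Q-fixed-vectors}. The only point needing care here is to confirm that the hypothesis $g\geq1$ is exactly what makes the monotonicity in $k$ valid. We should also check that no sign is lost when passing from $\pardeg$ to $\deg$ for $N$, which holds because the weights are nonnegative.
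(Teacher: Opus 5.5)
Your proposal is correct and follows the paper's proof essentially verbatim: combine \eqref{eq:sign deg d-Q} with $\deg N\le\pardeg N\le 0$, Proposition~\ref{prop:residual-quotient-L2-bound}, and $k\le p$, $2g-2\ge 0$. For $p>q$, reversing the Hermitian form alone only turns the $\mathrm{U}(q,p)$ upper bound into a lower bound, so you must also use complex conjugation, which preserves reductivity and negates the signature, exactly as the paper does.
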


\begin{proof}
As above, assume $p\leq q$. By \eqref{eq:sign deg d-Q},
\eqref{eq:semistabilityN}, and
Proposition~\ref{prop:residual-quotient-L2-bound},
\[
\operatorname{sign}(\phi)
\leq 2k(2g-2)+n(p+q)-\delta(n,q-p).
\]
Since $k\leq p$ and $2g-2\geq0$, the right-hand side is at most
$M_{g,n}(p,q)$. In particular, this argument includes $g=1$.
Complex conjugation gives the corresponding lower bound. Reversing the
sign of the preserved Hermitian form interchanges $p$ and $q$ and reverses
the signature, so the same upper bound also holds when $p>q$.
\end{proof}
\subsection{The equality case for reductive representations}
\label{subsec:reductive-equality-case}

We now describe what equality in the sharp signature bound implies for a
reductive representation. Throughout this subsection, assume
\(g,n\geq1\) and \(q>p>0\), and write
\[
d=q-p,
\qquad \delta=\delta(n,d)=\min\{2,nd\}.
\]
Then
\(M_{g,n}(p,q)=2p(2g-2)+n(p+q)-\delta\).

Let
\[
(E=V\oplus W,\Phi),
\qquad
\Phi=\begin{pmatrix}0&\beta\\ \gamma&0\end{pmatrix},
\]
be the normalized parabolic Higgs bundle associated with a reductive
representation
\(\phi:\pi_1(\Sigma_{g,n})\to\mathrm U(p,q)\).
We keep the kernel--image notation from the preceding proof:
\[
N=\ker\gamma,
\quad
I^0=\operatorname{im}(\gamma)\otimes
K_{\overline\Sigma}(D)^{-1},
\quad I=\operatorname{Sat}_W(I^0),
\quad Q=W/I,
\]
and \(k=\operatorname{rk}I=\operatorname{rk}\gamma\).

For each marked point, define the local excess
\begin{equation}
\label{eq:local-equality-excess}
\varepsilon_j^{\mathrm{loc}}
:=\ell_{x_j}(I/I^0)+a_j+q-b_j-k-d_{Q,j}.
\end{equation}
The local estimates
\eqref{local estimate length I/I^0} and
\eqref{eq:full-residue-lattice-inequality} give
\[
\ell_{x_j}(I/I^0)\geq k-r_j^{\mathrm{full}},
\qquad
a_j+q-b_j-r_j^{\mathrm{full}}\geq d_{Q,j}.
\]
Thus the excess is a sum of two nonnegative quantities:
\begin{equation}
\label{eq:local-excess-decomposition}
\begin{aligned}
\varepsilon_j^{\mathrm{loc}}
={}&\left[\ell_{x_j}(I/I^0)-(k-r_j^{\mathrm{full}})\right]\\
&+\left[a_j+q-b_j-r_j^{\mathrm{full}}-d_{Q,j}\right]
\geq0.
\end{aligned}
\end{equation}

We also set \(B_Q=\deg Q+\sum_jd_{Q,j}\). The parabolic degree and
weight estimates proved above imply
\begin{equation}
\label{eq:BQ-nonnegative}
B_Q=\operatorname{pardeg}(Q)
+\left(\sum_{j=1}^n d_{Q,j}-\operatorname{wt}(Q)\right)\geq0.
\end{equation}
The boundary estimate used in the proof of the upper bound is
\begin{equation}
\label{eq:boundary-equality-defect}
h_\partial(\phi)+2B_Q\geq\delta.
\end{equation}
These quantities measure exactly how far the signature is from its upper
bound.

\begin{proposition}
\label{prop:exact-reductive-defect}
For every reductive representation \(\phi\) under the assumptions above,
\begin{align}
M_{g,n}(p,q)-\operatorname{sign}(\phi)
={}&2(p-k)(2g-2)-2\deg N
+2\ell_{\mathrm{int}}(I/I^0)\nonumber\\
&+2\sum_{j=1}^n\varepsilon_j^{\mathrm{loc}}
+\bigl(h_\partial(\phi)+2B_Q-\delta\bigr).
\label{eq:exact-reductive-defect}
\end{align}
Every term on the right is nonnegative.
\end{proposition}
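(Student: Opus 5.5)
The identity is a bookkeeping rearrangement of the exact formula \eqref{eq:sign deg}, and I will prove it by direct substitution. Write \(M_{g,n}(p,q)=2p(2g-2)+n(p+q)-\delta\) and subtract \eqref{eq:sign deg}. The term \(n(p+q)\) cancels, and \(2p(2g-2)-2k(2g-2)=2(p-k)(2g-2)\). The sign of \(2\deg N\) flips to \(-2\deg N\), and \(+2\ell_{\mathrm{int}}(I/I^0)\) appears. In \eqref{eq:sign deg} the boundary sum is \(2\sum_j(\ell_{x_j}(I/I^0)+a_j+q-b_j-k)\). By the definition \eqref{eq:local-equality-excess}, it equals \(2\sum_j\varepsilon_j^{\mathrm{loc}}+2\sum_jd_{Q,j}\). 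Finally, \(-h_\partial(\phi)-2\deg Q\) with its sign reversed becomes \(h_\partial(\phi)+2\deg Q\). Combined with \(2\sum_jd_{Q,j}\) and \(-\delta\), this gives \(h_\partial(\phi)+2B_Q-\delta\). Collecting these terms yields \eqref{eq:exact-reductive-defect}.

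For nonnegativity, I will treat the terms one at a time.
\begin{enumerate}
\item \(2(p-k)(2g-2)\geq0\), because \(k=\operatorname{rk}\gamma\leq p\) and \(g\geq1\).
\item \(-2\deg N\geq0\) by \eqref{eq:semistabilityN}. That inequality came from \(\pardeg(N)\leq0\), which holds by semistability since \(N\) is \(\Phi\)-invariant, together with the fact that all weights are nonnegative.
\item \(\ell_{\mathrm{int}}(I/I^0)\geq0\) holds by definition, since it is a sum of lengths.
\item \(\varepsilon_j^{\mathrm{loc}}\geq0\) follows from the decomposition \eqref{eq:local-excess-decomposition}. Its first bracket is nonnegative by Corollary~\ref{eq:local-length-full-residue}, and its second by Lemma~\ref{lem:full-residue-lattice-inequality}.
\item \(h_\partial(\phi)+2B_Q-\delta\geq0\) is exactly Proposition~\ref{prop:residual-quotient-L2-bound}, restated as \eqref{eq:boundary-equality-defect}.
\end{enumerate}

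There is no real obstacle, since every ingredient was already established in the reductive upper-bound argument. The only points needing care are the signs when rearranging \eqref{eq:sign deg}, and checking that the \(-k\) inside the local boundary sum is absorbed correctly into \(\varepsilon_j^{\mathrm{loc}}\) rather than into the genus term. I will verify both by expanding term by term.
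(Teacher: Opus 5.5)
Your proposal is correct and matches the paper's proof: substitute \(\ell_{x_j}(I/I^0)+a_j+q-b_j-k=d_{Q,j}+\varepsilon_j^{\mathrm{loc}}\) into \eqref{eq:sign deg}, collect the \(Q\)-terms into \(B_Q\), and subtract from \(M_{g,n}(p,q)\). Nonnegativity then follows term by term from \(k\leq p\), \(\deg N\leq0\) (semistability plus nonnegative weights), the definition of \(\ell_{\mathrm{int}}\), \eqref{eq:local-excess-decomposition}, and \eqref{eq:boundary-equality-defect}, exactly as you outline.
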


\begin{proof}
The degree formula \eqref{eq:sign deg} reads
\begin{align*}
\operatorname{sign}(\phi)
={}&2k(2g-2)+n(p+q)-h_\partial(\phi)
+2\deg N-2\deg Q\\
&-2\ell_{\mathrm{int}}(I/I^0)
-2\sum_{j=1}^n
\bigl(\ell_{x_j}(I/I^0)+a_j+q-b_j-k\bigr).
\end{align*}
Substitute
\(\ell_{x_j}(I/I^0)+a_j+q-b_j-k
=d_{Q,j}+\varepsilon_j^{\mathrm{loc}}\)
and collect the terms involving \(Q\). This gives
\begin{align*}
\operatorname{sign}(\phi)
={}&2k(2g-2)+n(p+q)-h_\partial(\phi)+2\deg N\\
&-2B_Q-2\ell_{\mathrm{int}}(I/I^0)
-2\sum_{j=1}^n\varepsilon_j^{\mathrm{loc}}.
\end{align*}
Subtracting from \(M_{g,n}(p,q)\) proves the identity.

For nonnegativity, use \(k\leq p\) and \(g\geq1\). Semistability gives
\(\operatorname{pardeg}(N)\leq0\), and the normalized weights are
nonnegative, so
\[
\deg N=\operatorname{pardeg}(N)-\operatorname{wt}(N)\leq0.
\]
The saturation length is nonnegative by definition. The remaining terms
are nonnegative by \eqref{eq:local-excess-decomposition} and
\eqref{eq:boundary-equality-defect}.
\end{proof}

The identity gives a complete numerical characterization of equality.

\begin{corollary}
\label{cor:reductive-equality-characterization}
A reductive representation \(\phi\) satisfies
\(\operatorname{sign}(\phi)=M_{g,n}(p,q)\) if and only if all the following
conditions hold:
\begin{enumerate}
\item \((p-k)(2g-2)=0\);
\item \(\deg N=0\);
\item \(\ell_{\mathrm{int}}(I/I^0)=0\);
\item at each marked point \(x_j\),
\[
\ell_{x_j}(I/I^0)=k-r_j^{\mathrm{full}},
\qquad
a_j+q-b_j-r_j^{\mathrm{full}}=d_{Q,j};
\]
\item
\[
h_\partial(\phi)
+2\left(\deg Q+\sum_{j=1}^n d_{Q,j}\right)
=\delta(n,q-p).
\]
\end{enumerate}
\end{corollary}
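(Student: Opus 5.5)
This characterization should follow directly from the exact defect identity of Proposition~\ref{prop:exact-reductive-defect}. That identity writes \(M_{g,n}(p,q)-\operatorname{sign}(\phi)\) as a sum of five quantities:
\[
2(p-k)(2g-2),\quad -2\deg N,\quad 2\ell_{\mathrm{int}}(I/I^0),\quad 2\sum_j\varepsilon_j^{\mathrm{loc}},\quad h_\partial(\phi)+2B_Q-\delta.
\]
Each of these is nonnegative. The plan is to note that a finite sum of nonnegative real numbers vanishes if and only if every summand vanishes, and then to translate each vanishing summand into the corresponding item of the list.

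The translation goes term by term. The first term gives condition~(1), after dividing by \(2\). The second gives \(\deg N=0\), which is condition~(2). The third gives condition~(3). The fifth gives \(h_\partial(\phi)+2B_Q=\delta(n,q-p)\); recalling \(B_Q=\deg Q+\sum_j d_{Q,j}\), this is exactly condition~(5).

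The fourth term needs one more step. Since each \(\varepsilon_j^{\mathrm{loc}}\geq0\), the sum \(\sum_j\varepsilon_j^{\mathrm{loc}}\) vanishes if and only if every \(\varepsilon_j^{\mathrm{loc}}\) vanishes. By the decomposition \eqref{eq:local-excess-decomposition}, each \(\varepsilon_j^{\mathrm{loc}}\) is a sum of two brackets. These brackets are nonnegative by Corollary~\ref{eq:local-length-full-residue} and Lemma~\ref{lem:full-residue-lattice-inequality}, respectively. So \(\varepsilon_j^{\mathrm{loc}}=0\) if and only if both brackets vanish, which is precisely the pair of equalities in condition~(4).

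The converse is the same computation read backwards: if conditions (1)–(5) all hold, every summand in \eqref{eq:exact-reductive-defect} is zero, so \(\operatorname{sign}(\phi)=M_{g,n}(p,q)\). I do not expect a genuine obstacle here, since all the analytic and sheaf-theoretic work is already contained in Proposition~\ref{prop:exact-reductive-defect}. The only point needing care is that the nonnegativity of each summand is used as stated there. In particular, \(-2\deg N\geq0\) relies on \(\operatorname{pardeg}(N)\leq0\) together with the nonnegativity of the normalized weights. The local excess must be split into its two brackets, rather than treated as a single nonnegative number, to obtain both equalities in~(4).
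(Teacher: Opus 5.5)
Your proposal is correct and follows the paper's own proof: both apply the exact defect identity \eqref{eq:exact-reductive-defect}, use that a sum of nonnegative terms vanishes exactly when each term does, and split \(\varepsilon_j^{\mathrm{loc}}\) via \eqref{eq:local-excess-decomposition} into its two nonnegative brackets to obtain the pair of equalities in item~(4). Nothing is missing.
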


\begin{proof}
A sum of nonnegative terms is zero exactly when every term is zero.
Apply this observation to \eqref{eq:exact-reductive-defect}.
For the local term, \eqref{eq:local-excess-decomposition} shows that
\(\varepsilon_j^{\mathrm{loc}}=0\) is equivalent to the two equalities
in item~(4).
\end{proof}

\begin{remark}
\label{rem:local-equality-meaning}
The first local equality has a concrete interpretation. After completing
the stalks at \(x_j\), put the coefficient matrix of \(\gamma\), with
respect to the logarithmic frame \(dz/z\), in Smith normal form. Its
nonzero diagonal entries are
\[
z^{\mu_{j,1}},\ldots,z^{\mu_{j,k}},
\qquad \mu_{j,a}\in\mathbb Z_{\geq0}.
\]
Then
\[
\ell_{x_j}(I/I^0)=\sum_{a=1}^k\mu_{j,a},
\qquad
r_j^{\mathrm{full}}=\#\{a:\mu_{j,a}=0\}.
\]
Consequently, the first equality in item~(4) of
Corollary~\ref{cor:reductive-equality-characterization} holds exactly
when every \(\mu_{j,a}\) is either \(0\) or \(1\). In an extremal
representation, each nonzero local vanishing order of the image is
therefore exactly one.
\end{remark}

\begin{remark}
\label{rem:extremal-kernel-defect}
Extremality gives \(\deg N=0\). Since
\[
\operatorname{pardeg}(N)=\deg N+\operatorname{wt}(N)\leq0,
\qquad \operatorname{wt}(N)\geq0,
\]
it follows that both \(\operatorname{pardeg}(N)\) and
\(\operatorname{wt}(N)\) vanish. If \(g\geq2\), the equality
\((p-k)(2g-2)=0\) also gives \(k=p\), hence \(N=0\).
For \(g=1\), this numerical term gives no rank information. However,
if \(N\ne0\), then \(N\oplus0\) is a proper invariant parabolic
subbundle of degree zero. Polystability makes it a Higgs direct summand,
so the associated representation is reducible.
\end{remark}

A zero parabolic degree for \(Q\) gives a more precise splitting.

\begin{lemma}
\label{lem:pardeg-Q-zero-splitting}
If \(\operatorname{pardeg}(Q)=0\), then
\[
(E,\Phi)\simeq
\bigl(V\oplus I,\Phi|_{V\oplus I}\bigr)\oplus(Q,0).
\]
The splitting can be chosen orthogonal for the harmonic metric, with
the summand representing \(Q\) contained in \(W\). In particular,
\(\phi\) is reducible.
\end{lemma}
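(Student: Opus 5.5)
The argument uses polystability of the standard representative from Proposition~\ref{prop:normalized-logarithmic-representative}, applied to the $\Phi$-invariant saturated parabolic subbundle $V\oplus I$. Since $\pardeg(V)+\pardeg(I_W)+\pardeg(Q)=0$ and $\pardeg(Q)=0$ by hypothesis, the subbundle $V\oplus I$ has parabolic degree zero. It is proper unless $Q=0$; when $Q=0$ the statement is vacuous, the summand $(Q,0)$ being zero. In a polystable parabolic Higgs bundle of total parabolic degree zero, every $\Phi$-invariant saturated subbundle of parabolic degree zero is a direct summand with an invariant complement. The cleanest route to this is the metric one. The adapted harmonic metric $h$ on $\Sigma^\circ$ solves the Hitchin equations. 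For an invariant subbundle $S$, the Chern--Weil formula with second fundamental form $\eta$ gives
\[
\pardeg(S)=\frac{\sqrt{-1}}{2\pi}\int_{\Sigma^\circ}\operatorname{tr}\bigl(\pi_S F_{h}\bigr)-\|\eta\|^2_{L^2}-\|\pi_{S^\perp}\Phi^{\dagger}_h\pi_S\|^2_{L^2}.
\]
The curvature term is absent because $F_h+[\Phi,\Phi^\dagger_h]=0$ and the trace of the commutator term restricted to $S$ reduces to the same norm.

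The first step is to justify this formula for the parabolic degree with the tame metric, using the standard growth estimates of \cite[Section~5]{BGPMiR20} (or Simpson's noncompact argument). Vanishing of $\pardeg(V\oplus I)$ then forces $\eta=0$ and $\pi_{S^\perp}\Phi\pi_S=0$ together with its adjoint. Hence the $h$-orthogonal complement $S^\perp$ is holomorphic and $\Phi$-invariant. This is the step I expect to be the main obstacle: the boundary terms in the integration by parts must vanish. The second fundamental form has to be in $L^2$ and the filtered weights of $S$ must agree with the induced ones. I would take this analytic input from the metric-to-polystability statement \cite[Theorem~5.1]{BGPMiR20} and its proof, which already treats reductions of the structure group.

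Next, I identify the complement. The subbundle $V\oplus I$ is compatible with the $\mathrm U(p,q)$ splitting, and the involution $\vartheta=\operatorname{id}_V\oplus(-\operatorname{id}_W)$ is $h$-unitary, being the harmonic reduction. Therefore $(V\oplus I)^\perp=I^{\perp_W}\subset W$, and the complement lies in $W$. The projection $W\to Q$ restricts to a holomorphic isomorphism $I^{\perp_W}\cong Q$ on $\Sigma^\circ$. This isomorphism extends across $D$ as an isomorphism of parabolic bundles, since both sides carry the filtration induced from $W$ and the quotient structure coincides with the orthogonal-complement structure in the adapted frames.

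On the complement, $\Phi$ sends $I^{\perp_W}$ into $V\otimes K(D)$ by off-diagonality, while invariance keeps it inside $I^{\perp_W}\otimes K(D)\subset W\otimes K(D)$. Hence $\Phi$ vanishes there, which shows that the summand is $(Q,0)$. Finally, an orthogonal $\Phi$-invariant holomorphic splitting of a harmonic bundle is preserved by the flat connection $D^1$. The local system therefore splits orthogonally into two nonzero flat subbundles, and $\phi$ is reducible.
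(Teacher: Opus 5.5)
Your proposal is correct and follows essentially the same route as the paper. You observe that $V\oplus I$ is a $\Phi$-invariant saturated subbundle of parabolic degree zero; the equality case of the Chern--Weil/harmonic-metric proof of polystability then kills its second fundamental form and the off-diagonal Higgs block; the $h$-orthogonal complement lies in $W$ because $V\perp_h W$; the Higgs field vanishes on it by off-diagonality; and the flat connection preserves the splitting. One small correction: $Q=0$ is not a vacuous case to set aside, since under the standing assumption $q>p$ one has $\operatorname{rk}Q=q-k\geq q-p>0$, and this, together with $p>0$, is exactly what makes both flat summands nonzero and yields reducibility.
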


\begin{proof}
The saturated subbundle \(S=V\oplus I\) is \(\Phi\)-invariant:
\(\gamma(V)\subseteq I\otimes K_{\overline\Sigma}(D)\), and
\(\beta(I)\subseteq V\otimes K_{\overline\Sigma}(D)\).
Additivity of parabolic degree gives
\[
\operatorname{pardeg}(S)=-\operatorname{pardeg}(Q)=0.
\]
The equality case of the harmonic-metric proof of polystability implies
that the second fundamental form of \(S\), and the off-diagonal Higgs
block relative to \(S\oplus S^{\perp_h}\), vanish. Thus this orthogonal
decomposition is a holomorphic parabolic Higgs splitting, preserved by
the associated flat connection.

Because \(V\) and \(W\) are orthogonal and \(S\) contains \(V\), its
complement \(S^{\perp_h}\) lies in \(W\) and maps isomorphically to
\(Q=W/I\). The quotient Higgs field is zero: \(\Phi(W)\) takes values
in \(V\otimes K_{\overline\Sigma}(D)\), which vanishes in the quotient
by \(S\). Hence the complementary summand is \((Q,0)\).
Both summands have positive rank, since \(p>0\) and
\(\operatorname{rk}Q=q-k\geq q-p>0\). The flat splitting is therefore
nontrivial.
\end{proof}

\begin{corollary}
\label{cor:pardeg-Q-group-reduction}
If \(\operatorname{pardeg}(Q)=0\), then, after conjugation in
\(\mathrm U(p,q)\),
\[
\phi=\phi_1\oplus\phi_Q,
\qquad
\phi_1:\pi_1(\Sigma_{g,n})\to\mathrm U(p,k),
\qquad
\phi_Q:\pi_1(\Sigma_{g,n})\to\mathrm U(q-k),
\]
where \(\phi_Q\) has compact image. If also
\(\operatorname{sign}(\phi)=M_{g,n}(p,q)\), then \(k=p\), so the image
is conjugate into \(\mathrm U(p,p)\times\mathrm U(q-p)\), and
\[
\operatorname{sign}(\phi_1)=M_{g,n}(p,p),
\qquad
\operatorname{sign}(\phi_Q)=M_{g,n}(0,q-p).
\]
\end{corollary}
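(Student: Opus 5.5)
The first assertion is an immediate consequence of Lemma~\ref{lem:pardeg-Q-zero-splitting}. The orthogonal Higgs splitting $(E,\Phi)\simeq(V\oplus I,\Phi|_{V\oplus I})\oplus(Q,0)$ is preserved by the flat connection, so it gives a flat orthogonal decomposition $\mathcal E_\phi=\mathcal E_1\oplus\mathcal E_Q$. The summand $V\oplus I$ has $V$ positive definite of rank $p$ and $I\subset W$ negative definite of rank $k$; hence the parallel Hermitian form on $\mathcal E_1$ has signature $(p,k)$. The summand $S^{\perp_h}\subset W$ is negative definite of rank $q-k$. Choosing $\Omega$-orthonormal bases adapted to this flat splitting conjugates $\phi$ into $\mathrm U(p,k)\times\mathrm U(q-k)$, block diagonally. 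The second factor is $\mathrm U(0,q-k)\cong\mathrm U(q-k)$, which is compact. This gives $\phi=\phi_1\oplus\phi_Q$.

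For the extremal case, we use Corollary~\ref{cor:reductive-equality-characterization}. If $g\ge2$, condition~(1) forces $k=p$ directly. For $g=1$ the numerical condition is silent, and I would argue through the decomposition instead. By additivity of the signature under orthogonal direct sums, $\operatorname{sign}(\phi)=\operatorname{sign}(\phi_1)+\operatorname{sign}(\phi_Q)$. Corollary~\ref{cor:global-residue-compensation}, applied to $\phi_1$ (with signature $(p,k)$, $k\le p$, after interchanging $p$ and $k$ if needed), together with Proposition~\ref{prop:positive-genus-Up} applied to $\phi_Q$, gives
\[
\operatorname{sign}(\phi)\le M_{g,n}(k,p)+\max\{0,n(q-k)-2\}.
\]
For $k\le p$ we have $M_{g,n}(k,p)=2k(2g-2)+n(p+k)-\delta(n,p-k)$. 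Since $q-k>0$, the total is at most
\[
2k(2g-2)+n(p+q)-\delta(n,p-k)-\min\{2,n(q-k)\}.
\]
Comparing with $M_{g,n}(p,q)=2p(2g-2)+n(p+q)-\delta(n,q-p)$, the defect is
\[
2(p-k)(2g-2)+\delta(n,p-k)+\min\{2,n(q-k)\}-\min\{2,n(q-p)\}.
\]
If $k<p$, then $\delta(n,p-k)\ge1$, and the last two terms give a nonnegative difference. So equality forces $k=p$ in every genus.

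One subtlety is that $\phi_1$ and $\phi_Q$ need not be reductive in any special sense. However, they are summands of a reductive representation, hence reductive, so Corollary~\ref{cor:global-residue-compensation} applies to them. I expect the main care to be needed in applying the $\mathrm U(p,k)$ bound with the roles of $p$ and $k$ swapped; the symmetry remark at the start of Section~\ref{sec:positive-genus-signatures} handles that swap.

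Once $k=p$, the image lies in $\mathrm U(p,p)\times\mathrm U(q-p)$. The additivity then gives
\[
M_{g,n}(p,q)=\operatorname{sign}(\phi_1)+\operatorname{sign}(\phi_Q).
\]
By the balanced Milnor--Wood bound \eqref{eq:mw-signature}, $\operatorname{sign}(\phi_1)\le 2p|\chi|=M_{g,n}(p,p)$. By Proposition~\ref{prop:positive-genus-Up}, $\operatorname{sign}(\phi_Q)\le \max\{0,n(q-p)-2\}$, and this equals $M_{g,n}(0,q-p)=n(q-p)-\delta(n,q-p)$; one checks the case $n(q-p)=1$ separately, where both sides are $0$. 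These two bounds sum exactly to $M_{g,n}(p,q)$, so each inequality must be an equality. This yields the two stated signature values.
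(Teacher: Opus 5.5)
Your proposal is correct and follows essentially the same route as the paper. You use the flat $\Omega$-orthogonal splitting from Lemma~\ref{lem:pardeg-Q-zero-splitting}, then the defect $2(p-k)(2g-2)+\delta(n,p-k)+\min\{2,n(q-k)\}-\min\{2,n(q-p)\}\geq1$ for $k<p$ (you argue by monotonicity where the paper notes $\delta(n,q-k)=2$), and finally the identity $M_{g,n}(p,p)+M_{g,n}(0,q-p)=M_{g,n}(p,q)$, which forces both summand bounds to be attained. The detour through Corollary~\ref{cor:reductive-equality-characterization} for $g\geq2$ is unnecessary, since your decomposition argument already works in every genus; your remark that $\phi_1$ is reductive (with Proposition~\ref{prop:positive-genus-Up} supplying its bound when $k=0$) makes explicit what the paper's ``upper bounds for the two summands'' refers to.
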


\begin{proof}
By Lemma~\ref{lem:pardeg-Q-zero-splitting}, the harmonic splitting has
one summand of signature \((p,k)\) and one negative definite summand
of rank \(q-k\). The flat connection preserves both summands, giving
the asserted group reduction. On \(Q\), the Higgs field vanishes, so
the flat connection is unitary.

Suppose the signature is maximal. Additivity and the upper bounds for
the two summands give
\[
M_{g,n}(p,q)
\leq M_{g,n}(p,k)+M_{g,n}(0,q-k).
\]
If \(k<p\), direct substitution in the formula for \(M_{g,n}\) gives
\begin{align*}
&M_{g,n}(p,q)-M_{g,n}(p,k)-M_{g,n}(0,q-k)\\
&\qquad=2(p-k)(2g-2)+\delta(n,p-k)
+\delta(n,q-k)-\delta(n,q-p).
\end{align*}
Here \(p-k\geq1\) and \(q-k\geq2\). Thus
\(\delta(n,p-k)\geq1\), \(\delta(n,q-k)=2\), and
\(\delta(n,q-p)\leq2\). Since \(g\geq1\), the displayed difference
is at least one, a contradiction. Therefore \(k=p\).

Finally,
\[
M_{g,n}(p,p)+M_{g,n}(0,q-p)=M_{g,n}(p,q).
\]
The signatures of the summands add to the right-hand side and are
bounded by the two terms on the left. Both individual bounds must
therefore be attained.
\end{proof}

The boundary equality in
Corollary~\ref{cor:reductive-equality-characterization} leaves very few
possibilities.

\begin{corollary}
\label{cor:extremal-boundary-alternatives}
Assume \(\operatorname{sign}(\phi)=M_{g,n}(p,q)\).
\begin{enumerate}
\item If \((n,q-p)=(1,1)\), then
\[
h_\partial(\phi)=1,
\qquad B_Q=0,
\qquad k=p,
\qquad N=0,
\qquad \operatorname{rk}Q=1.
\]
All parabolic weights of \(Q\) vanish, and
\(\deg Q=\operatorname{pardeg}(Q)=d_{Q,1}=0\).
In particular, \(\phi\) is reducible.

\item If \(n(q-p)\geq2\), then
\[
(h_\partial(\phi),B_Q)=(2,0)
\quad\text{or}\quad
(h_\partial(\phi),B_Q)=(0,1).
\]
In the first case, necessarily
\[
n(q-p)=2,
\qquad k=p,
\qquad N=0,
\qquad \operatorname{rk}Q=q-p.
\]
All parabolic weights of \(Q\) vanish, and
\[
\deg Q=\operatorname{pardeg}(Q)=0,
\qquad d_{Q,j}=0,
\qquad h_{\partial,j}(\phi)=\operatorname{rk}Q
\quad\text{for every }j.
\]
Consequently, \(\phi\) is reducible.
In the second case, \(Q\) has at least one nonzero parabolic weight;
the equality \(B_Q=1\) alone does not force
\(\operatorname{pardeg}(Q)=0\).
\end{enumerate}
\end{corollary}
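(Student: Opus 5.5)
Extremality gives condition (5) of Corollary~\ref{cor:reductive-equality-characterization}:
\[
h_\partial(\phi)+2B_Q=\delta(n,q-p),
\]
with $h_\partial(\phi)\geq0$ and $B_Q\geq0$ an integer by \eqref{eq:BQ-nonnegative}. We would read off the possible pairs $(h_\partial(\phi),B_Q)$ from this equation and then run the case analysis from the proof of Proposition~\ref{prop:residual-quotient-L2-bound} backwards.

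\textbf{Case $(n,q-p)=(1,1)$.} Here $\delta=1$, so parity forces $B_Q=0$ and $h_\partial(\phi)=1$. The equality discussion after \eqref{eq:sum-dQj-ge-sQ} shows that $B_Q=0$ forces $\operatorname{pardeg}(Q)=0$, $s_Q=0$ and $d_{Q,1}=0$. Hence all weights of $Q$ vanish, and then $\deg Q=\operatorname{pardeg}(Q)=0$. Corollary~\ref{cor:pardeg-Q-group-reduction} now yields $k=p$, so $N=0$ and $\operatorname{rk}Q=q-p=1$. Reducibility follows from Lemma~\ref{lem:pardeg-Q-zero-splitting}.

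\textbf{Case $n(q-p)\geq2$.} Here $\delta=2$, so the solutions are $(h_\partial(\phi),B_Q)\in\{(2,0),(0,1)\}$.

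In the subcase $(2,0)$, the same equality analysis gives vanishing weights of $Q$, $\operatorname{pardeg}(Q)=\deg Q=0$ and $d_{Q,j}=0$ for every $j$. Corollary~\ref{cor:pardeg-Q-group-reduction} then gives $k=p$, $N=0$ and $\operatorname{rk}Q=q-p$. Applying Lemma~\ref{lem:zero-weight-Q-fixed-vectors} at every marked point gives $h_{\partial,j}(\phi)\geq\operatorname{rk}Q$. Summing,
\[
2=h_\partial(\phi)\geq n(q-p)\geq2 .
\]
This forces $n(q-p)=2$ and equality $h_{\partial,j}(\phi)=\operatorname{rk}Q$ for each $j$, since each term is bounded below and the sum is tight. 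Reducibility again follows from Lemma~\ref{lem:pardeg-Q-zero-splitting}.

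In the subcase $(0,1)$, suppose all weights of $Q$ vanished. Lemma~\ref{lem:zero-weight-Q-fixed-vectors} would then give
\[
0=h_\partial(\phi)\geq n\operatorname{rk}Q\geq n(q-p)\geq2,
\]
a contradiction. Hence $s_Q>0$, that is, $Q$ has a nonzero parabolic weight. The closing remark, that $B_Q=1$ alone does not force $\operatorname{pardeg}(Q)=0$, is just the observation that $B_Q$ also contains the strictly positive term $s_Q-\operatorname{wt}(Q)$. It needs no proof beyond noting that this term can absorb the unit.

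\textbf{Main obstacle.} The delicate step is the $(2,0)$ subcase. There one must make sure that Corollary~\ref{cor:pardeg-Q-group-reduction} applies, which requires $\operatorname{pardeg}(Q)=0$ together with extremality. One must also check that the per-point inequalities combine to give equality at every $j$.
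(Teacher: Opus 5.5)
Your proposal is correct, and its skeleton matches the paper's proof. Both arguments use the integer analysis of $h_\partial(\phi)+2B_Q=\delta(n,q-p)$. Both use the equality case of \eqref{eq:sum-dQj-ge-sQ} when $B_Q=0$, Lemma~\ref{lem:pardeg-Q-zero-splitting} for reducibility, and Lemma~\ref{lem:zero-weight-Q-fixed-vectors} for the contradiction in the $(0,1)$ subcase.

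The one genuine difference is how you obtain $k=p$ (and hence $N=0$ and $\operatorname{rk}Q=q-p$). You invoke Corollary~\ref{cor:pardeg-Q-group-reduction}. That corollary passes through the flat splitting $\phi=\phi_1\oplus\phi_Q$, additivity of the signature, the already established upper bounds for the summands in $\mathrm U(p,k)$ and $\mathrm U(q-k)$, and an arithmetic comparison of the $M_{g,n}$ values.

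The paper instead gets $k=p$ from the local chain $h_\partial(\phi)\geq n\operatorname{rk}Q\geq n(q-p)$ given by Lemma~\ref{lem:zero-weight-Q-fixed-vectors}, with $\operatorname{rk}Q=q-k$. Since $h_\partial(\phi)=1$, respectively $2$, every inequality in the chain is an equality. This yields $\operatorname{rk}Q=q-p$, hence $k=p$, together with $n(q-p)=2$ and $h_{\partial,j}(\phi)=\operatorname{rk}Q$, all at once.

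The paper's route is more economical. It relies only on the fixed-vector estimate and avoids the global bounds on the summands, which for $\phi_1$ require noting that it is reductive as a Higgs summand. Your route is nonetheless valid, since that corollary is proved earlier and its hypotheses $\operatorname{pardeg}(Q)=0$ and extremality hold. In the case $(n,q-p)=(1,1)$ it also lets you avoid the zero-weight lemma entirely.
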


\begin{proof}
Both \(B_Q\) and \(h_\partial(\phi)\) are nonnegative integers, and
extremality gives
\[
h_\partial(\phi)+2B_Q=\delta(n,q-p).
\]
If the right-hand side is one, the only possibility is
\((h_\partial(\phi),B_Q)=(1,0)\). If it is two, the possibilities are
\((2,0)\) and \((0,1)\).

Whenever \(B_Q=0\), the two nonnegative terms in
\eqref{eq:BQ-nonnegative} vanish. The equality criterion in
\eqref{eq:sum-dQj-ge-sQ} then gives
\[
\operatorname{pardeg}(Q)=\deg Q=0,
\qquad \operatorname{wt}(Q)=0,
\qquad d_{Q,j}=0\quad\text{for every }j.
\]
The zero-weight estimate \eqref{eq:zero-weight-Q-fixed-vectors} implies
\[
h_\partial(\phi)\geq n\operatorname{rk}Q\geq n(q-p).
\]
For \((n,q-p)=(1,1)\), the left-hand side is one, so
\(\operatorname{rk}Q=1\) and \(k=p\). In the case \(\delta=2\) and
\(B_Q=0\), we have
\[
2=h_\partial(\phi)\geq n\operatorname{rk}Q
\geq n(q-p)\geq2.
\]
Every inequality is an equality. This gives \(n(q-p)=2\), \(k=p\),
and \(h_{\partial,j}(\phi)=\operatorname{rk}Q\) at each boundary
component. In both cases, \(N=0\), and reducibility follows from
Lemma~\ref{lem:pardeg-Q-zero-splitting}.

Finally, if \((h_\partial(\phi),B_Q)=(0,1)\) and all weights of \(Q\)
were zero, the same zero-weight estimate would give
\(h_\partial(\phi)\geq n\operatorname{rk}Q>0\), a contradiction.
\end{proof}

For stable Higgs bundles, only the last alternative can occur.

\begin{corollary}
\label{cor:stable-extremal-Higgs-bundles}
Assume that the associated parabolic Higgs bundle is stable; this holds,
in particular, when \(\phi\) is irreducible. If
\(\operatorname{sign}(\phi)=M_{g,n}(p,q)\), then
\[
k=p,
\qquad N=0,
\qquad \operatorname{pardeg}(Q)>0.
\]
Moreover,
\[
\delta(n,q-p)=2,
\qquad h_\partial(\phi)=0,
\qquad \deg Q+\sum_{j=1}^n d_{Q,j}=1.
\]
In particular, no stable extremal Higgs bundle exists when
\((n,q-p)=(1,1)\).
\end{corollary}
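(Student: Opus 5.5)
The plan is to combine Corollary~\ref{cor:extremal-boundary-alternatives} with the splitting criteria already proved. Stability of the parabolic Higgs bundle means that every proper $\Phi$-invariant saturated parabolic subbundle has strictly negative parabolic degree. The first step is to rule out every alternative in Corollary~\ref{cor:extremal-boundary-alternatives} that produces a degree-zero invariant subbundle.

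First I would show $N=0$. Remark~\ref{rem:extremal-kernel-defect} gives $\operatorname{pardeg}(N)=0$ for an extremal representation. The subbundle $N\oplus0$ is $\Phi$-invariant and proper, since $V\neq0$ and $W\neq0$. If $N\neq0$, stability would force $\operatorname{pardeg}(N)<0$, a contradiction. Hence $N=0$ and $k=p$.

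Next I would show $\operatorname{pardeg}(Q)>0$. We have $\operatorname{pardeg}(Q)\geq0$ by \eqref{eq:pardeg-Q-nonnegative}. If equality held, Lemma~\ref{lem:pardeg-Q-zero-splitting} would give a nontrivial Higgs splitting: the summand $V\oplus I$ has parabolic degree zero and is proper because $\operatorname{rk}Q\geq q-p>0$. That contradicts stability directly.

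For the boundary data, Corollary~\ref{cor:extremal-boundary-alternatives} applies. When $(n,q-p)=(1,1)$, or in the case $(h_\partial(\phi),B_Q)=(2,0)$, it yields $\operatorname{pardeg}(Q)=0$, which we have just excluded. So the case $(n,q-p)=(1,1)$ is impossible, giving the final assertion. In the remaining case $n(q-p)\geq2$, so $\delta(n,q-p)=2$, and the only surviving alternative is $(h_\partial(\phi),B_Q)=(0,1)$. Since $B_Q=\deg Q+\sum_j d_{Q,j}$, this is exactly the claimed identity.

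The irreducible-implies-stable clause would be justified by the nonabelian Hodge correspondence of \cite{BGPMiR20}. A polystable object that is not stable splits into a direct sum of Higgs bundles, and through the harmonic metric this becomes a flat orthogonal splitting of the representation. That step is the only non-formal point, and I expect it to be the main obstacle. I would handle it by the same harmonic-metric equality argument used in Lemma~\ref{lem:pardeg-Q-zero-splitting}: a degree-zero invariant subbundle has vanishing second fundamental form, so it is preserved by the flat connection, contradicting irreducibility.
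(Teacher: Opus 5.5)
Your proposal is correct and follows essentially the same argument as the paper. Extremality gives $\operatorname{pardeg}(N)=0$, and stability applied to $N\oplus0$ forces $N=0$ and $k=p$. Stability applied to the proper invariant subbundle $V\oplus I$ then gives $\operatorname{pardeg}(Q)>0$. This excludes every alternative with $B_Q=0$ in Corollary~\ref{cor:extremal-boundary-alternatives}, leaving $(h_\partial(\phi),B_Q)=(0,1)$ with $\delta=2$. The detour through Lemma~\ref{lem:pardeg-Q-zero-splitting} is unnecessary, since stability applies to $V\oplus I$ directly. The paper also only asserts the irreducible-implies-stable clause rather than proving it.
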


\begin{proof}
Extremality gives \(\deg N=\operatorname{pardeg}(N)=0\).
If \(N\ne0\), the proper invariant subbundle \(N\oplus0\) has the
same slope as \(E\), contrary to stability. Hence \(N=0\) and \(k=p\).
Since \(\operatorname{rk}Q=q-p>0\), the invariant subbundle \(V\oplus I\)
is proper. Stability gives
\[
-\operatorname{pardeg}(Q)=\operatorname{pardeg}(V\oplus I)<0.
\]
This excludes every alternative with \(B_Q=0\) in
Corollary~\ref{cor:extremal-boundary-alternatives}. The remaining
alternative has \(\delta=2\), \(h_\partial(\phi)=0\), and \(B_Q=1\).
\end{proof}

\section{From reductive to arbitrary representations}\label{non-reductive}

\subsection{Isotropic filtrations and reductive splittings}
\label{subsec:isotropic-reduction-positive}

We now extend the upper bound to representations that are not reductive.
By the definition of reductivity used above, the image of such a
representation lies in a proper real parabolic subgroup. The parabolic
subgroups of \(\mathrm{U}(p,q)\) stabilize isotropic flags; see
\cite[Chapter~VII, Section~7]{Knapp}. Thus the representation preserves a
nonzero isotropic subspace.

Taking an isotropic quotient reduces both indices of the Hermitian form by
the same amount. The difficulty is that this operation can change the
signature: the discarded extension data may contribute to the boundary rho
invariant. We first explain when the signature is preserved, then estimate
the possible change in general.

\begin{lemma}
\label{lem:isotropic-quotient-positive}
Let \((\mathcal E,\Omega)\) be a flat Hermitian local system of signature
\((p,q)\) over \(\Sigma\), and suppose that
\(\mathcal F\subset\mathcal E\) is a flat isotropic sub-local system of
complex rank \(s\). Then \(\mathcal F^\perp\) is a flat sub-local system, and
the quotient
\begin{equation}
\label{eq:isotropic-quotient-positive}
\mathcal E_0
:=
\mathcal F^\perp/\mathcal F
\end{equation}
inherits a nondegenerate flat Hermitian form \(\Omega_0\) of signature
\((p-s,q-s)\). In particular,
\(
(q-s)-(p-s)=q-p.
\)
\end{lemma}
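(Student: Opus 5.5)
The plan is to reduce everything to fiberwise linear algebra on a single Hermitian vector space, since all constructions are compatible with parallel transport. Fix a basepoint and let \(E\) be the fiber with its nondegenerate Hermitian form \(\Omega\) of signature \((p,q)\), and \(F\subset E\) the fiber of \(\mathcal F\): an isotropic subspace of dimension \(s\), invariant under the holonomy group. First, \(F^\perp\) is invariant: if \(\gamma\) preserves \(\Omega\) and \(F\), then for \(v\in F^\perp\), \(f\in F\) we have \(\Omega(\gamma v,f)=\Omega(v,\gamma^{-1}f)=0\). Hence \(F^\perp\) defines a flat sub-local system \(\mathcal F^\perp\), and isotropy gives \(F\subset F^\perp\). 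Nondegeneracy gives \(\dim F^\perp=p+q-s\), so \(\mathcal E_0\) has rank \(p+q-2s\).

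Next, I define \(\Omega_0([v],[w]):=\Omega(v,w)\) for \(v,w\in F^\perp\). This is well defined because adding an element of \(F\) to \(v\) or \(w\) changes nothing, as \(F\perp F^\perp\). It is holonomy-invariant, hence a parallel Hermitian form on the flat bundle \(\mathcal E_0\). For nondegeneracy, note that the radical of \(\Omega|_{F^\perp}\) is \(F^\perp\cap F^{\perp\perp}=F^\perp\cap F=F\). So \(\Omega_0\) has trivial radical on \(F^\perp/F\).

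The main (though mild) step is computing the signature. I choose an isotropic complement: since \(\Omega\) is nondegenerate, there is an \(s\)-dimensional \(F'\) with \(\Omega\) pairing \(F\times F'\) perfectly. After modifying \(F'\) by elements of \(F\), we may take \(F'\) isotropic, so \(F\oplus F'\) is a sum of \(s\) hyperbolic planes, of signature \((s,s)\). Then \(E=(F\oplus F')\oplus U\) with \(U:=(F\oplus F')^\perp\) nondegenerate. By Sylvester additivity, \(U\) has signature \((p-s,q-s)\). Finally, \(U\subset F^\perp\) and \(U\cap F=0\), and a dimension count \(\dim U=p+q-2s\) shows that \(U\to F^\perp/F\) is an isometric isomorphism onto \((F^\perp/F,\Omega_0)\). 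This gives signature \((p-s,q-s)\), and the difference identity \((q-s)-(p-s)=q-p\) is then immediate.
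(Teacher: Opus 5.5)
Your proposal is correct and follows essentially the same route as the paper's proof. Both show invariance of \(F^\perp\), get nondegeneracy of \(\Omega_0\) from the radical computation \(F^\perp\cap(F^\perp)^\perp=F\), and read off the signature \((p-s,q-s)\) from a fiberwise Witt decomposition that splits off \(s\) hyperbolic planes. You additionally spell out the construction of the isotropic complement \(F'\) and the isometry \(U\cong F^\perp/F\), which the paper leaves implicit.
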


\begin{proof}
Since the holonomy of \(\mathcal E\) preserves both \(\Omega\) and
\(\mathcal F\), it also preserves the orthogonal complement
\(\mathcal F^\perp\), which is therefore a flat subbundle. Moreover, the
isotropy of \(\mathcal F\) implies
\(\mathcal F\subseteq\mathcal F^\perp\).
Recall that the radical of a Hermitian form \(B\) on a vector space \(U\) is
\(
\operatorname{rad}(B)
:=
\{u\in U:B(u,v)=0\text{ for every }v\in U\}.
\)
Fiberwise, the nondegeneracy of \(\Omega\) gives
\((\mathcal F^\perp)^\perp=\mathcal F\), and hence
\(
\operatorname{rad}\!\left(\Omega|_{\mathcal F^\perp}\right)
=
\mathcal F^\perp\cap(\mathcal F^\perp)^\perp
=
\mathcal F.
\)
Consequently, \(\Omega|_{\mathcal F^\perp}\) descends to a well-defined flat
Hermitian form on
\(\mathcal E_0:=\mathcal F^\perp/\mathcal F\), given by
\(
\Omega_0([u],[v]):=\Omega(u,v).
\)
The equality of the radical with \(\mathcal F\) shows that
\(\Omega_0\) is nondegenerate.

Fiberwise, choose an isotropic subspace \(F^{-}\) opposite to \(F\), paired
nondegenerately with \(F\), and a nondegenerate complement \(E_0\). This
gives a Witt decomposition
\[
(E,\Omega)
\cong
(F\oplus F^{-},\Omega_{\mathrm{hyp}})
\perp
(E_0,\Omega_0).
\]
The hyperbolic summand contributes \(s\) positive and \(s\) negative
directions. Hence \(\Omega_0\) has signature \((p-s,q-s)\).
\end{proof}

The signature \((p-s,q-s)\) in this lemma is the signature of the
Hermitian form on each fiber. It should not be confused with the signature
of the twisted intersection form on the surface. The latter need not be
preserved, as the following example shows.

\begin{remark}
\label{rem:failure-naive-isotropic-reduction}
Let \(\Sigma=\Sigma_{1,1}\) be the once-punctured torus, with standard
generators \(a,b,c\) satisfying \([a,b]c=1\). Fix \(\lambda>1\) and \(t>0\),
and define
\[
A=
\begin{pmatrix}
\lambda & 0\\
0 & \lambda^{-1}
\end{pmatrix},
\qquad
B=
\begin{pmatrix}
\lambda^{-1} & t\\
0 & \lambda
\end{pmatrix},
\qquad
C=[A,B]^{-1}.
\]
The assignment
\(\phi(a)=A\), \(\phi(b)=B\), and \(\phi(c)=C\) defines a representation
into \(\mathrm{SL}(2,\mathbb R)\cong\mathrm{SU}(1,1)\). Its image is
contained in a Borel subgroup and hence preserves an isotropic line. Since
this line is maximal isotropic, the quotient
\(\mathcal F^\perp/\mathcal F\) is zero-dimensional.

Writing \(u=t(\lambda-\lambda^{-1})>0\), a direct multiplication gives
\[
[A,B]=\begin{pmatrix}1&u\\0&1\end{pmatrix},
\qquad
C=\begin{pmatrix}1&-u\\0&1\end{pmatrix}.
\]
The image lies in an amenable Borel subgroup, so its Toledo invariant is
zero. The signed parabolic formula of \cite[Subsection~6.2]{KPWValues} gives
\(\rho(C)=1\). Hence the signature--Toledo formula gives
\(\operatorname{sign}(\phi)=1\).
The isotropic quotient has signature zero, so in this example
\[
\operatorname{sign}(\mathcal E,\Omega)
\neq
\operatorname{sign}
\bigl(\mathcal F^\perp/\mathcal F,\Omega_0\bigr).
\]
The entire discrepancy comes from the boundary rho invariant.
\end{remark}

A flat hyperbolic splitting removes this difficulty: the two isotropic
summands cancel in the intersection form.

\begin{lemma}
\label{lem:split-isotropic-reduction-positive}
Let \((\mathcal E,\Omega)\) be a flat Hermitian local system over
\(\Sigma\). Suppose that there is an orthogonal decomposition of flat local
systems
\begin{equation}
\label{eq:flat-hyperbolic-splitting}
(\mathcal E,\Omega)
\cong
(\mathcal F\oplus\mathcal F^{-},\Omega_{\mathrm{hyp}})
\perp
(\mathcal E_0,\Omega_0),
\end{equation}
where \(\mathcal F\) and \(\mathcal F^{-}\) are flat isotropic local systems
paired nondegenerately by \(\Omega_{\mathrm{hyp}}\). Then
\begin{equation}
\label{eq:split-isotropic-signature}
\operatorname{sign}(\mathcal E,\Omega)
=
\operatorname{sign}(\mathcal E_0,\Omega_0).
\end{equation}
\end{lemma}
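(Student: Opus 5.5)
By additivity of the signature under orthogonal direct sums (Section~\ref{subsec:twisted-signature}), it suffices to show that the hyperbolic summand \(\mathcal H:=(\mathcal F\oplus\mathcal F^{-},\Omega_{\mathrm{hyp}})\) has signature zero. The point is that \(\mathcal F\) and \(\mathcal F^-\) are each isotropic, so the twisted intersection form on \(\widehat H^1(\Sigma;\mathcal H)\) should have a Lagrangian, and a nondegenerate Hermitian form with a half-dimensional isotropic subspace has signature zero.

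Since \(\mathcal H=\mathcal F\oplus\mathcal F^-\) as flat local systems, the relative and absolute twisted cohomologies split as direct sums. The restriction map \(H^1(\Sigma,\partial\Sigma;\,\cdot\,)\to H^1(\Sigma;\,\cdot\,)\) respects this splitting. Hence
\[
\widehat H^1(\Sigma;\mathcal H)=\widehat H^1(\Sigma;\mathcal F)\oplus\widehat H^1(\Sigma;\mathcal F^-).
\]
For \(x,y\) in \(\widehat H^1(\Sigma;\mathcal F)\), choose the lift \(\widetilde x\) inside \(H^1(\Sigma,\partial\Sigma;\mathcal F)\). The cup product \(\Omega(\widetilde x\smile y)\) is then computed pointwise from \(\Omega\) restricted to \(\mathcal F\times\mathcal F\), which vanishes. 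So \(Q\) vanishes identically on \(\widehat H^1(\Sigma;\mathcal F)\), and likewise on \(\widehat H^1(\Sigma;\mathcal F^-)\). This uses the independence of \(Q_\phi\) from the choice of lift, which is stated in \eqref{eq:twisted-intersection-form}.

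Thus \(iQ\) is a nondegenerate Hermitian form (nondegenerate by Poincar\'e--Lefschetz duality) on a space that is the direct sum of two isotropic subspaces \(L_1,L_2\). Nondegeneracy forces \(\dim L_1\le\dim L_2\) and \(\dim L_2\le\dim L_1\), since each isotropic subspace has dimension at most half the total. So both are Lagrangian, and the signature is zero. Concretely, the form is \(\begin{pmatrix}0&B\\B^*&0\end{pmatrix}\) with \(B\) invertible, and this is congruent to the standard hyperbolic form, whose signature is \(0\).

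Combining this with additivity gives \(\operatorname{sign}(\mathcal E,\Omega)=\operatorname{sign}(\mathcal H)+\operatorname{sign}(\mathcal E_0,\Omega_0)=\operatorname{sign}(\mathcal E_0,\Omega_0)\).

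The only delicate point is checking that the splitting of \(\widehat H^1\) is compatible with choosing lifts inside the summand. This holds because all the maps in the long exact sequence of the pair are natural with respect to direct sums of coefficient systems. I expect no real obstacle here.
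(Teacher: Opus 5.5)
Your proof is correct. It is a repackaging of the paper's argument rather than a genuinely new idea.

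The paper does not split \(\widehat H^1(\Sigma;\mathcal H)\). Instead it uses the flat involution \(\tau(v,w)=(v,-w)\) of \(\mathcal F\oplus\mathcal F^{-}\), observes that \(\tau^*\Omega_{\mathrm{hyp}}=-\Omega_{\mathrm{hyp}}\), and concludes that \(\tau_*\) is an anti-isometry of \(iQ\) on \(\widehat H^1(\Sigma;\mathcal H)\). An anti-isometry exchanges positive and negative subspaces, so the signature is zero.

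Your summands \(\widehat H^1(\Sigma;\mathcal F)\) and \(\widehat H^1(\Sigma;\mathcal F^{-})\) are exactly the \((+1)\)- and \((-1)\)-eigenspaces of \(\tau_*\). An involution is anti-isometric precisely when both eigenspaces are totally isotropic, so the two arguments are equivalent.

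Your version makes explicit two points the paper leaves implicit: that lifts can be chosen inside each summand, and that the cup product factors through the zero pairing on \(\mathcal F\times\mathcal F\). The cost is a dimension count using nondegeneracy. That count is not even needed, since a Gram matrix \(\begin{pmatrix}0&B\\B^*&0\end{pmatrix}\) has signature zero for any \(B\).

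The paper's version needs only naturality of the cup product under an automorphism of the coefficients. Its real advantage is reuse: the same sign-reversing symmetry is invoked again in Lemma~\ref{lem:split-hyperbolic-levi-factor} to show that the boundary rho invariant of \(\alpha\oplus\alpha^{-*}\) vanishes. A Lagrangian splitting of twisted cohomology would not directly give that.
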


\begin{proof}
By additivity of the twisted intersection form under orthogonal direct sums,
it is enough to show that the hyperbolic summand
\(\mathcal H:=\mathcal F\oplus\mathcal F^{-}\) has signature zero.

Consider the flat involution
\[
\tau\colon
\mathcal F\oplus\mathcal F^{-}
\longrightarrow
\mathcal F\oplus\mathcal F^{-},
\qquad
\tau(v,w)=(v,-w).
\]
Since both \(\mathcal F\) and \(\mathcal F^{-}\) are isotropic and are
paired nondegenerately with one another, one has
\(\tau^*\Omega_{\mathrm{hyp}}=-\Omega_{\mathrm{hyp}}\). The induced
automorphism of
\[
\widehat H^1(\Sigma;\mathcal H)
=\operatorname{Im}\!\left(
H^1(\Sigma,\partial\Sigma;\mathcal H)
\longrightarrow H^1(\Sigma;\mathcal H)
\right)
\]
is therefore an anti-isometry of the Hermitian intersection form:
\[
iQ_{\mathrm{hyp}}(\tau_*x,\tau_*y)
=
-iQ_{\mathrm{hyp}}(x,y).
\]
It follows that \(\tau_*\) exchanges the positive and negative subspaces.
Consequently,
\(\operatorname{sign}(\mathcal F\oplus\mathcal F^{-},
\Omega_{\mathrm{hyp}})=0\), and
\eqref{eq:split-isotropic-signature} follows.
\end{proof}

For reductive representations, the required flat splitting is supplied by
a Levi subgroup.

\begin{corollary}
\label{cor:reductive-isotropic-reduction-positive}
Let
\(\phi\colon\pi_1(\Sigma)\to\mathrm{U}(p,q)\) be reductive, and suppose that
its image preserves an isotropic subspace \(F\subset E\) of dimension \(s\).
Then the associated local system admits a flat orthogonal decomposition
\[
(\mathcal E,\Omega)
\cong
(\mathcal F\oplus\mathcal F^{-},\Omega_{\mathrm{hyp}})
\perp
(\mathcal E_0,\Omega_0),
\qquad
\mathcal E_0\cong\mathcal F^\perp/\mathcal F,
\]
where \((\mathcal E_0,\Omega_0)\) has signature
\((p-s,q-s)\). In particular,
\[
\operatorname{sign}(\phi)
=
\operatorname{sign}(\mathcal E_0,\Omega_0).
\]
\end{corollary}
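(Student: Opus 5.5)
The plan is to use reductivity to replace the invariant isotropic subspace by a flat hyperbolic splitting, and then apply Lemma~\ref{lem:split-isotropic-reduction-positive}.

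\emph{Step 1.} Let \(P\subset\mathrm{U}(p,q)\) be the stabilizer of the isotropic subspace \(F\). It is a real parabolic subgroup; see \cite[Chapter~VII, Section~7]{Knapp}. By hypothesis \(H_\phi\subset P\). Since \(\phi\) is reductive, \(H_\phi\) is \(G\)-completely reducible, so \(H_\phi\) lies in some Levi subgroup \(L\subset P\).

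\emph{Step 2.} Identify the Levi subgroups of \(P\) concretely. A Levi subgroup of the stabilizer of \(F\) is the stabilizer of a decomposition
\[
E=F\oplus F^{-}\oplus E_0,
\]
where \(F^{-}\) is an isotropic complement paired nondegenerately with \(F\), and \(E_0=(F\oplus F^{-})^{\perp}\). I would check this by choosing an adapted Witt basis. In that basis the Levi factor is block diagonal, isomorphic to \(\mathrm{GL}(F)\times\mathrm{U}(E_0,\Omega_0)\), acting on \(F^{-}\) by the inverse conjugate transpose. Every Levi subgroup is \(P\)-conjugate to this standard one. Hence \(L\) preserves \(F\), some isotropic \(F^{-}\), and the nondegenerate complement \(E_0\). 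This identification is the one point needing care, since it uses the correct description of Levi subgroups for the real form \(\mathrm{U}(p,q)\). It is standard, but it must be stated via the stabilizer of an opposite parabolic: \(L=P\cap P^{-}\), with \(P^{-}\) the stabilizer of \(F^{-}\), and \(L\) then also preserves \((F\oplus F^{-})^\perp\).

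\emph{Step 3.} Since the image of \(\phi\) lies in \(L\), the three subspaces \(F\), \(F^{-}\), \(E_0\) are invariant under the holonomy. They therefore define flat sub-local systems \(\mathcal F\), \(\mathcal F^{-}\), \(\mathcal E_0\), and the decomposition is orthogonal in the sense of \eqref{eq:flat-hyperbolic-splitting}. The restriction of \(\Omega\) to \(E_0\) is nondegenerate. Projecting along \(F^{-}\) identifies \(E_0\) with \(F^\perp/F\), because \(F^\perp=F\oplus E_0\). Lemma~\ref{lem:isotropic-quotient-positive} then gives signature \((p-s,q-s)\). Finally, Lemma~\ref{lem:split-isotropic-reduction-positive} yields \(\operatorname{sign}(\phi)=\operatorname{sign}(\mathcal E_0,\Omega_0)\).
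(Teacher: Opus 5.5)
Your proposal is correct and follows the paper's proof. Reductivity places $H_\phi$ in a Levi subgroup of the stabilizer of $F$; that Levi subgroup preserves an opposite isotropic $F^{-}$ and the nondegenerate complement $E_0$, so the Witt decomposition is flat, and Lemma~\ref{lem:split-isotropic-reduction-positive} then gives the signature identity. Your extra checks (conjugacy of Levi subgroups, $F^\perp=F\oplus E_0$ giving $E_0\cong F^\perp/F$, and the index count via Lemma~\ref{lem:isotropic-quotient-positive}) make explicit what the paper states in a single line.
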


\begin{proof}
Let \(P_F\subset\mathrm{U}(p,q)\) be the parabolic subgroup stabilizing
\(F\). Since \(H_\phi=\overline{\phi(\pi_1(\Sigma))}^{\,\mathrm{Zar}}\subset P_F\) and \(\phi\) is reductive,
\(H_\phi\) is contained in a Levi subgroup \(L_F\subset P_F\). Such a Levi
subgroup preserves an isotropic subspace \(F^{-}\) opposite to \(F\), as well
as a nondegenerate complement \(E_0\), and hence yields a decomposition
\[
(E,\Omega)
\cong
(F\oplus F^{-},\Omega_{\mathrm{hyp}})
\perp
(E_0,\Omega_0).
\]
Because \(H_\phi\subset L_F\), this decomposition passes to the associated
flat local systems. The conclusion now follows from
Lemma~\ref{lem:split-isotropic-reduction-positive}.
\end{proof}

\begin{remark}
\label{rem:no-general-reductive-reduction}
Corollary~\ref{cor:reductive-isotropic-reduction-positive} concerns a flat
splitting. It does not assert that an arbitrary isotropic quotient preserves
the signature. In the nonreductive case, we must still account for the
boundary contribution illustrated in
Remark~\ref{rem:failure-naive-isotropic-reduction}. The next subsection gives
the required estimate.
\end{remark}

\subsection{Controlled degeneration to a Levi factor}
\label{subsec:controlled-levi-degeneration}

The key construction is a family of conjugate representations in which the
off-diagonal extension blocks tend to zero. The limit preserves a flat
hyperbolic splitting. The Toledo invariant is continuous along this
degeneration, whereas the signature may jump. We will bound that jump by
estimating the change in the boundary rho invariants.

Let \(F\subset E\) be a \(\phi\)-invariant isotropic subspace of dimension
\(s\). Choose an isotropic subspace \(F^{-}\) opposite to \(F\), and let
\(E_0=(F\oplus F^{-})^\perp\). We then have a Witt decomposition
\begin{equation}
\label{eq:witt-decomposition-levi}
E=F\oplus E_0\oplus F^{-},
\qquad
\Omega=
\begin{pmatrix}
0&0&I_s\\
0&\Omega_0&0\\
I_s&0&0
\end{pmatrix},
\end{equation}
where \((E_0,\Omega_0)\) has signature \((p-s,q-s)\).

Let \(P_F\subset\mathrm{U}(E,\Omega)\) be the parabolic subgroup stabilizing
\(F\). With respect to \eqref{eq:witt-decomposition-levi}, every
\(L\in P_F\) can be written in the form
\begin{equation}
\label{eq:parabolic-block-form-levi}
L=
\begin{pmatrix}
A&X&Z\\
0&L_0&Y\\
0&0&(A^*)^{-1}
\end{pmatrix},
\end{equation}
where \(L_0\in\mathrm{U}(E_0,\Omega_0)\), and the off-diagonal blocks satisfy
the relations imposed by \(L^*\Omega L=\Omega\). This is the standard Levi
decomposition of the stabilizer of an isotropic subspace; see, for example,
\cite[Chapter~VII, Section~7]{Knapp}.

For \(t>0\), define
\[
a_t
:=
\operatorname{diag}
\bigl(tI_F,I_{E_0},t^{-1}I_{F^{-}}\bigr).
\]
Then \(a_t\in\mathrm{U}(E,\Omega)\), and
\begin{equation}
\label{eq:levi-contraction}
a_tLa_t^{-1}
=
\begin{pmatrix}
A&tX&t^2Z\\
0&L_0&tY\\
0&0&(A^*)^{-1}
\end{pmatrix}.
\end{equation}
Consequently, the limit
\[
L^{\mathrm{gr}}
:=
\lim_{t\to 0}a_tLa_t^{-1}
=
\operatorname{diag}
\bigl(A,L_0,(A^*)^{-1}\bigr)
\]
exists and belongs to a Levi subgroup of \(P_F\).

Applying this contraction simultaneously to all elements in the image of
\(\phi\), we obtain a family of representations
\(\phi_t:=a_t\phi a_t^{-1}\), \(t>0\), converging pointwise to a
representation
\begin{equation}
\label{eq:graded-Levi-representation}
\phi^{\mathrm{gr}}
=
\alpha\oplus\phi_0\oplus\alpha^{-*},
\end{equation}
where
\[
\phi_0\colon
\pi_1(\Sigma)\longrightarrow
\mathrm{U}(p-s,q-s),
\qquad
\alpha^{-*}(\gamma)
:=
\bigl(\alpha(\gamma)^*\bigr)^{-1}.
\]
We refer to \(\phi^{\mathrm{gr}}\) as the Levi degeneration of \(\phi\)
associated with \(F\). Here ``Levi'' describes the subgroup containing the
limit; the representations \(\alpha\) and \(\phi_0\) need not be reductive.

\begin{lemma}
\label{lem:Levi-Jordan-compatibility}
Let \(\pi_F\colon P_F\to L_F\) be the algebraic projection onto the Levi
subgroup determined by the Witt decomposition
\eqref{eq:witt-decomposition-levi}. Thus
\(L^{\mathrm{gr}}=\pi_F(L)\). Then:

\begin{enumerate}
\item
The algebraic Jordan decomposition is preserved by \(\pi_F\). In particular,
if \(L=L_sL_u=L_uL_s\), then
\[
L^{\mathrm{gr}}
=
\pi_F(L_s)\pi_F(L_u),
\]
where \(\pi_F(L_s)\) and \(\pi_F(L_u)\) are respectively the semisimple and
unipotent parts of \(L^{\mathrm{gr}}\).

\item
For every eigenvalue \(\lambda\), the \(\lambda\)-primary component of
\(L^{\mathrm{gr}}\) is the associated graded of the filtration induced by
\(0\subset F\subset F^\perp\subset E\) on the \(\lambda\)-primary component
of \(L\).

\item
If \(|\lambda|=1\), put
\[
E_\lambda:=\ker(L-\lambda I)^N=\ker(L_s-\lambda I)
\qquad(N\gg0).
\]
Then \(\Omega|_{E_\lambda}\) is nondegenerate, and its signature is
unchanged by passage to the associated graded.
Consequently, the contribution to the rho invariant from eigenvalues
\(\lambda\) with \(|\lambda|=1\) and \(\lambda\ne1\) is the same for
\(L\) and \(L^{\mathrm{gr}}\).
\end{enumerate}
\end{lemma}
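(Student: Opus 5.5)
The plan is to treat \(\pi_F\) as the restriction to \(P_F\) of the homomorphism \(\mathrm{GL}(E)\supset \operatorname{Stab}(F,F^\perp)\to \mathrm{GL}(F)\times\mathrm{GL}(F^\perp/F)\times\mathrm{GL}(E/F^\perp)\), identified with the block-diagonal part via the Witt decomposition \eqref{eq:witt-decomposition-levi}. The first step is to check that \(\pi_F\) is a morphism of algebraic groups. This is immediate from \eqref{eq:parabolic-block-form-levi}: block upper-triangular matrices multiply by multiplying their diagonal blocks, so \(L\mapsto\operatorname{diag}(A,L_0,(A^*)^{-1})\) is a homomorphism. It is also polynomial and equals the limit in \eqref{eq:levi-contraction}.

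For part (1), morphisms of linear algebraic groups preserve Jordan decompositions. Here this can be seen directly. The semisimple and unipotent parts \(L_s,L_u\) are polynomials in \(L\), so they preserve \(F\) and \(F^\perp\) and therefore lie in \(P_F\). The image \(\pi_F(L_u)\) is block diagonal with unipotent blocks, since the diagonal blocks of a unipotent block-triangular matrix are unipotent. The image \(\pi_F(L_s)\) has diagonal blocks that are restrictions and quotients of a diagonalizable operator to invariant subquotients, so they are diagonalizable. The two images commute because \(\pi_F\) is a homomorphism. Uniqueness of the Jordan decomposition then gives (1).

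For part (2), the \(\lambda\)-primary component \(E^L_\lambda=\ker(L_s-\lambda)\) is \(L\)-invariant. The operator \(L\) induces maps on \(F\cap E^L_\lambda\), on \((F^\perp\cap E^L_\lambda)/(F\cap E^L_\lambda)\), and so on. The key point is that taking \(\lambda\)-primary parts is exact on \(L\)-invariant filtrations, because \(E=\bigoplus_\lambda E_\lambda^L\) functorially. Hence \(\ker(\pi_F(L_s)-\lambda)\) on each graded piece \(F\), \(F^\perp/F\), \(E/F^\perp\) is the image of the corresponding graded piece of \(E^L_\lambda\). Identifying \(E\cong F\oplus F^\perp/F\oplus E/F^\perp\) through the Witt splitting gives (2).

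For part (3), there are two substeps. First, nondegeneracy: for unit-modulus \(\lambda,\mu\), invariance of \(\Omega\) gives \(\Omega(E_\lambda,E_\mu)=0\) unless \(\lambda\bar\mu=1\), that is, unless \(\mu=\lambda\). More generally, \(E_\lambda\) is orthogonal to \(E_\mu\) whenever \(\mu\ne\bar\lambda^{-1}\). Since \(\bar\lambda^{-1}=\lambda\), the space \(E_\lambda\) is orthogonal to all other primary components, so nondegeneracy of \(\Omega\) forces \(\Omega|_{E_\lambda}\) to be nondegenerate.

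Second, invariance of the signature: set \(F_\lambda=F\cap E_\lambda\). It is isotropic, and \(F^\perp\cap E_\lambda\) is its orthogonal inside \(E_\lambda\); this uses the primary decomposition of \(F^\perp\) and the orthogonality just proved. Lemma \ref{lem:isotropic-quotient-positive}, applied fiberwise inside \(E_\lambda\), shows that \(\operatorname{Gr}E_\lambda\) is a hyperbolic part of rank \(2\dim F_\lambda\) (signature \((s_\lambda,s_\lambda)\)) plus \((F_\lambda^\perp/F_\lambda)\) with signature \((p_\lambda-s_\lambda,q_\lambda-s_\lambda)\). The total signature is therefore unchanged. Which of the two pieces carries the \(\lambda\)-part of \(L^{\mathrm{gr}}\) is decided by part (2).

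The main obstacle is the consequence for \(\rho\): the unit-circle, \(\lambda\ne1\) contributions to \(\rho\) must depend only on \(\lambda\) and the signature of \(\Omega|_{E_\lambda}\), and not on the unipotent structure. I would take this from the explicit formula for \(\rho\) in \cite{KPWUnitary} (its eigenvalue decomposition, as in the \(\varepsilon(\theta)\) formulas of Section~\ref{sec:special-signature-values}). There the elliptic contribution for an eigenvalue \(e^{i\theta}\ne1\) is \(\varepsilon(\theta)\) times the signature of the form on the generalized eigenspace. Unipotent corrections appear only at eigenvalue \(1\), and off-circle eigenvalues contribute through the Toledo/rotation part. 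If the cited formula instead carries Jordan-block terms at \(\lambda\ne1\), I would need to check separately that they cancel in pairs. Those terms are the delicate point to verify.
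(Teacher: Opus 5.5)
Your proposal is correct and follows essentially the same route as the paper. Part (1) uses that $\pi_F$ is an algebraic homomorphism together with uniqueness of the Jordan decomposition. Part (2) uses that the primary projectors are polynomials in $L$ and therefore preserve $0\subset F\subset F^\perp\subset E$. Part (3) uses the orthogonality $\Omega(E_\lambda,E_\mu)=0$ unless $\lambda\bar\mu=1$, and then splits the graded self-dual filtration into a hyperbolic pair plus the nondegenerate middle quotient.

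The point you flag as delicate is settled, exactly as in the paper, by \cite[Theorem~2(3)]{KPWUnitary}. That theorem gives the contribution of the $e^{2\pi\sqrt{-1}\theta}$-primary space, for $0<\theta<1$, as $(1-2\theta)\operatorname{sign}(\Omega|_{E_\lambda})$, with no Jordan-block correction away from the eigenvalue $1$.
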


\begin{proof}
The Levi projection
\(\pi_F:P_F\to \operatorname{GL}(F)\times \operatorname{U}(E_0,\Omega_0)\)
is an algebraic group homomorphism.  If \(L=L_sL_u=L_uL_s\) is the
multiplicative Jordan decomposition of \(L\), then \(\pi_F(L_s)\) is
semisimple, \(\pi_F(L_u)\) is unipotent, and these two elements commute.
By the uniqueness of the multiplicative Jordan decomposition, they are the
semisimple and unipotent parts of \(\pi_F(L)\).  This proves (1).

We next prove the compatibility with primary decompositions.  Since \(L\)
stabilizes \(F\) and preserves \(\Omega\), it also stabilizes \(F^\perp\).
Thus the filtration
\[
0\subset F\subset F^\perp\subset E
\]
is \(L\)-invariant.  With respect to the Witt decomposition
\(E=F\oplus E_0\oplus F^{-}\), the matrix of \(L\) is block upper triangular,
and the action induced by \(L\) on the associated graded of this filtration is
precisely
\[
L^{\mathrm{gr}}=\pi_F(L)=
\begin{pmatrix}
A&0&0\\
0&L_0&0\\
0&0&(A^*)^{-1}
\end{pmatrix}.
\]
For each eigenvalue \(\lambda\), let
\(E_\lambda=\ker(L-\lambda I)^N\) for \(N\gg0\). Equivalently,
\(E_\lambda=\ker(L_s-\lambda I)\). The primary projector is a polynomial in
\(L\), so it preserves \(F\) and \(F^\perp\). Thus the filtration restricts
to \(E_\lambda\), and passage to its associated graded removes precisely
the off-diagonal blocks. This gives the \(\lambda\)-primary component of
\(L^{\mathrm{gr}}\), proving (2).

For primary spaces one has the orthogonality relation
\[
\Omega(E_\lambda,E_\mu)=0
\qquad\text{unless}\qquad \lambda\overline\mu=1.
\]
This follows first for eigenvectors from \(L\)-invariance and then for
generalized eigenvectors by induction on the two nilpotence orders. If
\(|\lambda|=1\), the only primary space that can pair with
\(E_\lambda\) is \(E_\lambda\) itself; nondegeneracy of \(\Omega\) on
\(E\) therefore implies that \(\Omega|_{E_\lambda}\) is nondegenerate.

Put \(F_\lambda=F\cap E_\lambda\). Since the primary projectors preserve
\(F\), the preceding orthogonality also gives
\(
F^\perp\cap E_\lambda
=F_\lambda^{\perp,E_\lambda}.
\)
Hence
\(0\subset F_\lambda\subset F^\perp\cap E_\lambda\subset E_\lambda\)
is a self-dual filtration. Its associated graded is the orthogonal sum of
the nondegenerate middle quotient and the hyperbolic pair formed by the
first and last graded pieces. The hyperbolic pair has signature zero.
Consequently \(E_\lambda\) and its associated graded have the same Witt
class and the same signature.

For \(\lambda=e^{2\pi\sqrt{-1}\theta}\) with \(0<\theta<1\),
the contribution of this primary space to the rho invariant is
\[
(1-2\theta)\operatorname{sign}(\Omega|_{E_\lambda});
\]
see \cite[Theorem~2(3)]{KPWUnitary}. Both factors are unchanged by the Levi
projection. Summing over these eigenvalues proves the last assertion in
(3). This argument does not cover the eigenvalue \(1\), where the
nilpotent part can affect the rho invariant.
\end{proof}

\begin{lemma}
\label{lem:split-hyperbolic-levi-factor}
For the representation
\(\alpha\oplus\alpha^{-*}\) on \(F\oplus F^{-}\), one has
\begin{equation}
\label{eq:split-factor-vanishing}
\operatorname{sign}
\bigl(\alpha\oplus\alpha^{-*}\bigr)
=
T\bigl(\Sigma,\alpha\oplus\alpha^{-*}\bigr)
=
0.
\end{equation}
Moreover, for every boundary component \(c_j\),
\(
\rho\bigl(
\alpha(c_j)\oplus\alpha(c_j)^{-*}
\bigr)=0.
\)
Consequently,
\[
\operatorname{sign}(\phi^{\mathrm{gr}})
=
\operatorname{sign}(\phi_0),
\qquad
T(\Sigma,\phi^{\mathrm{gr}})
=
T(\Sigma,\phi_0).
\]
\end{lemma}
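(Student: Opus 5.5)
The plan has three parts: the signature via the involution trick of Lemma~\ref{lem:split-isotropic-reduction-positive}, the Toledo invariant via the same involution read on the symmetric space, and the rho invariant by combining the two with the signature--Toledo formula \eqref{eq:signature-toledo-rho}. The representation \(\alpha\oplus\alpha^{-*}\) on \(F\oplus F^{-}\), with the form \(\Omega_{\mathrm{hyp}}=\begin{pmatrix}0&I_s\\ I_s&0\end{pmatrix}\), is exactly a flat hyperbolic splitting in which both isotropic summands are flat. So the first claim, \(\operatorname{sign}(\alpha\oplus\alpha^{-*})=0\), is the special case \(\mathcal E_0=0\) of Lemma~\ref{lem:split-isotropic-reduction-positive}. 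The underlying reason is that the flat involution \(\tau(v,w)=(v,-w)\) commutes with the block-diagonal holonomy and satisfies \(\tau^*\Omega_{\mathrm{hyp}}=-\Omega_{\mathrm{hyp}}\).

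For the Toledo invariant I would run the same involution through the bounded-cohomology definition \eqref{eq:def-toledo}. The element \(\tau\) normalizes \(\mathrm U(s,s)\), since it conjugates the image of \(\alpha\oplus\alpha^{-*}\) to itself; more precisely, \(\tau\) commutes with every block-diagonal element. Moreover \(\tau\) maps \(\mathrm U(F\oplus F^-,\Omega_{\mathrm{hyp}})\) to itself and induces an antiholomorphic isometry of \(\mathcal X_{s,s}\). I would check this by identifying \(\tau\) as an anti-isometry of the form: it lies in \(\mathrm U(s,s)\) composed with the identification \(\mathrm U(s,s)=\mathrm U(\Omega)=\mathrm U(-\Omega)\), which reverses the complex structure of the symmetric space. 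Therefore \(\tau\) pulls \(\kappa^b_G\) back to \(-\kappa^b_G\).

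Since conjugation by \(\tau\) fixes the representation \(\alpha\oplus\alpha^{-*}\) pointwise, this gives \(T=-T\), hence \(T=0\). As a cross-check, and as a fallback if the antiholomorphy bookkeeping becomes delicate, I would use the Chern--Weil description of \cite[Section~3]{KPWUnitary}. The Toledo invariant of a representation into \(\mathrm{GL}(s,\mathbb C)\subset\mathrm U(s,s)\) factors through the amenable-radical-free part. Alternatively, one can observe that \(\alpha\oplus\alpha^{-*}\) preserves the totally geodesic complex structure swapped by \(\tau\). I expect the sign reversal of the Kähler class under \(\tau\) to be the only real point to verify.

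For the rho invariant, I would apply \eqref{eq:signature-toledo-rho} to \(\alpha\oplus\alpha^{-*}\) restricted to the annulus with boundary holonomies \(L\) and \(L^{-1}\) in \(\alpha(\pi_1)\)-form. Simpler still is to apply the involution directly to definition \eqref{eq:def-rho}. Conjugating by \(\tau\) fixes \(L\oplus L^{-*}\) but replaces \(\Omega\) by \(-\Omega\), and rho is odd under this change because both the transgression term and \(\eta(A_J)\) change sign (\(\eta\) of the boundary operator for \(-\Omega\) is \(-\eta\)). Hence \(\rho(\alpha(c_j)\oplus\alpha(c_j)^{-*})=0\).

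The final assertions then follow from additivity. The signature is additive under orthogonal direct sums, and so is \(T\), since the bounded Kähler class restricts additively to \(\mathrm U(s,s)\times\mathrm U(p-s,q-s)\). Applying this to \(\phi^{\mathrm{gr}}=(\alpha\oplus\alpha^{-*})\perp\phi_0\) gives \(\operatorname{sign}(\phi^{\mathrm{gr}})=\operatorname{sign}(\phi_0)\) and \(T(\Sigma,\phi^{\mathrm{gr}})=T(\Sigma,\phi_0)\).
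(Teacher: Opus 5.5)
Your proposal is correct. For the signature and for $\rho$ it is the paper's argument: the flat involution $\tau(v,w)=(v,-w)$ commutes with the holonomy and carries $\Omega_{\mathrm{hyp}}$ to $-\Omega_{\mathrm{hyp}}$, and both the intersection form and $\rho$ change sign under $\Omega\mapsto-\Omega$.

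The genuine difference is the Toledo invariant. The paper never works on the symmetric space. Once it has $\operatorname{sign}(\alpha\oplus\alpha^{-*})=0$ and $\rho(\alpha(c_j)\oplus\alpha(c_j)^{-*})=0$ for every $j$, it reads off $T=0$ from the signature--Toledo formula \eqref{eq:signature-toledo-rho}. You instead push the same involution through the bounded K\"ahler class. Conjugation by $\tau$ is an automorphism of $\mathrm U(F\oplus F^-,\Omega_{\mathrm{hyp}})$. It is covered by the map $P\mapsto(\tau P)^{\perp}$ on $\mathcal X_{s,s}$, which in the ball model is $Z\mapsto Z^*$ and hence an antiholomorphic isometry. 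Therefore it pulls $\kappa^b_G$ back to $-\kappa^b_G$; the curvature normalization does not see the sign of $J$. Since it fixes $\alpha\oplus\alpha^{-*}$ pointwise, you obtain $(\alpha\oplus\alpha^{-*})_b^*\kappa^b_G=0$.

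This argument is valid and does not depend on the signature--Toledo theorem. It even gives vanishing of the bounded class itself, not merely of its pairing with $[\Sigma,\partial\Sigma]$. The paper's route is shorter, because that formula is already available to it.

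Three small corrections:
\begin{enumerate}
\item $\tau$ is not an element of $\mathrm U(s,s)$. It normalizes the group because $\tau^*\Omega_{\mathrm{hyp}}=-\Omega_{\mathrm{hyp}}$, not because it fixes the image of the representation.
\item Your annulus idea for $\rho$ would only give $\rho(L)+\rho(L^{-1})=0$. The per-component vanishing comes from the direct involution argument on \eqref{eq:def-rho}.
\item The fallback about the ``amenable-radical-free part'' is too vague to serve as a proof, since $\mathrm{GL}(s,\mathbb C)$ is not amenable. You do not need it.
\end{enumerate}
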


\begin{proof}
Define a flat involution
\[
\tau\colon F\oplus F^{-}\longrightarrow F\oplus F^{-},
\qquad
\tau(v,w)=(v,-w).
\]
The map \(\tau\) commutes with the holonomy
\(\alpha\oplus\alpha^{-*}\) and satisfies
\(\tau^*\Omega_{\mathrm{hyp}}=-\Omega_{\mathrm{hyp}}\). It therefore induces
an anti-isometry of the twisted Hermitian intersection form. Hence its
positive and negative indices are equal, and
\[
\operatorname{sign}
\bigl(\alpha\oplus\alpha^{-*}\bigr)=0.
\]

Naturality of the boundary operator under the flat bundle isomorphism
\(\tau\) identifies the rho invariant of
\((F\oplus F^{-},\Omega_{\mathrm{hyp}})\) with that of the same holonomy
equipped with \(-\Omega_{\mathrm{hyp}}\). Reversing the Hermitian form
reverses the sign of the rho invariant (both the eta term and the primitive
term in \eqref{eq:def-rho} change sign). Hence this rho invariant equals
its negative and is zero. Equivalently, in
\cite[Theorem~2]{KPWUnitary} the elliptic and signed-unipotent terms cancel
in dual pairs, while the hyperbolic terms vanish. Thus
\(
\rho\bigl(
\alpha(c_j)\oplus\alpha(c_j)^{-*}
\bigr)=0
\)
for every \(j\).
The signature--Toledo formula \cite[Theorem~1]{KPWUnitary} now gives
\(T(\Sigma,\alpha\oplus\alpha^{-*})=0\). The last two identities follow from
the additivity of the signature, the Toledo invariant, and the rho invariant
under orthogonal direct sums.
\end{proof}

We next record a finite-dimensional consequence of the unipotent rho
formula. It expresses the remaining boundary contribution as the signature
of a Hermitian matrix, so that we can estimate it by linear algebra.

\begin{lemma}
\label{lem:unipotent-rho-finite-signature}
Let \(B\in\mathfrak u(E,\Omega)\) be nilpotent, and define the possibly
degenerate Hermitian form
\(
h_B(u,v):=\Omega(\sqrt{-1}Bu,v).
\)
Then
\begin{equation}
\label{eq:unipotent-rho-finite-signature}
\rho\bigl(\exp(2\pi B)\bigr)
=
-\operatorname{sign}(h_B),
\end{equation}
where zero eigenvalues are omitted when taking the signature of \(h_B\).
\end{lemma}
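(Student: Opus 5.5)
This is a finite-dimensional statement, so the plan is to reduce it to the explicit unipotent rho formula of \cite[Theorem~2]{KPWUnitary} and then identify that formula with $-\operatorname{sign}(h_B)$ by linear algebra.

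\emph{Setting up $h_B$.} First we check that $h_B$ is Hermitian. Since $B\in\mathfrak u(E,\Omega)$, we have $\Omega(Bu,v)=-\Omega(u,Bv)$. Hence
\[
\overline{h_B(v,u)}=\overline{\Omega(\sqrt{-1}Bv,u)}=\Omega(u,\sqrt{-1}Bv)=-\sqrt{-1}\,\Omega(u,Bv)=\sqrt{-1}\,\Omega(Bu,v)=h_B(u,v).
\]
The radical of $h_B$ is $\ker B$, so $h_B$ descends to a nondegenerate form on $E/\ker B$. The signature with zero eigenvalues omitted is the signature of this descended form.

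\emph{Normal form.} Since $U=\exp(2\pi B)$ is unipotent, $\ker(U-I)=\ker B$ and $B=\tfrac1{2\pi}\log U$. We decompose $(E,\Omega)$ into an $\Omega$-orthogonal sum of $B$-invariant indecomposable pieces; this is the standard normal form for nilpotent elements of $\mathfrak u(p,q)$, equivalently for nilpotent self-adjoint operators $\sqrt{-1}B$ of an indefinite Hermitian form.
\begin{itemize}
\item[(i)] A Jordan block of size $r$ with basis $e_1,\dots,e_r$, $Be_i=e_{i+1}$, and $\Omega(e_i,e_j)$ nonzero only when $i+j=r+1$. It carries a sign $\epsilon=\pm1$ determined by $\Omega(\sqrt{-1}^{\,r-1}B^{r-1}e_1,e_1)$.
\item[(ii)] Possibly hyperbolic pairs of blocks, but over $\mathbb C$ with a Hermitian form the single blocks with a sign already suffice.
\end{itemize}
Both sides of \eqref{eq:unipotent-rho-finite-signature} are additive under orthogonal sums: $\rho$ by its definition via the boundary operator, and $\operatorname{sign}(h_B)$ trivially. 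So it suffices to treat a single block.

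\emph{Single block.} On one block, $h_B$ restricted to $E/\ker B$ has Gram matrix $\Omega(\sqrt{-1}e_{i+1},e_j)$ for $1\le i,j\le r-1$. This is an anti-diagonal Hermitian matrix whose entries all carry the same sign pattern. A direct computation gives signature $0$ when $r-1$ is even and $\pm1$ when $r-1$ is odd, with the sign determined by $\epsilon$ and by whether $r\equiv 2$ or $0\bmod 4$ through the powers of $\sqrt{-1}$.

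We then compare this with the value of $\rho$ on a unipotent block given in \cite[Theorem~2]{KPWUnitary}, the signed-unipotent term. That formula assigns $0$ to odd-size blocks and $\pm1$ to even-size blocks, with the sign given by exactly the invariant $\epsilon\,\sqrt{-1}^{\,r-1}$ above. As a sanity check, the rank-one case $r=2$ in $\mathrm{U}(1,1)$ should reproduce $\rho(C)=1$ from Remark~\ref{rem:failure-naive-isotropic-reduction}, where $C=\exp(2\pi B)$ with $B=\begin{pmatrix}0&-u/2\pi\\0&0\end{pmatrix}$.

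\emph{Main obstacle.} The main difficulty is bookkeeping of sign and orientation conventions. We must match the sign invariant used in \cite[Theorem~2]{KPWUnitary} for a unipotent Jordan block, and the choice of holonomy $L$ versus $L^{-1}$, with the sign of $h_B$. My first attempt would be to settle this by the explicit $2\times2$ computation with $C$ above, which fixes the overall sign in \eqref{eq:unipotent-rho-finite-signature}. Then an induction via the Levi contraction of Lemma~\ref{lem:Levi-Jordan-compatibility} would reduce a general block to blocks of size at most $2$ plus hyperbolic summands. The hyperbolic summands contribute zero to both sides: to $\rho$ by Lemma~\ref{lem:split-hyperbolic-levi-factor}, and to $h_B$ because it is anti-isometric under $\tau$.
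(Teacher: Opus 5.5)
\textbf{Verdict: genuine gap.} Your architecture matches the paper's. You decompose $(E,\Omega,B)$ orthogonally into $B$-invariant Jordan blocks (the paper cites \cite[Proposition~4.15]{KPWUnitary}), use additivity and $\operatorname{rad}(h_B)=\ker B$, get $0$ on odd blocks and $\pm1$ on even blocks, and then appeal to the unipotent block formula of \cite{KPWUnitary}. The Hermitian check and the odd/even dichotomy are fine.

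\textbf{What is missing.} The one step with real content is never carried out: identifying the sign of the surviving line on an even block with the invariant in the rho formula, including the minus sign in \eqref{eq:unipotent-rho-finite-signature}. What you write about it is also inconsistent:
\begin{itemize}
\item Your $\epsilon$ is already the sign of $\Omega((\sqrt{-1}B)^{r-1}e_1,e_1)$, so there is no further dependence on $r \bmod 4$.
\item The quantity ``$\epsilon\sqrt{-1}^{\,r-1}$'' is not even real for even $r$.
\item The minus sign of the statement never appears.
\end{itemize}

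\textbf{Why your proposed repair fails.} The Levi degeneration $L\mapsto\lim_{t\to0}a_tLa_t^{-1}$ is a limit of conjugates, not a conjugation, and $\rho$ is discontinuous precisely on the eigenvalue-one part.
\begin{itemize}
\item Lemma~\ref{lem:Levi-Jordan-compatibility}(3) explicitly excludes $\lambda=1$.
\item In Remark~\ref{rem:failure-naive-isotropic-reduction}, the unipotent $C$ degenerates to $I$, while $\rho(C)=1\neq 0=\rho(I)$.
\item The only control on this jump is Lemma~\ref{lem:rho-defect-parabolic-extension}, which is proved using the present lemma. An induction through it would therefore be circular.
\item A single $2\times2$ example can fix a sign only if you already know that $\rho$ is a universal multiple of your invariant.
\item The sign of $h_B$ for your real matrix $B\in\mathfrak{sl}(2,\mathbb R)$ depends on which Hermitian form is transported through $\mathrm{SL}(2,\mathbb R)\cong\mathrm{SU}(1,1)$.
\end{itemize}

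\textbf{How to close the gap.} A direct computation suffices, and it is what the paper does. Since $\Omega(Bu,v)=-\Omega(u,Bv)$ and $\Omega$ is conjugate-linear in the second slot, the operator $\sqrt{-1}B$ is $\Omega$-self-adjoint. For a block of size $r=2\ell$ in your chain, put $f:=(\sqrt{-1}B)^{\ell-1}e_1=\sqrt{-1}^{\,\ell-1}e_\ell$. Then
\[
h_B(e_\ell,e_\ell)=h_B(f,f)=\Omega\bigl((\sqrt{-1}B)^{\ell}e_1,(\sqrt{-1}B)^{\ell-1}e_1\bigr)=\Omega\bigl((\sqrt{-1}B)^{2\ell-1}e_1,e_1\bigr).
\]
The right-hand side is the primitive Hermitian form on $E/BE$ of \cite[Definition~4.16]{KPWUnitary}, evaluated on the class of $e_1$; note that $e_1$ spans $E/BE$ in your indexing.

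In your anti-diagonal normal form, this middle entry is the only one not absorbed into hyperbolic planes. Hence $\operatorname{sign}(h_B)=\epsilon$ exactly on an even block. Then \cite[Theorem~4.17]{KPWUnitary}, which gives zero on odd blocks and minus the primitive sign on even blocks, yields $\rho=-\epsilon=-\operatorname{sign}(h_B)$.

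The paper does not assume the anti-diagonal normal form. It works with an anti-triangular Jordan chain and splits off hyperbolic planes, but the content is the same. No example and no induction are needed.
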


\begin{proof}
The skew-adjointness of \(B\) with respect to \(\Omega\) gives
\(\Omega(Bu,v)=-\Omega(u,Bv)\). Since our Hermitian forms are linear in
the first variable, this identity shows that \(h_B\) is Hermitian.
Since a nilpotent endomorphism has trace zero,
\(B\in\mathfrak{su}(E,\Omega)\). By
\cite[Proposition~4.15]{KPWUnitary}, \((E,\Omega,B)\) is an
\(\Omega\)-orthogonal direct sum of \(B\)-invariant indecomposable blocks.
Both sides of \eqref{eq:unipotent-rho-finite-signature} are additive, so it
suffices to consider one Jordan block of dimension \(d\).

Choose a Jordan chain \(e_0,\ldots,e_{d-1}\) with
\(Be_j=e_{j-1}\) for \(j\geq1\), \(Be_0=0\), and put
\(c:=\Omega(e_0,e_{d-1})\). From
\(\Omega(Bu,v)+\Omega(u,Bv)=0\) one obtains
\[
\Omega(e_r,e_s)=0\quad(r+s<d-1),
\quad
\Omega(e_r,e_{d-1-r})=(-1)^r c.
\]
Nondegeneracy of \(\Omega\) implies \(c\ne0\). Moreover,
\(\operatorname{rad}(h_B)=\ker B=\mathbb C e_0\). On
\(H=\operatorname{span}\{e_1,\ldots,e_{d-1}\}\), the induced form is
nondegenerate and
\[
h_B(e_j,e_k)=0\quad(j+k<d),
\quad
h_B(e_j,e_{d-j})=\sqrt{-1}(-1)^{j-1}c.
\]

If \(\dim H\geq2\), its first and last basis vectors span a
\(2\times2\) Gram block with negative determinant, hence a hyperbolic
plane. The first vector pairs trivially with all intermediate vectors.
Subtracting suitable multiples of it from the intermediate vectors makes
them orthogonal to the last vector, without changing their mutual
pairings. We can therefore split off the hyperbolic plane and repeat the
argument on the smaller anti-triangular matrix. It follows that, if
\(d=2\ell+1\), the form on \(H\) is a sum of \(\ell\) hyperbolic planes and
\(\operatorname{sign}(h_B)=0\). If \(d=2\ell\), it is a sum of
\(\ell-1\) hyperbolic planes and one line whose sign is the sign of
\[
h_B(e_\ell,e_\ell)
=\sqrt{-1}(-1)^{\ell-1}c
=\Omega\bigl((\sqrt{-1}B)^{2\ell-1}e_{2\ell-1},e_{2\ell-1}\bigr).
\]
The last expression is exactly the primitive Hermitian form induced on
\(E/BE\) in \cite[Definition~4.16]{KPWUnitary}. The block formula
\cite[Theorem~4.17]{KPWUnitary} is zero for odd blocks and the negative of
this primitive sign for even blocks. This proves
\eqref{eq:unipotent-rho-finite-signature}.
\end{proof}

\begin{lemma}
\label{lem:rho-defect-parabolic-extension}
Let \(L\in P_F\), and let
\(
L^{\mathrm{gr}}
=
\lim_{t\to 0}a_tLa_t^{-1}
\)
be its Levi degeneration. Then
\begin{equation}
\label{eq:rho-defect-parabolic-extension}
\left|
\rho(L)-\rho(L^{\mathrm{gr}})
\right|
\leq 2s.
\end{equation}
\end{lemma}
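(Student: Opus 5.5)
We decompose \(L\) into the primary components of its semisimple part and compare \(\rho(L)\) with \(\rho(L^{\mathrm{gr}})\) component by component. The block formula for the rho invariant, \cite[Theorem~2]{KPWUnitary}, writes \(\rho(L)\) as a sum of contributions from the \(\Omega\)-orthogonal primary summands. Hyperbolic eigenvalue pairs \(\lambda,\bar\lambda^{-1}\) with \(|\lambda|\neq1\) contribute zero. Elliptic eigenvalues \(\lambda\ne1\) on the unit circle contribute \((1-2\theta)\operatorname{sign}(\Omega|_{E_\lambda})\), which is unchanged by passage to \(L^{\mathrm{gr}}\) by Lemma~\ref{lem:Levi-Jordan-compatibility}(3). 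It therefore suffices to treat the eigenvalue-\(1\) primary space \(E_1=\ker(L_s-I)\), on which \(L\) acts unipotently. By Lemma~\ref{lem:Levi-Jordan-compatibility}(2), \(L^{\mathrm{gr}}|_{(E_1)^{\mathrm{gr}}}\) is the associated graded of the self-dual filtration \(0\subset F_1\subset F^\perp\cap E_1\subset E_1\), where \(F_1:=F\cap E_1\) and \(s_1:=\dim F_1\leq s\).

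On \(E_1\), write \(L|_{E_1}=\exp(2\pi B)\) with \(B=\tfrac{1}{2\pi}\log(L|_{E_1})\), a nilpotent element of \(\mathfrak u(E_1,\Omega)\). The logarithm is a polynomial in \(L\), so \(B\) preserves the filtration, and its graded part \(B^{\mathrm{gr}}=\pi_F(B)\) satisfies \(\exp(2\pi B^{\mathrm{gr}})=L^{\mathrm{gr}}|_{E_1^{\mathrm{gr}}}\). Lemma~\ref{lem:unipotent-rho-finite-signature} then reduces the claim to the linear-algebra inequality
\[
\bigl|\operatorname{sign}(h_B)-\operatorname{sign}(h_{B^{\mathrm{gr}}})\bigr|\leq 2s_1 .
\]
Alternatively, and more cleanly, use \(a_t\): the forms \(h_{B_t}\), with \(B_t=a_tBa_t^{-1}\), are all congruent to \(h_B\) for \(t>0\), so it is enough to compare \(h_B\) with the limit form on \(E_1\).

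For this comparison, use the Witt splitting \(E_1=F_1\oplus E_{1,0}\oplus F_1^-\), which can be chosen inside \(E_1\) because \(\Omega|_{E_1}\) is nondegenerate. Both \(h_B\) and \(h_{B^{\mathrm{gr}}}\) restrict to the same form on the middle block \(E_{1,0}\), since \(B\) and \(B^{\mathrm{gr}}\) induce the same map \(B_0\) there and \(\Omega\) is block-diagonal on it. The difference between the two Hermitian forms is therefore supported on rows and columns meeting \(F_1\oplus F_1^-\), which has dimension \(2s_1\). Passing to the compression to \(E_{1,0}\), or more simply using the general fact that changing a Hermitian form on a subspace of codimension \(c\), equivalently adding a form of rank at most \(2c\) supported on \(c\) coordinates, changes the signature by at most \(2c\), we obtain the two-sided bound.

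This crude estimate gives \(2\cdot 2s_1=4s_1\) and not \(2s_1\). The sharper count is the main obstacle. The key point is that \(\operatorname{sign}(h)\) and \(\operatorname{sign}(h|_U)\) differ by at most \(\operatorname{codim}U\) for any subspace \(U\). Take \(U=F^\perp\cap E_1\), which has codimension \(s_1\). On \(U\), the form \(h_B\) descends modulo \(F_1\) to \(h_{B_0}\), because \(h_B(F_1,U)=\Omega(\sqrt{-1}BF_1,U)\subset\Omega(F_1,U)=0\); hence \(h_B|_U\) has the same nonzero signature as \(h_{B_0}\). This gives \(|\operatorname{sign}h_B-\operatorname{sign}h_{B_0}|\leq s_1\). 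The same argument applies to \(B^{\mathrm{gr}}\). Moreover, \(h_{B^{\mathrm{gr}}}=h_{B_0}\perp h_{\mathrm{hyp}}\), where the hyperbolic part is the form of \(A\oplus A^{-*}\) and has signature zero by the involution argument of Lemma~\ref{lem:split-hyperbolic-levi-factor} (or directly, since that form pairs \(F_1\) with \(F_1^-\)). Combining the two comparisons through \(h_{B_0}\) yields \(|\rho(L)-\rho(L^{\mathrm{gr}})|\leq s_1\leq 2s\), which is stronger than the stated bound. One point requires care: this uses that rank or radical changes are handled by the convention of Lemma~\ref{lem:unipotent-rho-finite-signature}, which omits zero eigenvalues, so that the restriction inequality must be applied to possibly degenerate forms. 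The inequality \(|\operatorname{sign}h-\operatorname{sign}h|_U|\leq\operatorname{codim}U\) still holds in that setting, by the standard interlacing count of positive and negative indices.
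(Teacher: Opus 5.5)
Your final argument is correct, but it follows a different route from the paper's. Up to the reduction to the eigenvalue-one summand \(E_1\) the two proofs agree.

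\textbf{The paper's route.} It conjugates \(B\) by the internal Witt dilation \(b_t\). (Your \(a_t\) alternative would need this replacement, since \(a_t\) does not preserve \(E_1\).) The forms \(h_{B_t}\) are congruent to \(h_B\) and converge to \(h_{B^{\mathrm{gr}}}\). Lower semicontinuity of the inertia indices at the limit then bounds the signature jump by \(\operatorname{rank}(B_t-B^{\mathrm{gr}})\leq 2s_1\).

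\textbf{Your route.} You restrict \(h_B\) to the codimension-\(s_1\) subspace \(U=F^\perp\cap E_1\). Because \(B\) preserves both \(F_1\) and \(U\), the subspace \(F_1\) lies in the radical of \(h_B|_U\), and the induced form on \(U/F_1\) is \(h_{B_0}\). Under restriction to a subspace, each inertia index can only drop, and by at most \(\operatorname{codim}U\); as you note, this holds for degenerate forms too. Hence
\(|\operatorname{sign}h_B-\operatorname{sign}h_{B_0}|\leq s_1\). Finally, \(h_{B^{\mathrm{gr}}}=h_{B_0}\perp h_{\mathrm{hyp}}\), where \(h_{\mathrm{hyp}}\) vanishes on \(F_1\) and on \(F_1^-\). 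It is therefore block off-diagonal on \(F_1\oplus F_1^-\) and has signature zero.

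\textbf{What each approach buys.} Your argument is purely algebraic, needs no limiting process, and gives the sharper bound \(|\rho(L)-\rho(L^{\mathrm{gr}})|\leq s_1\leq s\). This bound cannot be improved in general: in Remark~\ref{rem:failure-naive-isotropic-reduction} the jump is exactly \(1=s\). With it, Corollary~\ref{cor:controlled-isotropic-reduction} would improve to \(|\operatorname{sign}(\phi)-\operatorname{sign}(\phi_0)|\leq sn\), although the paper's \(2sn\) already suffices for the induction. The paper's perturbation argument needs only a rank count on \(B_t-B^{\mathrm{gr}}\) and never identifies the descended form on \((F^\perp\cap E_1)/F_1\).

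Your intermediate ``crude estimate'' paragraph and the \(a_t\) remark play no role and can be dropped.
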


\begin{proof}
Decompose \(L\) according to its eigenvalues: those off the unit circle,
those on the unit circle other than \(1\), and the eigenvalue \(1\).
These give the canonical hyperbolic-unipotent, elliptic-unipotent, and
unipotent summands of \cite[Proposition~4.6]{KPWUnitary}.
By Lemma~\ref{lem:Levi-Jordan-compatibility}, the Levi projection preserves the
multiplicative Jordan decomposition and the primary decomposition. The
hyperbolic-unipotent summand contributes zero to the rho invariant, see
\cite[Lemma~4.7]{KPWUnitary}. The elliptic-unipotent contributions agree by
Lemma~\ref{lem:Levi-Jordan-compatibility}(3) and
\cite[Theorem~2(3)]{KPWUnitary}. It remains only to control the unipotent
summand.

Let \(E_1=\ker(L-I)^N\) for \(N\gg0\). By
Lemma~\ref{lem:Levi-Jordan-compatibility}(3), \(E_1\) is nondegenerate. Put
\[
F_1:=F\cap E_1,
\qquad
F_1^{\perp,1}:=F^\perp\cap E_1,
\qquad
s_1:=\dim F_1\leq s.
\]
The proof of that lemma shows that
\(F_1^{\perp,1}\) is the orthogonal complement of \(F_1\) inside \(E_1\).
Choose an internal Witt decomposition
\[
E_1=F_1\oplus E_{1,0}\oplus F_1^{-}.
\]
By Lemma~\ref{lem:Levi-Jordan-compatibility}(2), the eigenvalue-one primary
component of \(L^{\mathrm{gr}}\) is the associated graded of \(L|_{E_1}\).
Since the logarithm of a unipotent operator is a finite polynomial in
\(L-I\), it commutes with passage to the associated graded. Hence
\[
L_1:=L|_{E_1}=\exp(2\pi B),
\qquad
L_1^{\mathrm{gr}}
=
\exp(2\pi B^{\mathrm{gr}}),
\]
where \(B=(2\pi)^{-1}\log(L|_{E_1})\), and \(B^{\mathrm{gr}}\) is its
associated graded in the internal Witt decomposition above.

With respect to the induced Witt decomposition, \(B\) and its associated
graded have the form
\[
B=
\begin{pmatrix}
B_{F_1}&X&Z\\
0&B_0&Y\\
0&0&-B_{F_1}^*
\end{pmatrix},
\qquad
B^{\mathrm{gr}}
=
\begin{pmatrix}
B_{F_1}&0&0\\
0&B_0&0\\
0&0&-B_{F_1}^*
\end{pmatrix}.
\]
Here \(B_{F_1}=B|_{F_1}\), while \(B_0\) is the endomorphism induced by
\(B\) on the quotient \(F_1^\perp/F_1\). Since \(B\) is nilpotent and
preserves this filtration, both \(B_{F_1}\) and \(B_0\) are nilpotent;
moreover, \(-B_{F_1}^*\) is nilpotent. Thus \(B^{\mathrm{gr}}\) is nilpotent.

For \(t>0\), define the internal Witt dilation
\[
b_t:=\operatorname{diag}
\bigl(tI_{F_1},I_{E_{1,0}},t^{-1}I_{F_1^-}\bigr)
\in\mathrm U(E_1,\Omega|_{E_1})
\]
and set \(B_t=b_tBb_t^{-1}\). Then
\begin{equation}
\label{eq:nilpotent-Levi-difference}
B_t-B^{\mathrm{gr}}
=
\begin{pmatrix}
0&tX&t^2Z\\
0&0&tY\\
0&0&0
\end{pmatrix}.
\end{equation}
Since \(b_t\) preserves \(\Omega|_{E_1}\), the Hermitian forms
\(h_{B_t}\) and \(h_B\) are congruent. In particular, they have the same
signature and the same rank. Moreover,
\(h_{B_t}\to h_{B^{\mathrm{gr}}}\) as \(t\to0\).

Let \(n_\pm(h)\) denote the positive and negative indices of a Hermitian
form \(h\). By semicontinuity of inertia,
\[
n_\pm\bigl(h_{B^{\mathrm{gr}}}\bigr)
\leq
n_\pm(h_{B_t})
\]
for all sufficiently small \(t>0\). Therefore
\begin{align*}
\left|
\operatorname{sign}(h_B)
-
\operatorname{sign}\bigl(h_{B^{\mathrm{gr}}}\bigr)
\right|
&=
\left|
\operatorname{sign}(h_{B_t})
-
\operatorname{sign}\bigl(h_{B^{\mathrm{gr}}}\bigr)
\right|\\
&\leq
\operatorname{rank}(h_{B_t})
-
\operatorname{rank}\bigl(h_{B^{\mathrm{gr}}}\bigr)\\
&\leq
\operatorname{rank}
\bigl(h_{B_t}-h_{B^{\mathrm{gr}}}\bigr).
\end{align*}
The first inequality uses the fact that both inertia indices can only
decrease at the limit. The second follows from
\(\operatorname{rank}(A)\leq
\operatorname{rank}(A-C)+\operatorname{rank}(C)\).
Because \(\Omega\) is nondegenerate,
\[
\operatorname{rank}
\bigl(h_{B_t}-h_{B^{\mathrm{gr}}}\bigr)
=
\operatorname{rank}
\bigl(B_t-B^{\mathrm{gr}}\bigr).
\]
The image of the matrix in
\eqref{eq:nilpotent-Levi-difference} is contained in
\(F_1\oplus\operatorname{Im}(Y)\). Since
\(\dim F_1=s_1\) and \(\operatorname{rank}(Y)\leq s_1\), it follows that
\[
\operatorname{rank}
\bigl(B_t-B^{\mathrm{gr}}\bigr)
\leq 2s_1\leq2s.
\]
Lemma~\ref{lem:unipotent-rho-finite-signature} now yields
\[
\left|
\rho(L_1)
-
\rho(L_1^{\mathrm{gr}})
\right|
\leq 2s.
\]
The hyperbolic-unipotent and elliptic-unipotent contributions agree on the
two sides, so \eqref{eq:rho-defect-parabolic-extension} follows.
\end{proof}

\begin{corollary}
\label{cor:controlled-isotropic-reduction}
With the notation above, one has
\begin{equation}
\label{eq:Toledo-controlled-reduction}
T(\Sigma,\phi)=T(\Sigma,\phi_0)
\end{equation}
and
\begin{equation}
\label{eq:signature-controlled-reduction}
\left|
\operatorname{sign}(\phi)
-
\operatorname{sign}(\phi_0)
\right|
\leq 2sn.
\end{equation}
\end{corollary}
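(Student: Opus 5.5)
The plan is to compare \(\phi\) with its Levi degeneration \(\phi^{\mathrm{gr}}=\alpha\oplus\phi_0\oplus\alpha^{-*}\) along the conjugate family \(\phi_t=a_t\phi a_t^{-1}\). Lemma~\ref{lem:split-hyperbolic-levi-factor} lets us replace \(\phi_0\) by \(\phi^{\mathrm{gr}}\) on both sides of the statement. The signature--Toledo formula \eqref{eq:signature-toledo-rho} then splits the comparison into a Toledo part and a boundary rho part. The Toledo part should be exactly continuous, and the rho part is handled termwise by Lemma~\ref{lem:rho-defect-parabolic-extension}.

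\emph{Toledo identity.} Each \(\phi_t\) is conjugate to \(\phi\), so \(T(\Sigma,\phi_t)=T(\Sigma,\phi)\). I would argue that \(T(\Sigma,\phi)=T(\Sigma,\phi^{\mathrm{gr}})\) by naturality of bounded cohomology under the projection \(\pi_F\colon P_F\to L_F\). Concretely, \(\phi=\iota\circ\phi\) with \(\iota\colon P_F\hookrightarrow G\), and \(\phi^{\mathrm{gr}}=\pi_F\circ\phi\). It therefore suffices to show \(\iota^*\kappa^b_G=\pi_F^*(\kappa^b_G|_{L_F})\) in \(H^2_{cb}(P_F;\mathbb R)\).

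This holds because \(P_F=L_F\ltimes U_F\) with \(U_F\) amenable (unipotent), so \(H^\bullet_{cb}(P_F)\cong H^\bullet_{cb}(L_F)\) via \(\pi_F^*\). Moreover, the inclusion \(L_F\hookrightarrow P_F\) is a section of \(\pi_F\). Hence a class on \(P_F\) equals \(\pi_F^*\) of its restriction to \(L_F\).

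Pulling back by \(\phi\) and pairing with \(j_{\partial\Sigma}^{-1}\) and the relative fundamental class gives \(T(\Sigma,\phi)=T(\Sigma,\phi^{\mathrm{gr}})\). The latter equals \(T(\Sigma,\phi_0)\) by Lemma~\ref{lem:split-hyperbolic-levi-factor}. One subtlety is that \(j_{\partial\Sigma}^{-1}\) must be natural here. It is, since it depends only on the surface and not on the coefficients. An alternative route is continuity of \(T\) on representation varieties with boundary data, but the amenability argument avoids discontinuity issues, so I would use it.

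\emph{Signature estimate.} Apply \eqref{eq:signature-toledo-rho} to \(\phi\) and to \(\phi^{\mathrm{gr}}\) and subtract. The Toledo terms cancel by the identity just proved, leaving
\[
\operatorname{sign}(\phi)-\operatorname{sign}(\phi^{\mathrm{gr}})
=\sum_{j=1}^n\bigl(\rho(\phi(c_j))-\rho(\phi^{\mathrm{gr}}(c_j))\bigr).
\]
Each boundary holonomy \(\phi(c_j)\) lies in \(P_F\), and \(\phi^{\mathrm{gr}}(c_j)\) is its Levi degeneration. Lemma~\ref{lem:rho-defect-parabolic-extension} therefore bounds each summand by \(2s\), so the total is at most \(2sn\). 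Since \(\operatorname{sign}(\phi^{\mathrm{gr}})=\operatorname{sign}(\phi_0)\), this gives \eqref{eq:signature-controlled-reduction}.

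\emph{Main obstacle.} I expect the main difficulty to be the Toledo identity, in particular verifying that the bounded Kähler class restricted to \(P_F\) factors through \(\pi_F\) compatibly with the relative (boundary) version of the invariant. The rho part is purely formal, given the lemmas above.
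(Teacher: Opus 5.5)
Your proof is correct. The signature estimate is exactly the paper's argument: apply the signature--Toledo formula to $\phi$ and $\phi^{\mathrm{gr}}$, cancel the Toledo terms, and bound each boundary term by Lemma~\ref{lem:rho-defect-parabolic-extension}.

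Your proof of the Toledo identity takes a different route. The paper works along the family $\phi_t=a_t\phi a_t^{-1}$ itself. Conjugation invariance makes $T$ constant along this family. Because the bounded K\"ahler class factors through $\mathrm{PU}(p,q)$, the continuity of $T$ on the whole representation variety, \cite[Theorem~1(2)]{BIW}, then lets the paper pass to the limit $\phi^{\mathrm{gr}}$.

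You avoid the limiting process. You use that $\ker\pi_F$ is the unipotent radical of $P_F$, a closed amenable normal subgroup, so inflation $\pi_F^*\colon H^2_{cb}(L_F;\mathbb R)\to H^2_{cb}(P_F;\mathbb R)$ is an isomorphism (Monod's amenable-kernel theorem). With the section $L_F\hookrightarrow P_F$ this gives $\iota^*\kappa^b_G=\pi_F^*(\kappa^b_G|_{L_F})$. Since $\phi^{\mathrm{gr}}$ is $\pi_F\circ\phi$ followed by the Levi embedding, you get $\phi_b^*\kappa^b_G=(\phi^{\mathrm{gr}})_b^*\kappa^b_G$ in $H^2_b(\pi_1(\Sigma);\mathbb R)$. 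This is valid. It proves more than is needed: the bounded classes agree, not just their pairings with the relative fundamental class. The cost is the amenable-kernel theorem for continuous bounded cohomology in place of the BIW continuity theorem.

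The paper's route has the expository advantage of treating the Toledo term and the rho term along one and the same degeneration: one is continuous, the other may jump.

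One small correction to your closing remark. There is no discontinuity issue for $T$ itself: the BIW Toledo invariant is continuous on $\operatorname{Hom}(\pi_1(\Sigma),G)$ even for surfaces with boundary. Only $\rho$ and the signature jump. So the continuity route you set aside is perfectly sound, and it is the one the paper uses.
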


\begin{proof}
For every \(t>0\), the representation \(\phi_t\) is conjugate to \(\phi\),
so \(T(\Sigma,\phi_t)=T(\Sigma,\phi)\). The action of
\(\mathrm U(p,q)\) on its Hermitian symmetric space, and hence its bounded
K\"ahler class, factors through \(\mathrm{PU}(p,q)\). The Toledo invariant is
continuous on the latter representation variety by
\cite[Theorem~1(2)]{BIW}; its pullback to
\(\operatorname{Hom}(\pi_1(\Sigma),\mathrm U(p,q))\) is therefore
continuous. Since \(\phi_t\to\phi^{\mathrm{gr}}\), we obtain
\[
T(\Sigma,\phi)
=
T(\Sigma,\phi^{\mathrm{gr}})
=
T(\Sigma,\phi_0),
\]
where the second equality follows from
Lemma~\ref{lem:split-hyperbolic-levi-factor}.

Using the signature--Toledo formula
\cite[Theorem~1]{KPWUnitary}, together with
\(\operatorname{sign}(\phi^{\mathrm{gr}})
=\operatorname{sign}(\phi_0)\), we find
\[
\operatorname{sign}(\phi)-\operatorname{sign}(\phi_0)
=
\sum_{j=1}^{n}
\left[
\rho\bigl(\phi(c_j)\bigr)
-
\rho\bigl(\phi^{\mathrm{gr}}(c_j)\bigr)
\right].
\]
Applying Lemma~\ref{lem:rho-defect-parabolic-extension} to each boundary
holonomy gives
\[
\left|
\operatorname{sign}(\phi)
-
\operatorname{sign}(\phi_0)
\right|
\leq
\sum_{j=1}^{n}2s
=
2sn.
\]
\end{proof}

\subsection{The sharp upper bound for arbitrary representations}
\label{subsec:upper-bound-reduction}

We can now combine Corollary~\ref{cor:global-residue-compensation} with
controlled Levi degeneration. The boundary error is at most \(2sn\),
whereas reducing the two ranks by \(s\) decreases the proposed upper bound
by \(2sn+4s(g-1)\). Since \(g\geq1\), this is enough to close an
induction on the smaller rank.

For \(0\leq a\leq b\), put
\[
M_{g,n}(a,b):=2a(2g-2)+n(a+b)-\delta(n,b-a).
\]
We include \((a,b)=(0,0)\): the group \(\mathrm U(0,0)\) is trivial,
its zero-dimensional local system has signature zero, and
\(M_{g,n}(0,0)=0\).

\begin{proposition}
\label{prop:extension-to-arbitrary-representations}
Fix \(g,n\geq1\). Assume that
\(
\operatorname{sign}(\psi)\leq M_{g,n}(a,b)
\)
has been proved for every reductive representation into
\(\mathrm{U}(a,b)\), for all \(1\leq a\leq b\), and assume the known compact
case \(a=0\). Then the same estimate holds for every representation into
\(\mathrm{U}(p,q)\), with \(q\geq p\).
\end{proposition}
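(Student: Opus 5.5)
Induct on the smaller index \(p=\min\{p,q\}\), with \(q\geq p\) throughout. For \(p=0\) the group is compact, every representation is reductive, and the bound is the known compact case (Proposition~\ref{prop:positive-genus-Up}, noting \(M_{g,n}(0,q)=nq-\delta(n,q)=\max\{0,nq-2\}\) for \(nq\geq1\)). For \(p\geq1\), suppose \(\phi\colon\pi_1(\Sigma)\to\mathrm U(p,q)\) is given. If \(\phi\) is reductive, the hypothesis applies directly. Otherwise the Zariski closure of the image lies in a proper real parabolic subgroup. Since parabolics of \(\mathrm U(p,q)\) stabilize isotropic flags, \(\phi\) preserves a nonzero isotropic subspace \(F\) of some dimension \(s\), with \(1\leq s\leq p\).

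Apply the controlled Levi degeneration of Subsection~\ref{subsec:controlled-levi-degeneration} to \(F\). Corollary~\ref{cor:controlled-isotropic-reduction} produces \(\phi_0\colon\pi_1(\Sigma)\to\mathrm U(p-s,q-s)\) with
\[
\operatorname{sign}(\phi)\leq\operatorname{sign}(\phi_0)+2sn .
\]
The new pair \((p-s,q-s)\) still has \(q-s\geq p-s\), the same difference \(q-p\), and a strictly smaller minimum. The representation \(\phi_0\) need not be reductive, but the induction hypothesis on the smaller index covers arbitrary representations, including the degenerate case \((0,0)\), where the signature is \(0=M_{g,n}(0,0)\). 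It therefore gives \(\operatorname{sign}(\phi_0)\leq M_{g,n}(p-s,q-s)\).

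Next compare the bounds directly. Because \(\delta\) depends only on \(n\) and \(q-p\), which is unchanged, we get
\[
M_{g,n}(p,q)-M_{g,n}(p-s,q-s)=2s(2g-2)+2sn .
\]
Hence
\[
\operatorname{sign}(\phi)\leq M_{g,n}(p-s,q-s)+2sn=M_{g,n}(p,q)-4s(g-1)\leq M_{g,n}(p,q),
\]
using \(g\geq1\). This closes the induction.

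The main point to check carefully is the edge case \(p=q\). There \(s\) may equal \(p\), so \(\phi_0\) lands in \(\mathrm U(0,0)\), and one needs \(\delta(n,0)=0\) so that \(M_{g,n}(0,0)=0\) is consistent with the formula. A related case is \(p-s=0<q-s\), where the compact bound \(M_{g,n}(0,q-s)\) must agree with Proposition~\ref{prop:positive-genus-Up}. Both follow from the definition \(\delta(n,r)=\min\{2,nr\}\). The remaining substantive input is the \(2s\) per-boundary rho estimate of Lemma~\ref{lem:rho-defect-parabolic-extension}, which is already established.
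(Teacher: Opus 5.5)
Your proof is correct and follows the paper's argument essentially verbatim. Both use induction on \(p\), with the compact case (including the convention \(M_{g,n}(0,0)=0\)) as the base and the reductive hypothesis at rank \((p,q)\) for reductive \(\phi\). For nonreductive \(\phi\), both pass to the middle Levi factor via Corollary~\ref{cor:controlled-isotropic-reduction} and close with the identity \(M_{g,n}(p-s,q-s)+2sn=M_{g,n}(p,q)-4s(g-1)\leq M_{g,n}(p,q)\). The paper's phrase ``induction on \(p\)'' leaves two points implicit, and you state both correctly: the induction must be strong, since \(s\) may exceed \(1\), and it must cover arbitrary, not only reductive, representations at smaller rank.
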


\begin{proof}
We argue by induction on \(p\). The case \(p=0\) is the compact-unitary
case, including the zero-dimensional convention above. Let \(p>0\).
If \(\phi\) is reductive, the asserted inequality is
exactly the hypothesis at rank \((p,q)\). Suppose therefore that \(\phi\)
is nonreductive. Choose a nonzero invariant isotropic subspace of dimension
\(s\), and let
\(
\phi_0\colon\pi_1(\Sigma_{g,n})\to\mathrm{U}(p-s,q-s)
\)
be the middle factor of the Levi degeneration. By
Corollary~\ref{cor:controlled-isotropic-reduction},
\[
\operatorname{sign}(\phi)
\leq
\operatorname{sign}(\phi_0)+2sn.
\]
The induction hypothesis gives
\(
\operatorname{sign}(\phi_0)\leq M_{g,n}(p-s,q-s)
\).
Since the rank difference is unchanged,
\[
M_{g,n}(p-s,q-s)+2sn
=M_{g,n}(p,q)-4s(g-1)\leq M_{g,n}(p,q).
\]
This proves the assertion.
\end{proof}

\begin{theorem}
\label{thm:sharp-upper-all-representations}
Let \(g,n\geq1\) and \(q\geq p>0\). Every representation
\(
\phi\colon\pi_1(\Sigma_{g,n})\to\mathrm{U}(p,q)
\)
satisfies
\begin{equation}
\label{eq:sharp-upper-all-representations}
|\operatorname{sign}(\phi)|\leq M_{g,n}(p,q).
\end{equation}
\end{theorem}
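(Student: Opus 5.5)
The plan is to assemble Theorem~\ref{thm:sharp-upper-all-representations} from the reductive bound and the Levi-degeneration induction, and then get the absolute value by symmetry. First, Corollary~\ref{cor:global-residue-compensation} gives \(\operatorname{sign}(\psi)\leq M_{g,n}(a,b)\) for every reductive \(\psi\colon\pi_1(\Sigma_{g,n})\to\mathrm U(a,b)\) with \(1\leq a\leq b\). The compact case \(a=0\) is Proposition~\ref{prop:positive-genus-Up}, which gives \(|\operatorname{sign}|\leq\max\{0,nb-2\}\). This equals \(M_{g,n}(0,b)=nb-\delta(n,b)\) whenever \(nb\geq2\). When \(nb=1\), the signature is zero and \(M_{g,n}(0,1)=0\), and the case \(b=0\) is the zero-dimensional convention. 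These are exactly the hypotheses of Proposition~\ref{prop:extension-to-arbitrary-representations}, so we obtain \(\operatorname{sign}(\phi)\leq M_{g,n}(p,q)\) for every representation into \(\mathrm U(p,q)\) with \(q\geq p\).

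One point in that induction needs checking. The middle Levi factor \(\phi_0\) lands in \(\mathrm U(p-s,q-s)\), and \(p-s\) may be \(0\), so one has to confirm that the compact bound used there holds for arbitrary, not only reductive, representations. This is true because every representation into a compact group is reductive, and because Proposition~\ref{prop:positive-genus-Up} is stated for all representations. We also need \(M_{g,n}(p-s,q-s)+2sn=M_{g,n}(p,q)-4s(g-1)\), which holds since \(\delta\) depends only on \(q-p\).

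For the lower bound, I would apply the upper bound to the complex-conjugate representation \(\overline\phi\), which is again a representation into \(\mathrm U(p,q)\) since the standard form has real coefficients. Conjugating the coefficients of the local system conjugates \(Q_\phi\). Because \(Q_\phi\) is skew-Hermitian, \(i\overline{Q_\phi}\) is the conjugate of \(-iQ_\phi\), so \(\operatorname{sign}(\overline\phi)=-\operatorname{sign}(\phi)\). Alternatively, the signature--Toledo formula gives the same conclusion, since conjugation reverses both \(T\) and \(\rho\). Applying the upper bound to \(\overline\phi\) then gives \(-\operatorname{sign}(\phi)\leq M_{g,n}(p,q)\).

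The main obstacle is this sign reversal under conjugation. I would check it directly at the level of cohomology rather than through \(\rho\), because the intersection-form argument avoids any discussion of how the discontinuous function \(\rho\) behaves under conjugation.
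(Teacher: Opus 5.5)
Your proposal is correct and follows the paper's proof. The paper likewise combines the reductive bound of Corollary~\ref{cor:global-residue-compensation} with the compact case as the base of the Levi-degeneration induction in Proposition~\ref{prop:extension-to-arbitrary-representations}, and gets the lower bound from \(\operatorname{sign}(\overline\phi)=-\operatorname{sign}(\phi)\), checked directly on the intersection form. The one difference is cosmetic: the paper settles \(p=q\), and the balanced hypotheses of that proposition, directly with the Milnor--Wood inequality \eqref{eq:mw-signature}, whereas you send the balanced case through the reductive corollary. That route also works, because \(\delta(n,0)=0\) and the Levi reduction preserves the rank difference.
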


\begin{proof}
If \(q=p\), the signature Milnor--Wood inequality
\eqref{eq:mw-signature} gives
\[
|\operatorname{sign}(\phi)|
\leq2p|\chi(\Sigma_{g,n})|
=2p(2g-2+n)
=M_{g,n}(p,p),
\]
because \(\delta(n,0)=0\). Assume henceforth that \(q>p\). For reductive
representations the upper bound follows from
Corollary~\ref{cor:global-residue-compensation}; the balanced cases in the
hypotheses of Proposition~\ref{prop:extension-to-arbitrary-representations}
are covered by \eqref{eq:mw-signature}, and the compact case is
Theorem~\ref{thm:complete-Up-range}. That proposition
therefore extends the upper bound to every representation.

To obtain the lower bound, choose a basis in which
\(\Omega=\operatorname{diag}(I_p,-I_q)\). Complex conjugation then
preserves \(\mathrm U(p,q)\) and induces an antilinear map on twisted
cohomology satisfying
\[
iQ_{\overline\phi}(\overline x,\overline y)
=-\overline{iQ_\phi(x,y)}.
\]
Thus \(\operatorname{sign}(\overline\phi)=-\operatorname{sign}(\phi)\).
Applying the upper bound to \(\overline\phi\) gives the lower bound.
\end{proof}
\section{Realization of all values}
\label{subsec:realization-positive-genus}

We now construct a representation for each value in the interval of
Theorem~\ref{thm:positive-genus-realization-bounds}. The idea is to split the
coefficient space into \(p\) pieces of signature \((1,1)\) and a
negative-definite piece of dimension \(q-p\). We choose a representation on
each piece and add their signatures.

\begin{proposition}
\label{prop:realization-positive-genus}
Let \(g\geq1\), \(n\geq1\), and \(q\geq p>0\). Then every integer in
\begin{equation}
\label{eq:realized-positive-genus-interval}
[-M_{g,n}(p,q),M_{g,n}(p,q)]\cap\mathbb Z
\end{equation}
is the signature of a representation
\(
\pi_1(\Sigma_{g,n})\to\mathrm{U}(p,q).
\)
\end{proposition}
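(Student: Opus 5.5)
By Theorem~\ref{thm:sharp-upper-all-representations} the signature set is symmetric, and complex conjugation reverses the sign. So it suffices to realize every integer $m$ with $0\le m\le M_{g,n}(p,q)$; conjugating the resulting representation then gives the negative values. We build block-diagonal representations
\[
\phi=\psi_1\oplus\cdots\oplus\psi_p\oplus\kappa,
\qquad
\psi_i\colon\pi_1(\Sigma_{g,n})\to\mathrm U(1,1),
\qquad
\kappa\colon\pi_1(\Sigma_{g,n})\to\mathrm U(q-p),
\]
with $\kappa$ acting on the negative definite summand. By additivity of the signature under orthogonal sums, $\operatorname{sign}(\phi)=\sum_i\operatorname{sign}(\psi_i)+\operatorname{sign}(\kappa)$.

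Put $N=|\chi(\Sigma_{g,n})|=2g-2+n\ge1$ and $r=q-p>0$. Corollary~\ref{cor:signature-values-U11} shows that each $\operatorname{sign}(\psi_i)$ can be any integer in $[-2N,2N]$, so the first part ranges over every integer in $[-2pN,2pN]$.

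For $\kappa$ we view it as a $\mathrm U(r)$-representation whose Hermitian form is negated. Negating the form reverses the signature, and the range in Theorem~\ref{thm:complete-Up-range} is symmetric. Hence $\operatorname{sign}(\kappa)$ ranges over
\[
\bigl([2-nr,\,nr-2]\cap\mathbb Z\bigr)\cup\{0\}.
\]
When $nr\ge2$ the maximal value is $nr-2=nr-\delta(n,r)$. When $nr=1$ we take $\kappa$ trivial, with value $0=nr-\delta(n,r)$. In both cases the largest value of $\operatorname{sign}(\kappa)$ is exactly $nr-\delta(n,r)$.

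Next I would check the arithmetic:
\[
M_{g,n}(p,q)=2p(2g-2)+n(2p+r)-\delta(n,r)=2pN+\bigl(nr-\delta(n,r)\bigr).
\]
This uses $p+q=2p+r$. Now take $m\in[0,M_{g,n}(p,q)]$.

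If $m\le 2pN$, choose $\kappa$ trivial and realize $m$ entirely through the $\psi_i$, by writing $m$ as a sum of $p$ integers in $[-2N,2N]$.

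If $m>2pN$, take every $\psi_i$ maximal with signature $2N$. We then need $\operatorname{sign}(\kappa)=m-2pN\in[1,\,nr-\delta(n,r)]$. This interval is nonempty only when $nr\ge3$, and it lies inside $[2-nr,nr-2]$, so Theorem~\ref{thm:complete-Up-range} (via Proposition~\ref{prop:positive-genus-Up}) supplies $\kappa$.

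The image of $\phi$ lies in $\mathrm U(1,1)^{\times p}\times\mathrm U(q-p)$, as required. Combined with the upper bound, this yields Theorem~\ref{thm:positive-genus-realization-bounds}, and with the earlier special cases it yields Theorem~\ref{thm:introduction-main}.

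The only delicate point is the sign convention for the compact block sitting on the negative definite subspace. One must confirm that flipping the form multiplies the signature by $-1$, which is stated at the start of Section~\ref{sec:positive-genus-signatures}, and that the $\mathrm U(r)$ range is symmetric so the flip costs nothing. The edge cases $nr\in\{1,2\}$ need no compact contribution at all, because $nr-\delta(n,r)=0$ there.
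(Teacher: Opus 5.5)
Your proposal is correct and is essentially the paper's proof. Both take an orthogonal sum of $p$ rank-one $\mathrm{U}(1,1)$ representations from Corollary~\ref{cor:signature-values-U11} and a compact $\mathrm{U}(q-p)$ block on the negative definite summand from Proposition~\ref{prop:positive-genus-Up}, and combine them by additivity together with $M_{g,n}(p,q)=2p(2g-2+n)+\bigl(n(q-p)-\delta(n,q-p)\bigr)$. Your reduction to $m\geq0$ and your case split at $2p(2g-2+n)$ are a hands-on version of the paper's observation that the sum of the two symmetric integer intervals is the full interval. The symmetry you use comes from complex conjugation reversing the signature, not from the statement of Theorem~\ref{thm:sharp-upper-all-representations}, but this is harmless.
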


\begin{proof}
Set
\(
d:=q-p.
\)
We first construct the balanced part of the representation. By
Corollary~\ref{cor:signature-values-U11}, one has
\begin{equation}
\label{eq:rank-one-realization-positive-genus}
\mathscr{S}_{1,1}(\Sigma_{g,n})
=
[-2|\chi(\Sigma_{g,n})|,2|\chi(\Sigma_{g,n})|]
\cap\mathbb Z.
\end{equation}
Since \(g,n\geq1\), 
\(
|\chi(\Sigma_{g,n})|=2g-2+n.
\)
Taking orthogonal direct sums of \(p\) rank-one representations and using
the block-diagonal inclusion
\(
\mathrm{U}(1,1)^p
\hookrightarrow
\mathrm{U}(p,p),
\)
we obtain representations realizing every integer in
\begin{equation}
\label{eq:balanced-realization-range}
[-A_{g,n,p},A_{g,n,p}]\cap\mathbb Z,
\qquad
A_{g,n,p}:=2p(2g-2+n).
\end{equation}
Indeed, every integer in this interval can be written as a sum of \(p\)
integers belonging to the rank-one interval in
\eqref{eq:rank-one-realization-positive-genus}. The signature is additive
under orthogonal direct sums; see, for example,
\cite[Subsection~1.4]{KPWValues}.

If \(d=0\), then \(q=p\) and \(\delta(n,d)=0\). Consequently,
\[
A_{g,n,p}
=
2p(2g-2+n)
=
2p(2g-2)+n(p+q)
=
M_{g,n}(p,p).
\]
Thus \eqref{eq:balanced-realization-range} already gives the full interval
\eqref{eq:realized-positive-genus-interval}.

Assume now that \(d>0\), and put
\(B_{n,d}:=nd-\delta(n,d)=\max\{0,nd-2\}\).
If \(nd\geq2\), Proposition~\ref{prop:positive-genus-Up} provides
representations into \(\mathrm{U}(d)\) with every integer signature between
\(-B_{n,d}\) and \(B_{n,d}\). If \(nd=1\), then \(n=d=1\) and
\(B_{n,d}=0\); the trivial character realizes the required value \(0\).
Regard these representations as acting on a negative-definite Hermitian
space. Changing the sign of the coefficient form changes the sign of the
intersection form, and hence reverses the signature. Because the interval
is symmetric, representations into \(\mathrm{U}(0,d)\) therefore realize
every integer in
\begin{equation}
\label{eq:residual-compact-range}
[-B_{n,d},B_{n,d}]\cap\mathbb Z,
\qquad
B_{n,d}=nd-\delta(n,d).
\end{equation}
Consider the orthogonal block-diagonal inclusion
\[
\mathrm{U}(p,p)\times\mathrm{U}(0,d)
\hookrightarrow
\mathrm{U}(p,p+d)
=
\mathrm{U}(p,q).
\]
Let \(u\in[-A_{g,n,p},A_{g,n,p}]\cap\mathbb Z\) and
\(v\in[-B_{n,d},B_{n,d}]\cap\mathbb Z\). Choose representations
\(
\phi_u\colon
\pi_1(\Sigma_{g,n})\to\mathrm{U}(p,p),
\) \(
\psi_v\colon
\pi_1(\Sigma_{g,n})\to\mathrm{U}(0,d),
\)
such that
\(
\operatorname{sign}(\phi_u)=u,
\) \(
\operatorname{sign}(\psi_v)=v.
\)
Their orthogonal direct sum satisfies
\[
\operatorname{sign}(\phi_u\oplus\psi_v)=u+v.
\]

Adding an integer from \([-A_{g,n,p},A_{g,n,p}]\) to one from
\([-B_{n,d},B_{n,d}]\) gives every integer in
\[
[-(A_{g,n,p}+B_{n,d}),A_{g,n,p}+B_{n,d}]
\cap\mathbb Z.
\]
Finally,
\begin{align*}
A_{g,n,p}+B_{n,d}
&=
2p(2g-2+n)+nd-\delta(n,d)\\
&=
2p(2g-2)+2pn+n(q-p)-\delta(n,q-p)\\
&=
2p(2g-2)+n(p+q)-\delta(n,q-p)\\
&=
M_{g,n}(p,q).
\end{align*}
Therefore every integer in
\eqref{eq:realized-positive-genus-interval} is realized.
\end{proof}

Combining Proposition~\ref{prop:realization-positive-genus} with
Theorem~\ref{thm:sharp-upper-all-representations} proves
Theorem~\ref{thm:positive-genus-realization-bounds}. Together with the
rank-one, genus-zero, compact, and closed-surface cases proved earlier,
this completes the proof of
Theorem~\ref{thm:introduction-main}.

\appendix
\section{The \texorpdfstring{\(L^2\)}{L2} minimal-extension comparison}
\label{app:L2-minimal-extension}

We use the notation of Subsection~\ref{subsec:L2-Dolbeault-complex}.
In particular, \(\mathscr E=V\oplus W\) is the normalized logarithmic
lattice, and \(\mathcal A\), \(\mathcal B\), and
\(\mathcal C_{L^2}^\bullet\) are defined in
\eqref{eq:def-L2-lattice-A}, \eqref{eq:def-L2-lattice-B}, and
\eqref{eq:L2-total-Dolbeault-complex}.
We first identify the holomorphic \(L^2\) graph domain explicitly.
We then compare it with the coherent complex used in the proof, and finally
pass to global harmonic representatives and intersection cohomology.

Write \(\mathcal G_{(2),\mathrm{hol}}^\bullet(D^0)\) for the holomorphic
graph-domain complex. On an open set \(U\subset\overline\Sigma\), its
sections in degree zero are holomorphic sections \(u\) on
\(U\cap\Sigma^\circ\) for which both \(u\) and \(\Phi u\) are locally
\(L^2\), including near the marked points. Its sections in degree one are
locally \(L^2\) holomorphic one-forms with values in the bundle. The
differential is \(\Phi\). The following lemma shows, in particular, that
these sections belong to the normalized logarithmic lattice.

\begin{lemma}
\label{lem:filtered-holomorphic-graph-domain-strictness}
Fix a marked point \(x_j\). On the weight-zero graded fiber, write
\[
\operatorname{GrRes}_{x_j}(\Phi)=S_j+N_j,
\qquad [S_j,N_j]=0,
\qquad
\operatorname{Gr}_{0,j}(\mathscr E)=\bigoplus_{\lambda}E_{j,\lambda},
\]
where \(S_j\) acts on \(E_{j,\lambda}\) as \(\lambda\operatorname{id}\).
Let \(W_\bullet E_{j,\lambda}\) be the weight filtration of
\(N_j|_{E_{j,\lambda}}\), centered at zero, and let
\(q_{j,\lambda}\) be evaluation followed by the canonical projections to
the weight-zero graded fiber and then to \(E_{j,\lambda}\).
Define local subsheaves of \(\mathscr E\) by
\begin{equation}
\label{eq:filtered-graph-domain-associated-graded}
\begin{aligned}
\mathcal A^{\mathrm{hol}}_j
&=\{u:q_{j,0}(u)\in W_0E_{j,0},\quad
q_{j,\lambda}(u)\in W_{-2}E_{j,\lambda}
\text{ for }\lambda\ne0\},\\
\mathcal B^{\mathrm{hol}}_j
&=\{v:q_{j,\lambda}(v)\in W_{-2}E_{j,\lambda}
\text{ for every }\lambda\}.
\end{aligned}
\end{equation}
A condition on an absent primary summand is understood to be vacuous.
Then, on a sufficiently small coordinate disc \(\Delta_j\),
\[
\mathcal G_{(2),\mathrm{hol}}^\bullet(D^0)
=
\left[\mathcal A^{\mathrm{hol}}_j
\xrightarrow{\Phi}
\mathcal B^{\mathrm{hol}}_j\otimes
K_{\overline\Sigma}(D)\right].
\]
These descriptions are intrinsic: the projections are taken on the
weight-zero graded fiber, and no holomorphic splitting preserved by
\(\Phi\) is required.
\end{lemma}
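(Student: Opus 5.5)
The claim is local on a small disc \(\Delta_j\), so we compute the holomorphic \(L^2\) graph domain directly from the local model \eqref{eq:BGPM-local-model} and the model norm estimates for the adapted harmonic metric.

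\emph{Step 1: reduce to graded pieces.} By the growth estimate \(\|e_{j,\ell}\|_h^2\asymp|z|^{2a(\theta_{j,\ell})}|\log|z||^{\kappa_{j,\ell}}\) recalled in the proof of Proposition~\ref{prop:normalized-logarithmic-representative}, together with the Poincar\'e volume \(|z|^{-2}|\log|z||^{-2}\,dx\,dy\), we first decide which meromorphic sections are locally \(L^2\). A section with a pole along a frame direction of weight \(a\in[0,1)\) has norm squared \(\gtrsim|z|^{2a-2}\), which is not integrable against the Poincar\'e area. Hence every \(L^2\) holomorphic section lies in \(\mathscr E\), and similarly every \(L^2\) holomorphic one-form lies in \(\mathscr E\otimes K_{\overline\Sigma}(D)\). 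A section of \(\mathscr E\) whose value at \(x_j\) lies in the positive-weight step is automatically \(L^2\), because \(|z|^{2a}\) with \(a>0\) dominates every logarithm. For a value in the weight-zero graded fiber, only the logarithmic exponent \(\kappa\) matters. Integrability of \(|\log|z||^{\kappa-2}\) against \(r^{-1}dr\) requires \(\kappa\le 0\) for sections, and \(\kappa\le-2\) after the logarithmic factor of \(dz/z\) is accounted for in one-forms. Following Simpson and Mochizuki's norm estimates, the exponent of a vector in the weight-zero fiber is governed by the weight filtration of \(N_j\) on the \(S_j\)-primary summand. This gives the conditions \(W_0\) for sections and \(W_{-2}\) for one-forms, and shows that \(\mathcal B^{\mathrm{hol}}_j\otimes K_{\overline\Sigma}(D)\) is exactly the degree-one \(L^2\) holomorphic sheaf. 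This argument uses only the filtration, not any splitting, so the result is intrinsic. Here the statement is insensitive to the strictly parabolic part \(\psi_j\) of the residue, because that part raises the weight and is invisible on the weight-zero graded fiber.

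\emph{Step 2: the graph condition in degree zero.} We need \(u\) in \(L^2\) together with \(\Phi u\) in \(L^2\). Since \(q_{j,\lambda}(\Phi u)=(\lambda+N_j)q_{j,\lambda}(u)\), the second condition reads \((\lambda+N_j)q_{j,\lambda}(u)\in W_{-2}E_{j,\lambda}\). For \(\lambda=0\) this follows from \(q_{j,0}(u)\in W_0\), because \(N_j W_0\subset W_{-2}\). For \(\lambda\neq0\) the operator \(\lambda+N_j\) is invertible and preserves \(W_\bullet\), so the condition is equivalent to \(q_{j,\lambda}(u)\in W_{-2}\), which already implies the \(L^2\) condition \(W_0\). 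This yields \(\mathcal A^{\mathrm{hol}}_j\). The differential is \(\Phi\) by definition of the complex.

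The main obstacle is Step~1, namely the precise identification of the logarithmic exponent with the \(N_j\)-weight on each primary summand, \(\kappa=\ell\) on \(\operatorname{Gr}^W_\ell\), in the presence of the nonzero semisimple residue and the strictly parabolic part \(\psi_j\). I would handle this by quoting the norm estimates of \cite[Section~5.1]{BGPMiR20} and Mochizuki for tame harmonic bundles. These give that the metric is mutually bounded with the model metric attached to \((\widetilde\alpha_j,s_j,Y_j)\), and that the model metric is block-diagonal in the filtered sense.
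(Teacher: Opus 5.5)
Your proposal is correct and is essentially the paper's proof. It uses the same two-sided comparison with the model metric in an adapted frame, the same Poincar\'e radial integrals giving $W_0$ for sections and $W_{-2}$ for coefficients of $dz/z$, and the same invertibility of $\lambda\operatorname{id}+N_j$ on the nonzero primary summands. Your degree-zero step reapplies the one-form criterion to the lattice section $\Phi u$ via $q_{j,\lambda}(\Phi u)=(\lambda+N_j)q_{j,\lambda}(u)$, which is a slightly more economical substitute for the paper's operator-norm bounds on the weight-raising and regular parts of $\Phi$. One omission should be filled: sections of the graph domain are a priori only holomorphic on the punctured disc, so before restricting to meromorphic sections you must rule out essential singularities. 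The paper does this by angular integration of the Laurent expansion, in which each negative coefficient separately contributes a non-integrable term.
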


\begin{proof}
Choose a coordinate \(z\) centered at \(x_j\), and put
\(r=|z|\) and \(t=-\log r\). The tame harmonic metric is uniformly
comparable with its local model. In a holomorphic frame adapted to the
parabolic weights and the nilpotent weight filtrations, this gives
\begin{equation}
\label{eq:adapted-splitting-quasi-orthogonal}
C^{-1}\sum_a |u_a|^2r^{2\alpha_a}t^{\ell_a}
\leq
|\sum_a u_ae_a|_h^2
\leq
C\sum_a |u_a|^2r^{2\alpha_a}t^{\ell_a},
\end{equation}
where \(0\leq\alpha_a<1\) and \(\ell_a\in\mathbb Z\).
Here \(\ell_a\) is the weight-filtration index of the corresponding
basis vector. These are the local-model estimates in
\cite[Section~5.1, especially (5.8), and Theorem~5.1]{BGPMiR20},
expressed in the normalized lattice.
The same comparison applies to bundle-valued forms.

For the Poincar\'e metric,
\[
d\operatorname{vol}_{\mathrm P}\asymp
\frac{dr\,d\theta}{rt^2},
\qquad
|\frac{dz}{z}|_{\mathrm P}^2\asymp t^2.
\]
Consequently, the radial integrals for a constant adapted vector of
weight \(\alpha\) and index \(\ell\), as a section and as the coefficient
of \(dz/z\), are respectively
\[
\int^\infty e^{-2\alpha t}t^{\ell-2}\,dt,
\qquad
\int^\infty e^{-2\alpha t}t^\ell\,dt.
\]
Both converge when \(\alpha>0\). When \(\alpha=0\), they converge exactly
when \(\ell\leq0\) and \(\ell\leq-2\), respectively.
A negative Laurent term has an exponentially growing radial factor,
since \(\alpha<1\), and cannot be \(L^2\). This also excludes an essential
singularity: angular integration of the squared Laurent series detects
each negative coefficient separately. Thus an \(L^2\) holomorphic
section extends to \(\mathscr E\), and an \(L^2\) holomorphic one-form
extends to \(\mathscr E\otimes K_{\overline\Sigma}(D)\).
Terms whose coefficients vanish at \(z=0\) are always integrable.
It follows that
\begin{equation}
\label{eq:full-versus-graded-graph-domain}
\begin{aligned}
u\in L^2
&\Longleftrightarrow
q_{j,\lambda}(u)\in W_0E_{j,\lambda}
\quad\text{for every }\lambda,\\
v\,dz/z\in L^2
&\Longleftrightarrow
q_{j,\lambda}(v)\in W_{-2}E_{j,\lambda}
\quad\text{for every }\lambda.
\end{aligned}
\end{equation}

We explain why the full Higgs field gives the same conditions.
In an adapted frame write
\[
\Phi=(R_0+R_{>}+zB(z))\frac{dz}{z},
\]
where \(R_0\) preserves the parabolic weights, \(R_{>}\) strictly
increases them, and \(B(z)\) is holomorphic. A block of \(R_{>}\)
from weight \(\alpha\) to weight \(\beta>\alpha\), including the
one-form factor, has operator norm bounded by
\(Cr^{\beta-\alpha}t^N\) for some integer \(N\). A block of the regular
term has norm bounded by \(Cr^{1+\beta-\alpha}t^N\). Both are bounded
near the puncture; for the second bound, the essential point is
\(1+\beta-\alpha>0\). Thus these terms send \(L^2\) sections to
\(L^2\) one-forms. The same estimates apply to changes of lifts: two
lifts of the same fiber vector differ by a term divisible by \(z\),
and changing a lift of a weight-zero graded vector may also add a
positive-weight vector. Neither change affects the integrability
conditions in \eqref{eq:full-versus-graded-graph-domain}.

On a positive-weight block, \(R_0\) also sends holomorphic lattice
sections to \(L^2\) one-forms: the exponential decay dominates every
logarithmic power. The only remaining condition is therefore on the
weight-zero graded residue. There it acts on \(E_{j,\lambda}\) as
\(A_\lambda=\lambda\operatorname{id}+N_j\). If \(\lambda=0\), then
\(N_jW_0\subseteq W_{-2}\), so the condition \(u\in L^2\) already ensures
\(\Phi u\in L^2\). If \(\lambda\ne0\), both \(A_\lambda\) and its inverse
preserve every weight-filtration step. Hence
\[
q_{j,\lambda}(u)\in W_0,\quad
A_\lambda q_{j,\lambda}(u)\in W_{-2}
\quad\Longleftrightarrow\quad
q_{j,\lambda}(u)\in W_{-2}.
\]
This proves the asserted descriptions of both terms of the complex.
\end{proof}

\begin{lemma}
\label{lem:maximal-holomorphic-L2-comparison}
The natural inclusion
\[
\mathcal G_{(2),\mathrm{hol}}^\bullet(D^0)
\longrightarrow
\mathscr L_{(2)}^\bullet(D^0)
\]
is a quasi-isomorphism of complexes of sheaves on \(\overline\Sigma\).
\end{lemma}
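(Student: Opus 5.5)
Away from the marked points both complexes are resolutions of the same object: the holomorphic graph-domain complex is $[\mathscr E\xrightarrow{\Phi}\mathscr E\otimes K]$, and the maximal $L^2$ complex $\mathscr L^\bullet_{(2)}(D^0)$ is its fine Dolbeault resolution for $D^0=\bar\partial+\Phi$. The local $\bar\partial$-Poincaré lemma then shows that the inclusion is a quasi-isomorphism on $\Sigma^\circ$. The claim is local, so it remains to work on a small punctured coordinate disc $\Delta_j^*$ around each $x_j$. There we must show that the stalk cohomology of $\mathscr L^\bullet_{(2)}(D^0)$ at $x_j$ is computed by holomorphic $L^2$ data. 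The plan is to filter both complexes by the Hodge-type degree and reduce to an $L^2$ $\bar\partial$-lemma. Concretely, we view $\mathscr L^\bullet_{(2)}(D^0)$ as the total complex of the double complex with horizontal differential $\Phi$ and vertical differential $\bar\partial$, acting on $L^2$ forms of type $(0,\ast)$ and $(1,\ast)$ with the maximal graph domain.

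\emph{Step 1.} Using the quasi-orthogonal adapted frame \eqref{eq:adapted-splitting-quasi-orthogonal}, decompose $\mathscr E|_{\Delta_j}$ up to bounded perturbation into line pieces carrying the model metric $r^{2\alpha}t^{\ell}$. For each such piece we prove an $L^2$ Dolbeault lemma on the punctured disc with the Poincaré metric. The sheaf of $L^2$ $(0,1)$- and $(1,1)$-forms (with the appropriate twist) is $\bar\partial$-surjective from $L^2$ sections in the graph domain, and the $\bar\partial$-kernel consists exactly of the holomorphic $L^2$ sections described in Lemma~\ref{lem:filtered-holomorphic-graph-domain-strictness}. We prove this by Fourier expansion in $\theta$: on each mode $z^m$ the equation $\bar\partial u=f$ becomes a radial ODE in $t$, which we solve explicitly. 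We then check weighted Hardy-type inequalities of the form $\int e^{-2\alpha t}t^{\ell}|u|^2\,dt\le C\int e^{-2\alpha t}t^{\ell+2}|u'|^2\,dt$, choosing the integration constant at $t=\infty$ or at the inner boundary according to the sign of $m+\alpha$. The borderline mode $m=0,\alpha=0$ is handled by the logarithmic weight $t^\ell$; the $t$-power is what allows a Hardy inequality there, except at the single exponent where it fails. That exponent corresponds exactly to the dividing line $\ell\le0$ versus $\ell\le-2$ in \eqref{eq:full-versus-graded-graph-domain}.

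\emph{Step 2.} Pass from the model to the actual $\bar\partial_{\mathscr E}$ and $\Phi$. The perturbation terms $R_>$ and $zB(z)$ are bounded operators between the relevant weighted $L^2$ spaces, as shown in the previous lemma. We also need a compactness or smallness statement: after shrinking $\Delta_j$ these terms have small norm relative to the model. A Neumann-series argument then preserves the surjectivity and kernel statements for the full $\bar\partial$ on each column of the double complex.

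\emph{Step 3.} Run the standard spectral sequence of the double complex with $\bar\partial$ first. Its $E_1$ page is precisely $\mathcal G^\bullet_{(2),\mathrm{hol}}(D^0)$ with differential $\Phi$. It degenerates because the columns are $\bar\partial$-acyclic in positive degree. Hence the inclusion induces isomorphisms on stalk cohomology at $x_j$.

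The main obstacle is the interaction between the maximal graph domain and the double-complex filtration. An element of $\mathscr L_{(2)}$ need only have $D^0u\in L^2$, not $\bar\partial u$ and $\Phi u$ separately in $L^2$. We would try to resolve this in the weight-zero nilpotent block by exploiting the fact that the Higgs term there has norm of order $t^{-1}$ times the weight shift. Alternatively, if that fails, we would fall back on citing the corresponding local comparison in \cite[Section~6]{MochizukiL2Complexes}, where the $\lambda=0$ case of the tame harmonic-bundle $L^2$ comparison is treated, and check that its lattice description agrees with \eqref{eq:filtered-graph-domain-associated-graded}.
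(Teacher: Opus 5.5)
Your direct route has a genuine gap in Steps~1 and~3. The column-wise $L^2$ Dolbeault lemma is false at the borderline weight indices, so the columns are not acyclic and the $E_1$ page is not $\mathcal G^\bullet_{(2),\mathrm{hol}}(D^0)$.

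Work on the weight-zero, zero-eigenvalue part, in the Fourier mode $m=0$, with $t=-\log r$:
\begin{itemize}
\item a section $u(t)$ of index $\ell$ has $\|u\|^2\asymp\int|u|^2t^{\ell-2}\,dt$ and $\|\bar\partial u\|^2\asymp\int|u'|^2t^{\ell}\,dt$;
\item a $(1,0)$-form $b(t)\,dz/z$ has $\|b\,dz/z\|^2\asymp\int|b|^2t^{\ell}\,dt$ and $\|\bar\partial(b\,dz/z)\|^2\asymp\int|b'|^2t^{\ell+2}\,dt$.
\end{itemize}
The weighted Hardy inequality $\int|u|^2t^{a}\leq C\int|u'|^2t^{a+2}$ holds exactly when $a\neq-1$. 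It therefore fails at $\ell=1$ in the $(0,\ast)$ column and at $\ell=-1$ in the $(1,\ast)$ column. There $\bar\partial$ does not have closed range: for example, $f=1/(t\log t)$ lies in $L^2(t\,dt)$, but no primitive of $f$ lies in $L^2(t^{-1}dt)$. So the local column cohomology is nonzero, indeed infinite-dimensional.

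These indices are not exotic. They occur for every even Jordan block of $N_j$, for instance for nontrivial unipotent boundary holonomy in $\mathrm U(1,1)$, where the indices are $\pm1$. You observe that Hardy fails at one exponent, but you then still assert column acyclicity. No Neumann-series perturbation (Step~2) can restore a surjectivity that the model itself lacks.

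The missing idea is the coupling through the Higgs field. $N_j$ induces an isomorphism $\operatorname{Gr}^W_1\to\operatorname{Gr}^W_{-1}$. Lowering the index by $2$ costs $t^{-1}$ in $h$, while $|dz/z|_{\mathrm P}\asymp t$. Hence $N_j$ identifies the bad index-$1$ piece of the $(0,\ast)$ column with the bad index-$(-1)$ piece of the $(1,\ast)$ column, isometrically up to constants. The two non-closed-range pieces cancel only in the total complex: one must show that $d_1=\Phi$ is an isomorphism between them. This is the non-perturbative part of the local analysis behind \cite{MochizukiL2Complexes}.

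Your treatment of the domain problem is also aimed at the wrong summand. Where the residue is nilpotent, the same index count shows that $\Phi$ is bounded for $h$ and the Poincar\'e metric. So $\Phi u^{0,1}$ is automatically $L^2$, and the maximal graph domain splits there with no further work.

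The genuine non-splitting occurs on primary summands where the semisimple residue has an eigenvalue $\lambda\neq0$. This happens also at positive parabolic weight, e.g.\ for negative-trace hyperbolic holonomy. There $|\Phi|\asymp|\lambda|\,t$ is unbounded, with two consequences:
\begin{itemize}
\item in degree one, the condition $\bar\partial u^{1,0}+\Phi u^{0,1}\in L^2$ defines a strictly larger space than the split one;
\item already in degree zero, the condition $\Phi u\in L^2$ is not a $\bar\partial$-column condition, yet it is what produces the $W_{-2}$ constraint in \eqref{eq:filtered-graph-domain-associated-graded}.
\end{itemize}
So there is no double complex to which your spectral sequence applies, and a separate proof of local acyclicity of the maximal-domain complex is needed on these summands.

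The paper does not carry out any of this local analysis. Its proof is precisely your fallback. It invokes the $\lambda=0$ holomorphic $L^2$ comparison for tame harmonic bundles in \cite[Section~1.4.1 and Remark~1.4]{MochizukiL2Complexes}. It stresses that the domain is the maximal graph domain of $D^0$, so $\bar\partial u^{1,0}$ and $\Phi u^{0,1}$ need not be separately $L^2$. It uses the ordinary Dolbeault resolution away from $D$. The lattice check you propose is not needed for this lemma, since $\mathcal G^\bullet_{(2),\mathrm{hol}}(D^0)$ is defined by $L^2$ conditions; matching it with the lattices is the content of the preceding lemma.
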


\begin{proof}
This is the punctured-curve holomorphic \(L^2\) comparison for tame
harmonic bundles; see
\cite[Section~1.4.1 and Remark~1.4]{MochizukiL2Complexes}.
At parameter \(\lambda=0\), the holomorphic structure is
\(\overline\partial_{\mathscr E}\) and the differential on holomorphic
sections is \(\Phi\). Thus its holomorphic complex is exactly the one
defined above. Its analytic complex uses the distributional maximal
graph domain of \(D^0=\overline\partial_{\mathscr E}+\Phi\).
In total degree one, this is the condition that the sum
\(\overline\partial u^{1,0}+\Phi u^{0,1}\) be \(L^2\); the two summands
are not required to be separately \(L^2\). This is the domain convention
used here. Away from the marked points, the assertion is the usual
Dolbeault resolution of the Higgs complex.
\end{proof}

\begin{proposition}
\label{prop:appendix-general-residue-L2-model}
The full Higgs field satisfies
\(\Phi(\mathcal A)\subseteq
\mathcal B\otimes K_{\overline\Sigma}(D)\).
There is a natural zig-zag of quasi-isomorphisms
\[
\mathcal C_{L^2}^{\bullet}
\xleftarrow{\ \simeq\ }
\mathcal G_{(2),\mathrm{hol}}^\bullet(D^0)
\xrightarrow{\ \simeq\ }
\mathscr L_{(2)}^\bullet(D^0).
\]
For the fixed adapted harmonic metric, common \(L^2\)-harmonic
representatives give isomorphisms
\[
\mathbb H^k(\overline\Sigma,\mathcal C_{L^2}^{\bullet})
\cong H_{(2)}^k(D^0)
\cong H_{(2)}^k(D^1)
\cong IH^k(\overline\Sigma;\mathcal E_\phi).
\]
The middle isomorphism is an identification of global cohomology through
harmonic representatives. In degree one, these spaces identify with
\[
\widehat H^1(\Sigma;\mathcal E_\phi)
:=\operatorname{Im}\left(
H^1(\Sigma,\partial\Sigma;\mathcal E_\phi)
\longrightarrow H^1(\Sigma;\mathcal E_\phi)\right).
\]
\end{proposition}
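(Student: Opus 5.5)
The proposition has four parts, and I would prove them in order. The first is the target inclusion \(\Phi(\mathcal A)\subseteq\mathcal B\otimes K_{\overline\Sigma}(D)\). The second is the zig-zag of quasi-isomorphisms. The third is the chain of isomorphisms on global cohomology. The fourth is the identification of degree one with \(\widehat H^1\).

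\textbf{The target inclusion.} This is the local computation already sketched in Subsection~\ref{subsec:L2-Dolbeault-complex}. Write \(\Phi=R_j(z)\,dz/z\) near \(x_j\). Because the residue preserves the parabolic filtration and the primary decomposition of the weight-zero graded residue, we get \(q_j(R_j(z)s)=N_jq_j(s)\). Since \(N_jW_0G_j\subseteq W_{-2}G_j\), the inclusion follows.

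\textbf{The right-hand arrow.} The quasi-isomorphism \(\mathcal G_{(2),\mathrm{hol}}^\bullet(D^0)\to\mathscr L_{(2)}^\bullet(D^0)\) is Lemma~\ref{lem:maximal-holomorphic-L2-comparison}, so nothing new is needed there.

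\textbf{The left-hand arrow.} Lemma~\ref{lem:filtered-holomorphic-graph-domain-strictness} identifies \(\mathcal G_{(2),\mathrm{hol}}^\bullet\) near \(x_j\) with \([\mathcal A^{\mathrm{hol}}_j\to\mathcal B^{\mathrm{hol}}_j\otimes K(D)]\). Comparing with \eqref{eq:def-L2-lattice-A} and \eqref{eq:def-L2-lattice-B}, the lattices \(\mathcal A,\mathcal B\) impose the same conditions on the zero-primary part \(E_{j,0}=G_j\). On a primary summand \(E_{j,\lambda}\) with \(\lambda\neq0\), however, the holomorphic graph domain imposes the extra condition \(W_{-2}\), which \(\mathcal A,\mathcal B\) do not impose. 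So \(\mathcal G\subseteq\mathcal C_{L^2}\) termwise, with both sheaves equal to \(\mathscr E\) away from \(D\).

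To show this inclusion is a quasi-isomorphism, I would show that the quotient complex, a skyscraper complex at each \(x_j\), is acyclic. Its degree-zero term is \(\bigoplus_{\lambda\ne0}E_{j,\lambda}/W_{-2}E_{j,\lambda}\), and its degree-one term is the corresponding quotient \(\mathcal B/\mathcal B^{\mathrm{hol}}\) tensored with the fiber of \(K(D)\), via \(\mathrm{res}\), namely \(\bigoplus_{\lambda\neq 0}E_{j,\lambda}/W_{-2}E_{j,\lambda}\). The induced differential is \(A_\lambda=\lambda\,\mathrm{id}+N_j\). This is invertible and preserves every \(W_\ell\), so it induces an isomorphism on \(E_{j,\lambda}/W_{-2}\), and the quotient is acyclic.

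The point needing care is that the differential on the quotient really is the graded residue. The lifts differ by terms divisible by \(z\) or by positive-weight terms, and strictly parabolic parts of the residue vanish on the weight-zero graded piece. This follows from the same frame estimates and lift-independence argument as in the proof of Lemma~\ref{lem:filtered-holomorphic-graph-domain-strictness}.

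\textbf{Global cohomology.} Composing the zig-zag gives \(\mathbb H^k(\mathcal C_{L^2}^\bullet)\cong H^k_{(2)}(D^0)\). The sheaves of the analytic complex are fine, so hypercohomology is global \(L^2\) cohomology. The metric is complete, and the \(L^2\) Hodge decomposition identifies \(H^k_{(2)}(D^0)\) with \(\ker\Delta_0\cap L^2\). This step needs closed range, which I would obtain from the finite dimensionality of the algebraic side (compare \cite{MochizukiL2Complexes}). The Kähler identity \(\Delta_1=2\Delta_0\) gives the same harmonic space for \(D^1\), so \(H^k_{(2)}(D^0)\cong H^k_{(2)}(D^1)\). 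Then \cite[Theorem~1.3]{MochizukiL2Complexes}, together with Riemann--Hilbert, gives \(H^k_{(2)}(D^1)\cong IH^k(\overline\Sigma;\mathcal E_\phi)\).

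\textbf{Degree one.} The final identification is the standard topological fact that, for a curve, \(IH^1=\operatorname{Im}(H^1_c(\Sigma^\circ)\to H^1(\Sigma^\circ))\). Indeed, \(j_{!*}\) in degree one is \(\tau_{\le0}Rj_*\) shifted, and its \(H^1\) is the image of compactly supported cohomology. Applied to \(\Sigma\simeq\Sigma^\circ\), this gives \(\widehat H^1\).

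\textbf{Main obstacle.} I expect the hardest step to be the analytic one: justifying closed range of \(D^0\) and \(D^1\), so that global \(L^2\) cohomology is computed by harmonic forms. For this I would rely on Mochizuki's results, since all the algebraic steps are finite-dimensional linear algebra.
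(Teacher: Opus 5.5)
Your proposal is correct and follows the paper's proof essentially step by step. It uses the same four ingredients: the residue computation for \(\Phi(\mathcal A)\subseteq\mathcal B\otimes K_{\overline\Sigma}(D)\); the termwise inclusion of the holomorphic graph complex into \(\mathcal C_{L^2}^\bullet\), whose cokernel \(\bigoplus_{\lambda\ne0}[E_{j,\lambda}/W_{-2}\xrightarrow{A_\lambda}E_{j,\lambda}/W_{-2}]\) is acyclic; fine sheaves together with harmonic representatives and Mochizuki's theorem for the global isomorphisms; and the standard description of \(IH^1\) on a curve. The only differences are cosmetic: you obtain closed range from the finite dimensionality of the coherent hypercohomology, where the paper cites Mochizuki's Corollary~6.8, and you phrase the degree-one step as the image of compactly supported cohomology, where the paper uses the kernel of restriction to \(\partial\Sigma\) and then the exact sequence of the pair.
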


\begin{proof}
First, the lattice conditions are preserved by the full Higgs field.
For a local section \(s\in\mathcal A\), projection of its residue to
\(G_j=E_{j,0}\) gives
\[
q_j\bigl(\operatorname{Res}_{x_j}(\Phi)s\bigr)
=N_jq_j(s)\in W_{-2}G_j.
\]
The strictly parabolic residue has zero projection to the weight-zero
quotient, and every regular term has coefficient divisible by \(z\).
Thus \(\Phi s\) satisfies the defining condition of
\(\mathcal B\otimes K_{\overline\Sigma}(D)\).

The local graph-domain lemma gives termwise inclusions
\(\mathcal A^{\mathrm{hol}}_j\subseteq\mathcal A\) and
\(\mathcal B^{\mathrm{hol}}_j\subseteq\mathcal B\), hence a natural
inclusion of complexes
\[
\iota:\mathcal G_{(2),\mathrm{hol}}^\bullet(D^0)
\longrightarrow\mathcal C_{L^2}^\bullet.
\]
We compute its cokernel directly at each marked point. The lattice
conditions agree on positive parabolic weights and on the zero-primary
part of the weight-zero fiber. They differ only on \(E_{j,\lambda}\)
with \(\lambda\ne0\): the coherent model imposes no condition there,
whereas both holomorphic graph lattices require the fiber value to lie
in \(W_{-2}E_{j,\lambda}\). The cokernel is therefore the skyscraper
complex
\[
\bigoplus_{\lambda\ne0}
\left[
E_{j,\lambda}/W_{-2}E_{j,\lambda}
\xrightarrow{\ \overline A_\lambda\ }
E_{j,\lambda}/W_{-2}E_{j,\lambda}
\right],
\]
where we have used \(dz/z\) to identify the logarithmic fiber in degree
one with \(\mathbb C\). Regular terms vanish in this quotient, and
strictly parabolic residue terms disappear upon projection to the
weight-zero fiber. Thus the displayed differential is precisely the one
induced by \(A_\lambda=\lambda\operatorname{id}+N_j\).
It is invertible, with inverse induced by the finite sum
\[
A_\lambda^{-1}
=\lambda^{-1}\sum_{m=0}^{\nu-1}(-\lambda^{-1}N_j)^m,
\qquad N_j^\nu=0.
\]
The cokernel complex is acyclic. Since the two complexes agree away from
\(D\), \(\iota\) is a quasi-isomorphism everywhere.
On \(G_j\), the conditions are exactly the nilpotent
\(W_0/W_{-2}\) model of \cite[Section~3.2, equation~(2)]{DPS}.
This argument also explains why the individual coherent lattices need
not themselves consist entirely of \(L^2\) sections: the extra
nonzero-primary terms form acyclic quotient complexes.
The other arrow in the zig-zag is
Lemma~\ref{lem:maximal-holomorphic-L2-comparison}.

We now pass to global cohomology. The maximal-domain sheaves are fine.
Indeed, take a smooth partition of unity on the compactification
\(\overline\Sigma\). For each of its functions \(\chi\), both \(\chi\)
and \(d_\lambda\chi=\overline\partial\chi+\lambda\partial\chi\) are
bounded in the Poincar\'e metric. Near a puncture, the latter norm is
\(O(r|\log r|)\). Therefore
\[
D^\lambda(\chi u)
=\chi D^\lambda u+d_\lambda\chi\wedge u
\]
belongs to \(L^2\) whenever \(u\) is in the local maximal graph domain.
These partitions of unity preserve that domain, so global sections
compute its hypercohomology.

The \(L^2\)-Hodge theorem and the harmonic-bundle K\"ahler identities give
\[
H_{(2)}^k(D^\lambda)
\cong\operatorname{Harm}_{(2)}^k(D^\lambda),
\qquad
\Delta_\lambda=(1+|\lambda|^2)\Delta_0.
\]
The comparison with harmonic representatives, including finite
dimensionality, is supplied by
\cite[Corollary~6.8]{MochizukiL2Complexes}; the cutoff argument is
\cite[Proposition~6.5 and Corollary~6.6]{MochizukiL2Complexes}.
In particular, \(D^0\) and \(D^1\) have the same harmonic forms.
At \(\lambda=1\), the minimal-extension comparison of
\cite[Theorem~1.3]{MochizukiL2Complexes}, followed by the
Riemann--Hilbert correspondence, identifies the flat complex with the
intersection complex.

For clarity, our indexing convention is unshifted. If
\(j:\Sigma^\circ\hookrightarrow\overline\Sigma\), we set
\[
\operatorname{IC}_{\overline\Sigma}^\bullet(\mathcal E_\phi)
:=j_{!*}(\mathcal E_\phi[1])[-1],
\qquad
IH^k(\overline\Sigma;\mathcal E_\phi)
:=\mathbb H^k\bigl(\overline\Sigma,
\operatorname{IC}_{\overline\Sigma}^\bullet(\mathcal E_\phi)\bigr).
\]
Combining the preceding comparisons yields
\[
\begin{aligned}
\mathbb H^k(\overline\Sigma,\mathcal C_{L^2}^\bullet)
&\cong H_{(2)}^k(D^0)\\
&\cong\operatorname{Harm}_{(2)}^k(D^0)
=\operatorname{Harm}_{(2)}^k(D^1)\\
&\cong H_{(2)}^k(D^1)
\cong IH^k(\overline\Sigma;\mathcal E_\phi).
\end{aligned}
\]
Only the flat complex at \(\lambda=1\) is identified with the
intersection complex at the level of sheaves. The passage from
\(\lambda=0\) to \(\lambda=1\) uses global harmonic representatives.

Finally, on a punctured curve the unshifted intersection complex is
\(j_*\mathcal E_\phi\), placed in degree zero. The low-degree sequence
for \(Rj_*\) identifies its first cohomology with the kernel of
restriction to the local punctured discs, equivalently to
\(\partial\Sigma\). The long exact sequence of the pair
\((\Sigma,\partial\Sigma)\) then gives
\[
\begin{aligned}
IH^1(\overline\Sigma;\mathcal E_\phi)
&\cong\ker\left(H^1(\Sigma;\mathcal E_\phi)
\longrightarrow H^1(\partial\Sigma;\mathcal E_\phi)\right)\\
&=\operatorname{Im}\left(H^1(\Sigma,\partial\Sigma;\mathcal E_\phi)
\longrightarrow H^1(\Sigma;\mathcal E_\phi)\right).
\end{aligned}
\]
This is the claimed parabolic cohomology group.
\end{proof}

\section{Disclosure of Delegation to Generative AI}

The authors declare the use of generative AI in the research and writing process. According to the GAIDeT taxonomy (2025), the following tasks were delegated to GAI tools under full human supervision:

\begin{itemize}
\item Idea generation
\item Literature search and systematization
\item Selection of research methods
\item Text generation
\end{itemize}

The GAI tool used was: ChatGPT-5.6.

Responsibility for the final manuscript lies entirely with the authors.

GAI tools are not listed as authors and do not bear responsibility for the final outcomes.

Declaration submitted by: Xueyuan WAN

\emph{Additional note}: X. Wan suggested AI use the Higgs bundle method, and after much communication with it, AI gave the proof, and then X. Wan understood the proof with the AI's help. I. Kim and P. Pansu reorganized the crux of the proof in order to remove unnecessary definitions and notations and make the verification of the proof easier.

\bibliographystyle{amsalpha}
\bibliography{possible_values_v9}

\end{document}